\newtheorem{theorem}{Theorem}[section]
\newtheorem{lemma}[theorem]{Lemma}
\newtheorem{corollary}[theorem]{Corollary}
\theoremstyle{definition}
\newtheorem{definition}[theorem]{Definition}
\newtheorem{example}[theorem]{Example}
\theoremstyle{remark}
\newtheorem{remark}[theorem]{Remark}
\def\mt{\mathcal{T}}
\title{On the superconvergence of a hydridizable discontinuous Galerkin method for the Cahn-Hilliard equation}
\title{On the superconvergence of a hydridizable discontinuous Galerkin method for the Cahn-Hilliard equation}
\author{
	{\sc Gang Chen}\footnote{School of Mathematics Sciences, University of Electronic Science and Technology of China, Chengdu, China.
		Email: \emph{cglwdm@uestc.edu.cn}} ,\ \
	{\sc Daozhi Han}\footnote{ Department of Mathematics
		and Statistics, Missouri University of Science and Technology. Email:
		\emph{handaoz@mst.edu}},\ \
	\ {\sc John Singler}\footnote{Department of Mathematics
		and Statistics, Missouri University of Science and Technology. Email: \emph{singlerj@mst.edu}} \\
	and \ {\sc Yangwen Zhang}\footnote{Department of Mathematics Science, University of Delaware
		Newark, DE. Email: \emph{ywzhangf@udel.edu}}
}
\date{}
\begin{document}
\maketitle

\begin{abstract}
	We propose a hydridizable discontinuous Galerkin (HDG) method for solving the Cahn-Hilliard equation. The temporal discretization can be based on either the backward Euler method or the convex-splitting method. We show that the fully discrete scheme admits a unique solution, and we  establish  optimal convergence rates for all variables in the $L^2$ norm for arbitrary polynomial orders. In terms of the globally coupled degrees of freedom,   the scalar variables are superconvergent. Another theoretical contribution of this work is a novel HDG  Sobolev inequality that is useful for HDG error analysis of nonlinear problems. Numerical results are reported to confirm the theoretical convergence rates.

\end{abstract}

\begin{keywords}
	Cahn-Hilliard equation, hybridizable discontinuous Galerkin method,  superconvergence
\end{keywords}

\section{Introduction}

Let $\Omega\subset \mathbb{R}^d$ ($d=2,3$) be a polygonal domain with Lipshitz boundary $\partial\Omega$ and $T$ be a positive constant.
We consider the following Cahn-Hilliard equation:
\begin{subequations}\label{ori}
	\begin{align}
	u_t-\Delta\phi&= 0  &\text{in} \; \Omega\times(0,T],\label{o1}\\
	-\epsilon\Delta u+\epsilon^{-1}f(u)&= \phi  &\text{in} \; \Omega\times(0,T],\label{o2}\\
	\nabla u\cdot\bm n=\nabla\phi\cdot\bm n&= 0 &\text{on} \; \partial\Omega\times(0,T],\label{o3}\\
	u(\cdot,0)&= u^0(\cdot)  &\text{in}\; \Omega,\label{o4}
	\end{align}
\end{subequations}
where $f(u)=u^3-u$. The Cahn-Hilliard equation is a fourth order, nonlinear parabolic equation which was originally proposed by Cahn and Hilliard \cite{CaHi1958, Cahn1959, CaHi1959} as a phenomenological model for phase separation and coarsening in a binary alloy. Since then Cahn-Hilliard-type equations have found applications in a variety of fields, including multiphase flow \cite{LoTr1998, AMW1998}, two-phase flow in porous media \cite{GaBr2000}, tumor growth \cite{WLC2011}, pattern formation \cite{ZPN2005}, thin films \cite{BePu1996} and many others. Owing to its importance,  many works have been devoted to the design and analysis of numerical schemes for solving the Cahn-Hilliard equation; see, e.g., finite difference methods \cite{Furihata2001}, mixed and nonconforming finite element methods \cite{ElFr1989, DuNi1991, ElLa1992, BBG1999, FePr2004, DWW2016} and Fourier-spectral methods \cite{ShYa2009, LiQi2017, SXY2018}.

In recent years, the discontinuous Galerkin (DG) method has become popular for solving the Cahn-Hilliard equation, owing to its flexibility in handling higher order derivatives, high-order accuracy, the property of local conservation which is crucial for applications in porous {{red}medium} flow and transport phenomenon, high parallelizability and ease of achieving $hp$-adaptivity. Applications of DG methods to fourth order elliptic problems have been considered by Babu\v{s}ka and Zl\'{a}mal in \cite{BaZl1973}, by Baker in \cite{Baker1977}, and more recently by Mozolevski et al. in a series of works \cite{MoSu2003, MoBo2007, MoSuBo2007a, MoSuBo2007b, SuMo2007}. In \cite{FeKa2007}, Feng and Karakashian design and analyze a DG method of interior penalty type based on the fourth order formulation of the Cahn-Hilliard equation. Optimal error estimates in various energy norms are established: see also \cite{FLX2016}. Kay et al. propose and analyze a different DG method \cite{KSS2009} that treats the Cahn-Hilliard equation as a system of second order equations allowing a relatively larger penalty term. A fully adaptive version of the interior penalty DG method was recently constructed in \cite{AKW2015} for the Cahn-Hilliard equation with a source and optimal $L^2$ error bound were derived; see also \cite{FLAR2018} for solving the advective Cahn-Hilliard equation. The local discontinuous Galerkin (LDG) method has also been proposed for the discretization of the Cahn-Hilliard equation by writing it as a system of four first-order equations. Dong and Shu in \cite{DoSh2009} analyzed an LDG scheme for general elliptic equations including the linearized Cahn-Hilliard equation and obtained optimal error estimate in $L^2$. Recently, an LDG method has been employed for solving a number of Cahn-Hilliard fluid models, cf. \cite{GXX2014, GuXu2014, SoSh2017}.

The DG method is however often criticized for the larger amount of degrees of freedom compared to the continuous Galerkin (CG) method . In the seminal work \cite{Cockburn_Gopalakrishnan_Lazarov_Unify_SINUM_2009} Cockburn et al.  propose a hybridizable discontinuous Galerkin (HDG) method for second order elliptic problems. In a nutshell, the HDG method maps the flux and solution into the numerical trace of the solution via a local solver, which are in turn connected by the continuity of fluxes across inter-element boundaries (a transmission condition). Hence the globally coupled degrees of freedom are those numerical traces, resulting in a significant reduction of the number of unknowns in traditional  DG methods. Moreover, the HDG methods possess the same favorable properties as classical mixed methods. In particular,
HDG methods provide optimal convergence rates for both the gradient and the primal
variables of the mixed formulation. This property enables the construction of superconvergent solutions, contrary to other
DG methods. These advantages of the HDG methods have made HDG an attractive alternative for solving problems governed by PDEs and PDE control problems, see \cite{Cockburn_Shi_Stokes_MathComp_2013,Cockburn_Gopalakrishnan_Nguyen_Peraire_Sayas_Stokes_MathComp_2011,Cockburn_Shi_Stokes_CF_2014,Cockburn_Says_Divergence_Free_MathComp_2014,Cesmelioglu_Cockburn_Nguyen_Peraire_Oseen_JSC_2013,Rhebergen_Cockburn_NS_JCP_2013,Rhebergen_Cockburn_Deforming_JCP_2012,Cockburn_Nguyen_Peraire_Hyperbolic_Book_2016,Sanchez_Ciuca_Nguyen_Peraire_Cockburn_Hamiltonian_JCP_2017,Stanglmeier_Nguyen_Peraire_Wave_CMAME_2016,ChenHuShenSinglerZhangZheng_HDG_Convection_Dirtributed_Control_JCAM_2018,HuShenSinglerZhangZheng_HDG_Dirichlet_control3,HuMateosSinglerZhangZhang2}.

Most  study currently focuses on establishing 
optimal and superconvergent rates of HDG methods for  second order problems, such as elliptic PDEs \cite{Cockburn_Gopalakrishnan_Sayas_Porjection_MathComp_2010}, convection diffusion equations \cite{Chen_Cockburn_Convection_Diffusion_IMAJNA_2012,Chen_Cockburn_Convection_Diffusion_MathComp_2014,Qiu_Shi_Convection_Diffusion_JSC_2016}, Stokes equations \cite{Cockburn_Shi_Stokes_CF_2014,Cockburn_Gopalakrishnan_Nguyen_Peraire_Sayas_Stokes_MathComp_2011}, Oseen equations \cite{Cesmelioglu_Cockburn_Nguyen_Peraire_Oseen_JSC_2013} and Navier-Stokes equations \cite{Qiu_Shi_NS_IMAJNA_2016,Cesmelioglu_NS_MathComp_2017}.  However, in \cite{Cockburn_Dong_Guzman}, the authors utilized an HDG method with polynomial degree $k$ for all variables to investigate the biharmonic equations and obtained an optimal convergence rate for the solution and suboptimal convergence rates for the other variables. To the best of our knowledge, there does not exist an HDG work that achieves optimal convergence rates for all variables  for a  fourth order problem.

In this work, we propose a HDG method for the Cahn-Hilliard equation with  Lehrenfeld-Sch\"oberl stabilization function, polynomials of degree $k+1$ for the scalar unknown, and polynomials of degree $k$ for the other unknowns. The HDG framework with reduced stabilization and polynomials of mixed orders was first introduced by Lehrenfeld in \cite{Lehrenfeld_PhD_thesis_2010} where it was alluded that the scheme could be a superconvergent method, i.e.,  $O(h^{k+2})$ error estimates is expected  for the solution variables even though polynomials of  degree $k$ are used for the globally coupled unknowns (numerical traces of the solution). Optimal convergence and hence superconvergence was then rigorously established for convection diffusion problems \cite{Qiu_Shi_Convection_Diffusion_JSC_2016}, Navier-Stokes equations \cite{Qiu_Shi_NS_IMAJNA_2016}, and more recently for linear elasticity problems \cite{Qiu_Shen_Shi_Elasticity_MathComp_2018}. We provide the HDG formulation for the Cahn-Hilliard equation in \Cref{FDCH} and prove the existence, uniqueness and stability of the HDG method in \Cref{E_U_S}. In \Cref{error_analysis}, we perform a rigorous error analysis for the HDG method and obtain the following a priori error bounds for the solution $\phi$, $u$ and their fluxes $\bm p = -\nabla\phi$ and $\bm q = -\nabla u$:
\begin{align*}
\Delta t \sum_{n=1}^N \|\bm p^n -\bm p_h^n\|_{L^2(\Omega)}^2 +\Delta t \sum_{n=1}^N \|\bm q^n -\bm q_h^n\|_{L^2(\Omega)}^2\le C (h^{k+1}+\Delta t)^2, \\
\Delta t \sum_{n=1}^N \|\phi^n -\phi_h^n\|_{L^2(\Omega)}^2 +\max_{1\le n\le N} \|u^n-u_h^n\|_{L^2(\Omega)}^2 \le C (h^{k+2}+\Delta t)^2.
\end{align*}
These convergence rates are further validated by numerical experiments  in \Cref{numerics}. A particular theoretical contribution of this article is the establishment of  a {\em novel} HDG Sobolev inequality (cf. \Cref{discrete-soblev}) which is a useful tool in the numerical analysis of nonlinear problems.

\section{The HDG formulation}\label{FDCH}

To introduce the fully discrete HDG formulation for the Cahn-Hilliard equation, we first fix some notation.
Let $\mathcal{T}_h$ be a shape-regular, quasi-uniform triangulation of $\Omega$. Let $\mathcal{E}_h$ denote the set of all faces $E$ of all simplexes $K$ of the triangulation $\mathcal{T}_h$.  Also let $\mathcal{E}_h^o$ and $\mathcal{E}_h^{\partial}$ denote the set of interior faces and boundary faces, respectively.
 Furthermore, we  introduce the discrete inner products
\begin{align*}
(w,v)_{\mathcal{T}_h} &:= \sum_{K\in\mathcal{T}_h} (w,v)_K  = \sum_{K\in\mathcal{T}_h}\int_K w v ,   \\
 \left\langle \zeta,\rho\right\rangle_{\partial\mathcal{T}_h} &:= \sum_{K\in\mathcal{T}_h} \left\langle \zeta,\rho\right\rangle_{\partial K} = \sum_{K\in\mathcal{T}_h}\int_{\partial K} \zeta\rho.
\end{align*}

For any integer $k\ge 0$, let $\mathcal{P}^k(K)$ denote the set of polynomials of degree at most $k$ on the element $K$.  We introduce the following  discontinuous finite element spaces:
\begin{align*}
\bm V_h&:=\{\bm v_h\in [L^2(\Omega)]^d:\bm v_h|_K\in [\mathcal{P}^{k}(K)]^{d},\forall K\in\mathcal{T}_h\},\\
W_h&:=\{w_h\in L^2(\Omega):w_h|_K\in \mathcal{P}^{k+1}(K),\forall K\in\mathcal{T}_h\},\\
\mathring{W}_h&:=\{w_h\in L^2_0(\Omega):w_h|_K\in \mathcal{P}^{k+1}(K),\forall K\in\mathcal{T}_h\},\\
M_h&:=\{\mu_h\in L^2(\mathcal{E}_h):\mu_h|_E\in \mathcal{P}^{k}(E),\forall E\in\mathcal{E}_h\},
\end{align*}
where $L^2_0(\Omega)$ is the subspace of $L^2(\Omega)$ of mean zero functions.

Since the HDG method is based on a mixed formulation,  we rewrite the Cahn-Hilliard equation as a first order system by setting $\bm p+\nabla \phi=0$ and $\bm q+\nabla u=0$ in \eqref{ori}. The mixed formulation of \eqref{ori} then reads
\begin{subequations}\label{mixed}
	\begin{align}
	\bm p+\nabla \phi&=0  &\text{in} \; \Omega\times(0,T],\\
	u_t+\nabla\cdot\bm p&= 0  &\text{in} \; \Omega\times(0,T],\\
	\bm q+\nabla u&=0  &\text{in} \; \Omega\times(0,T],\\
	\epsilon\nabla\cdot\bm q+\epsilon^{-1}f(u)&= \phi  &\text{in} \; \Omega\times(0,T],\\
	\bm p\cdot\bm n=\bm q\cdot\bm n&= 0 &\text{on} \; \partial\Omega\times(0,T],\\
	u(\cdot,0)&= u^0(\cdot) &\text{in}\; \Omega.
	\end{align}
\end{subequations}

Now we introduce the fully discrete HDG formulation of the Cahn-Hilliard equation  based on backward Euler method and convex-splitting approach.
\begin{subequations}\label{HDG_no_compact}
For a fixed integer $N$, let $0=t_0<t_1<\cdots<t_N=T$ be a uniform partition of $[0, T]$ with $ \Delta t = T/N$. Based on the mixed form \eqref{mixed}, the HDG method seeks $(\bm p_h^n, \phi_h^n, \widehat{\phi}_h^n)\in \bm V_h\times W_h\times M_h$ satisfying
\begin{align}
(\bm p_h^n, \bm r_1)_{\mathcal T_h} - (\phi_h^n, \nabla\cdot \bm r_1)_{\mathcal T_h} + \langle \widehat \phi_h^n, \bm r_1\cdot\bm n\rangle_{\partial \mathcal T_h}=0,\\
(\partial_{t}^+u_h^n, w_1)_{\mathcal{T}_h}-(\bm p_h^n, \nabla w_1)_{\mathcal T_h} + \langle \widehat {\bm p}_h^n\cdot \bm n, w_1\rangle_{\partial \mathcal T_h}=0,\\
\langle \widehat{\bm p}_h^n\cdot \bm n, \mu_1\rangle_{\partial \mathcal T_h}=0,
\end{align}
for all $(\bm r_1, w_1,\mu_1)\in \bm V_h\times W_h\times M_h$; and $(\bm q_h^n, u_h^n, \widehat u_h^n)\in \bm V_h\times W_h\times M_h$ such that
\begin{align}
(\bm q_h^n, \bm r_2)_{\mathcal T_h} - (u_h^n, \nabla\cdot \bm r_2)_{\mathcal T_h} + \langle \widehat u_h^n, \bm r_2\cdot\bm n\rangle_{\partial \mathcal T_h}=0,\\
-(\epsilon\bm q_h^n, \nabla w_2)_{\mathcal T_h} + \langle \epsilon \widehat {\bm q}_h^n\cdot \bm n, w_2\rangle_{\partial \mathcal T_h}+ (\epsilon^{-1}f^n(u_h^n), w_2)_{\mathcal T_h}=0,\\
\langle \epsilon\widehat{\bm q}_h^n\cdot \bm n, \mu_2\rangle_{\partial \mathcal T_h}=0,
\end{align}
for all $(\bm r_2, w_2,\mu_2)\in \bm V_h\times W_h\times M_h$.
Here, $\partial_{t}^+u_h^n=(u_h^n-u_h^{n-1})/\Delta t$, $f^n(u_h^n)=(u_h^n)^3-u_h^n$ for the fully implicit scheme and 
$f^n(u_h^n)=(u_h^n)^3-u_h^{n-1}$ for the energy-splitting scheme, and the numerical fluxes  on $\partial \mathcal T_h$ are defined as
\begin{align}
\widehat{\bm p}_h^n\cdot \bm n = \bm p_h^n \cdot \bm n + h_K^{-1} (\Pi_k^\partial \phi_h^n - \widehat \phi_h^n), \\
\widehat{\bm q}_h^n\cdot \bm n = \bm q_h^n \cdot \bm n + h_K^{-1} (\Pi_k^\partial u_h^n - \widehat u_h^n),
\end{align}
where $\Pi_k^\partial $ is the element-wise $L^2$ projection onto $\mathcal{P}^k (E)$ such that 
\begin{align*}
\langle\Pi_k^\partial u_h, \mu_h\rangle_{E}=\langle u_h, \mu_h\rangle_{E}, \quad \forall \mu_h\in \mathcal{P}^k (E) \ \textup{and} \ E\in \partial K.
\end{align*}
\end{subequations}
We shall also make use of the standard $L^2$ projection, denoted by $P_M$,  onto $M_h$. Note that $\Pi_k^\partial$ coincides with $P_M$ on the space $H^1(\Omega)$.

To make the expressions concise, we introduce the operator  $\mathcal A: [\bm V_h\times W_h\times M_h]^2 \to \mathbb R$ by
\begin{align}\label{def_A}
\begin{split}
\hspace{1em}&\hspace{-1em}\mathcal A(\bm q_h,u_h,\widehat u_h;\bm r_h,w_h,\mu_h)\\
&=(\bm q_h,\bm r_h)_{\mathcal{T}_h} - (u_h,\nabla\cdot\bm r_h)_{\mathcal{T}_h}
+\langle \widehat u_h,\bm r_h\cdot\bm n \rangle_{\partial\mathcal{T}_h}\\
&\quad +(\nabla\cdot\bm q_h, w_h)_{\mathcal{T}_h} -\langle \bm q_h\cdot\bm n, \mu_h \rangle_{\partial\mathcal{T}_h}\\
&\quad +\langle  h_K^{-1}({\Pi}_k^{\partial}u_h-\widehat u_h), {\Pi}_k^{\partial}w_h-\mu_h \rangle_{\partial\mathcal{T}_h},
\end{split}
\end{align}
for all $(\bm q_h,u_h,\widehat u_h), (\bm r_h,w_h,\mu_h)\in \bm V_h\times W_h\times M_h$.

Then the HDG formulation \eqref{HDG_no_compact} can be recasted as: for $n=1,2,\cdots, N$, find $(\bm p_h^n,\phi_h^n,\widehat{\phi}_h^n)$, $(\bm q_h^n, u_h^n,\widehat u_h^n)\in \bm V_h\times W_h\times M_h$ such that 
\begin{subequations}\label{HDG-hill}
		\begin{align}
	    (\partial_{t}^+ u_h^n,w_1)_{\mathcal{T}_h}+\mathcal A(\bm p_h^n,\phi_h^n,\widehat{\phi}_h^n;\bm r_1,w_1,\mu_1)&=0,\label{hdg01}\\
	    (\epsilon^{-1}f^n(u^n_h),w_2)_{\mathcal{T}_h}+\epsilon\mathcal{A}(\bm q_h^n,u_h^n,\widehat{u}_h^n;\bm r_2, w_2,\mu_2)-(\phi_h^n,w_2)_{\mathcal{T}_h}&=0\label{hdg02},\\
	    (u_h^0,w_3)_{\mathcal{T}_h}-(u^0,w_3)_{\mathcal{T}_h}&=0, \label{hdg03}
	\end{align}
	 for all $(\bm r_1,w_1,\mu_1), (\bm r_2,w_2,\mu_2) \in \bm V_h\times W_h\times M_h$ and $w_3\in W_h$.
\end{subequations}

\section{Preliminaries}
Throughout, $C$ shall denote a generic constant independent of the mesh parameters $h, \Delta t$.
We first recall the standard $L^2$ projections $\bm\Pi_{k}^o :  [L^2(\Omega)]^d \to \bm V_h$ and $\Pi_{k+1}^o :  L^2(\Omega) \to W_h$
\begin{equation}\label{L2_projection}
\begin{split}
(\bm\Pi_k^o \bm q,\bm r_h)_{K} &= (\bm q,\bm r_h)_{K} ,\qquad \forall \bm r_h\in [{\mathcal P}_{k}(K)]^d,\\
(\Pi_{k+1}^o u,w_h)_{K}  &= (u,w_h)_{K} ,\qquad \forall w_h\in \mathcal P_{k+1}(K),
\end{split}
\end{equation}
which obey  the following classical error estimates (see for instance \cite[Lemma 3.3]{ChenSinglerZhang1}):
\begin{subequations}\label{classical_ine}
	\begin{align}
	\|{\bm q -\bm\Pi_k^o \bm q}\|_{\mathcal T_h} \le  C h^{k+1} \|{\bm q}\|_{H^{k+1}}, \ \| {u -{\Pi_{k+1}^o u}}\|_{\mathcal T_h} \le  C h^{k+2} \|{u}\|_{H^{k+2}},\label{eq27a}\\
	\| {u - {\Pi_{k+1}^o u}}\|_{\partial\mathcal T_h} \le  C h^{k+\frac 3 2} \|{u}\|_{H^{k+2}},
	 \  \| {w}\|_{\partial \mathcal T_h} \le  C h^{-\frac 12} \| {w}\|_{ \mathcal T_h}, \: \forall w\in V_h,\label{eq27b}\\
	\| {u - {\Pi_{k+1}^o u}}\|_{L^\infty} \le  C h^{k+2-d/2} |{u}|_{H^{k+2}}.\label{est-infty}
	\end{align}
\end{subequations}
The same error bounds hold true for the projections of  $\bm p$ and $\phi$.

We shall also utilize the following version of the piecewise Poincar\'{e}-Friedrichs inequality , cf. \cite{Brenner_Poincare_SINUM_2003}.
\begin{lemma}\label{Poincare0}
	Let $v$ be a piecewise $H^1$ function with respect to the partition $\mathcal{T}_h$. The following Poincar\'{e}-Friedrichs inequality holds
	\begin{align*}
	\|v\|_{\mathcal{T}_h}^2\le 	C\left(\|\nabla v\|^2_{\mathcal{T}_h}+ |( v,1)_{\mathcal{T}_h}|^2
	+\sum_{E\in\mathcal{E}^o_h}|E|^{d/(1-d)}\Big(\int_E [\![v]\!] ds\Big)^2
	\right),
	\end{align*}
	where the generic constant $C$ depends only on the regularity of the partition, and $[\![v]\!]$ denotes the jump of $v$ across a side $E$.
\end{lemma}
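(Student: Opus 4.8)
The plan is to deduce the inequality from the classical conforming Poincar\'e--Friedrichs inequality by means of two successive approximations, with the quantities $\int_E[\![v]\!]\,ds$ surfacing naturally along the way. First I would introduce the broken Crouzeix--Raviart interpolant $I_h v$: the piecewise linear function whose average on each face $E$ of each element $K$ equals $\frac1{|E|}\int_E v|_K\,ds$. This operator reproduces constants and, after scaling to the reference simplex, is bounded from $H^1(K)$ into $L^2(K)$; combining this with Poincar\'e's inequality on $K$ and the shape-regularity of $\mathcal{T}_h$ yields $\|v-I_h v\|_{\mathcal{T}_h}\le C\|\nabla v\|_{\mathcal{T}_h}$ and $\|\nabla(v-I_h v)\|_{\mathcal{T}_h}\le C\|\nabla v\|_{\mathcal{T}_h}$. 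Since moreover $\int_E[\![I_h v]\!]\,ds=\int_E[\![v]\!]\,ds$ on every $E\in\mathcal E_h^{o}$ and $|(v-I_hv,1)_{\mathcal T_h}|\le C\|\nabla v\|_{\mathcal T_h}$, it suffices to prove the asserted bound for the piecewise linear function $I_h v$: the three terms on the right-hand side change only by controlled multiples of $\|\nabla v\|_{\mathcal T_h}$, and $\|v\|_{\mathcal T_h}\le\|I_h v\|_{\mathcal T_h}+C\|\nabla v\|_{\mathcal T_h}$.

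Next I would split $I_h v=w_c+w_j$, where $w_c$ is the genuine Crouzeix--Raviart function (piecewise linear, continuous in the face-average sense across interior faces) obtained by replacing, on each interior face, the two one-sided face averages of $I_h v$ by their arithmetic mean and leaving the boundary face averages untouched, so that $\int_E[\![w_c]\!]\,ds=0$ for every interior face, while $w_j:=I_h v-w_c$ collects the jumps. On each element $w_j$ is a linear polynomial whose $d+1$ one-sided face averages equal $\pm\frac1{2|E|}\int_E[\![v]\!]\,ds$ on interior faces and $0$ on boundary faces; a routine scaling argument using $|E|\sim h_E^{d-1}$ and shape-regularity then yields $\|\nabla w_j\|_{\mathcal T_h}^2\le C\sum_{E\in\mathcal E_h^{o}}|E|^{d/(1-d)}\big(\int_E[\![v]\!]\,ds\big)^2$ --- with the exponents matching precisely --- and, with a harmless extra factor $h^2$, the same bound for $\|w_j\|_{\mathcal T_h}^2$, hence also for $|(w_j,1)_{\mathcal T_h}|^2$. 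For the conforming part I would invoke the Poincar\'e--Friedrichs inequality for Crouzeix--Raviart functions, $\|w_c\|_{\mathcal T_h}^2\le C\big(\|\nabla w_c\|_{\mathcal T_h}^2+|(w_c,1)_{\mathcal T_h}|^2\big)$, and estimate $\|\nabla w_c\|_{\mathcal T_h}\le\|\nabla I_h v\|_{\mathcal T_h}+\|\nabla w_j\|_{\mathcal T_h}$ and $|(w_c,1)_{\mathcal T_h}|\le|(I_h v,1)_{\mathcal T_h}|+|(w_j,1)_{\mathcal T_h}|$. Assembling $\|I_h v\|_{\mathcal T_h}\le\|w_c\|_{\mathcal T_h}+\|w_j\|_{\mathcal T_h}$, substituting these bounds, and then undoing the first reduction gives the claim, the constant depending only on the shape-regularity of $\mathcal T_h$.

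The only substantial obstacle I anticipate is the Crouzeix--Raviart Poincar\'e--Friedrichs inequality used for $w_c$; everything else is elementary interpolation and scaling. That inequality is itself obtained by a Peetre--Tartar/compactness argument on a reference configuration followed by a scaling and covering argument to make the constant mesh-independent, and one may instead simply quote it (together with the present lemma) from \cite{Brenner_Poincare_SINUM_2003}. A minor annoyance is bookkeeping the benign powers of $h$ (bounded by $\operatorname{diam}\Omega$) that appear in the $L^2$ --- but not the gradient --- estimate of $w_j$. I note also that a pure compactness argument applied directly to the stated inequality would be shorter but would only deliver a constant depending on the individual mesh rather than on the shape-regularity constant, which is why the constructive route above is preferable.
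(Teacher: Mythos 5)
The paper does not actually prove this lemma: it is stated with a bare citation to \cite{Brenner_Poincare_SINUM_2003}, so there is no internal argument to compare yours against. Your constructive proof is correct and is, in substance, a reconstruction of Brenner's own two-step argument: reduce from a piecewise $H^1$ function to its broken Crouzeix--Raviart interpolant via Bramble--Hilbert on each element (which preserves the face-jump integrals and costs only multiples of $\|\nabla v\|_{\mathcal T_h}$), then split off the jump part $w_j$, whose scaling you have worked out correctly --- $\|\nabla w_j\|_{0,K}^2\sim h_K^{d-2}\sum_E|E|^{-2}\bigl(\int_E[\![v]\!]\bigr)^2\sim\sum_E|E|^{d/(1-d)}\bigl(\int_E[\![v]\!]\bigr)^2$ in both $d=2$ and $d=3$ --- and finish with the Poincar\'e--Friedrichs inequality for the conforming-in-mean Crouzeix--Raviart part $w_c$. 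The one place where your sketch is thin is exactly the step you flag: a Peetre--Tartar compactness argument ``on a reference configuration'' does not by itself deliver a mesh-independent constant for $w_c$, because the Crouzeix--Raviart space changes with $h$; the standard mesh-independent route is an enriching (vertex-averaging) operator into a conforming space, with the resulting correction controlled by the face jumps of $w_c$, which in turn are controlled by $\|\nabla w_c\|_{\mathcal T_h}$ since their face means vanish. Since you explicitly allow quoting that inequality (and indeed the whole lemma) from \cite{Brenner_Poincare_SINUM_2003}, which is precisely what the paper does, your proposal is acceptable as written; if you want a self-contained proof you should replace the compactness remark by the enriching-operator argument.
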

The following HDG Poincar\'{e} inequality is then an immediate consequence of Lemma \ref{Poincare0}, the Cauchy-Schwarz inequality and the triangle inequality.
\begin{lemma}[The HDG Poincar\'{e} inequality] If $(v_h, \widehat v_h) \in W_h \times M_h$, we have
	\begin{align}\label{HDG-poincare}
	\|v_h\|^2_{\mathcal{T}_h}\le C
	\left(
	\|\nabla v_h\|^2_{\mathcal{T}_h}+|( v_h,1)_{\mathcal{T}_h}|^2+
	\|h_K^{-1/2}(\Pi_k^{\partial}v_h-\widehat v_h)\|^2_{\partial\mathcal{T}_h}
	\right).
	\end{align}
\end{lemma}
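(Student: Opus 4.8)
The goal is to derive \eqref{HDG-poincare} from Lemma~\ref{Poincare0} applied to the piecewise polynomial $v_h\in W_h$. The only term in Lemma~\ref{Poincare0} that is not already in the desired form is the jump term $\sum_{E\in\mathcal E_h^o}|E|^{d/(1-d)}\big(\int_E[\![v_h]\!]\,ds\big)^2$, so the plan is to bound this quantity by the HDG stabilization seminorm $\|h_K^{-1/2}(\Pi_k^\partial v_h-\widehat v_h)\|_{\partial\mathcal T_h}^2$, after which the result follows by simply collecting terms.

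First I would rewrite the jump integral by inserting the numerical trace: on an interior face $E$ shared by $K^+$ and $K^-$, write $\int_E[\![v_h]\!]\,ds = \int_E (v_h^+-\widehat v_h)\,ds + \int_E (\widehat v_h - v_h^-)\,ds$, where $\widehat v_h\in M_h$ is single-valued on $E$. Since $\widehat v_h|_E\in\mathcal P^k(E)$ and $\Pi_k^\partial$ is the $L^2$ projection onto $\mathcal P^k(E)$, we have $\int_E (v_h^\pm - \widehat v_h)\,ds = \int_E 1\cdot(v_h^\pm-\widehat v_h)\,ds = \int_E (\Pi_k^\partial v_h^\pm - \widehat v_h)\,ds$ because $1\in\mathcal P^k(E)$. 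Then Cauchy--Schwarz on each face gives $\big(\int_E(\Pi_k^\partial v_h^\pm-\widehat v_h)\,ds\big)^2 \le |E|\,\|\Pi_k^\partial v_h^\pm-\widehat v_h\|_{L^2(E)}^2$, and the elementary inequality $(a+b)^2\le 2a^2+2b^2$ combines the two contributions. Using $|E|^{d/(1-d)}\cdot|E| = |E|^{1/(1-d)}$ and the shape-regularity/quasi-uniformity relations $|E|\simeq h_E^{d-1}\simeq h_K^{d-1}$, one checks that $|E|^{1/(1-d)}\simeq h_K^{-1}$, so each term is controlled by $h_K^{-1}\|\Pi_k^\partial v_h-\widehat v_h\|_{L^2(E)}^2$. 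Summing over all interior faces and recognizing that each appears as a piece of $\langle h_K^{-1}(\Pi_k^\partial v_h-\widehat v_h),\,\Pi_k^\partial v_h-\widehat v_h\rangle_{\partial\mathcal T_h}$ (with boundary faces only adding nonnegative terms) yields
\begin{align*}
\sum_{E\in\mathcal E_h^o}|E|^{d/(1-d)}\Big(\int_E[\![v_h]\!]\,ds\Big)^2 \le C\,\|h_K^{-1/2}(\Pi_k^\partial v_h-\widehat v_h)\|_{\partial\mathcal T_h}^2.
\end{align*}

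Substituting this bound into Lemma~\ref{Poincare0} and using the triangle inequality to absorb constants gives \eqref{HDG-poincare} directly. I do not anticipate a genuine obstacle here; the one point requiring a little care is the bookkeeping with the mesh-size powers $|E|^{d/(1-d)}$ versus $h_K^{-1}$, which relies on quasi-uniformity (so that $h_K$, $h_E$, and $|E|^{1/(d-1)}$ are all comparable uniformly in $h$), and the observation that $\Pi_k^\partial$ reproduces constants on each face, which is exactly what lets us replace $v_h^\pm$ by $\Pi_k^\partial v_h^\pm$ inside the face integral without changing its value.
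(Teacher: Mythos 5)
Your proof is correct and follows exactly the route the paper intends: the paper itself only remarks that the inequality is "an immediate consequence of Lemma \ref{Poincare0}, the Cauchy--Schwarz inequality and the triangle inequality," and your argument supplies precisely those steps, with the one nontrivial observation (that $\Pi_k^\partial$ reproduces constants on each face, so $\int_E(v_h^\pm-\widehat v_h)\,ds=\int_E(\Pi_k^\partial v_h^\pm-\widehat v_h)\,ds$) correctly identified and the mesh-size bookkeeping $|E|^{d/(1-d)}\cdot|E|\simeq h_K^{-1}$ carried out accurately. No gaps.
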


Now, we glean some basic properties of the operator $\mathcal A$. First, the definition of $\mathcal A$ in Eq. \eqref{def_A} immediately implies  lemmas  \ref{energy_of_A} and \ref{summertic_A}.
\begin{lemma}\label{energy_of_A}
	For any $(\bm r_h, w_h, \mu_h)\in \bm V_h\times W_h\times M_h$, we have 
	\begin{align}
	\mathcal A(\bm r_h, w_h, \mu_h,\bm r_h, w_h, \mu_h) = \|\bm r_h\|_{\mathcal T_h}^2 + \|h_{K}^{-1/2}(\Pi_k^{\partial} w_h-\mu_h)\|_{\partial\mathcal  T_h}^2. \label{energy_of_A1}
	\end{align}
\end{lemma}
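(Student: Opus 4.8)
This identity is the ``energy'' identity for $\mathcal A$ and follows by a direct computation from the definition \eqref{def_A}; no integration by parts or projection identities are needed. The plan is to substitute the test arguments $(\bm q_h, u_h, \widehat u_h) = (\bm r_h, w_h, \mu_h)$ into the six terms on the right-hand side of \eqref{def_A} and observe that they collapse.

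Carrying this out: the first term becomes $(\bm r_h,\bm r_h)_{\mathcal T_h} = \|\bm r_h\|_{\mathcal T_h}^2$. The two volume terms $-(w_h,\nabla\cdot\bm r_h)_{\mathcal T_h}$ and $+(\nabla\cdot\bm r_h, w_h)_{\mathcal T_h}$ cancel by the symmetry of the $L^2(\Omega)$ inner product, and similarly the two face terms $\langle \mu_h,\bm r_h\cdot\bm n\rangle_{\partial\mathcal T_h}$ and $-\langle \bm r_h\cdot\bm n,\mu_h\rangle_{\partial\mathcal T_h}$ cancel by the symmetry of the $\langle\cdot,\cdot\rangle_{\partial\mathcal T_h}$ pairing. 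The remaining stabilization term is $\langle h_K^{-1}(\Pi_k^{\partial}w_h-\mu_h),\Pi_k^{\partial}w_h-\mu_h\rangle_{\partial\mathcal T_h}$, which by definition of the weighted boundary norm equals $\|h_K^{-1/2}(\Pi_k^{\partial}w_h-\mu_h)\|_{\partial\mathcal T_h}^2$ (recall $h_K$ is constant on each $\partial K$). Summing the surviving contributions yields the asserted identity. There is no genuine obstacle here; the only point requiring care is to note that the cancellations are purely algebraic consequences of the symmetry of the bilinear pairings, so that no commutation of $\Pi_k^{\partial}$ with differentiation and no boundary manipulation is invoked.
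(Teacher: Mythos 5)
Your computation is correct and matches the paper, which simply asserts that the identity follows immediately from the definition \eqref{def_A}; the cancellations of the divergence terms and of the two boundary pairings, together with the stabilization term yielding the weighted norm, are exactly the intended argument. Nothing further is needed.
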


\begin{lemma}\label{summertic_A}
	For all $(\bm q_h, u_h,\widehat u_h), (\bm p_h, \phi_h,\widehat \phi_h) \in \bm V_h\times W_h\times M_h$, the operator $\mathcal{A}$ has  the following property
	\begin{align}\label{summertic_A1}
	\mathcal{A}(\bm q_h, u_h,\widehat u_h;\bm p_h,-\phi_h,-\widehat \phi_h)=
	\mathcal{A}(\bm p_h,\phi_h,\widehat \phi_h;\bm q_h,-u_h,-\widehat u_h).
	\end{align}
\end{lemma}
Next, we show that the operator $\mathcal{A}$ satisfies the following bound.
\begin{lemma}\label{A_bound}
	For all $(\bm q_h, u_h,\widehat u_h), (\bm p_h, \phi_h,\widehat \phi_h) \in \bm V_h\times W_h\times M_h$, we have
	\begin{align}\label{A_bound1}
	&\left|\mathcal{A}(\bm q_h,u_h,\widehat u_h;\bm p_h,\phi_h,\widehat \phi_h )\right|
	 \le C\left(\|\bm q_h\|_{\mathcal{T}_h}+\|\nabla u_h\|_{\mathcal{T}_h}
	+\|h_K^{-1/2}( \Pi_k^{\partial}u_h-\widehat u_h)\|_{\partial\mathcal{T}_h}\right) \nonumber\\
	& \times \left(\|\bm p_h\|_{\mathcal{T}_h}+\|\nabla \phi_h\|_{\mathcal{T}_h}+\|h_K^{-1/2}( \Pi_k^{\partial}\phi_h-\widehat \phi_h)\|_{\partial\mathcal{T}_h}\right).
	\end{align}
\end{lemma}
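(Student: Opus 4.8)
The plan is to bound each of the four groups of terms in the definition \eqref{def_A} of $\mathcal{A}$ separately, using Cauchy--Schwarz on each inner product and each boundary pairing, and then collect the resulting factors. Writing out $\mathcal{A}(\bm q_h,u_h,\widehat u_h;\bm p_h,\phi_h,\widehat\phi_h)$ term by term, the volume terms $(\bm q_h,\bm p_h)_{\mathcal{T}_h}$ and $(\nabla\cdot\bm q_h,\phi_h)_{\mathcal{T}_h}$ and $(u_h,\nabla\cdot\bm p_h)_{\mathcal{T}_h}$ are handled by the Cauchy--Schwarz inequality, but the divergence terms are not directly in the form stated on the right-hand side of \eqref{A_bound1}; for those I would first integrate by parts elementwise, $(\nabla\cdot\bm q_h,\phi_h)_K = -(\bm q_h,\nabla\phi_h)_K + \langle \bm q_h\cdot\bm n,\phi_h\rangle_{\partial K}$, so that only $\|\bm q_h\|_{\mathcal{T}_h}$, $\|\nabla\phi_h\|_{\mathcal{T}_h}$, and boundary norms appear. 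The same device applied to $(u_h,\nabla\cdot\bm p_h)_K$ produces $\|\nabla u_h\|_{\mathcal{T}_h}$, $\|\bm p_h\|_{\mathcal{T}_h}$, and boundary terms.

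For the boundary terms, the key observation is that one may freely insert the projection $\Pi_k^\partial$ in the appropriate slot: since $\bm q_h\cdot\bm n|_E \in \mathcal{P}^k(E)$ and $\mu_h|_E \in \mathcal{P}^k(E)$, we have $\langle \bm q_h\cdot\bm n,\phi_h\rangle_{\partial K} = \langle \bm q_h\cdot\bm n,\Pi_k^\partial\phi_h\rangle_{\partial K}$ and $\langle \widehat u_h,\bm p_h\cdot\bm n\rangle_{\partial K} = \langle \Pi_k^\partial u_h,\bm p_h\cdot\bm n\rangle_{\partial K}$, and similarly $\langle\bm q_h\cdot\bm n,\mu_h\rangle = \langle\bm q_h\cdot\bm n,\mu_h\rangle$ already has the projected factor. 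After rearranging, each boundary pairing can be written in terms of the differences $\Pi_k^\partial u_h - \widehat u_h$ and $\Pi_k^\partial\phi_h - \widehat\phi_h$ together with quantities like $\bm q_h\cdot\bm n$, $\bm p_h\cdot\bm n$, $\nabla u_h\cdot\bm n$, $\nabla\phi_h\cdot\bm n$ (the last two arising from the integration-by-parts step). Then I would apply Cauchy--Schwarz on $\partial\mathcal{T}_h$, splitting $\langle h_K^{-1}(\Pi_k^\partial u_h - \widehat u_h), \cdot\rangle$ as $\|h_K^{-1/2}(\Pi_k^\partial u_h - \widehat u_h)\|_{\partial\mathcal{T}_h}\, \|h_K^{-1/2}\,\cdot\,\|_{\partial\mathcal{T}_h}$, and use the discrete trace/inverse inequality $\|v_h\|_{\partial K}\le C h_K^{-1/2}\|v_h\|_K$ (the second estimate in \eqref{eq27b}) to convert $\|h_K^{-1/2}\bm p_h\cdot\bm n\|_{\partial\mathcal{T}_h}$ into $Ch_K^{-1}\|\bm p_h\|_{\mathcal{T}_h}$-type bounds — wait, more carefully, $\|\bm p_h\cdot\bm n\|_{\partial K}\le C h_K^{-1/2}\|\bm p_h\|_K$ gives $\|h_K^{-1/2}\bm p_h\cdot\bm n\|_{\partial K}\le Ch_K^{-1}\|\bm p_h\|_K$, which is not quite what we want; instead one keeps $h_K^{-1/2}$ on the difference factor and $h_K^{1/2}$ nowhere, applying the trace inequality $\|\bm p_h\cdot\bm n\|_{\partial\mathcal{T}_h}\le C h^{-1/2}\|\bm p_h\|_{\mathcal{T}_h}$ so that the pairing $\langle h_K^{-1}(\Pi_k^\partial u_h-\widehat u_h),\bm p_h\cdot\bm n\rangle$ is bounded by $\|h_K^{-1/2}(\Pi_k^\partial u_h-\widehat u_h)\|_{\partial\mathcal{T}_h}\cdot\|h_K^{-1/2}\bm p_h\cdot\bm n\|_{\partial\mathcal{T}_h}$ — but this still has an extra $h^{-1/2}$.

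The resolution of that apparent mismatch — and what I expect to be the one genuinely delicate point — is to organize the boundary terms so that the difference $\Pi_k^\partial\phi_h - \widehat\phi_h$ is always paired against the \emph{other} factor's difference $\Pi_k^\partial u_h - \widehat u_h$ or against a full trace that absorbs cleanly. Concretely, after integration by parts and inserting projections, the boundary contribution reduces to $\langle h_K^{-1}(\Pi_k^\partial u_h - \widehat u_h), \Pi_k^\partial\phi_h - \mu_h\rangle_{\partial\mathcal{T}_h}$ plus terms of the form $\langle(\bm q_h - \bm\Pi_k^o\,\text{stuff})\cdot\bm n, \ldots\rangle$; in fact the structure of $\mathcal{A}$ is precisely such that, pairing the third volume group with the boundary group, everything recombines into $\langle h_K^{-1}(\Pi_k^\partial u_h - \widehat u_h),\Pi_k^\partial\phi_h - \widehat\phi_h\rangle_{\partial\mathcal{T}_h}$-type expressions which Cauchy--Schwarz bounds directly by the product of the two stabilization seminorms appearing on the right of \eqref{A_bound1}, with no leftover negative power of $h$. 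The remaining volume terms $(\bm q_h,\bm p_h)_{\mathcal{T}_h}$, $(\bm q_h,\nabla\phi_h)_{\mathcal{T}_h}$, $(\nabla u_h,\bm p_h)_{\mathcal{T}_h}$ are then immediately $\le \|\bm q_h\|_{\mathcal{T}_h}\|\bm p_h\|_{\mathcal{T}_h} + \|\bm q_h\|_{\mathcal{T}_h}\|\nabla\phi_h\|_{\mathcal{T}_h} + \|\nabla u_h\|_{\mathcal{T}_h}\|\bm p_h\|_{\mathcal{T}_h}$ by Cauchy--Schwarz. Summing all contributions, each is bounded by a product of one factor from $\{\|\bm q_h\|_{\mathcal{T}_h}, \|\nabla u_h\|_{\mathcal{T}_h}, \|h_K^{-1/2}(\Pi_k^\partial u_h - \widehat u_h)\|_{\partial\mathcal{T}_h}\}$ and one from the corresponding set with $(\bm p_h,\phi_h,\widehat\phi_h)$; using $ab + cd + \cdots \le (a+c+\cdots)(b+d+\cdots)$ for nonnegative reals yields \eqref{A_bound1}. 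The main obstacle, then, is purely bookkeeping: keeping track of which integration-by-parts boundary terms cancel against which pieces of the stabilization pairing so that the powers of $h_K$ balance exactly; once the terms are grouped correctly, only Cauchy--Schwarz and the standard discrete trace inequality \eqref{eq27b} are needed.
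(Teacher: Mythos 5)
Your overall skeleton---expand $\mathcal{A}$ via \eqref{def_A}, integrate by parts elementwise on the two divergence terms, insert $\Pi_k^\partial$ using the fact that $\bm p_h\cdot\bm n$ and $\bm q_h\cdot\bm n$ lie in $\mathcal{P}^k(E)$, then apply Cauchy--Schwarz and the inverse inequality \eqref{eq27b}---is exactly the paper's proof. After those manipulations the expression is
\[
(\bm q_h,\bm p_h)_{\mathcal{T}_h}+(\nabla u_h,\bm p_h)_{\mathcal{T}_h}-\langle \Pi_k^{\partial}u_h-\widehat u_h,\bm p_h\cdot\bm n\rangle_{\partial\mathcal{T}_h}
-(\bm q_h,\nabla\phi_h)_{\mathcal{T}_h}+\langle\bm q_h\cdot\bm n,\Pi_k^{\partial}\phi_h-\widehat\phi_h\rangle_{\partial\mathcal{T}_h}
+\langle h_K^{-1}(\Pi_k^{\partial}u_h-\widehat u_h),\Pi_k^{\partial}\phi_h-\widehat\phi_h\rangle_{\partial\mathcal{T}_h},
\]
and the volume terms and the stabilization term are handled exactly as you say.

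However, your resolution of what you correctly identify as the delicate point is wrong. You claim that after regrouping, ``everything recombines into $\langle h_K^{-1}(\Pi_k^\partial u_h-\widehat u_h),\Pi_k^\partial\phi_h-\widehat\phi_h\rangle$-type expressions.'' It does not: the two mixed boundary terms $\langle \Pi_k^{\partial}u_h-\widehat u_h,\bm p_h\cdot\bm n\rangle_{\partial\mathcal{T}_h}$ and $\langle\bm q_h\cdot\bm n,\Pi_k^{\partial}\phi_h-\widehat\phi_h\rangle_{\partial\mathcal{T}_h}$ pair a difference against a normal trace, not against the other difference, and no cancellation or recombination with the stabilization term occurs. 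The correct (and elementary) treatment is the one you started and then abandoned in your ``wait'' digression, except that you were analyzing a pairing that never arises --- these boundary terms carry \emph{no} $h_K^{-1}$ weight. Split them as $\|h_K^{-1/2}(\Pi_k^{\partial}u_h-\widehat u_h)\|_{\partial\mathcal{T}_h}\,\|h_K^{1/2}\,\bm p_h\cdot\bm n\|_{\partial\mathcal{T}_h}$ and use $\|\bm p_h\|_{\partial K}\le C h_K^{-1/2}\|\bm p_h\|_K$ so that $\|h_K^{1/2}\,\bm p_h\cdot\bm n\|_{\partial K}\le C\|\bm p_h\|_K$; the powers of $h_K$ then balance exactly, with no leftover $h^{-1/2}$ and no need for any recombination. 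With that correction the proof closes; as written, the stated mechanism for the key step is false even though the conclusion is reachable by the tools you already have on the table.
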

\begin{proof} By the definition of $\mathcal A$ in \eqref{def_A} and integration by parts, one gets
	\begin{align*}
	\left|\mathcal{A}(\bm q_h,u_h,\widehat u_h;\bm p_h,\phi_h,\widehat \phi_h)\right|
	& \le \left| (\bm q_h,\bm p_h)_{\mathcal{T}_h} + (\nabla u_h,\bm p_h)_{\mathcal{T}_h}
	-\langle \Pi_k^{\partial}u_h-\widehat u_h, \bm p_h\cdot\bm n\rangle_{\partial\mathcal{T}_h}\right|\\
	& \quad +\left|-(\bm q_h,\nabla \phi_h)+\langle\bm q_h\cdot\bm n,\Pi_k^{\partial}\phi_h -\widehat \phi_h
	\rangle_{\partial\mathcal{T}_h}\right|\\
	&\quad +\left|\langle h_K^{-1}(\Pi_k^{\partial}u_h-\widehat u_h), \Pi_k^{\partial}\phi_h-\widehat \phi_h \rangle_{\partial\mathcal{T}_h}\right|.
	\end{align*}
	Then the bound \eqref{A_bound1} follows from the Cauchy-Schwarz inequality and the  inverse inequality \eqref{eq27b}. This completes the proof.
\end{proof}
Furthermore, we establish a crucial lemma that bounds the gradient of the scalar variable in terms of the flux variable and the reduced stabilization.
\begin{lemma}\label{es_u}
	If $(\bm q_h,u_h,\widehat u_h)\in \bm V_h\times W_h\times M_h$ satisfies
	\begin{align}\label{Ap}
	\mathcal A(\bm q_h,u_h,\widehat u_h; \bm r_h,0,0)=0, \quad \forall \bm r_h\in \bm V_h, 
	\end{align}
	 then the following inequality holds 
	\begin{align}\label{es_u1}
	\|\nabla u_h\|_{\mathcal{T}_h}+\|h_K^{-1/2}(u_h-\widehat u_h)\|_{\partial\mathcal{T}_h}\le C\left(\|\bm q_h\|_{\mathcal{T}_h}+\|h_K^{-1/2}(\Pi_k^{\partial}u_h-\widehat u_h)\|_{\partial\mathcal{T}_h}\right).
	\end{align}
\end{lemma}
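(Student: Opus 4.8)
The plan is to extract from the constraint \eqref{Ap} a variational identity for $\nabla u_h$, test it against a suitable element of $\bm V_h$, and exploit the reproduction properties of the face projection $\Pi_k^\partial$.

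First, unfolding the definition \eqref{def_A} with $w_h=\mu_h=0$, the hypothesis \eqref{Ap} becomes $(\bm q_h,\bm r_h)_{\mathcal T_h}-(u_h,\nabla\cdot\bm r_h)_{\mathcal T_h}+\langle\widehat u_h,\bm r_h\cdot\bm n\rangle_{\partial\mathcal T_h}=0$ for all $\bm r_h\in\bm V_h$. Integrating by parts element by element, this is equivalent to
\[
(\bm q_h+\nabla u_h,\bm r_h)_{\mathcal T_h}=\langle u_h-\widehat u_h,\bm r_h\cdot\bm n\rangle_{\partial\mathcal T_h}\qquad\forall\,\bm r_h\in\bm V_h.
\]
Since $u_h|_K\in\mathcal P^{k+1}(K)$ we have $\nabla u_h|_K\in[\mathcal P^{k}(K)]^d$, so $\nabla u_h\in\bm V_h$ and we may take $\bm r_h=\nabla u_h$. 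On each flat face $E$ the normal trace $\nabla u_h\cdot\bm n|_E$ lies in $\mathcal P^{k}(E)$, so by the defining property of $\Pi_k^\partial$ we have $\langle u_h-\Pi_k^\partial u_h,\nabla u_h\cdot\bm n\rangle_{\partial\mathcal T_h}=0$; writing $u_h-\widehat u_h=(u_h-\Pi_k^\partial u_h)+(\Pi_k^\partial u_h-\widehat u_h)$ the identity reduces to
\[
\|\nabla u_h\|_{\mathcal T_h}^2=-(\bm q_h,\nabla u_h)_{\mathcal T_h}+\langle\Pi_k^\partial u_h-\widehat u_h,\nabla u_h\cdot\bm n\rangle_{\partial\mathcal T_h}.
\]
Applying Cauchy--Schwarz to both terms, together with the inverse trace inequality \eqref{eq27b} componentwise to absorb $\|h_K^{1/2}\nabla u_h\|_{\partial\mathcal T_h}\le C\|\nabla u_h\|_{\mathcal T_h}$, and then dividing by $\|\nabla u_h\|_{\mathcal T_h}$, gives $\|\nabla u_h\|_{\mathcal T_h}\le C(\|\bm q_h\|_{\mathcal T_h}+\|h_K^{-1/2}(\Pi_k^\partial u_h-\widehat u_h)\|_{\partial\mathcal T_h})$.

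Next, for the boundary term on the left side of \eqref{es_u1} I split $u_h-\widehat u_h=(u_h-\Pi_k^\partial u_h)+(\Pi_k^\partial u_h-\widehat u_h)$ once more; the second piece already appears on the right of \eqref{es_u1}, so it remains to prove $\|h_K^{-1/2}(u_h-\Pi_k^\partial u_h)\|_{\partial\mathcal T_h}\le C\|\nabla u_h\|_{\mathcal T_h}$. The key point is that $\Pi_k^\partial$ fixes constants face by face: letting $\bar u_h$ denote the elementwise mean of $u_h$, we have $u_h-\Pi_k^\partial u_h=(u_h-\bar u_h)-\Pi_k^\partial(u_h-\bar u_h)$, and since $\Pi_k^\partial$ is an $L^2$-contraction on each face, $\|u_h-\Pi_k^\partial u_h\|_{\partial K}\le 2\|u_h-\bar u_h\|_{\partial K}$. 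The continuous trace inequality $\|v\|_{\partial K}^2\le C(h_K^{-1}\|v\|_K^2+h_K\|\nabla v\|_K^2)$ and the elementwise Poincar\'e inequality $\|u_h-\bar u_h\|_K\le Ch_K\|\nabla u_h\|_K$ (both with constants depending only on the shape-regularity of $\mathcal T_h$) then yield $\|u_h-\Pi_k^\partial u_h\|_{\partial K}\le Ch_K^{1/2}\|\nabla u_h\|_K$, which is the claimed bound after summing over $K\in\mathcal T_h$. Combining this with the estimate for $\|\nabla u_h\|_{\mathcal T_h}$ from the previous paragraph and the triangle inequality gives \eqref{es_u1}.

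The only nonroutine step is this last one --- recognizing that the reduced-stabilization defect $u_h-\Pi_k^\partial u_h$ is controlled by the elementwise gradient. This rests on the observation that $\Pi_k^\partial$ leaves $\mathcal P^{k}(K)|_{\partial K}$ (in particular the constants) unchanged, so the defect may be measured against $u_h$ minus its element mean, after which a Poincar\'e inequality and a trace inequality finish the job; everything else is the natural choice of test function followed by Cauchy--Schwarz and the discrete inverse inequality already recorded in \eqref{eq27b}.
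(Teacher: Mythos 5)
Your proof is correct and follows essentially the same route as the paper's: test \eqref{Ap} with $\bm r_h=\nabla u_h$, integrate by parts, use the orthogonality $\langle u_h-\Pi_k^\partial u_h,\nabla u_h\cdot\bm n\rangle_{\partial\mathcal T_h}=0$ together with Cauchy--Schwarz and the inverse inequality \eqref{eq27b}, then finish the boundary term by the triangle inequality. The only difference is that you supply the details of the bound $\|h_K^{-1/2}(u_h-\Pi_k^\partial u_h)\|_{\partial\mathcal T_h}\le C\|\nabla u_h\|_{\mathcal T_h}$ (via constant-preservation of $\Pi_k^\partial$, an elementwise Poincar\'e inequality and a scaled trace inequality), which the paper uses without comment.
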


\begin{proof} 
	By the definition of $\mathcal A$ in \eqref{def_A}, let $\bm r_h = \nabla u_h$ in \eqref{Ap} and perform integration by parts to get
	\begin{align*}
	(\bm q_h,\nabla u_h)_{\mathcal{T}_h} +(\nabla u_h,\nabla u_h)_{\mathcal{T}_h} +\langle
	\widehat u_h-u_h,\nabla u_h \cdot \bm n
	\rangle_{\partial\mathcal{T}_h}=0.
	\end{align*}
	Note that 
	$\langle\Pi_k^\partial u_h-u_h, \nabla u_h\cdot \bm n\rangle_{\partial \mt_h}=0$.	
	It follows from the element-wise Cauchy-Schwarz inequality and the inverse inequality \eqref{eq27b}  that
	\begin{align*}
	\|\nabla u_h\|_{\mathcal{T}_h} \le C \left(\|\bm q_h\|_{\mathcal{T}_h} +\|h_K^{-1/2}(\Pi_k^{\partial}u_h-\widehat u_h)\|_{\partial\mathcal{T}_h}\right).
	\end{align*}
	The triangle inequality gives 
	\begin{align*}
	\|h_K^{-1/2}(u_h-\widehat u_h)\|_{\partial\mathcal{T}_h}&\le
	\|h_K^{-1/2}(\Pi_k^{\partial}u_h-\widehat u_h)\|_{\partial\mathcal{T}_h}
	+\|h_K^{-1/2}(u_h-\Pi_k^{\partial}u_h)\|_{\partial\mathcal{T}_h}\\
	&\le C \left(\|h_K^{-1/2}(\Pi_k^{\partial}u_h-\widehat u_h)\|_{\partial\mathcal{T}_h}
	+\|\nabla u_h\|_{\mathcal{T}_h}	\right).
	\end{align*}
	The desired inequality \eqref{es_u1} now follows by combining the last two inequalities. This completes the proof. 
\end{proof}
Finally, we show that the operator $\mathcal A$ satisfies a version of the discrete LBB condition.
\begin{lemma}[Discrete LBB Condition of $\mathcal A$]\label{LBB} 
	For all $(\bm q_h,u_h,\widehat u_h)\in \bm V_h\times \mathring{W}_h\times M_h$, we have 
	\begin{subequations}
		\begin{align}
		&\sup_{\bm 0\neq (\bm p_h,\phi_h,\widehat \phi_h)\in \bm V_h\times \mathring{W}_h\times M_h}\frac{\mathcal{A}(\bm q_h,u_h,\widehat u_h;\bm p_h,\phi_h,\widehat \phi_h)}{\|\bm p_h\|_{\mathcal{T}_h}+\|\nabla \phi_h\|_{\mathcal{T}_h}+\|h_K^{-1/2}( \Pi_k^{\partial}\phi_h-\widehat \phi_h)\|_{\partial\mathcal{T}_h}}\nonumber\\
		&\qquad\ge C\left(\|\bm q_h\|_{\mathcal{T}_h}+\|\nabla u_h\|_{\mathcal{T}_h}+
		\|h_K^{-1/2}( \Pi_k^{\partial}u_h-\widehat u_h)\|_{\partial\mathcal{T}_h}\right),\label{LBBa}\\
		&
		\sup_{\bm 0\neq (\bm p_h,\phi_h,\widehat \phi_h)\in \bm V_h\times \mathring{W}_h\times M_h}\frac{\mathcal{A}(\bm p_h,\phi_h,\widehat \phi_h;\bm q_h,u_h,\widehat u_h)}{\|\bm p_h\|_{\mathcal{T}_h}+\|\nabla \phi_h\|_{\mathcal{T}_h}+\|h_K^{-1/2}( \Pi_k^{\partial}\phi_h-\widehat \phi_h)\|_{\partial\mathcal{T}_h}}\nonumber\\
		&\qquad\ge C\left(\|\bm q_h\|_{\mathcal{T}_h}+\|\nabla u_h\|_{\mathcal{T}_h}+
	\|h_K^{-1/2}( \Pi_k^{\partial}u_h-\widehat u_h)\|_{\partial\mathcal{T}_h}\right).\label{LBBb}
		\end{align}
	\end{subequations}
\end{lemma}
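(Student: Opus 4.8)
The plan is to prove the two inf-sup conditions \eqref{LBBa} and \eqref{LBBb} by an explicit choice of test function, exploiting the structure already exposed in lemmas \ref{energy_of_A}, \ref{summertic_A}, \ref{A_bound} and especially \ref{es_u}. For \eqref{LBBa}, given $(\bm q_h,u_h,\widehat u_h)$, I would first try the diagonal choice $(\bm p_h,\phi_h,\widehat\phi_h)=(\bm q_h,u_h,\widehat u_h)$: by \Cref{energy_of_A} this gives $\mathcal A(\bm q_h,u_h,\widehat u_h;\bm q_h,u_h,\widehat u_h)=\|\bm q_h\|_{\mathcal T_h}^2+\|h_K^{-1/2}(\Pi_k^\partial u_h-\widehat u_h)\|_{\partial\mathcal T_h}^2$, which controls two of the three terms on the right-hand side but not $\|\nabla u_h\|_{\mathcal T_h}$. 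To recover the gradient term I would add a correction directed along $\nabla u_h$: take $\bm p_h=\bm q_h+\delta\,\bm w_h$ where $\bm w_h\in\bm V_h$ is chosen so that $\mathcal A(\bm q_h,u_h,\widehat u_h;\bm w_h,0,0)$ reproduces (a multiple of) $\|\nabla u_h\|_{\mathcal T_h}^2$ plus harmless cross terms. Concretely, since $\mathcal A(\bm q_h,u_h,\widehat u_h;\bm r_h,0,0)=(\bm q_h,\bm r_h)_{\mathcal T_h}-(u_h,\nabla\cdot\bm r_h)_{\mathcal T_h}+\langle\widehat u_h,\bm r_h\cdot\bm n\rangle_{\partial\mathcal T_h}$, choosing $\bm r_h=\bm\Pi_k^o(\nabla u_h)$ and integrating by parts turns the last two terms into $(\nabla u_h,\bm\Pi_k^o\nabla u_h)_{\mathcal T_h}+\langle\widehat u_h-u_h,\bm\Pi_k^o\nabla u_h\cdot\bm n\rangle_{\partial\mathcal T_h}=\|\nabla u_h\|_{\mathcal T_h}^2+\langle\widehat u_h-\Pi_k^\partial u_h,\bm r_h\cdot\bm n\rangle_{\partial\mathcal T_h}$, and the remaining boundary term is absorbed by Cauchy-Schwarz, the inverse inequality \eqref{eq27b} and a Young inequality against the stabilization term. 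With $\delta$ small enough the combined test function $(\bm q_h+\delta\,\bm r_h,\,u_h,\,\widehat u_h)$ yields $\mathcal A(\cdots)\ge c(\|\bm q_h\|_{\mathcal T_h}^2+\|\nabla u_h\|_{\mathcal T_h}^2+\|h_K^{-1/2}(\Pi_k^\partial u_h-\widehat u_h)\|_{\partial\mathcal T_h}^2)$, while the denominator is bounded above by $C$ times the same square-root quantity (using $\|\bm r_h\|_{\mathcal T_h}\le\|\nabla u_h\|_{\mathcal T_h}$ and \Cref{es_u} to control $\|\nabla u_h\|_{\mathcal T_h}$ consistently); dividing gives \eqref{LBBa}.

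For \eqref{LBBb}, I would reduce it to the first one via the (near-)symmetry in \Cref{summertic_A}. Replacing $(\bm p_h,\phi_h,\widehat\phi_h)$ by $(\bm p_h,-\phi_h,-\widehat\phi_h)$ in the supremum (which changes neither the denominator nor the set over which we take the sup, since $\mathring W_h$ and $M_h$ are linear spaces) and applying \eqref{summertic_A1} shows the numerator of \eqref{LBBb} equals $\mathcal A(\bm q_h,u_h,\widehat u_h;\bm p_h,-\phi_h,-\widehat\phi_h)$ up to sign bookkeeping, so \eqref{LBBb} follows from \eqref{LBBa} after again flipping signs. I would double-check that the $u_h\in\mathring W_h$ restriction (mean zero) is exactly what is needed so that the Poincaré-type control and the construction above do not see a spurious constant mode; the mean-zero constraint is what makes $\|\nabla u_h\|_{\mathcal T_h}+\|h_K^{-1/2}(\Pi_k^\partial u_h-\widehat u_h)\|_{\partial\mathcal T_h}$ a genuine norm on $\mathring W_h\times M_h$, consistent with \eqref{HDG-poincare}.

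The main obstacle I anticipate is calibrating the correction parameter $\delta$ and the Young-inequality splittings so that the cross terms generated by the $\delta\,\bm r_h$ perturbation — in particular $(\bm q_h,\delta\bm r_h)_{\mathcal T_h}$ and the boundary pairing $\langle\widehat u_h-\Pi_k^\partial u_h,\delta\bm r_h\cdot\bm n\rangle_{\partial\mathcal T_h}$ — are strictly dominated by the positive contributions $\|\bm q_h\|_{\mathcal T_h}^2$, $\delta\|\nabla u_h\|_{\mathcal T_h}^2$ and $\|h_K^{-1/2}(\Pi_k^\partial u_h-\widehat u_h)\|_{\partial\mathcal T_h}^2$, while keeping the constant in the final bound independent of $h$; this is where the inverse inequality $\|\bm r_h\|_{\partial\mathcal T_h}\le Ch_K^{-1/2}\|\bm r_h\|_{\mathcal T_h}$ from \eqref{eq27b} and the $L^2$-stability $\|\bm\Pi_k^o\nabla u_h\|_{\mathcal T_h}\le\|\nabla u_h\|_{\mathcal T_h}$ must be used carefully. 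A secondary, more bookkeeping-level point is ensuring the upper bound on the denominator really is controlled by the target quantity after adding $\delta\bm r_h$ to $\bm p_h$, which is immediate once one notes $\delta$ is an absolute constant. Everything else — the integration-by-parts identities and the invocations of \Cref{energy_of_A}, \Cref{summertic_A} and \Cref{es_u} — is routine.
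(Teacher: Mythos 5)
Your proof is correct and follows essentially the same route as the paper: for \eqref{LBBa} the paper also takes the test function $(\bm q_h+\alpha\nabla u_h,\,u_h,\,\widehat u_h)$ (note $\nabla u_h\in\bm V_h$ already, so your projection $\bm\Pi_k^o\nabla u_h$ is just $\nabla u_h$), integrates by parts, replaces $u_h$ by $\Pi_k^\partial u_h$ in the boundary pairing, and absorbs the two cross terms with the inverse inequality and Young's inequality for small $\alpha$, then bounds the denominator by the same quantity. For \eqref{LBBb} the paper only remarks that the argument is "similar," and your reduction via the symmetry identity \eqref{summertic_A1} together with the sign-flip invariance of the denominator is a clean, valid way to carry that out.
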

\begin{proof} 
	We only give the details of the proof of the inequality \eqref{LBBa}, since the argument for \eqref{LBBb} is similar.  
	
	First, we note that $\|\bm p_h\|_{\mathcal{T}_h}+\|\nabla \phi_h\|_{\mathcal{T}_h}+\|h_K^{-1/2}( \Pi_k^{\partial}\phi_h-\widehat \phi_h)\|_{\partial\mathcal{T}_h}$ defines a norm in the product space $\bm V_h\times \mathring{W}_h\times M_h$, thanks to the HDG Poincar\'{e}  inequality \eqref{HDG-poincare}.  Let $\alpha$ be a positive number to be specified later. For any fixed $(\bm q_h,u_h,\widehat u_h)\in \bm V_h\times \mathring{W}_h\times M_h$, we take $(\bm p_h,\phi_h,\widehat \phi_h)=(\bm q_h+\alpha\nabla u_h,u_h,\widehat u_h)\in \bm V_h\times \mathring{W}_h\times M_h$ to get
	\begin{align*}
	\hspace{1em}&\hspace{-1em}\mathcal A(\bm q_h,u_h,\widehat u_h;\bm p_h, \phi_h, \widehat \phi_h)\\
	&=(\bm q_h,\bm q_h+\alpha\nabla u_h)_{\mathcal{T}_h} - ( u_h,\nabla\cdot(\bm q_h+\alpha\nabla u_h))_{\mathcal{T}_h}\\
	&\quad + \langle \widehat u_h,(\bm q_h+\alpha\nabla u_h)\cdot\bm n
	\rangle_{\partial\mathcal{T}_h} + (\nabla\cdot\bm q_h,u_h)_{\mathcal{T}_h}
	-\langle \bm q_h\cdot\bm n,\widehat u_h \rangle_{\partial\mathcal{T}_h}\\
	&\quad +\langle 
	h_K^{-1}({\Pi}_k^{\partial}u_h-\widehat u_h),{\Pi}_k^{\partial}u_h-\widehat u_h
	\rangle_{\partial\mathcal{T}_h}\\
	&=\|\bm q_h\|^2_{\mathcal{T}_h}+\alpha\|\nabla u_h\|^2_{\mathcal{T}_h}
	+\|h_K^{-1/2}(\Pi_k^{\partial}u_h-\widehat u_h)\|^2_{\partial\mathcal{T}_h}\\
	&\quad +\alpha(\bm q_h,\nabla u_h)_{\mathcal{T}_h}+\alpha\langle\widehat u_h-{\Pi}_k^{\partial}u_h,\bm n\cdot \nabla u_h \rangle_{\partial\mathcal{T}_h}\\
	&\ge  (1-C\alpha)\left(\|\bm q_h\|^2_{\mathcal{T}_h} +\|h_K^{-1/2}(\Pi_k^{\partial}u_h-\widehat u_h)\|^2_{\partial\mathcal{T}_h}
	\right)+\frac{\alpha}{2}\|\nabla u_h\|^2_{\mathcal{T}_h}.
	\end{align*}
	By choosing  $\alpha$ such that $1-C\alpha >0$,  one obtains
	\begin{align*}
	\mathcal A(\bm q_h,u_h,\widehat u_h;\bm p_h, \phi_h, \widehat \phi_h) \ge C_1\left(\|\bm q_h\|^2_{\mathcal{T}_h}
	+\|h_K^{-1/2}(\Pi_k^{\partial}u_h-\widehat u_h)\|^2_{\partial\mathcal{T}_h}+\|\nabla u_h\|^2_{\mathcal{T}_h}\right).
	\end{align*}
	On the other hand, $(\bm p_h,\phi_h,\widehat \phi_h)=(\bm q_h+\alpha\nabla u_h,u_h,\widehat u_h)$ gives
	\begin{align*}
	\hspace{1em}&\hspace{-1em}\|\bm p_h\|^2_{\mathcal{T}_h}+\|h_K^{-1/2}(\Pi_k^{\partial}\phi_h-\widehat \phi_h)\|^2_{\partial\mathcal{T}_h}+\|\nabla \phi_h\|^2_{\mathcal{T}_h}\\
	&\le C_2(\|\bm q_h\|^2_{\mathcal{T}_h}+\|h_K^{-1/2}(\Pi_k^{\partial}u_h-\widehat u_h)\|^2_{\partial\mathcal{T}_h}
	+\|\nabla u_h\|^2_{\mathcal{T}_h}).
	\end{align*}
	Then \eqref{LBBa} follows immediately. This completes the proof.
	
\end{proof}

We now introduce the HDG inversion of the Laplace operator equipped with homogeneous Neumann boundary condition.
\begin{definition}\label{-1h}
For any $u_h\in W_h$,  we define $(\bm{\Pi}_{\bm V}u_h,\Pi_Wu_h,\Pi_{M}u_h)\in \bm V_h\times\mathring{W}_h\times M_h$ such that 
\begin{align}\label{def_-1_h}
\mathcal A(\bm{\Pi}_{\bm V}u_h,\Pi_W u_h,\Pi_{M}u_h;\bm r_h,w_h,\mu_h)=(u_h,w_h)_{\mathcal{T}_h},
\end{align}
for all $(\bm r_h,w_h,\mu_h)\in \bm V_h\times\mathring{W}_h\times M_h$, 
\end{definition}

Thanks to the discrete LBB condition \Cref{LBB}, the inversion \eqref{def_-1_h}  in \Cref{-1h} is well defined.
For all $u_h\in \mathring{W}_h$, we define the semi-norm
\begin{align*}
\|u_h\|^2_{-1,h}:=
\mathcal A(\bm{\Pi}_{\bm V}u_h,\Pi_Wu_h,\Pi_{M}u_h;\bm{\Pi}_{\bm V}u_h,\Pi_Wu_h,\Pi_{M}u_h).
\end{align*}
Then for all $u_h\in \mathring{W}_h$, by \Cref{energy_of_A} and \Cref{-1h}, we have
\begin{align}\label{-1_equal}
\begin{split}
\|u_h\|^2_{-1,h}&=\|\bm{\Pi_V}u_h\|^2_{\mathcal{T}_h}
+\|h_K^{-1/2}( \Pi_k^{\partial}\Pi_Wu_h-\Pi_M u_h)\|^2_{\partial\mathcal{T}_h}\\
&=(u_h,\Pi_Wu_h)_{\mathcal{T}_h}.
\end{split}
\end{align}

Next, we show that $\|\cdot\|_{-1,h}$  is  a norm on the space $\mathring{W}_h$.
\begin{lemma} 
	$\|\cdot\|_{-1,h}$ defines a norm on the space $\mathring{W}_h$.
\end{lemma}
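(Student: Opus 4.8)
The plan is to exhibit $\|\cdot\|_{-1,h}$ as the norm induced by an inner product on $\mathring W_h$, so that homogeneity and the triangle inequality come for free and only positive definiteness requires separate work. To this end I would introduce the bilinear form $(u_h,v_h)_{-1,h}:=(u_h,\Pi_W v_h)_{\mathcal T_h}$ for $u_h,v_h\in\mathring W_h$. By \eqref{-1_equal} we have $(u_h,u_h)_{-1,h}=\|u_h\|_{-1,h}^2\ge 0$, so it remains to check bilinearity, symmetry, and definiteness. Bilinearity is immediate once one observes that the HDG inverse $u_h\mapsto(\bm{\Pi}_{\bm V}u_h,\Pi_W u_h,\Pi_M u_h)$ is linear: this follows from the bilinearity of $\mathcal A$, the linearity in $u_h$ of the right-hand side of \eqref{def_-1_h}, and uniqueness of the solution guaranteed by the discrete LBB condition (\Cref{LBB}); composing with the $L^2$ inner product preserves bilinearity.

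The crux is symmetry, which I would deduce from \Cref{summertic_A}. Write $A_u=(\bm{\Pi}_{\bm V}u_h,\Pi_W u_h,\Pi_M u_h)$ and $A_v=(\bm{\Pi}_{\bm V}v_h,\Pi_W v_h,\Pi_M v_h)$, both admissible test tuples. Testing \eqref{def_-1_h} for $u_h$ against $(\bm{\Pi}_{\bm V}v_h,-\Pi_W v_h,-\Pi_M v_h)$ gives $\mathcal A(A_u;\bm{\Pi}_{\bm V}v_h,-\Pi_W v_h,-\Pi_M v_h)=-(u_h,\Pi_W v_h)_{\mathcal T_h}$. Applying \Cref{summertic_A} with $(\bm q_h,u_h,\widehat u_h)=A_u$ and $(\bm p_h,\phi_h,\widehat\phi_h)=A_v$ rewrites the left-hand side as $\mathcal A(A_v;\bm{\Pi}_{\bm V}u_h,-\Pi_W u_h,-\Pi_M u_h)$, which by \eqref{def_-1_h} for $v_h$ equals $-(v_h,\Pi_W u_h)_{\mathcal T_h}$. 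Chaining these identities yields $(u_h,\Pi_W v_h)_{\mathcal T_h}=(v_h,\Pi_W u_h)_{\mathcal T_h}$, i.e.\ $(\cdot,\cdot)_{-1,h}$ is symmetric.

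For definiteness, assume $\|u_h\|_{-1,h}=0$. Then \eqref{-1_equal} forces $\bm{\Pi}_{\bm V}u_h=\bm 0$ and $\Pi_k^\partial\Pi_W u_h=\Pi_M u_h$ on $\partial\mathcal T_h$. Inserting these two facts into the definition \eqref{def_A} of $\mathcal A$, the identity \eqref{def_-1_h} collapses to $(u_h,w_h)_{\mathcal T_h}=-(\Pi_W u_h,\nabla\cdot\bm r_h)_{\mathcal T_h}+\langle\Pi_M u_h,\bm r_h\cdot\bm n\rangle_{\partial\mathcal T_h}$ for all $(\bm r_h,w_h,\mu_h)\in\bm V_h\times\mathring W_h\times M_h$; since the right-hand side does not depend on $w_h$, comparing the admissible choices $w_h$ and $-w_h$ gives $(u_h,w_h)_{\mathcal T_h}=0$ for every $w_h\in\mathring W_h$, and the choice $w_h=u_h\in\mathring W_h$ forces $u_h=0$. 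Hence $(\cdot,\cdot)_{-1,h}$ is an inner product on $\mathring W_h$, and $\|\cdot\|_{-1,h}=(\cdot,\cdot)_{-1,h}^{1/2}$ is the associated norm. I expect the only delicate point to be the sign bookkeeping in the symmetry step, because \Cref{summertic_A} carries the minus signs on the scalar components; everything else is routine.
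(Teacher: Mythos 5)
Your proof is correct and its decisive step coincides with the paper's: both deduce from $\|u_h\|_{-1,h}=0$ and \eqref{-1_equal} that $\bm{\Pi}_{\bm V}u_h=\bm 0$ and $\Pi_k^{\partial}\Pi_W u_h=\Pi_M u_h$, so that \eqref{def_-1_h} collapses to an identity forcing $(u_h,w_h)_{\mathcal T_h}=0$ for all $w_h\in\mathring W_h$ and hence $u_h=0$. The extra material you supply --- linearity of the HDG inverse via \Cref{LBB} and symmetry of $(u_h,\Pi_W v_h)_{\mathcal T_h}$ via \Cref{summertic_A} --- is sound and simply makes explicit the seminorm (inner-product) structure that the paper takes for granted from the representation $\|u_h\|_{-1,h}^2=\|\bm{\Pi}_{\bm V}u_h\|_{\mathcal T_h}^2+\|h_K^{-1/2}(\Pi_k^{\partial}\Pi_W u_h-\Pi_M u_h)\|_{\partial\mathcal T_h}^2$.
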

\begin{proof} Thanks to \eqref{-1_equal}, one only needs to show that    $\|u_h\|_{-1,h}=0$ implies $u_h=0$ for $u_h\in \mathring{W}_h$. It follows readily from \eqref{-1_equal} that
	\begin{align*}
	\bm{\Pi_V}u_h=\bm 0,\qquad \Pi_k^{\partial}\Pi_Wu_h-\Pi_M u_h=0.
	\end{align*}
	Then  \Cref{-1h} and \eqref{def_A} give that for all 	$(\bm r_h, w_h) \in \bm{V}_h\times\mathring{W}_h$ 
	\begin{align*}
	(u_h,w_h)_{\mt_h}=(\Pi_W u_h, \nabla \cdot \bm{r}_h)_{\mt_h}-\langle\Pi_M u_h, \bm{r}_h \cdot \bm n\rangle_{\partial \mt_h}.
	\end{align*}
	This is only possible if  $u_h=0$. The proof is complete.
\end{proof}




%
%
For the negative norm $\|\cdot\|_{-1, h}$, the following HDG interpolation inequality holds true.
\begin{lemma} 
	If $u_h\in \mathring{W}_h$ and $(w_h,\mu_h)\in W_h\times M_h$,  one has
\begin{align}
(u_h,w_h)_{\mathcal{T}_h}\le C\|u_h\|_{-1,h}\left(\|\nabla w_h\|_{\mathcal{T}_h}
+\|h_K^{-1/2}(\Pi_k^{\partial}w_h-\mu_h)\|_{\partial\mathcal{T}_h}
\right)\label{Cauthy},
\end{align}
where $h_k$ is the diameter of the element $K$. 
\end{lemma}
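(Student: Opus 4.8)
The plan is to unfold the definition of $\|\cdot\|_{-1,h}$ and test the variational characterization \eqref{def_-1_h} of $(\bm{\Pi}_{\bm V}u_h,\Pi_W u_h,\Pi_M u_h)$ against a carefully chosen triple. Because $u_h\in\mathring W_h$ has zero mean, $(u_h,w_h)_{\mathcal T_h}=(u_h,w_h-c)_{\mathcal T_h}$ for the constant $c:=|\Omega|^{-1}(w_h,1)_{\mathcal T_h}$, and $w_h-c\in\mathring W_h$. Hence $(\bm 0,\,w_h-c,\,\mu_h-c)$ is an admissible test triple in $\bm V_h\times\mathring W_h\times M_h$ (constants lie in $M_h$ since $k\ge 0$). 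Substituting it into \eqref{def_-1_h} and reading off the terms of \eqref{def_A}, the three terms carrying the vanishing test function drop out, the identity $\Pi_k^\partial c=c$ simplifies the stabilization entry to $\Pi_k^\partial w_h-\mu_h$, and one is left with
\begin{align*}
(u_h,w_h)_{\mathcal T_h} &= (\nabla\cdot\bm{\Pi}_{\bm V}u_h,\, w_h-c)_{\mathcal T_h} -\langle \bm{\Pi}_{\bm V}u_h\cdot\bm n,\, \mu_h-c\rangle_{\partial\mathcal T_h}\\
&\quad +\langle h_K^{-1}(\Pi_k^\partial\Pi_W u_h-\Pi_M u_h),\, \Pi_k^\partial w_h-\mu_h\rangle_{\partial\mathcal T_h}.
\end{align*}

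Next I would integrate the first term by parts element by element, rewriting $(\nabla\cdot\bm{\Pi}_{\bm V}u_h,\,w_h-c)_{\mathcal T_h}$ as $-(\bm{\Pi}_{\bm V}u_h,\nabla w_h)_{\mathcal T_h}+\langle\bm{\Pi}_{\bm V}u_h\cdot\bm n,\,w_h-c\rangle_{\partial\mathcal T_h}$; combined with the preceding boundary term the constants cancel, leaving $\langle\bm{\Pi}_{\bm V}u_h\cdot\bm n,\,w_h-\mu_h\rangle_{\partial\mathcal T_h}$. Since $\bm{\Pi}_{\bm V}u_h\cdot\bm n$ is a polynomial of degree at most $k$ on each face $E$ and $\mu_h|_E\in\mathcal P^k(E)$, one may replace $w_h$ by $\Pi_k^\partial w_h$ in that inner product, which yields
\begin{align*}
(u_h,w_h)_{\mathcal T_h} &= -(\bm{\Pi}_{\bm V}u_h,\nabla w_h)_{\mathcal T_h} + \langle\bm{\Pi}_{\bm V}u_h\cdot\bm n,\,\Pi_k^\partial w_h-\mu_h\rangle_{\partial\mathcal T_h}\\
&\quad + \langle h_K^{-1}(\Pi_k^\partial\Pi_W u_h-\Pi_M u_h),\,\Pi_k^\partial w_h-\mu_h\rangle_{\partial\mathcal T_h}.
\end{align*}

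Finally I would estimate the three pieces by Cauchy--Schwarz: the first by $\|\bm{\Pi}_{\bm V}u_h\|_{\mathcal T_h}\|\nabla w_h\|_{\mathcal T_h}$, the third by $\|h_K^{-1/2}(\Pi_k^\partial\Pi_W u_h-\Pi_M u_h)\|_{\partial\mathcal T_h}\|h_K^{-1/2}(\Pi_k^\partial w_h-\mu_h)\|_{\partial\mathcal T_h}$, and the middle one by $\|h_K^{1/2}\bm{\Pi}_{\bm V}u_h\cdot\bm n\|_{\partial\mathcal T_h}\|h_K^{-1/2}(\Pi_k^\partial w_h-\mu_h)\|_{\partial\mathcal T_h}$, where the trace/inverse inequality \eqref{eq27b} and quasi-uniformity give $\|h_K^{1/2}\bm{\Pi}_{\bm V}u_h\cdot\bm n\|_{\partial\mathcal T_h}\le C\|\bm{\Pi}_{\bm V}u_h\|_{\mathcal T_h}$. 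Since identity \eqref{-1_equal} gives $\|\bm{\Pi}_{\bm V}u_h\|_{\mathcal T_h}\le\|u_h\|_{-1,h}$ and $\|h_K^{-1/2}(\Pi_k^\partial\Pi_W u_h-\Pi_M u_h)\|_{\partial\mathcal T_h}\le\|u_h\|_{-1,h}$, summing the three bounds produces \eqref{Cauthy}. I do not anticipate a genuine obstacle; the only points that require care are bookkeeping --- the mean-zero shift needed to make $w_h-c$ an admissible test function in \eqref{def_-1_h}, and the degree count that permits swapping $w_h$ for $\Pi_k^\partial w_h$ against $\bm{\Pi}_{\bm V}u_h\cdot\bm n$.
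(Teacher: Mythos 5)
Your proposal is correct and follows essentially the same route as the paper: test the defining identity \eqref{def_-1_h} with $(\bm 0,w_h,\mu_h)$, integrate by parts elementwise, and close with Cauchy--Schwarz, the inverse inequality \eqref{eq27b}, and the identity \eqref{-1_equal}. If anything you are slightly more careful than the paper, which plugs $w_h\in W_h$ directly into a definition whose test space is $\mathring W_h$; your mean-zero shift $w_h\mapsto w_h-c$, $\mu_h\mapsto\mu_h-c$ cleanly justifies that step.
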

\begin{proof}
Let $(w_h, \mu_h)\in {W}_h\times M_h$ and $u_h\in \mathring{W}_h$.  By \Cref{-1h} and \eqref{def_A} we have
	\begin{align*}
	(u_h,w_h)_{\mathcal{T}_h}&=\mathcal A(\bm{\Pi}_{\bm V}u_h,\Pi_W u_h,\Pi_M u_h;\bm 0,w_h,\mu_h)\\
	&=(\nabla\cdot\bm{\Pi_V}u_h,w_h)_{\mathcal{T}_h}-\langle \bm n\cdot\bm{\Pi_V}u_h, \mu_h\rangle_{\partial\mathcal{T}_h}\\
	&\quad +\langle h_K^{-1}(\Pi_k^{\partial}\Pi_W u_h-\Pi_M u_h),\Pi_k^{\partial}w_h-\mu_h\rangle_{\partial\mathcal{T}_h}.
	\end{align*}
	By integration by parts,  the identity \eqref{-1_equal} and the inverse inequality \eqref{eq27b} we have 
	\begin{align*}
	(u_h,w_h)_{\mathcal{T}_h} &\le 
	\|\bm{\Pi_V}u_h\|_{\mathcal{T}_h}\|\nabla w_h\|_{\mathcal{T}_h} + C\|h_K^{-1/2}( \Pi_k^{\partial} w_h-\mu_h)\|_{\partial\mathcal{T}_h}\\
	&\quad \times \left(\|\bm{\Pi_V}u_h\|_{\mathcal{T}_h}+\|h_K^{-1/2}( \Pi_k^{\partial}\Pi_Wu_h-\Pi_M u_h)\|_{\partial\mathcal{T}_h}\right)\\
	&\le C \|u_h\|_{-1,h}\left(\|\nabla w_h\|_{\mathcal{T}_h}+\|h_K^{-1/2}(\Pi_k^{\partial}w_h-\mu_h)\|_{\partial\mathcal{T}_h}\right).
	\end{align*}
	This concludes our proof.
\end{proof}

In addition, by the \Cref{-1h}, the identity \eqref{-1_equal} and \Cref{es_u} one can easily establish the following relation.
\begin{lemma} \label{lem-nega}
	For any $u_h\in \mathring{W}_h$ there holds
\begin{align}
\|\nabla \Pi_W u_h \|_{\mathcal{T}_h}+\|h_K^{-1/2}(\Pi_k^{\partial} \Pi_W u_h-\Pi_M u_h)\|_{\partial\mathcal{T}_h}  \le C\|u_h\|_{-1,h}.
\end{align}

\end{lemma}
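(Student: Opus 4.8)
The plan is to read the inequality off directly from the two facts already established: the negative-norm identity \eqref{-1_equal} and the gradient bound of \Cref{es_u}. The key observation is that the inversion triple $(\bm{\Pi}_{\bm V}u_h,\Pi_W u_h,\Pi_M u_h)$ from \Cref{-1h} satisfies the hypothesis \eqref{Ap} of \Cref{es_u}: testing \eqref{def_-1_h} against $(\bm r_h,0,0)$ with $\bm r_h\in\bm V_h$ (admissible since $0\in\mathring W_h$ and $0\in M_h$) gives $\mathcal A(\bm{\Pi}_{\bm V}u_h,\Pi_W u_h,\Pi_M u_h;\bm r_h,0,0)=(u_h,0)_{\mathcal{T}_h}=0$ for all $\bm r_h$.

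With this in hand, I would apply \Cref{es_u} to the triple $(\bm{\Pi}_{\bm V}u_h,\Pi_W u_h,\Pi_M u_h)$ in place of $(\bm q_h,u_h,\widehat u_h)$, obtaining
\[
\|\nabla \Pi_W u_h\|_{\mathcal{T}_h}\le C\left(\|\bm{\Pi}_{\bm V}u_h\|_{\mathcal{T}_h}+\|h_K^{-1/2}(\Pi_k^{\partial}\Pi_W u_h-\Pi_M u_h)\|_{\partial\mathcal{T}_h}\right).
\]
Then I would invoke \eqref{-1_equal}, which says precisely that $\|\bm{\Pi}_{\bm V}u_h\|_{\mathcal{T}_h}^2+\|h_K^{-1/2}(\Pi_k^{\partial}\Pi_W u_h-\Pi_M u_h)\|_{\partial\mathcal{T}_h}^2=\|u_h\|_{-1,h}^2$. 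In particular each summand is at most $\|u_h\|_{-1,h}$, and by the elementary bound $a+b\le\sqrt2\,(a^2+b^2)^{1/2}$ the right-hand side above is $\le\sqrt2\,\|u_h\|_{-1,h}$, so $\|\nabla\Pi_W u_h\|_{\mathcal{T}_h}\le C\|u_h\|_{-1,h}$. For the remaining term, $\|h_K^{-1/2}(\Pi_k^{\partial}\Pi_W u_h-\Pi_M u_h)\|_{\partial\mathcal{T}_h}$ is literally one of the two squared summands in $\|u_h\|_{-1,h}^2$ and hence is itself $\le\|u_h\|_{-1,h}$. Adding the two estimates yields the claimed inequality.

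There is no real obstacle here; the only point requiring a moment's care is the verification that the inversion triple is admissible as a test argument in \eqref{Ap} — i.e., that one may restrict the test functions in \eqref{def_-1_h} to $(\bm r_h,0,0)$ — which is immediate. The lemma is thus a short bookkeeping consequence of \Cref{es_u} together with the definition of $\|\cdot\|_{-1,h}$.
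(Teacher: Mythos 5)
Your proof is correct and follows exactly the route the paper indicates (the paper only sketches this lemma as a consequence of \Cref{-1h}, the identity \eqref{-1_equal}, and \Cref{es_u}): verifying that the inversion triple satisfies the hypothesis \eqref{Ap}, applying \Cref{es_u}, and reading off both terms from \eqref{-1_equal}. Nothing further is needed.
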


For the error analysis  of the nonlinear equation we need  to establish the  discrete HDG Sobolev inequalities for which we will make use of the so-called Oswald interpolation operator \cite{MR2300291}.

\begin{lemma}[\cite{MR2300291}] \label{Oswald_inequality}
	There exists an interpolation operator, called Oswald interpolation, $\mathcal{I}_h^c:{W}_{h}\to{W}_{h}\cap H^1(\Omega)$, such that for any ${w}_{h}\in{W}_{h}$, 
	\begin{align}
	\sum_{K\in\mathcal{T}_h}\|{w}_{h}-\mathcal{I}_h^c{w}_{h}\|^2_{0,K}\le C  \sum_{E\in\mathcal{E}_h^o}h_E\|[\![{w}_{h}]\!]\|^2_{0,E},\label{ICK1}\\
	\sum_{K\in\mathcal{T}_h}|{w}_{h}-\mathcal{I}_h^c{w}_{h}|^2_{1,K}\le C  \sum_{E\in\mathcal{E}_h^o}h_E^{-1}\|[\![{w}_{h}]\!]\|^2_{0,E},\label{ICK2}
	\end{align}
	where $[\![w_h]\!]$ denotes the jump of $w_h$ across a side $E$
\end{lemma}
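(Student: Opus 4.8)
The plan is to construct the operator $\mathcal{I}_h^c$ by nodal averaging and then estimate the error $w_h-\mathcal{I}_h^c w_h$ element by element by scaling to the reference simplex. First I would fix a Lagrange nodal set for the continuous space $W_h\cap H^1(\Omega)$: on each $K\in\mathcal{T}_h$ these are the principal lattice points of degree $k+1$, with nodes on a shared sub-simplex identified, and I write $\{\theta_a\}$ for the associated nodal basis and $\mathcal{T}_a$ for the set of elements containing the node $a$. For $w_h\in W_h$ I then define
\[
(\mathcal{I}_h^c w_h)(a)=\frac{1}{|\mathcal{T}_a|}\sum_{K\in\mathcal{T}_a}(w_h|_K)(a),
\qquad \mathcal{I}_h^c w_h=\sum_a (\mathcal{I}_h^c w_h)(a)\,\theta_a .
\]
By construction $\mathcal{I}_h^c w_h\in W_h\cap H^1(\Omega)$. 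If $a$ is interior to a single element $K$ then $\mathcal{T}_a=\{K\}$ and the nodal error vanishes there; if $a$ lies on $\partial K$, the nodal error $(w_h|_K-\mathcal{I}_h^c w_h)(a)$ is a convex combination of differences $(w_h|_K-w_h|_{K'})(a)$ over elements $K'$ in the vertex patch around $a$, and each such difference telescopes into a sum of values of $[\![w_h]\!]$ at $a$ over the (boundedly many) interior faces of that patch.

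Next I would estimate on a fixed element $K$. Writing $e_K:=(w_h-\mathcal{I}_h^c w_h)|_K\in\mathcal{P}^{k+1}(K)$ and using the affine equivalence with the reference simplex $\widehat K$ together with equivalence of norms on the finite-dimensional space $\mathcal{P}^{k+1}(\widehat K)$ and quasi-uniformity, I get
\[
\|e_K\|_{0,K}^2\le C\,h_K^{d}\sum_{a\in K}|e_K(a)|^2,
\qquad
|e_K|_{1,K}^2\le C\,h_K^{d-2}\sum_{a\in K}|e_K(a)|^2 ,
\]
the sums running over the Lagrange nodes of $K$. Each term $|e_K(a)|^2$ is, by the previous paragraph, controlled by $C\sum_{E}\big|[\![w_h]\!](a)\big|^2$ over faces $E$ in a fixed-size neighbourhood of $K$, and a further scaling on the reference face gives $\big|[\![w_h]\!](a)\big|^2\le C\,h_E^{1-d}\|[\![w_h]\!]\|_{0,E}^2$. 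Substituting and using $h_K\sim h_E$ yields, for each $K$,
\[
\|e_K\|_{0,K}^2\le C\sum_{E\subset\overline{\omega_K}} h_E\,\|[\![w_h]\!]\|_{0,E}^2,
\qquad
|e_K|_{1,K}^2\le C\sum_{E\subset\overline{\omega_K}} h_E^{-1}\,\|[\![w_h]\!]\|_{0,E}^2 ,
\]
where $\omega_K$ is the union of the elements sharing at least a vertex with $K$. Summing over $K\in\mathcal{T}_h$ and using that, by shape-regularity, each interior face $E$ lies in $\overline{\omega_K}$ for only a bounded number of $K$, I obtain \eqref{ICK1}--\eqref{ICK2}.

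The step I expect to be the main obstacle is the combinatorial bookkeeping in the first two steps: one must verify that every nodal error is controlled by jumps across faces confined to a neighbourhood of $K$ whose size is bounded uniformly in $h$, so that after summing over elements no face is counted more than boundedly often; this is where shape-regularity (bounding the cardinality of vertex patches and the number of faces therein) does the real work. The scaling arguments themselves are routine, but they must be carried out with the correct powers of $h$: the mismatch between a $d$-dimensional element norm and a $(d-1)$-dimensional face norm is precisely what produces the weights $h_E^{\pm 1}$ in \eqref{ICK1}--\eqref{ICK2}. Since the statement is classical, one may alternatively simply appeal to \cite{MR2300291}.
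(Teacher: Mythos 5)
Your proposal is correct, but note that the paper itself offers no proof of this lemma: it is stated purely as a citation to \cite{MR2300291}, so there is nothing internal to compare against. Your argument — nodal averaging over the elements of each Lagrange node's patch, telescoping the nodal error into jumps across interior faces of the patch, and then transferring nodal sums to $L^2$ and $H^1$ norms by scaling to the reference simplex — is exactly the standard construction used in the cited reference, and the powers $h_E^{\pm 1}$ come out correctly from the mismatch $h_K^{d}\cdot h_E^{1-d}$ (resp. $h_K^{d-2}\cdot h_E^{1-d}$). The only step you should make explicit is the one you already flagged: for a node $a$ shared by several elements, the telescoping requires that the star of $a$ be face-connected with every intermediate face containing $a$, which holds for conforming simplicial meshes and, together with shape-regularity bounding the cardinality of the star, gives the finite-overlap count needed when summing over $K$. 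It is worth observing that the paper's Appendix proves the stability of a related Scott--Zhang-type operator $\mathcal I_h^{k+1+d}$ by precisely this kind of nodal-averaging-plus-scaling argument, so your proof is stylistically consistent with what the authors do elsewhere; appealing directly to \cite{MR2300291}, as the paper does, is of course also legitimate.
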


\begin{remark} From  the proof of \cite[Page 644, Theorem 2.1]{MR2300291}, one can also obtain the following estimate:
	\begin{align}\label{ICK3}
	\sum_{K\in\mathcal{T}_h}h_K^{\varepsilon}\|{v}_{h}-\mathcal{I}_h^c{v}_{h}\|^2_{0,K}\le C  \sum_{E\in\mathcal{E}_h^o}h_E^{1+\varepsilon}\|[\![{v}_{h}]\!]\|^2_{0,E},
	\end{align}
	where  $\varepsilon$ is any fixed  real constant.
\end{remark}

Now we are ready to prove  the HDG Sobolev inequalities.
\begin{theorem}[Discrete Sobolev inequalities] \label{discrete-soblev} 
	Let $\mu$  be the exponents as in the classical $H^1$ Sobolev embedding, i.e., $\mu$ satisfying
	\begin{eqnarray}\label{mu_interval}
	\left\{
	\begin{aligned}
	&1\le \mu < \infty,&\text{ if } \ d=2,\\
	&1\le \mu\le 6 ,&\text{ if } \ d=3.
	\end{aligned}
	\right.
	\end{eqnarray}For $w_h\in W_h$, it holds
	\begin{eqnarray}\label{sobolev-001}
	\|{w}_{h}\|_{0,\mu}\le C 
	\left(
	\|w_h\|_{\mathcal{T}_h}
	+
	\|\nabla w_h\|_{\mathcal{T}_h}
	+\|h_E^{-1/2}[\![ w_h]\!]\|_{\mathcal{E}_h^o}
	\right).
	\end{eqnarray}
	If further $w_h\in \mathring{W}_h$, then
	\begin{eqnarray}\label{sobolev-002}
	\|{w}_{h}\|_{0,\mu}\le C 
	\left(
	\|\nabla w_h\|_{\mathcal{T}_h}
	+\|h_E^{-1/2}[\![ w_h]\!]\|_{\mathcal{E}_h^o}
	\right).
	\end{eqnarray}
	\end{theorem}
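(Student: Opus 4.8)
The plan is to reduce the discrete inequality to the classical $H^1$ Sobolev embedding by comparing $w_h$ with its Oswald interpolant $\mathcal{I}_h^c w_h \in W_h \cap H^1(\Omega)$. First I would write $w_h = (w_h - \mathcal{I}_h^c w_h) + \mathcal{I}_h^c w_h$ and estimate the two pieces separately in the $L^\mu$ norm. For the conforming part $\mathcal{I}_h^c w_h$, the ordinary Sobolev embedding $H^1(\Omega) \hookrightarrow L^\mu(\Omega)$ (valid for $\mu$ in the stated range) gives $\|\mathcal{I}_h^c w_h\|_{0,\mu} \le C(\|\mathcal{I}_h^c w_h\|_{\mathcal{T}_h} + \|\nabla \mathcal{I}_h^c w_h\|_{\mathcal{T}_h})$, and then the triangle inequality together with \eqref{ICK1}--\eqref{ICK2} bounds this by $C(\|w_h\|_{\mathcal{T}_h} + \|\nabla w_h\|_{\mathcal{T}_h} + \|h_E^{-1/2}[\![w_h]\!]\|_{\mathcal{E}_h^o})$, since the jump terms coming from $\|w_h - \mathcal{I}_h^c w_h\|_{\mathcal{T}_h}$ and $|w_h - \mathcal{I}_h^c w_h|_{1,\mathcal{T}_h}$ are exactly of that form.

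The main work is to control the nonconforming remainder $w_h - \mathcal{I}_h^c w_h$ in $L^\mu$ rather than just in $L^2$. Here I would use an inverse estimate on each element $K$: for polynomials of fixed degree, $\|v\|_{0,\mu,K} \le C h_K^{d/\mu - d/2}\|v\|_{0,2,K}$ (this is scaling combined with norm equivalence on the reference element). Summing the $\mu$-th powers over $K$ and applying a discrete Hölder inequality reduces the estimate to $\left(\sum_K \|w_h - \mathcal{I}_h^c w_h\|_{0,2,K}^2 \cdot h_K^{\text{(something)}}\right)^{1/2}$-type quantities; the remark's weighted estimate \eqref{ICK3} with an appropriately chosen exponent $\varepsilon$ is precisely what is needed to absorb the powers of $h_K$ and land on $\sum_{E} h_E^{-1}\|[\![w_h]\!]\|_{0,E}^2$. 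I would check that in $d=3$ the borderline $\mu = 6$ still works (the exponent arithmetic is tight there), and that in $d=2$ all finite $\mu$ are admissible because the $h_K$ powers can be made favorable. The shape-regularity and quasi-uniformity of $\mathcal{T}_h$ are used to pass between $h_K$ and $h_E$ for faces $E \subset \partial K$ and to control the number of elements meeting a given face.

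The hard part will be the bookkeeping in the second step: getting the exponents of $h_K$ to match up so that \eqref{ICK3} applies with a legitimate (possibly negative) $\varepsilon$, and doing the discrete Hölder step correctly so the sum over elements of $L^\mu$-norm contributions is dominated by an $\ell^2$-type sum. Once the first inequality \eqref{sobolev-001} is proved, the second \eqref{sobolev-002} follows immediately: for $w_h \in \mathring{W}_h$ the mean-zero condition $(w_h,1)_{\mathcal{T}_h}=0$ lets us invoke the HDG Poincar\'{e} inequality \eqref{HDG-poincare} — or more directly \Cref{Poincare0} with the jump term rewritten via Cauchy--Schwarz — to bound $\|w_h\|_{\mathcal{T}_h}$ by $C(\|\nabla w_h\|_{\mathcal{T}_h} + \|h_E^{-1/2}[\![w_h]\!]\|_{\mathcal{E}_h^o})$, and substituting this into \eqref{sobolev-001} absorbs the $\|w_h\|_{\mathcal{T}_h}$ term.
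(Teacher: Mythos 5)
Your proposal follows the same overall architecture as the paper's proof (split $w_h$ into its Oswald interpolant plus a nonconforming remainder, apply the classical $H^1\hookrightarrow L^\mu$ embedding to the conforming part, and control the remainder element by element), and your handling of the conforming part and of the reduction of \eqref{sobolev-002} to \eqref{sobolev-001} via the Poincar\'{e}--Friedrichs inequality is exactly what the paper does. Where you genuinely diverge is in the remainder term: the paper asserts an element-wise Sobolev embedding $\|v\|_{L^\mu(K)}\le C(\|v\|_{L^2(K)}+\|\nabla v\|_{L^2(K)})$ with an $h_K$-independent constant, then uses discrete Minkowski and the elementary $\ell^2\hookrightarrow\ell^\mu$ inequality to land on \eqref{ICK1}--\eqref{ICK2}; as written this suppresses the fact that the embedding constant on a shrinking element scales like $h_K^{d/\mu-d/2}$, which is unbounded for $\mu>2$. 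Your route --- the polynomial inverse estimate $\|v\|_{0,\mu,K}\le Ch_K^{d/\mu-d/2}\|v\|_{0,2,K}$ followed by the weighted Oswald estimate \eqref{ICK3} with $\varepsilon=2(d/\mu-d/2)$ --- is the bookkeeping that actually makes this step airtight: the requirement $1+\varepsilon\ge -1$ is precisely $\mu\le 6$ when $d=3$ and is vacuous when $d=2$, so your observation that the endpoint $\mu=6$ is where the exponent arithmetic becomes tight identifies exactly the place where the admissible range of $\mu$ enters the discrete argument. In short, your version is a sharper and more defensible rendering of the same proof. Two small points to tidy up: (i) the case $1\le\mu<2$ should be dispatched separately by $L^2\hookrightarrow L^\mu$ on the bounded domain (the paper does this explicitly), since the $\ell^2$-to-$\ell^\mu$ summation step only runs in the right direction for $\mu\ge 2$; and (ii) when you pass from element sums to face sums you should note, as you do implicitly, that shape regularity bounds the number of elements sharing a face, so no quasi-uniformity is actually needed for this lemma.
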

\begin{proof}  
	We only give the proof of the inequality  \eqref{sobolev-001}, since \eqref{sobolev-002} is a direct consequence of the inequality \eqref{sobolev-001} and the Poincar\'{e}-Friedrichs inequality in \Cref{Poincare0}. 
	
	First, we note that the case $1\leq \mu<2$ is trivial since $L^2$ is embedded in $L^\mu$ by H\"{o}lder's inequality. We consider the case $\mu\ge 2$.
	
	 By the triangle inequality we have 
	\begin{align*}
	\|w_h\|_{L^\mu} \le \|\mathcal{I}_h^c {w}_{h}\|_{L^\mu} + 	\|\mathcal{I}_h^c {w}_{h} - w_h\|_{L^\mu}. 
	\end{align*}
	 Since $\mathcal{I}_h^c w_{h} \in H^1(\Omega)$, by the classical Sobolev embedding, the triangle inequality and \Cref{Oswald_inequality}, we have 
	\begin{align}
	\|\mathcal{I}_h^c w_{h}\|_{L^\mu}&\le C \left(\|\mathcal{I}_h^c w_{h}\|_{\mathcal T_h}+\|\nabla\mathcal{I}_h^c w_{h}\|_{\mathcal T_h}\right)\nonumber\\
	&\le C\left(\|\mathcal{I}_h^c w_{h} -w_h\|_{\mathcal T_h}+\|w_h\|_{\mathcal T_h}+\|\nabla w_{h}\|_{\mathcal T_h} +\|\nabla(\mathcal{I}_h^c w_{h}-w_h)\|_{\mathcal T_h}\right)\nonumber\\
	&\le C \left(\| w_{h}\|_{\mathcal T_h}+\|\nabla w_{h}\|_{\mathcal{T}_h}
	+\|h_E^{-1/2}[\![w_h]\!]\|_{\mathcal E_h^o}\right).\label{411}
	\end{align}
	For the term $\|\mathcal{I}_h^c w_{h} - w_h\|_{0,\mu}$, we use the element-wise Sobolev embedding and the discrete Minkowski's inequality to get
	\begin{align*}
	\|w_{h}- \mathcal{I}_h^c w_{h}\|_{L^\mu}
	&=\left(\sum_{K\in\mathcal{T}_h}\| w_{h}- \mathcal{I}_h^c w_{h}\|^{\mu}_{L^\mu,K}\right)^{\frac{1}{\mu}}\\
	&\le C \left(\sum_{K\in\mathcal{T}_h}
	\big(\| w_{h}- \mathcal{I}_h^c w_{h}\|_{L^2,K}+\|\nabla (w_{h}- \mathcal{I}_h^c w_{h})\|_{L^2,K}\big)^\mu \right)^{\frac{1}{\mu}}\\
	&\le C \left(\sum_{K\in\mathcal{T}_h}
	\| w_{h}- \mathcal{I}_h^c w_{h}\|_{L^2,K}^\mu \right)^{\frac{1}{\mu}}+ C \left(\sum_{K\in\mathcal{T}_h}
	\|\nabla (w_{h}- \mathcal{I}_h^c w_{h})\|_{L^2,K}^\mu \right)^{\frac{1}{\mu}}\\
	&\le C \left(\sum_{K\in\mathcal{T}_h}
	\| w_{h}- \mathcal{I}_h^c w_{h}\|_{L^2,K}^2 \right)^{\frac{1}{2}}+ \left(\sum_{K\in\mathcal{T}_h}
	\|\nabla (w_{h}- \mathcal{I}_h^c w_{h})\|_{L^2,K}^2 \right)^{\frac{1}{2}},
	\end{align*}
where the last inequality follows from the fact $\mu\ge2$ and the inequality $\sum_{i=1}^n |a_i|^{\mu/2}\le
	(\sum_{i=1}^n |a_i|)^{\mu/2}$. Lemma \ref{Oswald_inequality} then yields
	\begin{align}\label{inter-inequa}
	\| {w}_{h}- \mathcal{I}_h^c w_{h}\|_{L^\mu} \le C\|h_E^{-1/2}[\![w_h]\!]\|_{\mathcal E_h^o}.
	\end{align}
	The desired inequality \eqref{sobolev-001} now follows from the inequalities \eqref{411} and \eqref{inter-inequa}. This completes the proof.

\end{proof}

The combination of the above theorem and the triangle inequality gives the following HDG Sobolev inequality.
\begin{corollary}[HDG Sobolev inequality] \label{discrete-soblev-hdg} 
	For $(w_h,\mu_h)\in W_h\times M_h$, it holds
	\begin{align}\label{sobolev-01}
	\|{w}_{h}\|_{L^\mu}\le C \left( \|w_h\|_{\mathcal{T}_h}+\|\nabla w_h\|_{\mathcal{T}_h}
	+\|h_K^{-1/2}(\Pi_k^{\partial}w_h-\mu_h)\|_{\partial\mathcal{T}_h}\right),
	\end{align}
	if in addition $w_h\in \mathring{W}_h$, then
	\begin{eqnarray}\label{sobolev-02}
	\|w_{h}\|_{L^\mu}\le C \left(\|\nabla w_h\|_{\mathcal{T}_h}+\|h_K^{-1/2}(\Pi_k^{\partial}w_h-\mu_h)\|_{\partial\mathcal{T}_h}\right),
	\end{eqnarray}
	where  $\mu$ satisfying \eqref{mu_interval}.
\end{corollary}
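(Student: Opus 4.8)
The plan is to derive \Cref{discrete-soblev-hdg} directly from \Cref{discrete-soblev} by controlling the jump term $\|h_E^{-1/2}[\![w_h]\!]\|_{\mathcal{E}_h^o}$ on interior faces by the HDG reduced-stabilization quantity $\|h_K^{-1/2}(\Pi_k^\partial w_h - \mu_h)\|_{\partial\mathcal{T}_h}$ plus the broken gradient. The point is that an HDG pair $(w_h,\mu_h)$ has a single-valued trace $\mu_h$ on each interior face, so the jump of $w_h$ can be bootstrapped through $\mu_h$.

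First I would fix an interior face $E$ shared by two elements $K^+$ and $K^-$. On each side, write
\begin{align*}
w_h^\pm - \mu_h = (w_h^\pm - \Pi_k^\partial w_h^\pm) + (\Pi_k^\partial w_h^\pm - \mu_h),
\end{align*}
so that, since the jump $[\![w_h]\!] = w_h^+ - w_h^-$ telescopes through $\mu_h$,
\begin{align*}
\| [\![w_h]\!] \|_{0,E} \le \sum_{\pm} \left( \|w_h^\pm - \Pi_k^\partial w_h^\pm\|_{0,E} + \|\Pi_k^\partial w_h^\pm - \mu_h\|_{0,E} \right).
\end{align*}
Multiplying by $h_E^{-1/2}$, summing over interior faces, and using quasi-uniformity to replace $h_E$ by the neighboring $h_K$, the second group of terms is directly bounded by $\|h_K^{-1/2}(\Pi_k^\partial w_h - \mu_h)\|_{\partial\mathcal{T}_h}$. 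For the first group I would bound $\|w_h - \Pi_k^\partial w_h\|_{0,E}$ on each $E\subset\partial K$: since $\Pi_k^\partial$ is the $L^2$-projection onto $\mathcal{P}^k(E)$ and $w_h|_K$ is a polynomial, a standard scaling/approximation argument (as already used in the proof of \Cref{es_u}, where $\|h_K^{-1/2}(w_h-\Pi_k^\partial w_h)\|_{\partial\mathcal{T}_h} \le C\|\nabla w_h\|_{\mathcal{T}_h}$ was invoked) gives $\|h_K^{-1/2}(w_h-\Pi_k^\partial w_h)\|_{\partial\mathcal{T}_h} \le C\|\nabla w_h\|_{\mathcal{T}_h}$. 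Combining these,
\begin{align*}
\|h_E^{-1/2}[\![w_h]\!]\|_{\mathcal{E}_h^o} \le C\left( \|\nabla w_h\|_{\mathcal{T}_h} + \|h_K^{-1/2}(\Pi_k^\partial w_h - \mu_h)\|_{\partial\mathcal{T}_h} \right),
\end{align*}
and plugging this into \eqref{sobolev-001} yields \eqref{sobolev-01}. The inequality \eqref{sobolev-02} for $w_h \in \mathring{W}_h$ follows in exactly the same way from \eqref{sobolev-002}, or alternatively from \eqref{sobolev-01} combined with the HDG Poincar\'e inequality \eqref{HDG-poincare}.

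I do not anticipate a serious obstacle here — this is the "triangle inequality" step the text already announces. The only mild subtlety is making sure the estimate $\|h_K^{-1/2}(w_h-\Pi_k^\partial w_h)\|_{\partial\mathcal{T}_h} \le C\|\nabla w_h\|_{\mathcal{T}_h}$ is justified: it rests on the observation that $\Pi_k^\partial$ reproduces polynomials of degree $k$ on each face together with an element-wise scaled trace/inverse inequality and the fact that $w_h - (\text{its }\mathcal{P}^k\text{-face-projection})$ annihilates constants, so its face $L^2$-norm is controlled by $h_K^{1/2}$ times the element gradient. Everything else is bookkeeping with quasi-uniformity to interchange $h_E$ and $h_K$, and the already-established \Cref{discrete-soblev}.
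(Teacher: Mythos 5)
Your proposal is correct and follows essentially the same route as the paper: bound the interior-face jump through the single-valued trace $\mu_h$, insert $\Pi_k^\partial w_h$ by the triangle inequality, control $\|h_K^{-1/2}(w_h-\Pi_k^\partial w_h)\|_{\partial\mathcal{T}_h}$ by $C\|\nabla w_h\|_{\mathcal{T}_h}$, and feed the result into \Cref{discrete-soblev}, with \eqref{sobolev-02} following from \eqref{sobolev-01} and the HDG Poincar\'e inequality. No gaps.
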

\begin{proof}
For any $\mu_h \in M_h$, since $\mu_h$ is single-valued, by the triangle inequality, we have
\begin{align*}
\|h_E^{-1/2}[\![ w_h]\!]\|_{\mathcal{E}_h^o} &\leq \|h_K^{-1/2}(w_h-\mu_h)\|_{\partial\mathcal{T}_h}\\
& \leq \|h_K^{-1/2}(w_h-\Pi_k^{\partial}w_h)\|_{\partial\mathcal{T}_h}+\|h_K^{-1/2}(\Pi_k^{\partial}w_h-\mu_h)\|_{\partial\mathcal{T}_h} \\
&\leq C \|\nabla w_h \|_{\partial\mathcal{T}_h}+\|h_K^{-1/2}(\Pi_k^{\partial}w_h-\mu_h)\|_{\partial\mathcal{T}_h}.
\end{align*}
The HDG Sobolev inequality now follows from the inequality \eqref{sobolev-001}. 

The second inequality \eqref{sobolev-02} follows from the first inequality \eqref{sobolev-01} and the HDG Poincar\'{e} inequality \eqref{HDG-poincare}.
This completes the proof.
\end{proof}

\section{Well-posedness of the HDG formulation}
\label{E_U_S}

In this section we establish the well-posedness  of the HDG method \eqref{HDG-hill}, that is, existence and uniqueness of solutions as well as the energy stability of the solutions. The results differ slightly between the fully implicit discretization (FI) and the convex-spliting method (CS): the CS time marching enjoys unconditionally unique solvability and stability while there is a time-step constraint in the FI scheme for uniqueness and stability. For convenience, we will focus on the analysis of one method and point out the difference of the other. 


\subsection{Existence and uniqueness}
\begin{theorem}\label{exist}
	The HDG scheme  \eqref{HDG-hill}  admits at least one solution.
\end{theorem}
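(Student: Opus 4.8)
The plan is to prove existence of a solution to the fully discrete system \eqref{HDG-hill} at each time level $n$ by induction on $n$, using a topological fixed-point argument (Brouwer's fixed-point theorem, in the form usually attributed to the corollary of Brouwer sometimes called the "acute-angle" or Browder--Minty-type lemma for finite-dimensional spaces): if $\Phi:\mathbb{R}^m\to\mathbb{R}^m$ is continuous and $\langle \Phi(x),x\rangle>0$ for all $x$ with $\|x\|=\rho$, then $\Phi$ has a zero in the ball of radius $\rho$. Assuming $u_h^{n-1}$ is known (the base case $u_h^0=P_{W}u^0$ is given by \eqref{hdg03}), I would first use the local solvers to eliminate $\bm p_h^n,\widehat\phi_h^n,\bm q_h^n,\widehat u_h^n$ and $\phi_h^n$ and reduce \eqref{HDG-hill} to a single equation for the unknown pair $(u_h^n,\widehat u_h^n)\in W_h\times M_h$, or more precisely cast the whole coupled system as $\Phi(X)=0$ with $X$ collecting all the unknowns at time $n$ in the product finite-dimensional space $\mathcal{X}_h:=[\bm V_h\times W_h\times M_h]^2$.

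The key computation is the coercivity estimate $\langle\Phi(X),X\rangle\ge 0$ (or $>0$) for $\|X\|$ large. To get it I would test \eqref{hdg01} with $(\bm r_1,w_1,\mu_1)=(\bm p_h^n,-\phi_h^n,-\widehat\phi_h^n)$ and \eqref{hdg02} with $(\bm r_2,w_2,\mu_2)=(\bm q_h^n,u_h^n,\widehat u_h^n)$, then add the two identities. Using the symmetry property \Cref{summertic_A} the cross terms involving $\mathcal{A}$ cancel against the $(\phi_h^n,w_2)$ coupling term, and using \Cref{energy_of_A} the diagonal $\mathcal{A}$ term produces $\epsilon\big(\|\bm q_h^n\|_{\mathcal{T}_h}^2+\|h_K^{-1/2}(\Pi_k^\partial u_h^n-\widehat u_h^n)\|_{\partial\mathcal{T}_h}^2\big)$. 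The time-derivative term gives $(\partial_t^+u_h^n,u_h^n)_{\mathcal{T}_h}=\tfrac1{2\Delta t}\big(\|u_h^n\|_{\mathcal{T}_h}^2-\|u_h^{n-1}\|_{\mathcal{T}_h}^2+\|u_h^n-u_h^{n-1}\|_{\mathcal{T}_h}^2\big)$, and the nonlinear term gives $\epsilon^{-1}(f^n(u_h^n),u_h^n)_{\mathcal{T}_h}$. For the fully implicit choice $f^n(u_h^n)=(u_h^n)^3-u_h^n$ this contributes $\epsilon^{-1}\big(\|u_h^n\|_{L^4}^4-\|u_h^n\|_{\mathcal{T}_h}^2\big)$; the quartic term is nonnegative and the $-\|u_h^n\|_{\mathcal{T}_h}^2$ term is absorbed either by the quartic term via Young's inequality or (if one wants to avoid a time-step restriction here) by noting it is lower order. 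For the convex-splitting choice $f^n(u_h^n)=(u_h^n)^3-u_h^{n-1}$ the bad term is $-\epsilon^{-1}(u_h^{n-1},u_h^n)_{\mathcal{T}_h}$, which is controlled by Cauchy--Schwarz and Young against the quartic term and the known quantity $\|u_h^{n-1}\|_{\mathcal{T}_h}^2$. In either case one arrives at a bound of the form $\langle\Phi(X),X\rangle\ge c\|X\|_{\star}^2 - C$, where $\|\cdot\|_\star$ is an appropriate norm on $\mathcal{X}_h$ built from $\|\bm p_h^n\|_{\mathcal{T}_h}$, $\|\bm q_h^n\|_{\mathcal{T}_h}$, the reduced-stabilization seminorms, and (via the HDG Poincar\'e inequality \eqref{HDG-poincare} together with \Cref{es_u} and \Cref{A_bound}) the full $\|u_h^n\|_{\mathcal{T}_h}$, $\|\phi_h^n\|_{\mathcal{T}_h}$, $\|\widehat u_h^n\|$, $\|\widehat\phi_h^n\|$; equivalence of norms on the finite-dimensional space then upgrades this to $\langle\Phi(X),X\rangle>0$ on a sufficiently large sphere.

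The one delicate point — and the step I expect to be the main obstacle — is making the reduction to $\Phi$ genuinely well-defined and continuous, and in particular handling the mean-value/constraint structure: the transmission conditions $\langle\widehat{\bm p}_h^n\cdot\bm n,\mu_1\rangle=0$ and $\langle\epsilon\widehat{\bm q}_h^n\cdot\bm n,\mu_2\rangle=0$ force the discrete conservation of mass, so $u_h^n$ inherits the mean of $u_h^0$, and $\phi_h^n$ (being only determined by \eqref{hdg01}–\eqref{hdg02} up to its interaction with $\mathring W_h$) must be sought with its mean pinned down correctly; one works on the affine subspace $\{w:(w,1)_{\mathcal{T}_h}=(u^0,1)_{\mathcal{T}_h}\}$ for $u_h^n$ and on $\mathring W_h$ for the zero-mean part of $\phi_h^n$, recovering $(\phi_h^n,1)_{\mathcal{T}_h}$ a posteriori from \eqref{hdg02} tested with $w_2\equiv1$. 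After the shift $u_h^n\mapsto u_h^n - \frac{1}{|\Omega|}(u^0,1)_{\mathcal{T}_h}$ the problem is posed on a genuine linear space and the coercivity argument above applies verbatim (the constant shift only perturbs the nonlinear term by controllable lower-order amounts). Continuity of $\Phi$ is immediate since every term is polynomial in the coefficients of $X$. Applying the finite-dimensional fixed-point corollary of Brouwer's theorem then yields a zero of $\Phi$, i.e. a solution of \eqref{HDG-hill} at level $n$, and the induction closes; for the CS scheme this works for any $\Delta t$, while for the FI scheme the absorption of $-\epsilon^{-1}\|u_h^n\|_{\mathcal{T}_h}^2$ is harmless for existence (it only matters for uniqueness and stability later), so no time-step restriction is needed for \Cref{exist} itself.
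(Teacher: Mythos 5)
Your overall strategy is the same as the paper's: reduce to the mean-zero/constrained finite-dimensional space, shift $u$ by $\frac{1}{|\Omega|}(u^0,1)_{\mathcal T_h}$, recover the mean of $\phi_h^n$ a posteriori from \eqref{hdg02} tested with $w_2\equiv 1$, and apply the ``acute-angle'' corollary of Brouwer's theorem (the paper cites \cite[II, Lemma 1.4]{Roger_Book_1977}). Continuity and the absence of a time-step restriction for existence are also as in the paper. However, the central coercivity computation as you have written it does not work. You propose to test \eqref{hdg01} with $(\bm r_1,w_1,\mu_1)=(\bm p_h^n,-\phi_h^n,-\widehat\phi_h^n)$, but from the definition \eqref{def_A} one finds
\begin{align*}
\mathcal A(\bm p_h^n,\phi_h^n,\widehat\phi_h^n;\bm p_h^n,-\phi_h^n,-\widehat\phi_h^n)
=\|\bm p_h^n\|_{\mathcal T_h}^2-2(\phi_h^n,\nabla\cdot\bm p_h^n)_{\mathcal T_h}+2\langle\widehat\phi_h^n,\bm p_h^n\cdot\bm n\rangle_{\partial\mathcal T_h}
-\|h_K^{-1/2}(\Pi_k^{\partial}\phi_h^n-\widehat\phi_h^n)\|_{\partial\mathcal T_h}^2,
\end{align*}
which has a \emph{negative} stabilization term and uncancelled divergence cross terms; it is not an energy, and \Cref{summertic_A} applied to this choice returns the same expression rather than producing a cancellation with $-(\phi_h^n,w_2)_{\mathcal T_h}$. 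Moreover, with $w_1=-\phi_h^n$ the time-derivative term is $-(\partial_t^+u_h^n,\phi_h^n)_{\mathcal T_h}$, so your claimed identity $(\partial_t^+u_h^n,u_h^n)_{\mathcal T_h}=\frac{1}{2\Delta t}(\|u_h^n\|^2-\|u_h^{n-1}\|^2+\|u_h^n-u_h^{n-1}\|^2)$ never appears; and if instead you take $w_1=u_h^n$ to produce it, the term $\mathcal A(\bm p_h^n,\phi_h^n,\widehat\phi_h^n;\cdot,u_h^n,\cdot)$ is left uncontrolled. The three outcomes you list (a clean energy from the diagonal $\mathcal A$ terms, the $(\partial_t^+u_h^n,u_h^n)$ identity, and cancellation of the coupling term) cannot all be produced by any single consistent choice here.

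The correct pairing --- which is what the paper's inner product on the constrained space $X$ encodes --- is to test \eqref{hdg01} with $(\Delta t\,\bm p_h^n,\Delta t\,\phi_h^n,\Delta t\,\widehat\phi_h^n)$ and \eqref{hdg02} with $(\bm q_h^n,u_h^n,\widehat u_h^n)$ and add: then both diagonal terms are genuine energies by \Cref{energy_of_A}, the $\Delta t$-weighting makes the cross terms $(u_h^n,\phi_h^n)_{\mathcal T_h}$ (from the time-difference term) and $-(\phi_h^n,u_h^n)_{\mathcal T_h}$ (from the coupling term) cancel exactly, and the only leftover is $-(u_h^{n-1},\phi_h^n)_{\mathcal T_h}$, which is absorbed into $\Delta t\bigl(\|\bm p_h^n\|_{\mathcal T_h}^2+\|h_K^{-1/2}(\Pi_k^\partial\phi_h^n-\widehat\phi_h^n)\|_{\partial\mathcal T_h}^2\bigr)$ via the HDG Poincar\'e inequality \eqref{HDG-poincare} and \Cref{es_u} on the mean-zero constrained space. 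Your treatment of the nonlinear term and of the mean-value bookkeeping is fine, but as written the proof has a genuine gap at this step and needs the test functions corrected.
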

\begin{proof}
	We take $(\bm r_1, w_1,\mu_1)=(\bm 0,1,1)$ and  
	$(\bm r_2, w_2,\mu_2)=(\bm 0,1,1)$ in \eqref{HDG-hill} to get
	\begin{align}
	(u_h^n,1)_{\mathcal T_h}=(u_h^{n-1},1)_{\mathcal T_h}=\cdots=(u^0,1)_{\mathcal T_h},\quad
	(\phi_h^n,1)_{\mathcal T_h}=(\epsilon^{-1}f^n(u^n_h),1)_{\mathcal T_h}. \label{mass_conservative}
	\end{align} 
	Introduce the space
	\begin{align*}
	X=\left\{(\overline{\bm p}_h,\overline \phi_h,\widehat{\overline \phi}_h, \overline{\bm q}_h, \overline u_h,\widehat{\overline{u}}_h)\in [\bm V_h\times \mathring W_h\times M_h]^2:\right.\\
	\left.\mathcal A(\overline{\bm p}_h, \overline\phi_h,\widehat{\overline\phi}_h;\bm r_1,0,0)=
	\mathcal A(\overline{\bm q}_h,\overline u_h,\widehat{\overline u}_h;\bm r_2,0,0)=0, 
	\forall \bm r_1,\bm r_2 \in \bm V_h\right\},
	\end{align*}
	with the inner product

	\begin{align*}
	&\left( (\overline{\bm p}_h,\overline \phi_h,\widehat{\overline \phi}_h, \overline{\bm q}_h, \overline u_h,\widehat{\overline{u}}_h), (\bm r_1,w_1,\mu_1,\bm r_2,w_2,\mu_2)\right)_X\\
	&	=(\overline{\bm p}_h,  \bm r_1)_{\mathcal T_h}+\big(h_K^{-1/2}(\Pi_k^{\partial} \overline\phi_h-\widehat {\overline \phi}_h), (\Pi_k^{\partial} w_1-\mu_1) \big)_{\partial\mathcal T_h}\\
	&+\epsilon (\overline {\bm q}_h, \bm r_2)_{\mathcal T_h}+\epsilon \big(h_K^{-1/2}(\Pi_k^{\partial} \overline u_h-\widehat{\overline u}_h), (\Pi_k^{\partial} w_2-\mu_2) \big)_{\partial\mathcal T_h}.
	\end{align*}
	Thanks to \Cref{es_u} and the HDG Sobolev inequality \eqref{discrete-soblev-hdg}, the inner product is well-defined on space $X$ with an induced norm  $\|\cdot\|_X^2=(\cdot,\cdot)_{X}$.   Now we introduce a nonlinear operator $G: X \to X$ as follows
	\begin{align*}
	\hspace{-1em}&\hspace{-1em}\left(G(\overline{\bm p}_h^n,\overline\phi_h^n,\widehat{\overline\phi}_h^n, \overline {\bm q}_h^n,\overline u_h^n,\widehat {\overline u}_h^n),(\bm r_1,w_1,\mu_1,\bm r_2,w_2,\mu_2)\right)_X\\
	&:= \Delta t \mathcal A(\overline{\bm p}_h^n, \overline\phi_h^n,\widehat{\overline\phi}_h^n;\bm r_1,w_1,\mu_1)+(\overline u_h^n- u_h^{n-1},w_1)_{\mathcal T_h}\\
	&\quad +\epsilon \mathcal A(\overline{\bm q}_h^n,\overline u_h^n,\widehat{\overline u}_h^n;\bm r_2,w_2,\mu_2)+(\epsilon^{-1}f^n(\overline u_h^n+\alpha),w_2)_{\mathcal T_h}-(\overline \phi_h^n,w_2)_{\mathcal T_h},
	\end{align*}
	where $\alpha=\frac{1}{|\Omega|}(u^0,1)_{\mathcal T_h}$. By  \Cref{A_bound,es_u} and the HDG Sobolev inequality \eqref{discrete-soblev-hdg}, the operator $G$ is a continuous operator. Therefore, 
	\begin{align*}
	\hspace{-1em}&\hspace{-1em}\left(G(\overline{\bm p}_h^n,\overline\phi_h^n,\widehat{\overline \phi}_h^n, \overline {\bm q}_h^n, \overline u_h^n,\widehat{\overline u}_h^n), 
	(\overline{\bm p}_h^n,\overline\phi_h^n,\widehat{\overline \phi}_h^n, \overline {\bm q}_h^n, \overline u_h^n,\widehat{\overline u}_h^n)\right)_X\\
	& = \Delta t \left(\|\overline{\bm p}_h^n\|^2_{\mathcal T_h}+\|h_K^{-1/2}(\Pi_k^{\partial}\overline \phi_h^n-\widehat{\overline \phi}_h^n)\|_{\partial\mathcal T_h}^2\right)-( u_h^{n-1},\overline \phi_h^n)_{\mathcal T_h}\\
	&\quad + \epsilon \left( \|\overline{\bm q}_h^n\|^2_{\mathcal T_h}+\|h_K^{-1/2}(\Pi_k^{\partial}\overline u_h^n-\widehat{\overline u}_h^n)\|_{\partial\mathcal T_h}^2\right)+(\epsilon^{-1}f^n(\overline u_h^n+\alpha),\overline u_h^n)_{\mathcal T_h}.
	\end{align*}
	By \Cref{A_bound}, the Cauchy-Schwarz inequality and noting that
	\begin{align*}
	\big(f^n(\overline u_h^n+\alpha),  \overline u_h^n \big)_{\mathcal T_h} &\geq  \|\overline u_h^n\|_{L^4(\Omega)}^4 -\|\overline u_h^n\|_{L^2(\Omega)}^2 -\| u_h^{n-1}\|_{L^2(\Omega)}^2\\
	&\geq \frac{1}{2}  \|\overline u_h^n\|_{L^4(\Omega)}^4-\| u_h^{n-1}\|_{L^2(\Omega)}^2-C.
	\end{align*}
	Hence, we have 
	\begin{align*}
	\hspace{-1em}&\hspace{-1em} \left(G(\overline{\bm p}_h^n,\overline\phi_h^n,\widehat{\overline \phi}_h^n, \overline {\bm q}_h^n, \overline u_h^n,\widehat{\overline u}_h^n), 
	(\overline{\bm p}_h^n,\overline\phi_h^n,\widehat{\overline \phi}_h^n, \overline {\bm q}_h^n, \overline u_h^n,\widehat{\overline u}_h^n)\right)_X\\
	&\ge \frac{\epsilon\Delta t}{2}\|(\overline{\bm p}_h^n,\overline\phi_h^n,\widehat{\overline \phi}_h^n, \overline {\bm q}_h^n, \overline u_h^n,\widehat{\overline u}_h^n)\|_X^2+ \frac 1 2 \|\overline u_h^n\|_{L^4(\Omega)}^4 -\| u_h^{n-1}\|_{L^2(\Omega)}^2-C.
	\end{align*}
	Hence for the fixed $u_h^{n-1}$,  if
	$\|(\overline{\bm p}_h^n,\overline\phi_h^n,\widehat{\overline \phi}_h^n, \overline {\bm q}_h^n, \overline u_h^n,\widehat{\overline u}_h^n)\|_X$ is large enough, we have
	\begin{align*}
	\left(G(\overline{\bm p}_h^n,\overline\phi_h^n,\widehat{\overline \phi}_h^n, \overline {\bm q}_h^n, \overline u_h^n,\widehat{\overline u}_h^n),(\overline{\bm p}_h,\overline\phi_h^n,\widehat{\overline \phi}_h^n, \overline {\bm q}_h^n, \overline u_h^n,\widehat{\overline u}_h^n)\right)_X>0.
	\end{align*}
	Then \cite[II, Lemma 1.4]{Roger_Book_1977} implies the 
	existence of $(\overline{\bm p}_h^n,\overline\phi_h^n,\widehat{\overline \phi}_h^n, \overline {\bm q}_h^n, \overline u_h^n,\widehat{\overline u}_h^n)$ such that $G(\overline{\bm p}_h^n,\overline\phi_h^n,\widehat{\overline \phi}_h^n, \overline {\bm q}_h^n, \overline u_h^n,\widehat{\overline u}_h^n)=\bm 0$.
	
	Now we define
	\begin{align}\label{def-solution}
	(\bm p^n_h,\phi^n_h,\widehat{\phi}^n_h, \bm q_h^n, u_h^n,\widehat u_h^n)
	=(\overline{\bm p}_h^n,\overline\phi_h^n+\beta,\widehat{\overline \phi}_h^n+\beta, \overline {\bm q}_h^n, \overline u_h^n+\alpha,\widehat{\overline u}_h^n+\alpha),
	\end{align}
	with
	\begin{align*}
	\beta=\frac{1}{|\Omega|}(\epsilon^{-1}f^n(\overline u_h^{n}+\alpha),1)_{\mathcal T_h}.
	\end{align*}
	We proceed to show that $(\bm p^n_h,\phi^n_h,\widehat{\phi}^n_h, \bm q_h^n, u_h^n,\widehat u_h^n)$ is the solution to the HDG scheme  \eqref{HDG-hill}. 
	
	Since
	\begin{align*}
	&-(\beta,\nabla\cdot\bm r_1)_{\mathcal T_h}+\langle\beta,\bm r_1\cdot\bm n \rangle_{\partial\mathcal T_h}=0, \\
	&-(\alpha,\nabla\cdot\bm r_2)_{\mathcal T_h}+\langle\alpha,\bm r_2\cdot\bm n \rangle_{\partial\mathcal T_h}=0,
	\end{align*}	
	and $(1,w_1)_{\mathcal T_h}=(1,w_2)_{\mathcal T_h}=0$, one gets that
	\begin{align} \label{proof1}
	&\Delta t \mathcal A({\bm p}_h^n, \phi_h^n,\widehat{\phi}_h^n;\bm r_1,w_1,\mu_1)+( u_h^n- u_h^{n-1},w_1)_{\mathcal T_h}\nonumber\\
	&\quad +\epsilon \mathcal A({\bm q}_h^n, u_h^n,\widehat{ u}_h^n;\bm r_2,w_2,\mu_2)-( \phi_h^n,w_2)_{\mathcal T_h}\nonumber\\
	&\quad +(\epsilon^{-1}f^n( u_h^n),w_2)_{\mathcal T_h}=0
	\end{align}
	for all $(\bm r_h,w_1,\mu_1,\bm r_2,w_2,\mu_2)\in [\bm V_h\times \mathring W_h\times M_h]^2$.
	
	Noting that  Eqs. \eqref{proof1} only hold for  $w_1, w_2 \in \mathring{W_h}$. Next, we  prove that they  are true for the  $w_1, w_2 \in W_h$.
	A  direct calculation gives
	\begin{align} \label{proof2}
	&\Delta t \mathcal A({\bm p}_h^n, \phi_h^n,\widehat{\phi}_h^n;\bm 0,1,1)+( u_h^n- u_h^{n-1},1)_{\mathcal T_h}
	=( u_h^n- u_h^{n-1},1)_{\mathcal T_h} =0.
	\end{align} 
	Likewise, 
	\begin{align} \label{proof3}
	&\epsilon \mathcal A({\bm q}_h^n, u_h^n,\widehat{ u}_h^n;\bm 0,1,1)+(\epsilon^{-1}f^n( u_h^n),1)_{\mathcal T_h}-( \phi_h^n,1)_{\mathcal T_h}\nonumber\\
	&\quad=(\epsilon^{-1}f^n( u_h^n),1)_{\mathcal T_h}-( \phi_h^n,1)_{\mathcal T_h}\nonumber\\
	&\quad=(\epsilon^{-1}f^n( \overline u_h^n+\alpha),1)_{\mathcal T_h}-( \overline{\phi}_h^n+\beta,1)_{\mathcal T_h}\nonumber\\
	&\quad=(\epsilon^{-1}f^n( \overline u_h^n+\alpha),1)_{\mathcal T_h}-( \beta,1)_{\mathcal T_h}		\nonumber\\
	&\quad=0.
	\end{align}
	Collectively, \eqref{proof1}, \eqref{proof2}, \eqref{proof3} implies
	\begin{align*} 
	&\Delta t \mathcal A({\bm p}_h^n, \phi_h^n,\widehat{\phi}_h^n;\bm r_1,w_1,\mu_1)+( u_h^n- u_h^{n-1},w_1)_{\mathcal T_h}\nonumber\\
	&\quad +\epsilon \mathcal A({\bm q}_h^n, u_h^n,\widehat{ u}_h^n;\bm r_2,w_2,\mu_2)+(\epsilon^{-1}f^n( u_h^n),w_2)_{\mathcal T_h}-( \phi_h^n,w_2)_{\mathcal T_h}=0,
	\end{align*}
	holds for all $(\bm r_h,w_1,\mu_1,\bm r_2,w_2,\mu_2)\in [\bm V_h\times  W_h\times M_h]^2$. 
	This completes the proof.
\end{proof}

Next we show that the solution to the fully discrete scheme with the convex-splitting $f^n(u_h^n) = (u_h^n)^3-u_h^{n-1}$ is unique, while the solution is only conditionally unique for the case of the Backward Euler temporal discretization.
\begin{theorem}\label{unique}
	The solution to the HDG scheme of \eqref{HDG-hill} with the splitting  $f^n(u_h^n) = (u_h^n)^3-u_h^{n-1}$ is unique. On the other hand, the solution corresponding to the Backward Euler discretization is unique provided that $\Delta t\leq C \epsilon^3$.
\end{theorem}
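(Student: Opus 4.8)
The plan is to argue by induction on the time level $n$. Since $u_h^0$ is uniquely determined by \eqref{hdg03} and \Cref{exist} already supplies a solution, it suffices, for fixed $n\ge 1$ and fixed $u_h^{n-1}$, to prove that \eqref{HDG-hill} has a unique solution at step $n$. Suppose $(\bm p_h^n,\phi_h^n,\widehat\phi_h^n,\bm q_h^n,u_h^n,\widehat u_h^n)$ and $(\tilde{\bm p}_h^n,\tilde\phi_h^n,\widehat{\tilde\phi}_h^n,\tilde{\bm q}_h^n,\tilde u_h^n,\widehat{\tilde u}_h^n)$ are two solutions at step $n$ associated with the same $u_h^{n-1}$, and write $(\delta\bm p,\delta\phi,\delta\widehat\phi,\delta\bm q,\delta u,\delta\widehat u)$ for their componentwise differences. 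Subtracting the two copies of \eqref{hdg01} and of \eqref{hdg02} gives, for all test functions in $\bm V_h\times W_h\times M_h$,
\begin{align*}
\tfrac{1}{\Delta t}(\delta u,w_1)_{\mathcal{T}_h}+\mathcal A(\delta\bm p,\delta\phi,\delta\widehat\phi;\bm r_1,w_1,\mu_1)&=0,\\
\epsilon^{-1}(\delta f,w_2)_{\mathcal{T}_h}+\epsilon\,\mathcal A(\delta\bm q,\delta u,\delta\widehat u;\bm r_2,w_2,\mu_2)-(\delta\phi,w_2)_{\mathcal{T}_h}&=0,
\end{align*}
where $\delta f=(u_h^n)^3-(\tilde u_h^n)^3$ for the convex splitting and $\delta f=(u_h^n)^3-(\tilde u_h^n)^3-\delta u$ for backward Euler (the $u_h^{n-1}$-terms cancel). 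Since both solutions satisfy \eqref{mass_conservative}, i.e.\ $(u_h^n,1)_{\mathcal{T}_h}=(u_h^{n-1},1)_{\mathcal{T}_h}=(\tilde u_h^n,1)_{\mathcal{T}_h}$, we get $(\delta u,1)_{\mathcal{T}_h}=0$, so $\delta u\in\mathring{W}_h$.

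The heart of the argument is a duality against the HDG inverse Laplacian of \Cref{-1h}. Let $c=\frac{1}{|\Omega|}(\delta\phi,1)_{\mathcal{T}_h}$. Because adding one and the same constant to the scalar component and its numerical trace in the first slot of $\mathcal A$ leaves $\mathcal A$ unchanged (a divergence-theorem identity together with the fact that $\Pi_k^\partial$ reproduces constants), the first difference identity may be rewritten as $\mathcal A(\delta\bm p,\delta\phi-c,\delta\widehat\phi-c;\bm r_1,w_1,\mu_1)=-\tfrac{1}{\Delta t}(\delta u,w_1)_{\mathcal{T}_h}$. Restricting to $(\bm r_1,w_1,\mu_1)\in\bm V_h\times\mathring{W}_h\times M_h$ and using \eqref{def_-1_h}, the right side equals $-\tfrac{1}{\Delta t}\mathcal A(\bm{\Pi}_{\bm V}\delta u,\Pi_W\delta u,\Pi_M\delta u;\bm r_1,w_1,\mu_1)$, so the triple $(\delta\bm p+\tfrac{1}{\Delta t}\bm{\Pi}_{\bm V}\delta u,\ \delta\phi-c+\tfrac{1}{\Delta t}\Pi_W\delta u,\ \delta\widehat\phi-c+\tfrac{1}{\Delta t}\Pi_M\delta u)$ is annihilated by $\mathcal A(\cdot;\cdot)$ tested over all of $\bm V_h\times\mathring{W}_h\times M_h$. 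This triple itself lies in $\bm V_h\times\mathring{W}_h\times M_h$, so by the discrete LBB bound \eqref{LBBa} (combined with the HDG Poincar\'{e} inequality \eqref{HDG-poincare}) it must vanish, giving
\[
(\delta\bm p,\ \delta\phi-c,\ \delta\widehat\phi-c)=-\tfrac{1}{\Delta t}(\bm{\Pi}_{\bm V}\delta u,\ \Pi_W\delta u,\ \Pi_M\delta u).
\]

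Next I close the energy estimate. Taking $(\bm r_2,w_2,\mu_2)=(\delta\bm q,\delta u,\delta\widehat u)$ in the second difference identity and using \Cref{energy_of_A}, together with $(\delta\phi,\delta u)_{\mathcal{T}_h}=(\delta\phi-c,\delta u)_{\mathcal{T}_h}=-\tfrac{1}{\Delta t}(\Pi_W\delta u,\delta u)_{\mathcal{T}_h}=-\tfrac{1}{\Delta t}\|\delta u\|_{-1,h}^2$ (by $\delta u\in\mathring{W}_h$ and \eqref{-1_equal}), we obtain
\[
\tfrac{1}{\Delta t}\|\delta u\|_{-1,h}^2+\epsilon\Big(\|\delta\bm q\|_{\mathcal{T}_h}^2+\|h_K^{-1/2}(\Pi_k^\partial\delta u-\delta\widehat u)\|_{\partial\mathcal{T}_h}^2\Big)+\epsilon^{-1}(\delta f,\delta u)_{\mathcal{T}_h}=0.
\]
For the convex splitting, $(\delta f,\delta u)_{\mathcal{T}_h}=\int_\Omega\big((u_h^n)^3-(\tilde u_h^n)^3\big)(u_h^n-\tilde u_h^n)\ge 0$ by the monotonicity of $t\mapsto t^3$, so all three terms must vanish; in particular $\|\delta u\|_{-1,h}=0$, hence $\delta u=0$ since $\|\cdot\|_{-1,h}$ is a norm on $\mathring{W}_h$, and also $\delta\bm q=0$ and $\Pi_k^\partial\delta u=\delta\widehat u$. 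Feeding $\delta u=0$ back into the identity of the previous paragraph and into $(\delta\phi,1)_{\mathcal{T}_h}=\epsilon^{-1}(\delta f,1)_{\mathcal{T}_h}$ (from \eqref{mass_conservative}) forces $c=0$ and then all six differences to vanish. For backward Euler, $(\delta f,\delta u)_{\mathcal{T}_h}\ge-\|\delta u\|_{\mathcal{T}_h}^2$, so the identity yields
\[
\tfrac{1}{\Delta t}\|\delta u\|_{-1,h}^2+\epsilon\Big(\|\delta\bm q\|_{\mathcal{T}_h}^2+\|h_K^{-1/2}(\Pi_k^\partial\delta u-\delta\widehat u)\|_{\partial\mathcal{T}_h}^2\Big)\le\epsilon^{-1}\|\delta u\|_{\mathcal{T}_h}^2,
\]
and I would estimate the right side by \eqref{Cauthy} with $(w_h,\mu_h)=(\delta u,\delta\widehat u)$, namely $\|\delta u\|_{\mathcal{T}_h}^2\le C\|\delta u\|_{-1,h}\big(\|\nabla\delta u\|_{\mathcal{T}_h}+\|h_K^{-1/2}(\Pi_k^\partial\delta u-\delta\widehat u)\|_{\partial\mathcal{T}_h}\big)$, then use \Cref{es_u} — whose hypothesis \eqref{Ap} for $(\delta\bm q,\delta u,\delta\widehat u)$ follows from the second difference identity with $w_2=\mu_2=0$ — to bound $\|\nabla\delta u\|_{\mathcal{T}_h}$ by $C\big(\|\delta\bm q\|_{\mathcal{T}_h}+\|h_K^{-1/2}(\Pi_k^\partial\delta u-\delta\widehat u)\|_{\partial\mathcal{T}_h}\big)$. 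A weighted Young's inequality then absorbs the right side, leaving $\tfrac{1}{2\Delta t}\|\delta u\|_{-1,h}^2+(\epsilon-C\epsilon^{-2}\Delta t)\big(\|\delta\bm q\|_{\mathcal{T}_h}^2+\|h_K^{-1/2}(\Pi_k^\partial\delta u-\delta\widehat u)\|_{\partial\mathcal{T}_h}^2\big)\le 0$, and the condition $\Delta t\le C\epsilon^3$ makes the coefficient positive, so again all differences vanish.

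The main obstacle is that $\mathcal A$ is not symmetric but has a saddle-point structure, so one cannot produce sign-definite quantities simply by testing each equation with the other's trial functions and adding: the cross terms built from the off-diagonal part of $\mathcal A$ refuse to cancel. This is precisely what the negative norm $\|\cdot\|_{-1,h}$ and the HDG inverse Laplacian are designed to circumvent, via the duality step above; the price to pay is the LBB injectivity argument and the preliminary shift of $\delta\phi,\delta\widehat\phi$ by their common (nonzero, because of the nonlinearity) mean so that the inverse-Laplacian identity applies. The remaining ingredients — monotonicity of the cubic for the convex splitting, and an interpolation-plus-Young absorption producing $\Delta t\le C\epsilon^3$ for backward Euler — are routine.
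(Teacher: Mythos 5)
Your proof is correct, but it takes a genuinely different (and more elaborate) route than the paper's. The paper simply tests the first difference equation with $(\Delta t\,\delta\bm p,\Delta t\,\delta\phi,\Delta t\,\delta\widehat\phi)$ and the second with $(\delta\bm q,\delta u,\delta\widehat u)$; by \Cref{energy_of_A} each $\mathcal A$-term is then a sum of squares, and the two cross terms $(\delta u,\delta\phi)_{\mathcal T_h}$ and $-(\delta\phi,\delta u)_{\mathcal T_h}$ cancel on addition, yielding directly
\begin{align*}
\Delta t\Big(\|\delta\bm p\|_{\mathcal T_h}^2+\|h_K^{-1/2}(\Pi_k^{\partial}\delta\phi-\delta\widehat\phi)\|_{\partial\mathcal T_h}^2\Big)+\epsilon\Big(\|\delta\bm q\|_{\mathcal T_h}^2+\|h_K^{-1/2}(\Pi_k^{\partial}\delta u-\delta\widehat u)\|_{\partial\mathcal T_h}^2\Big)+\epsilon^{-1}(\delta f,\delta u)_{\mathcal T_h}=0,
\end{align*}
after which the convex-splitting and Backward Euler cases are handled exactly as in your last paragraph. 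So the ``main obstacle'' you describe --- that the cross terms built from $\mathcal A$ refuse to cancel --- does not actually arise: no duality is needed because each equation is tested against its own solution differences, not the other's. Your detour through the HDG inverse Laplacian, the constant shift of $\delta\phi$, the LBB injectivity argument, and the identity $(\delta\phi,\delta u)_{\mathcal T_h}=-\tfrac{1}{\Delta t}\|\delta u\|_{-1,h}^2$ is nevertheless sound (the shift-invariance of $\mathcal A$ under adding a common constant to the scalar and its trace, and the recovery of all six differences at the end, are all justified), and it replaces the paper's term $\Delta t(\|\delta\bm p\|_{\mathcal T_h}^2+\dots)$ by the equivalent quantity $\tfrac{1}{\Delta t}\|\delta u\|_{-1,h}^2$; this is precisely the machinery the paper deploys later in the negative-norm error analysis (\Cref{Err-neg}), so your argument buys a preview of that technique at the cost of invoking \Cref{LBB}, \Cref{-1h} and \eqref{Cauthy} where two lines of direct testing suffice. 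The two derivations of the time-step restriction $\Delta t\le C\epsilon^3$ for Backward Euler are essentially the same interpolation-plus-Young absorption.
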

\begin{proof} 
	Given $(\bm p_{h}^{n-1},\phi_{h}^{n-1},\widehat{\phi}_{h}^{n-1},\bm q_{h}^{n-1},u_{h}^{n-1},\widehat u_{h}^{n-1})$,
	we let $(\bm p_{h,1}^n,\phi_{h,1}^n,\widehat{\phi}_{h,1}^n,\bm q_{h,1}^n,\\u_{h,1}^n,\widehat u_{h,1}^n)$
	and $(\bm p_{h,2}^n,\phi_{hthe rest of ,2}^n,\widehat{\phi}_{h,2}^n, \bm q_{h,2}^n,u_{h,2}^n,\widehat u_{h,2}^n)$
	be two solutions of \eqref{HDG-hill}. Let
	\begin{align*}
	\bm P_h^n:=\bm p_{h,1}^n-\bm p_{h,2}^n,\qquad
	\Phi_h^n:=\phi_{h,1}^n-\phi_{h,2}^n,\qquad
	\widehat{\Phi}_h^n:=\widehat{\phi}_{h,1}^n-\widehat{\phi}_{h,2}^n,\\	
	\bm Q_h^n:=\bm q_{h,1}^n-\bm q_{h,2}^n,\qquad
	U_h^n:=u_{h,1}^n-u_{h,2}^n,\qquad
	\widehat U_h^n:=\widehat u_{h,1}^n-\widehat u_{h,2}^n.
	\end{align*}
	After inserting the two solutions into \eqref{HDG-hill}, subtracting the two  equations gives
	\begin{subequations}
		\begin{align}
		\frac{1}{\Delta t}( U_h^n,w_1)_{\mathcal{T}_h}+\mathcal A(\bm P_h^n,\Phi_h^n,\widehat{\Phi}_h^n;\bm r_1, w_1, \mu_1)&=0,\label{u1}\\
		\epsilon\mathcal{A}(\bm Q_h^n,U^n_h,\widehat{U}_h^n;\bm r_2,w_2,\mu_2)-(\Phi_h^n,w_2)_{\mathcal{T}_h}&\nonumber\\
		+\epsilon^{-1}\Big(f^n(u_{h,1}^n)-f^n(u_{h,2}^n),w_2\Big)_{\mathcal{T}_h}&=0.\label{u2}
		\end{align}
	\end{subequations}
	Take $(\bm r_1, w_1, \mu_1, \bm r_2, w_2, \mu_2)=(\Delta t\bm  P_h^n, \Delta t \Phi_h^n, \Delta t \widehat{\Phi}_h^n, \bm Q_h^n,U_h^n,\widehat U_h^n)$ to get
	\begin{align*}
	(U_h^n,\Phi_h^n)_{\mathcal T_h}+\Delta t\left(\|\bm P_h^n\|_{\mathcal T_h}^2+\|h_K^{-1/2}(\Pi_k^{\partial}\Phi_h^n-\widehat{\Phi}_h^n)\|_{\partial\mathcal T_h}^2\right)&=0, \nonumber \\
	\epsilon\left( \|\bm Q_h^n\|_{\mathcal T_h}^2+\|h_K^{-1/2}(\Pi_k^{\partial}U_h^n-\widehat U_h^n)\|_{\partial\mathcal T_h}^2\right)& \nonumber \\
	+\epsilon^{-1}\Big(f^n(u_{h,1}^n)-f^n(u_{h,2}^n),U_h^n\Big)_{\mathcal T_h}-(\Phi_h^n,U_h^n)_{\mathcal T_h}&=0.
	\end{align*}
	We add the above two equations together  to get
	\begin{align}\label{Final}
	& \Delta t\left(\|\bm P_h^n\|_{\mathcal T_h}^2+\|h_K^{-1/2}(\Pi_k^{\partial}\Phi_h^n-\widehat{\Phi}_h^n)\|_{\mathcal T_h}^2\right) \nonumber \\
	&\quad +\epsilon\left( \|\bm Q_h^n\|_{\mathcal T_h}^2+\|h_K^{-1/2}(\Pi_k^{\partial}U_h^n-\widehat U_h^n)\|_{\mathcal T_h}^2\right) \nonumber \\
	&\quad +\epsilon^{-1} \left(f^n(u_{h,1}^n)-f^n(u_{h,2}^n),U_h^n\right)_{\mathcal T_h}
	=0.
	\end{align}
	
	In the case of the convex-splitting $f^n(u_h^n) = (u_h^n)^3-u_h^{n-1}$, one has 
	\begin{align*}
	\epsilon^{-1}\left(f^n(u_{h,1}^n)-f^n(u_{h,2}^n),U_h^n\right)_{\mathcal T_h}&=\epsilon^{-1}\left((u_{h,1}^n)^3-(u_{h,2}^n)^3,U_h^n\right)_{\mathcal T_h} \\
	&=\epsilon^{-1}\left((u_{h,1}^n)^2+u_{h,1}^n u_{h,2}^n+(u_{h,2}^n)^2,(U_h^n)^2\right)_{\mathcal T_h} \\
	&\geq 0.
	\end{align*}
	Hence all  terms on the left hand side of \eqref{Final} are nonnegative. It follows that
	\begin{align}\label{uniquness_proof_2}
	\begin{split}
	\|\bm P_h^n\|_{\mathcal T_h}^2+\|h_K^{-1/2}(\Pi_k^{\partial}\Phi_h^n-\widehat{\Phi}_h^n)\|_{\partial \mathcal T_h}^2=0,\\
	\|\bm Q_h^n\|_{\mathcal T_h}^2+\|h_K^{-1/2}(\Pi_k^{\partial}U_h^n-\widehat U_h^n)\|_{\partial \mathcal T_h}^2=0.
	\end{split}
	\end{align}
	Now, we take $(\bm r_1, w_1, \mu_1)=(\bm 0,1,1)$ in \eqref{u1} and  $(\bm r_2, w_2, \mu_2)=(\bm 0,1,1)$ in \eqref{u2} to get $(U_h^n,1)_{\mathcal T_h}=0$, $	(\Phi_h^n,1)_{\mathcal T_h}=0$.
	By \eqref{sobolev-02},  \Cref{es_u} and \eqref{uniquness_proof_2}, we have $(\bm P_{h}^{n},\Phi_{h}^{n},\widehat{\Phi}_{h}^{n}, \bm Q_{h}^{n},U_{h}^{n},\widehat U_{h}^{n})=0$.

	For the case of the Backward Euler method, i.e., 	In the case of the convex-splitting $f^n(u_h^n) = (u_h^n)^3-u_h^{n}$, one has 
	\begin{align*}
	\epsilon^{-1}\left(f^n(u_{h,1}^n)-f^n(u_{h,2}^n),U_h^n\right)_{\mathcal T_h}&=\epsilon^{-1}\left((u_{h,1}^n)^3-(u_{h,2}^n)^3,U_h^n\right)_{\mathcal T_h}-\epsilon^{-1}\|U_h^n\|_{\mathcal{T}_h}^2. 
	\end{align*}
	In light of  Eq. \eqref{u1} we estimate $\epsilon^{-1}\|U_h^n\|_{\mathcal{T}_h}^2$  using the continuity of $\mathcal{A}$ and \Cref{es_u} as follows
	\begin{align*}
	-\epsilon^{-1} \|U_h^n\|_{\mathcal{T}_h}^2 &\geq -\frac{\Delta t}{2}\left(\|\bm P_h^n\|_{\mathcal T_h}^2+\|h_K^{-1/2}(\Pi_k^{\partial}\Phi_h^n-\widehat{\Phi}_h^n)\|_{\mathcal T_h}^2\right) \\
	&\quad -C \Delta t \epsilon^{-2} \left(\|\bm Q_h^n\|_{\mathcal T_h}^2+\|h_K^{-1/2}(\Pi_k^{\partial}U_h^n-\widehat U_h^n)\|_{\partial \mathcal T_h}^2\right)
	\end{align*}
	Hence provided that $\Delta t\leq \frac{\epsilon^3}{2C}$, Eq. \eqref{Final} yields
	\begin{align*}
	& \Delta t\left(\|\bm P_h^n\|_{\mathcal T_h}^2+\|h_K^{-1/2}(\Pi_k^{\partial}\Phi_h^n-\widehat{\Phi}_h^n)\|_{\mathcal T_h}^2\right) 
	+\epsilon\left( \|\bm Q_h^n\|_{\mathcal T_h}^2+\|h_K^{-1/2}(\Pi_k^{\partial}U_h^n-\widehat U_h^n)\|_{\mathcal T_h}^2\right) \leq 0.
	\end{align*}
	One then obtains uniqueness of the solution for the Backward Euler scheme. This completes the proof.
\end{proof}

\subsection{Energy stability}
In this subsection, we analyze the stability of the HDG formulation \eqref{HDG-hill}, focusing on  the fully implicit scheme, i.e., $f^n(u_h^n)=(u_h^n)^3-u_h^n$.  We first recall some useful identities.

%
	%
	%
	%
	%
	%
	%
	%
	%
	%



\begin{lemma}\label{identity} 
	Let a,b be two real numbers. Let  $\{a_n\}_{n=0}^m$ and $\{b_n\}_{n=0}^m$ be two sequences such that $b_0=0$. Then the following  identities hold
	\begin{subequations}
		\begin{align}
		(a-b)a&=\frac{1}{2}[a^2-b^2+(a-b)^2],\label{identity1} \\
		(a^3-a)(a-b)\color{black}&=\frac{1}{4}\left[(a^2-1)^2-(b^2-1)^2+(a^2-b^2)^2 \right.\nonumber \\
		&\left.\quad +2(a(a-b))^2-2(a-b)^2\right],\label{identity2}\\
		(a^3-b)(a-b)\color{black}&=\frac{1}{4}\left[(a^2-1)^2-(b^2-1)^2+(a^2-b^2)^2\right.\nonumber\\
		&\left.\quad +2(a(a-b))^2+2(a-b)^2\right],\label{identity3} \\
		\sum_{n=1}^m a_n b_n (b_n-b_{n-1})&=-\frac{1}{2}\sum_{n=1}^m(a_n-a_{n-1})b_{n-1}^2+\frac{1}{2}\sum_{n=1}^m a_n(b_n-b_{n-1})^2 \nonumber \\
		& \quad+\frac{1}{2}a_m{b_m}^2. \label{identity4}
		\end{align}
	\end{subequations}
\end{lemma}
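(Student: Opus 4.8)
The plan is to prove the four identities in order, deriving each of the last three from the preceding ones so that only one short computation is actually needed. Identity \eqref{identity1} is immediate: expanding the bracket on the right gives $a^2-b^2+(a-b)^2=2a^2-2ab=2a(a-b)$, and dividing by $2$ proves it. This identity is the building block for everything else.

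For \eqref{identity2} I would avoid expanding the quartic directly. Writing $(a^3-a)(a-b)=(a^2-1)\,a(a-b)$ and applying \eqref{identity1} to the factor $a(a-b)$ gives
\[
(a^3-a)(a-b)=\tfrac12(a^2-1)(a^2-b^2)+\tfrac12(a^2-1)(a-b)^2 .
\]
Applying \eqref{identity1} a second time, now with $a$ replaced by $a^2-1$ and $b$ by $b^2-1$ (so that the difference becomes $a^2-b^2$), rewrites the first term as $\tfrac14\big[(a^2-1)^2-(b^2-1)^2+(a^2-b^2)^2\big]$, while the elementary identity $(a^2-1)(a-b)^2=(a(a-b))^2-(a-b)^2$ disposes of the second term. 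Collecting the pieces yields \eqref{identity2}. Then \eqref{identity3} follows at once from $(a^3-b)(a-b)=(a^3-a)(a-b)+(a-b)^2$: adding $(a-b)^2=\tfrac14\cdot 4(a-b)^2$ to \eqref{identity2} turns the $-2(a-b)^2$ term into $+2(a-b)^2$.

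For \eqref{identity4} I would apply \eqref{identity1} with $a=b_n$ and $b=b_{n-1}$ to obtain $b_n(b_n-b_{n-1})=\tfrac12\big[b_n^2-b_{n-1}^2+(b_n-b_{n-1})^2\big]$, multiply by $a_n$, and sum over $n=1,\dots,m$:
\[
\sum_{n=1}^m a_n b_n(b_n-b_{n-1})=\tfrac12\sum_{n=1}^m a_n(b_n^2-b_{n-1}^2)+\tfrac12\sum_{n=1}^m a_n(b_n-b_{n-1})^2 .
\]
The second sum already matches the stated right-hand side, so it remains to handle the first by summation by parts: shifting the index in $\sum_{n=1}^m a_n b_{n-1}^2=\sum_{n=0}^{m-1}a_{n+1}b_n^2$, regrouping, and using $b_0=0$ to drop the boundary contribution $a_1 b_0^2$ and to absorb the vanishing term $(a_1-a_0)b_0^2$, one obtains $\sum_{n=1}^m a_n(b_n^2-b_{n-1}^2)=a_m b_m^2-\sum_{n=1}^m(a_n-a_{n-1})b_{n-1}^2$. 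Substituting back gives \eqref{identity4}.

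I do not anticipate any genuine obstacle: all four statements are elementary. The only place demanding care is the index bookkeeping in the Abel summation for \eqref{identity4}, where the hypothesis $b_0=0$ must be invoked to discard the spurious boundary terms and align the telescoped sum with the claimed form.
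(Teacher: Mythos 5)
Your proof is correct. Note that the paper states \Cref{identity} without any proof at all (the identities are elementary and left to the reader), so there is no argument to compare against; your derivation — verifying \eqref{identity1} by direct expansion, obtaining \eqref{identity2} by factoring $(a^3-a)(a-b)=(a^2-1)\,a(a-b)$ and applying \eqref{identity1} twice (once to $a(a-b)$ and once with $a\mapsto a^2-1$, $b\mapsto b^2-1$), deducing \eqref{identity3} from $(a^3-b)(a-b)=(a^3-a)(a-b)+(a-b)^2$, and establishing \eqref{identity4} by combining \eqref{identity1} with an Abel summation in which $b_0=0$ kills the boundary term — is a clean and complete way to fill that gap, and the index bookkeeping in the summation by parts checks out.
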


The first energy identity makes use of the negative norm and takes the following form.

\begin{lemma}[Discrete energy identy I] \label{DEL}
	Let $(\bm p_h^n,\phi_h^n,\widehat \phi_h^n)$, $(\bm q_h^n,u_h^n,\widehat u_h^n)$ be the solution of \eqref{HDG-hill}. 
 The following energy identity holds for $m=1, \ldots, N$
 \begin{equation}\label{EnergyI1}
	\begin{aligned}
	& \quad \frac{1}{4\epsilon} \|( u_h^m)^2-1\|^2_{\mathcal{T}_h}+\frac{\epsilon}{2}\|\bm q_h^m\|^2_{\mathcal{T}_h} \\
	&\qquad+\frac{\epsilon}{2}\|h_K^{-1/2}(\Pi_k^{\partial}u_h^m-\widehat u_h^m)\|^2_{\partial\mathcal{T}_h}+\Delta t \sum_{n=1}^m\|\partial_{t}^+u_h^n\|^2_{-1,h}\\
	& \qquad+(\Delta t)^2\sum_{n=1}^m\left(\frac{\epsilon}{2}\|\partial_{t}^+ \bm q_h^n\|^2_{\mathcal{T}_h}+\frac{\epsilon}{2}\|h_K^{-1/2}\partial_{t}^+(\Pi_k^{\partial}u_h^n-\widehat u_h^n)\|^2_{\partial\mathcal{T}_h} \right)\\
	&\qquad +(\Delta t)^2 \sum_{n=1}^m\left(\frac{1}{4\epsilon}\|\partial_{t}^+(u_h^n)^2\|^2_{\mathcal{T}_h}+\frac{1}{2\epsilon}\|u_h^n\partial_{t}^+ u_h^n\|^2_{\mathcal{T}_h}-\frac{1}{2\epsilon}\|\partial_{t}^+u_h^n\|^2_{\mathcal{T}_h}\right)\\
	&=	\frac{1}{4\epsilon}\|( u_h^{0})^2-1\|^2_{\mathcal{T}_h}+\frac{\epsilon}{2}\|\bm q_h^{0}\|^2_{\mathcal{T}_h}+\frac{\epsilon}{2}\|h_K^{-1/2}(\Pi_k^{\partial}u_h^{0}-\widehat u_h^{0})\|^2_{\partial\mathcal{T}_h}.
	\end{aligned}
\end{equation}
\end{lemma}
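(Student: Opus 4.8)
The plan is to derive a one-step version of \eqref{EnergyI1} by testing the second block \eqref{hdg02} of the HDG scheme against the backward difference of its own solution, and then to multiply by $\Delta t$ and sum over $n=1,\dots,m$ so that all telescoping quantities collapse to the energies at time levels $m$ and $0$. Concretely, I would take $(\bm r_2,w_2,\mu_2)=(\partial_t^+\bm q_h^n,\partial_t^+u_h^n,\partial_t^+\widehat u_h^n)\in\bm V_h\times W_h\times M_h$ in \eqref{hdg02}, which gives
\[
(\epsilon^{-1}f^n(u_h^n),\partial_t^+u_h^n)_{\mathcal T_h}+\epsilon\,\mathcal A(\bm q_h^n,u_h^n,\widehat u_h^n;\partial_t^+\bm q_h^n,\partial_t^+u_h^n,\partial_t^+\widehat u_h^n)-(\phi_h^n,\partial_t^+u_h^n)_{\mathcal T_h}=0 .
\]
The three terms are then handled separately: the coupling term produces the dissipation $(\phi_h^n,\partial_t^+u_h^n)_{\mathcal T_h}=-\|\partial_t^+u_h^n\|_{-1,h}^2$, the $\mathcal A$-term produces an increment of the HDG gradient energy $\tfrac\epsilon2\big(\|\bm q_h^n\|_{\mathcal T_h}^2+\|h_K^{-1/2}(\Pi_k^\partial u_h^n-\widehat u_h^n)\|_{\partial\mathcal T_h}^2\big)$ plus a nonnegative numerical-dissipation remainder, and the nonlinear term produces an increment of the double-well energy $\tfrac1{4\epsilon}\|(u_h^n)^2-1\|_{\mathcal T_h}^2$ plus a remainder.

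For the coupling term, taking $(\bm r_1,w_1,\mu_1)=(\bm 0,1,1)$ in \eqref{hdg01} gives $(\partial_t^+u_h^n,1)_{\mathcal T_h}=0$, so $\partial_t^+u_h^n\in\mathring W_h$. Setting the constant $c_n=\tfrac1{|\Omega|}(\phi_h^n,1)_{\mathcal T_h}$ and using that $\mathcal A$ is unchanged when the same constant is added to its second and third (scalar) arguments (the divergence terms cancel by the element-wise divergence theorem and $\Pi_k^\partial$ fixes constants), equation \eqref{hdg01} shows that $(\bm p_h^n,\phi_h^n-c_n,\widehat\phi_h^n-c_n)\in\bm V_h\times\mathring W_h\times M_h$ solves the defining relation \eqref{def_-1_h} of the HDG Laplace inversion (\Cref{-1h}) with data $-\partial_t^+u_h^n$. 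By the unique solvability guaranteed by the discrete LBB condition \Cref{LBB} it therefore coincides with $(\bm\Pi_{\bm V}(-\partial_t^+u_h^n),\Pi_W(-\partial_t^+u_h^n),\Pi_M(-\partial_t^+u_h^n))$. Combining this with $(\partial_t^+u_h^n,1)_{\mathcal T_h}=0$ and the identity \eqref{-1_equal} yields
\[
(\phi_h^n,\partial_t^+u_h^n)_{\mathcal T_h}=(\phi_h^n-c_n,\partial_t^+u_h^n)_{\mathcal T_h}=-(\Pi_W(\partial_t^+u_h^n),\partial_t^+u_h^n)_{\mathcal T_h}=-\|\partial_t^+u_h^n\|_{-1,h}^2 .
\]

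The main obstacle is the $\mathcal A$-term, because $\mathcal A$ is not symmetric, so the naive polarization identity for $\mathcal A(X^n;X^n-X^{n-1})$ is unavailable; the remedy is to exploit the flux equation contained in \eqref{hdg02}. Expanding $\mathcal A(\bm q_h^n,u_h^n,\widehat u_h^n;\bm q_h^n-\bm q_h^{n-1},u_h^n-u_h^{n-1},\widehat u_h^n-\widehat u_h^{n-1})$ from \eqref{def_A}, the first three (volume and trace) terms vanish by \eqref{hdg02} tested with $(\bm q_h^n-\bm q_h^{n-1},0,0)$; the two divergence terms involving $u_h^n-u_h^{n-1}$ and $\widehat u_h^n-\widehat u_h^{n-1}$ collapse to $(\bm q_h^n,\bm q_h^n-\bm q_h^{n-1})_{\mathcal T_h}$ after subtracting the flux equation of \eqref{hdg02} at levels $n$ and $n-1$ and testing the difference with $\bm r_2=\bm q_h^n$; and the reduced-stabilization term equals $\langle h_K^{-1}(\Pi_k^\partial u_h^n-\widehat u_h^n),(\Pi_k^\partial u_h^n-\widehat u_h^n)-(\Pi_k^\partial u_h^{n-1}-\widehat u_h^{n-1})\rangle_{\partial\mathcal T_h}$ by linearity of $\Pi_k^\partial$. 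Applying the algebraic identity \eqref{identity1} to the last two pieces writes $\Delta t\,\mathcal A(\bm q_h^n,u_h^n,\widehat u_h^n;\partial_t^+\bm q_h^n,\partial_t^+u_h^n,\partial_t^+\widehat u_h^n)$ as $\tfrac12$ times the increment of $\|\bm q_h^n\|_{\mathcal T_h}^2+\|h_K^{-1/2}(\Pi_k^\partial u_h^n-\widehat u_h^n)\|_{\partial\mathcal T_h}^2$ plus the nonnegative remainder $\tfrac{(\Delta t)^2}{2}\big(\|\partial_t^+\bm q_h^n\|_{\mathcal T_h}^2+\|h_K^{-1/2}\partial_t^+(\Pi_k^\partial u_h^n-\widehat u_h^n)\|_{\partial\mathcal T_h}^2\big)$. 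Likewise $\Delta t(\epsilon^{-1}f^n(u_h^n),\partial_t^+u_h^n)_{\mathcal T_h}$ is expanded by \eqref{identity2} with $a=u_h^n$, $b=u_h^{n-1}$ into the increment of $\tfrac1{4\epsilon}\|(u_h^n)^2-1\|_{\mathcal T_h}^2$ plus the remainder $\tfrac{(\Delta t)^2}{4\epsilon}\|\partial_t^+\big((u_h^n)^2\big)\|_{\mathcal T_h}^2+\tfrac{(\Delta t)^2}{2\epsilon}\|u_h^n\partial_t^+u_h^n\|_{\mathcal T_h}^2-\tfrac{(\Delta t)^2}{2\epsilon}\|\partial_t^+u_h^n\|_{\mathcal T_h}^2$. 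Substituting these computations into the tested equation, multiplying by $\Delta t$, and summing from $n=1$ to $m$ (using at $n=1$ the analogous flux and transmission relations at the initial level, which define $\bm q_h^0,\widehat u_h^0$), the increments telescope and \eqref{EnergyI1} follows.
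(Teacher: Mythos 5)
Your proof is correct and follows essentially the same route as the paper: test the second block of the scheme against the discrete time differences of its own solution, use the flux sub-equation (at level $n$ tested with $\delta\bm q_h^n$, and the difference of levels $n$ and $n-1$ tested with $\bm q_h^n$ — which is exactly the paper's step of applying $\partial_t^+$ to \eqref{hdg02} and testing with $(\bm q_h^n,0,0)$) to reduce the nonsymmetric $\mathcal A$-pairing to $(\bm q_h^n,\delta\bm q_h^n)_{\mathcal T_h}$ plus the stabilization increment, then invoke the identities of \Cref{identity} and telescope. The only cosmetic deviation is the coupling term: the paper obtains $(\phi_h^n,\partial_t^+u_h^n)_{\mathcal T_h}=-\|\partial_t^+u_h^n\|_{-1,h}^2$ by testing \eqref{hdg01} with $(\bm{\Pi}_{\bm V}\partial_t^+u_h^n,-\Pi_W\partial_t^+u_h^n,-\Pi_M\partial_t^+u_h^n)$ and using the symmetry property \eqref{summertic_A1}, whereas you identify $(\bm p_h^n,\phi_h^n-c_n,\widehat\phi_h^n-c_n)$ with the HDG inversion of $-\partial_t^+u_h^n$ via uniqueness; both arguments are valid and equivalent.
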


\begin{proof}
Due to the mass conservation property of the scheme, it holds $\partial_{t}^+ u_h^n \in \mathring{W}_h$. We take $(\bm r_1, w_1,\mu_1)=-(\bm{\Pi}_{\bm V}\partial_{t}^+ u_h^n,-\Pi_W\partial_{t}^+ u_h^n,-\Pi_M \partial_{t}^+ u_h^n)$ in \eqref{hdg01} to get
	\begin{align*}
	(\partial_{t}^+ u_h^n,\Pi_W \partial_t^+ u_h^n)_{\mathcal{T}_h}&=\mathcal A(\bm p_h^n,\phi_h^n,\widehat{\phi}_h^n;\bm{\Pi}_{\bm V}\partial_{t}^+ u_h^n,-\Pi_W\partial_{t}^+ u_h^n,-\Pi_M \partial_{t}^+ u_h^n)\\
	& = \mathcal A(\bm{\Pi}_{\bm V}\partial_{t}^+ u_h^n, \Pi_W\partial_{t}^+ u_h^n,\Pi_M \partial_{t}^+ u_h^n;\bm p_h^n,-\phi_h^n,-\widehat{\phi}_h^n) &\text{by }\eqref{summertic_A1}\\
	& = -(\partial_{t}^+ u_h^n,\phi_h^n)_{\mathcal{T}_h}. &\text{by }\eqref{def_-1_h}\color{blue}
	\end{align*}
	In view of \eqref{-1_equal}, one obtains that
	\begin{align}\label{proof:s1}
	\|\partial_{t}^+ u_h^n\|^2_{-1,h}+(\partial_{t}^+u_h^n,\phi_h^n)_{\mathcal{T}_h}=0.
	\end{align}

	Next, we take $(\bm r_2, w_2,\mu_2)=(\bm 0,\partial_{t}^+u_h^n,\partial_{t}^+ \widehat u_h^n)$ in \eqref{hdg02} to get
	\begin{equation}\label{proof:s2}
	\begin{aligned}
	&(\epsilon^{-1}f^n(u_h^n),\partial_t^+ u_h^n)_{\mathcal{T}_h}+\epsilon(\nabla\cdot\bm q_h^n,\partial_{t}^+ u_h^n)_{\mathcal{T}_h}-\epsilon\langle\bm q_h^n\cdot\bm n,\partial_{t}^+\widehat u_h^n \rangle_{\partial\mathcal{T}_h}\\
	&\qquad+\epsilon\langle h_K^{-1}(\Pi_k^{\partial}u_h^n-\widehat u_h^n),\partial_{t}^+ (\Pi_k^{\partial}u_h^n-\widehat u_h^n) \rangle_{\partial\mathcal T_h}-(\phi_h^n,\partial_{t}^+ u_h^n)_{\mathcal{T}_h}=0\color{blue}.\color{black}
	\end{aligned}
	\end{equation}
	Then we apply $\partial_{t}^+$ to \eqref{hdg02} and take $(\bm r_2, w_2,\mu_2)=(\bm q_h^n,0,0)$ so that
	\begin{align}\label{proof:s3}
	\epsilon(\partial_{t}^+ \bm q_h^n,\bm q_h^n)_{\mathcal{T}_h}-\epsilon(\partial_{t}^+ u_h^n,\nabla\cdot\bm q_h^n)_{\mathcal{T}_h}+\epsilon\langle\partial_{t}^+ \widehat u_h^n,\bm q_h^n\cdot\bm n\rangle_{\partial\mathcal{T}_h}=0.
	\end{align}
	Taking summation of \eqref{proof:s1}, \eqref{proof:s2} and \eqref{proof:s3} gives
	\begin{align*}
	&\|\partial_{t}^+ u_h^n\|^2_{-1,h}+\epsilon^{-1}(f^n(u_h^n),\partial_{t}^+ u_h^n)_{\mathcal{T}_h}+\epsilon(\partial_{t}^+\bm q_h^n,\bm q_h^n)_{\mathcal{T}_h}\\
	&\qquad+\epsilon\langle h_K^{-1}(\Pi_k^{\partial}u_h^n-\widehat u_h^n),\partial_{t}^+(\Pi_k^{\partial}u_h^n-\widehat u_h^n) \rangle_{\partial\mathcal T_h }=0.
	\end{align*}
	Then multiply this equation by $\Delta t$  and use the identities in  \Cref{identity} to get
	\begin{align*}
	&\quad \frac{1}{4\epsilon}\|( u_h^n)^2-1\|^2_{\mathcal{T}_h}+\frac{\epsilon}{2}\|\bm q_h^n\|^2_{\mathcal{T}_h}+\frac{\epsilon}{2}\|h_K^{-1/2}(\Pi_k^{\partial}u_h^n-\widehat u_h^n)\|^2_{\partial\mathcal{T}_h}\\
	& \qquad +\Delta t \|\partial_{t}^+ u_h^n\|^2_{-1,h}+(\Delta t)^2\left(\frac{\epsilon}{2}\|\partial_{t}^+ \bm q_h^n\|^2_{\mathcal{T}_h}+\frac{\epsilon}{2}\|h_K^{-1/2}\partial_{t}^+ (\Pi_k^{\partial}u_h^n-\widehat u_h^n)\|^2_{\partial\mathcal{T}_h}\right)\\
	&\qquad +(\Delta t)^2 \left(\frac{1}{4\epsilon}\|\partial_{t}^+ (u_h^n)^2 \|^2_{\mathcal{T}_h}+\frac{1}{2\epsilon}\|u_h^n\partial_{t}^+ u_h^n\|^2_{\mathcal{T}_h}-\frac{1}{2\epsilon}\|\partial_{t}^+ u_h^n\|^2_{\mathcal{T}_h}\right)\\
	&=	\frac{1}{4\epsilon}\|( u_h^{n-1})^2-1\|^2_{\mathcal{T}_h}+\frac{\epsilon}{2}\|\bm q_h^{n-1}\|^2_{\mathcal{T}_h}+\frac{\epsilon}{2}\|h_K^{-1/2}(\Pi_k^{\partial}u_h^{n-1}-\widehat u_h^{n-1})\|^2_{\partial\mathcal{T}_h}.
	\end{align*}
	The identity \eqref{EnergyI1} follows from summing the above equation from $n=1$ to $n=m$. This completes the proof.
	\end{proof}

Next, we give the second energy identity which involves the $L^2$ norm of $\bm p_h^n$.
\begin{lemma}[Discrete energy identity II] \label{DEL-2}
	Let $(\bm p_h^n,\phi_h^n,\widehat \phi_h^n)$, $(\bm q_h^n,u_h^n,\widehat u_h^n)$ be the solution of \eqref{HDG-hill}. Then
	for $m= 1, \ldots N$, we have the following energy identity
	\begin{equation}\label{EnergyI2}
	\begin{aligned}
	&\quad \frac{1}{4\epsilon} \|( u_h^m)^2-1\|^2_{\mathcal{T}_h}+\frac{\epsilon}{2}\|\bm q_h^m\|^2_{\mathcal{T}_h}+\frac{\epsilon}{2}\|h_K^{-1/2}(\Pi_k^{\partial}u_h^m-\widehat u_h^m)\|^2_{\partial\mathcal{T}_h}\\
	&\qquad +\Delta t\sum_{n=1}^m\left(\|\bm p_h^n\|_{\mathcal{T}_h}^2+\|h_K^{-1/2}(\Pi_k^{\partial}\phi_h^n-\widehat{\phi}_h^n)\|^2_{\partial\mathcal{T}_h}\right)\\
	&\qquad +(\Delta t)^2 \sum_{n=1}^m\left(\frac{1}{2\epsilon}\|\partial_{t}^+\bm q_h^n\|^2_{\mathcal{T}_h}+\frac{1}{2}\|h_K^{-1/2}\partial_{t}^+(\Pi_k^{\partial}u_h^n-\widehat u_h^n)\|^2_{\partial\mathcal{T}_h}\right)\\
	&\qquad +(\Delta t)^2\sum_{n=1}^m\left(\frac{1}{4\epsilon}\|\partial_{t}^+[(u_h^n)^2]\|^2_{\mathcal{T}_h}+\frac{1}{2\epsilon}\|u_h^n\partial_{t}^+ u_h^n\|^2_{\mathcal{T}_h}-\frac{1}{2\epsilon}\|\partial_{t}^+u_h^n\|^2_{\mathcal{T}_h}\right)\\
	&=	\frac{1}{4\epsilon}\|( u_h^{0})^2-1\|^2_{\mathcal{T}_h}+\frac{\epsilon}{2}\|\bm q_h^{0}\|^2_{\mathcal{T}_h}+\frac{\epsilon}{2}\|h_K^{-1/2}(\Pi_k^{\partial}u_h^{0}-\widehat u_h^{0})\|^2_{\partial\mathcal{T}_h}.
	\end{aligned}
	\end{equation}	
\end{lemma}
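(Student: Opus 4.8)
The plan is to follow the proof of \Cref{DEL} almost verbatim, changing only the way the first HDG equation \eqref{hdg01} is tested. Whereas \Cref{DEL} feeds \eqref{hdg01} the discrete Laplace inverse of $\partial_t^+u_h^n$ in order to generate the negative-norm quantity, here I would test \eqref{hdg01} directly with $(\bm r_1,w_1,\mu_1)=(\bm p_h^n,\phi_h^n,\widehat\phi_h^n)$. The energy identity \eqref{energy_of_A1} for $\mathcal A$ then yields immediately
\[
(\partial_t^+u_h^n,\phi_h^n)_{\mathcal T_h}+\|\bm p_h^n\|_{\mathcal T_h}^2+\|h_K^{-1/2}(\Pi_k^\partial\phi_h^n-\widehat\phi_h^n)\|_{\partial\mathcal T_h}^2=0,
\]
which here plays the role of \eqref{proof:s1}.

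The remaining two ingredients are taken over unchanged from the proof of \Cref{DEL}: choosing $(\bm r_2,w_2,\mu_2)=(\bm 0,\partial_t^+u_h^n,\partial_t^+\widehat u_h^n)$ in \eqref{hdg02} reproduces \eqref{proof:s2}, and applying $\partial_t^+$ to \eqref{hdg02} followed by the test $(\bm r_2,w_2,\mu_2)=(\bm q_h^n,0,0)$ reproduces \eqref{proof:s3}. Adding these two identities to the display above, the terms $(\partial_t^+u_h^n,\phi_h^n)_{\mathcal T_h}$, $\epsilon(\nabla\cdot\bm q_h^n,\partial_t^+u_h^n)_{\mathcal T_h}$ and $\epsilon\langle\bm q_h^n\cdot\bm n,\partial_t^+\widehat u_h^n\rangle_{\partial\mathcal T_h}$ cancel pairwise, leaving
\begin{align*}
&\|\bm p_h^n\|_{\mathcal T_h}^2+\|h_K^{-1/2}(\Pi_k^\partial\phi_h^n-\widehat\phi_h^n)\|_{\partial\mathcal T_h}^2+\epsilon^{-1}(f^n(u_h^n),\partial_t^+u_h^n)_{\mathcal T_h}\\
&\qquad+\epsilon(\partial_t^+\bm q_h^n,\bm q_h^n)_{\mathcal T_h}+\epsilon\langle h_K^{-1}(\Pi_k^\partial u_h^n-\widehat u_h^n),\partial_t^+(\Pi_k^\partial u_h^n-\widehat u_h^n)\rangle_{\partial\mathcal T_h}=0.
\end{align*}
I would then multiply by $\Delta t$ and expand every ``difference-of-products'' term using the algebraic identities of \Cref{identity}: identity \eqref{identity1} on $\epsilon(\partial_t^+\bm q_h^n,\bm q_h^n)_{\mathcal T_h}$ and on the boundary stabilization term, and identity \eqref{identity2} (the fully implicit case $f^n(u_h^n)=(u_h^n)^3-u_h^n$) on $\epsilon^{-1}(f^n(u_h^n),u_h^n-u_h^{n-1})_{\mathcal T_h}$ with $a=u_h^n$ and $b=u_h^{n-1}$. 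Summing the resulting one-step identity over $n=1,\dots,m$ telescopes the $a^2-b^2$ and $(a^2-1)^2-(b^2-1)^2$ contributions down to the endpoint values at $m$ and $0$, while the leftover $(a-b)^2$ and $(a(a-b))^2$ pieces accumulate into the $(\Delta t)^2$-weighted sums on the left of \eqref{EnergyI2}; collecting terms gives the claimed identity.

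The point worth noting is that, with \Cref{DEL} already in hand, this is essentially a one-line variant rather than a new argument, and no additional tool is needed; in particular, since the $\|\cdot\|_{-1,h}$ norm never enters, one does not even require $\partial_t^+u_h^n\in\mathring W_h$ here, only the mass-conservation identity \eqref{mass_conservative}. The only genuine chore is the algebraic bookkeeping of the numerous $(\Delta t)^2$ terms spawned by the remainders in \eqref{identity1} and \eqref{identity2}, together with the routine check that the time-difference operation on \eqref{hdg02} is admissible at $n=1$, which holds by the same choice of initial data $(\bm q_h^0,u_h^0,\widehat u_h^0)$ used in \Cref{DEL}.
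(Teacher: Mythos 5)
Your proposal is correct and follows essentially the same route as the paper: the paper's own proof also tests \eqref{hdg01} with $(\bm p_h^n,\phi_h^n,\widehat\phi_h^n)$ to obtain the identity involving $\|\bm p_h^n\|_{\mathcal T_h}^2$, reuses \eqref{proof:s2} and \eqref{proof:s3} from the proof of \Cref{DEL}, adds the three identities, and concludes with \Cref{identity} and summation over $n$. No discrepancies to report.
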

\begin{proof}
	We take $(\bm r_1, w_1, \mu_1)=(\bm p_h^n,\phi_h^n,\widehat\phi_h^n)$ in \eqref{hdg01} to get
	\begin{align}\label{pp03}
	(\partial_{t}^+ u_h^n,\phi_h^n)_{\mathcal{T}_h}+\|\bm p_h^n\|_{\mathcal{T}_h}^2+\|h_K^{-1/2}(\Pi_k^{\partial}\phi_h^n-\widehat{\phi}_h^n)\|^2_{\partial\mathcal{T}_h}=0.
	\end{align}
	Taking the sum of \eqref{proof:s2}, \eqref{proof:s3} and \eqref{pp03} gives
	\begin{align*}
	&\quad \epsilon^{-1}(f^n(u_h^n),\partial_{t}^+u_h^n)_{\mathcal{T}_h}+\epsilon(\partial_{t}^+ \bm q_h^n,\bm q_h^n)_{\mathcal{T}_h}\\
	&\qquad +\epsilon\langle h_K^{-1}(\Pi_k^{\partial}u_h^n-\widehat u_h^n),\partial_{t}^+ (\Pi_k^{\partial}u_h^n-\widehat u_h^n) \rangle_{\partial\mathcal T_h }\\
	&\qquad+\|\bm p_h^n\|_{\mathcal{T}_h}^2+\|h_K^{-1/2}(\Pi_k^{\partial}\phi_h^n-\widehat{\phi}_h^n)\|^2_{\partial\mathcal{T}_h}\\
	&=0.
	\end{align*}
	The energy identity now follows from applications of the identities in  \Cref{identity} and addition of the  resulting equation from $n=1$ to $n=m$.
\end{proof}

There is a negative term, i.e., $-\frac{(\Delta t)^2}{2\epsilon}\|\partial_{t}^+u_h^n\|^2_{\mathcal{T}_h}$ in the energy identities I and II in the foregoing \Cref{DEL,DEL-2}. In  \Cref{corEnergy}, we  bound this term and derive the stability result.
\begin{theorem}\label{corEnergy}
	Let $C$ be the product of the constants in \eqref{Cauthy} and \Cref{es_u}. 
	If the time-step satisfies the constraint $\Delta t\leq \frac{2 \epsilon^3}{C}$, then for  $m=1, 2, \ldots N$,  the following energy bounds hold
\begin{align}
	\hspace{1em}&\hspace{-1em} \|u_h^m\|_{0,4}^2+\|\bm q_h^m\|_{\mathcal{T}_h}^2 + \|\nabla u_h^m\|_{\mathcal{T}_h}^2+  \|h_K^{-1/2}(\Pi_k^{\partial}u_h^m-\widehat u_h^m)\|_{\partial\mathcal{T}_h}^2 +\Delta t \sum_{n=1}^m \|\nabla \phi_h^n\|_{\mathcal{T}_h}^2 \nonumber \\ 
	& +\Delta t \sum_{n=1}^m\|\partial_{t}^+ u_h^n\|^2_{-1,h} + \Delta t \sum_{n=1}^m
	\left(\|\bm p_h^n\|_{\mathcal{T}_h}^2 +\|h_K^{-1/2}(\Pi_k^{\partial}\phi_h^n-\widehat{\phi}_h^n)\|^2_{\partial\mathcal{T}_h}\right) \leq C.	\label{corEnergy-1}
\end{align}

\end{theorem}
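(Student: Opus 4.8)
The plan is to add the two discrete energy identities of \Cref{DEL} and \Cref{DEL-2}. For each $m$, the sum has on its left-hand side (besides manifestly nonnegative $(\Delta t)^2$ contributions) the quantities $\tfrac{1}{2\epsilon}\|(u_h^m)^2-1\|_{\mathcal{T}_h}^2$, $\epsilon\|\bm q_h^m\|_{\mathcal{T}_h}^2$, $\epsilon\|h_K^{-1/2}(\Pi_k^{\partial}u_h^m-\widehat u_h^m)\|_{\partial\mathcal{T}_h}^2$, $\Delta t\sum_{n=1}^m\|\partial_t^+u_h^n\|_{-1,h}^2$, $\Delta t\sum_{n=1}^m(\|\bm p_h^n\|_{\mathcal{T}_h}^2+\|h_K^{-1/2}(\Pi_k^{\partial}\phi_h^n-\widehat\phi_h^n)\|_{\partial\mathcal{T}_h}^2)$ and $\tfrac{\epsilon(\Delta t)^2}{2}\sum_{n=1}^m(\|\partial_t^+\bm q_h^n\|_{\mathcal{T}_h}^2+\|h_K^{-1/2}\partial_t^+(\Pi_k^{\partial}u_h^n-\widehat u_h^n)\|_{\partial\mathcal{T}_h}^2)$, together with the single offending negative term $-\tfrac{(\Delta t)^2}{\epsilon}\sum_{n=1}^m\|\partial_t^+u_h^n\|_{\mathcal{T}_h}^2$, while the right-hand side is a fixed combination of norms of the initial data and is therefore $\le C$. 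So the whole game is to absorb that negative term into the positive ones.

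The key step estimates $\|\partial_t^+u_h^n\|_{\mathcal{T}_h}^2$. Mass conservation gives $\partial_t^+u_h^n\in\mathring{W}_h$, so the HDG interpolation inequality \eqref{Cauthy} with $(w_h,\mu_h)=(\partial_t^+u_h^n,\partial_t^+\widehat u_h^n)$ yields
\[
\|\partial_t^+u_h^n\|_{\mathcal{T}_h}^2\le C\|\partial_t^+u_h^n\|_{-1,h}\big(\|\nabla\partial_t^+u_h^n\|_{\mathcal{T}_h}+\|h_K^{-1/2}\partial_t^+(\Pi_k^{\partial}u_h^n-\widehat u_h^n)\|_{\partial\mathcal{T}_h}\big).
\]
Taking $w_2=0,\mu_2=0$ in \eqref{hdg02} shows $\mathcal A(\bm q_h^n,u_h^n,\widehat u_h^n;\bm r_2,0,0)=0$ for all $\bm r_2\in\bm V_h$; since this relation is linear with zero right-hand side, applying $\partial_t^+$ gives $\mathcal A(\partial_t^+\bm q_h^n,\partial_t^+u_h^n,\partial_t^+\widehat u_h^n;\bm r_2,0,0)=0$, so \Cref{es_u} (with the triangle inequality) bounds the parenthesis above by $C(\|\partial_t^+\bm q_h^n\|_{\mathcal{T}_h}+\|h_K^{-1/2}\partial_t^+(\Pi_k^{\partial}u_h^n-\widehat u_h^n)\|_{\partial\mathcal{T}_h})$. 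Multiplying the resulting inequality by $\tfrac{(\Delta t)^2}{\epsilon}$ and applying Young's inequality splits the right-hand side into $\tfrac{\Delta t}{2}\|\partial_t^+u_h^n\|_{-1,h}^2$, which is absorbed by the $\Delta t\sum\|\partial_t^+u_h^n\|_{-1,h}^2$ term, plus a multiple of $(\Delta t)^3(\|\partial_t^+\bm q_h^n\|_{\mathcal{T}_h}^2+\|h_K^{-1/2}\partial_t^+(\Pi_k^{\partial}u_h^n-\widehat u_h^n)\|_{\partial\mathcal{T}_h}^2)$, which is absorbed by the positive $(\Delta t)^2$ term above precisely when $\Delta t\le 2\epsilon^3/C$, with $C$ the product of the constants from \eqref{Cauthy} and \Cref{es_u}.

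After this absorption, the summed identity becomes an inequality whose left-hand side still controls, up to fixed constants, $\|\bm q_h^m\|_{\mathcal{T}_h}^2$, $\|h_K^{-1/2}(\Pi_k^{\partial}u_h^m-\widehat u_h^m)\|_{\partial\mathcal{T}_h}^2$, $\Delta t\sum\|\partial_t^+u_h^n\|_{-1,h}^2$, $\Delta t\sum(\|\bm p_h^n\|_{\mathcal{T}_h}^2+\|h_K^{-1/2}(\Pi_k^{\partial}\phi_h^n-\widehat\phi_h^n)\|_{\partial\mathcal{T}_h}^2)$ and $\|(u_h^m)^2-1\|_{\mathcal{T}_h}^2$, with right-hand side $\le C$. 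The remaining three terms of \eqref{corEnergy-1} follow immediately: expanding $\|(u_h^m)^2-1\|_{\mathcal{T}_h}^2=\|u_h^m\|_{0,4}^4-2\|u_h^m\|_{\mathcal{T}_h}^2+|\Omega|$ and using $\|u_h^m\|_{\mathcal{T}_h}^2\le|\Omega|^{1/2}\|u_h^m\|_{0,4}^2$ yields $\|u_h^m\|_{0,4}^2\le C$; \Cref{es_u} applied to $\mathcal A(\bm q_h^m,u_h^m,\widehat u_h^m;\bm r_2,0,0)=0$ bounds $\|\nabla u_h^m\|_{\mathcal{T}_h}$ by $\|\bm q_h^m\|_{\mathcal{T}_h}+\|h_K^{-1/2}(\Pi_k^{\partial}u_h^m-\widehat u_h^m)\|_{\partial\mathcal{T}_h}$; and \Cref{es_u} applied to $\mathcal A(\bm p_h^n,\phi_h^n,\widehat\phi_h^n;\bm r_1,0,0)=0$ (obtained by taking $w_1=0,\mu_1=0$ in \eqref{hdg01}) bounds $\|\nabla\phi_h^n\|_{\mathcal{T}_h}$ by $\|\bm p_h^n\|_{\mathcal{T}_h}+\|h_K^{-1/2}(\Pi_k^{\partial}\phi_h^n-\widehat\phi_h^n)\|_{\partial\mathcal{T}_h}$, so that $\Delta t\sum\|\nabla\phi_h^n\|_{\mathcal{T}_h}^2\le C$. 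I expect the absorption step to be the only genuine obstacle: one must verify that combining \eqref{Cauthy} with \Cref{es_u} on the time-incremented $\bm q$-equation produces exactly the quantities that occur as positive terms in the energy identities, and balance Young's inequality so that the leftover is controlled under the stated time-step restriction; everything else is bookkeeping.
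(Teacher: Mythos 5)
Your proposal is correct and follows essentially the same route as the paper: the negative term is absorbed via the interpolation inequality \eqref{Cauthy} together with \Cref{es_u} applied to the time-differenced flux relation, Young's inequality, and the time-step restriction, after which \Cref{es_u} and the expansion of $\|(u_h^m)^2-1\|_{\mathcal{T}_h}^2$ supply the remaining terms. The only cosmetic difference is that you sum the two energy identities first (so the offending term appears with coefficient $(\Delta t)^2/\epsilon$ rather than $(\Delta t)^2/(2\epsilon)$), which merely shifts the unspecified constant in the admissible time-step threshold; the paper instead absorbs within identity I and then invokes identity II, but the mechanism is identical.
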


\begin{proof}
	Since $\partial_t^+ u_h^n \in \mathring{W}_h$, by the interpolation inequality \eqref{Cauthy} and  \Cref{es_u} we have 
	\begin{align*}
	\hspace{1em}&\hspace{-1em}\frac{(\Delta t)^2}{2\epsilon}\|\partial_t^+ u_h^n\|_{\mathcal{T}_h}^2 \\
	&\leq \frac{(\Delta t)^2 C}{2\epsilon} \|\partial_t^+ u_h^n\|_{-1, h} \left(\|\partial_{t}^+ \nabla u_h^n\|_{\mathcal{T}_h}+\|h_K^{-1/2}\partial_{t}^+(\Pi_k^{\partial}u_h^n-\widehat u_h^n)\|^2_{\partial\mathcal{T}_h}\right) \\
	&\leq \frac{(\Delta t)^2 C}{2\epsilon} \|\partial_t^+ u_h^n\|_{-1, h} \left(\|\partial_{t}^+ \bm q_h^n\|_{\mathcal{T}_h}+\|h_K^{-1/2}\partial_{t}^+(\Pi_k^{\partial}u_h^n-\widehat u_h^n)\|^2_{\partial\mathcal{T}_h}\right)\\
	&\leq \frac{\Delta t}{2}\|\partial_t^+ u_h^n\|_{-1, h}^2+ \frac{(\Delta t)^3 C}{4\epsilon^3}\left(\frac{\epsilon}{2}\|\partial_{t}^+ \bm q_h^n\|^2_{\mathcal{T}_h}
	+\frac{\epsilon}{2}\|h_K^{-1/2}\partial_{t}^+(\Pi_k^{\partial}u_h^n-\widehat u_h^n)\|^2_{\partial\mathcal{T}_h}\right)\color{blue}.
	\end{align*}
	Now, by the energy identity I \eqref{EnergyI1} and the assumption $\Delta t \leq \frac{2 \epsilon^3}{C}$, we have 
	\begin{align*}
	&\quad \frac{1}{4\epsilon} \|( u_h^m)^2-1\|^2_{\mathcal{T}_h}+\frac{\epsilon}{2}\|\bm q_h^m\|^2_{\mathcal{T}_h}+\frac{\epsilon}{2}\|h_K^{-1/2}(\Pi_k^{\partial}u_h^m-\widehat u_h^m)\|^2_{\partial\mathcal{T}_h}\\
	& \qquad +\frac{\Delta t}{2}\sum_{n=1}^m\|\partial_{t}^+ u_h^n\|^2_{-1,h}+ (\Delta t)^2 \sum_{n=1}^m\left(\frac{1}{4\epsilon}\|\partial_t ^+(u_h^n)^2\|^2_{\mathcal{T}_h}+\frac{1}{2\epsilon}\|u_h^n\partial_{t}^+ u_h^n\|^2_{\mathcal{T}_h}\right)\\
	&\qquad +\frac{(\Delta t)^2}{2}\sum_{n=1}^m\left(\frac{\epsilon}{2}\|\partial_{t}^+ \bm q_h^n\|^2_{\mathcal{T}_h}+\frac{\epsilon}{2}\|h_K^{-1/2}\partial_{t}^+ (\Pi_k^{\partial}u_h^n-\widehat u_h^n)\|^2_{\partial\mathcal{T}_h}\right)\\
	&\leq\frac{1}{4\epsilon}\|( u_h^{0})^2-1\|^2_{\mathcal{T}_h}+\frac{\epsilon}{2}\|\bm q_h^{0}\|^2_{\mathcal{T}_h}+\frac{\epsilon}{2}\|h_K^{-1/2}(\Pi_k^{\partial}u_h^{0}-\widehat u_h^{0})\|^2_{\partial\mathcal{T}_h}.
	\end{align*}
The energy bounds \eqref{corEnergy-1} now follow from the energy law above, the energy identity II \eqref{EnergyI2} and \Cref{es_u}.
\end{proof}

\begin{remark}
In the case of the energy-splitting scheme $f^n(u_h^n)=(u_h^n)^3-u_h^{n-1}$, by the identity \eqref{identity3}, an energy law holds in the spirit of  \Cref{DEL} where all terms are associated with the positive sign. 
\end{remark}

We note that in \Cref{corEnergy}, the energy term $\|\phi_h^m\|_{\mathcal T_h}^2$ is not contained. Moreover, the HDG Poincar\'{e} inequality \eqref{sobolev-02} does not apply to $\phi_h^n$ since $\phi_h^n \notin \mathring{W}_h$. Hence, we need a refined analysis for this term. 
In the following we derive further a priori bounds for the solution of the fully implicit  HDG scheme \eqref{HDG-hill} with the assumption $\Delta t\leq \frac{2 \epsilon^3}{C}$.

\begin{lemma}\label{phi_infty} 
	The assumption is the same as in Theorem \ref{corEnergy}. There holds
	 \begin{align}
	\hspace{1em}&\hspace{-1em} \|\phi_h^{m}\|_{\mathcal T_h}^2+\sum_{n=1}^{m}\|\phi_h^n-\phi_h^{n-1}\|^2_{\mathcal T_h}+\epsilon \Delta t\sum_{n=1}^{m}\|\partial_{t}^+ u_h^n\|_{\mathcal T_h}^2 \leq C, \label{phi_infty-1}
\end{align}
where $C$ may depend on $\epsilon, T$ and the initial condition.
\end{lemma}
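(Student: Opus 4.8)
My plan is to mimic, at the discrete level, the continuous estimate obtained by differentiating the chemical--potential relation $\phi=-\epsilon\Delta u+\epsilon^{-1}f(u)$ in time and testing with $\phi$, namely $\frac{1}{2}\frac{d}{dt}\|\phi\|^2+\epsilon\|u_t\|^2=\epsilon^{-1}(f'(u)u_t,\phi)$, followed by a Gronwall argument. First I note that one needs $\bm q_h^0,\widehat u_h^0,\phi_h^0$ (and $\bm p_h^0,\widehat\phi_h^0$) to be initialized by the static HDG equations driven by $u_h^0$, so that $\partial_t^+$ applied to \eqref{hdg02} is meaningful already at $n=1$; this is the natural initialization and is also tacitly used in the energy identities of \Cref{DEL,DEL-2}. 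Applying $\partial_t^+$ to \eqref{hdg02} and choosing $(\bm r_2,w_2,\mu_2)=(\bm p_h^n,\phi_h^n,\widehat\phi_h^n)$ gives
\begin{equation*}
(\partial_t^+\phi_h^n,\phi_h^n)_{\mathcal T_h}+\epsilon\,\mathcal A(\partial_t^+\bm q_h^n,\partial_t^+u_h^n,\partial_t^+\widehat u_h^n;\bm p_h^n,\phi_h^n,\widehat\phi_h^n)=\epsilon^{-1}\big(\partial_t^+f^n(u_h^n),\phi_h^n\big)_{\mathcal T_h}.
\end{equation*}

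The crux --- and the step I expect to be the main obstacle --- is to show that the $\mathcal A$-term equals $-\epsilon\|\partial_t^+u_h^n\|^2_{\mathcal T_h}$. By the symmetry \eqref{summertic_A1}, $\mathcal A(\partial_t^+\bm q_h^n,\partial_t^+u_h^n,\partial_t^+\widehat u_h^n;\bm p_h^n,-\phi_h^n,-\widehat\phi_h^n)=\mathcal A(\bm p_h^n,\phi_h^n,\widehat\phi_h^n;\partial_t^+\bm q_h^n,-\partial_t^+u_h^n,-\partial_t^+\widehat u_h^n)$, and the right--hand side equals $\|\partial_t^+u_h^n\|^2_{\mathcal T_h}$ by \eqref{hdg01} with $(\bm r_1,w_1,\mu_1)=(\partial_t^+\bm q_h^n,-\partial_t^+u_h^n,-\partial_t^+\widehat u_h^n)$. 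Since the definition \eqref{def_A} gives the elementary identity $\mathcal A(\bm r,v,\nu;\bm p,\phi,\widehat\phi)=2\,\mathcal A(\bm r,v,\nu;\bm p,0,0)-\mathcal A(\bm r,v,\nu;\bm p,-\phi,-\widehat\phi)$, and since $\mathcal A(\partial_t^+\bm q_h^n,\partial_t^+u_h^n,\partial_t^+\widehat u_h^n;\bm p_h^n,0,0)=0$ --- obtained by applying $\partial_t^+$ to the relation $\mathcal A(\bm q_h^n,u_h^n,\widehat u_h^n;\bm r_2,0,0)=0$ (which follows from \eqref{hdg02} by taking $w_2=\mu_2=0$) and choosing $\bm r_2=\bm p_h^n\in\bm V_h$ --- we conclude $\mathcal A(\partial_t^+\bm q_h^n,\partial_t^+u_h^n,\partial_t^+\widehat u_h^n;\bm p_h^n,\phi_h^n,\widehat\phi_h^n)=-\|\partial_t^+u_h^n\|^2_{\mathcal T_h}$, whence $(\partial_t^+\phi_h^n,\phi_h^n)_{\mathcal T_h}+\epsilon\|\partial_t^+u_h^n\|^2_{\mathcal T_h}=\epsilon^{-1}(\partial_t^+f^n(u_h^n),\phi_h^n)_{\mathcal T_h}$, the discrete analogue of the continuous identity above.

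It remains to control the nonlinear term and sum. For the fully implicit scheme $\partial_t^+f^n(u_h^n)=g_h^n\,\partial_t^+u_h^n$ with $g_h^n:=(u_h^n)^2+u_h^nu_h^{n-1}+(u_h^{n-1})^2-1$; since \Cref{corEnergy} bounds $\|u_h^n\|_{\mathcal T_h}$, $\|\nabla u_h^n\|_{\mathcal T_h}$ and the stabilization seminorm uniformly in $n$, the HDG Sobolev inequality \eqref{sobolev-01} with $\mu=6$ yields $\|u_h^n\|_{L^6}\le C$, hence $\|g_h^n\|_{L^3}\le C$. The generalized H\"older inequality with exponents $3,2,6$ together with \eqref{sobolev-01} applied to $\phi_h^n$ then give $(\partial_t^+f^n(u_h^n),\phi_h^n)_{\mathcal T_h}\le C\|\partial_t^+u_h^n\|_{\mathcal T_h}\big(\|\phi_h^n\|_{\mathcal T_h}+\|\nabla\phi_h^n\|_{\mathcal T_h}+\|h_K^{-1/2}(\Pi_k^\partial\phi_h^n-\widehat\phi_h^n)\|_{\partial\mathcal T_h}\big)$. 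Multiplying the identity above by $\Delta t$, using \eqref{identity1} to rewrite $\Delta t(\partial_t^+\phi_h^n,\phi_h^n)_{\mathcal T_h}=\frac{1}{2}\big(\|\phi_h^n\|^2_{\mathcal T_h}-\|\phi_h^{n-1}\|^2_{\mathcal T_h}+\|\phi_h^n-\phi_h^{n-1}\|^2_{\mathcal T_h}\big)$, applying Young's inequality to absorb a $\frac{\epsilon}{2}\Delta t\|\partial_t^+u_h^n\|^2_{\mathcal T_h}$ contribution, and summing over $n=1,\dots,m$, the gradient and stabilization sums of $\phi_h^n$ are bounded by $C$ via \eqref{corEnergy-1}, leaving $\|\phi_h^m\|^2_{\mathcal T_h}\le \|\phi_h^0\|^2_{\mathcal T_h}+C+C\epsilon^{-3}\Delta t\sum_{n=1}^m\|\phi_h^n\|^2_{\mathcal T_h}$. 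A discrete Gronwall argument (with $\Delta t$ small, consistent with the standing hypothesis after possibly enlarging $C$) then gives $\|\phi_h^m\|^2_{\mathcal T_h}\le C$ uniformly in $m$, and reinserting this bound controls $\sum_{n=1}^m\|\phi_h^n-\phi_h^{n-1}\|^2_{\mathcal T_h}$ and $\epsilon\Delta t\sum_{n=1}^m\|\partial_t^+u_h^n\|^2_{\mathcal T_h}$, which is \eqref{phi_infty-1}; the bound $\|\phi_h^0\|_{\mathcal T_h}\le C$ needed to start the recursion follows from the regularity of the initial data and the static equations defining $\phi_h^0$. An alternative that avoids the Gronwall step is to split $\phi_h^n$ into its mean --- bounded through \eqref{mass_conservative} and \Cref{corEnergy} by $|\Omega|^{-1}|(\epsilon^{-1}f^n(u_h^n),1)_{\mathcal T_h}|\le C$ --- and a zero--mean part in $\mathring W_h$, for which \eqref{sobolev-02} removes the $\|\phi_h^n\|_{\mathcal T_h}$ term on the right.
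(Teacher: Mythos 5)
Your argument is correct and, at its core, coincides with the paper's: both hinge on the identity $(\partial_{t}^+\phi_h^n,\phi_h^n)_{\mathcal{T}_h}+\epsilon\|\partial_{t}^+u_h^n\|^2_{\mathcal{T}_h}=\epsilon^{-1}(\partial_{t}^+f^n(u_h^n),\phi_h^n)_{\mathcal{T}_h}$ (Eq.\ \eqref{add:0}), obtained from the symmetry \eqref{summertic_A1} together with \eqref{hdg01}, and both control the nonlinearity by the same $L^3\times L^2\times L^6$ H\"older split backed by the HDG Sobolev inequality and \Cref{corEnergy}. Two remarks. First, your intermediate display carries a sign slip: testing the time-differenced \eqref{hdg02} with $(\bm p_h^n,\phi_h^n,\widehat\phi_h^n)$ gives $(\partial_t^+\phi_h^n,\phi_h^n)_{\mathcal T_h}-\epsilon\mathcal A(\cdots)=\epsilon^{-1}(\partial_t^+f^n(u_h^n),\phi_h^n)_{\mathcal T_h}$, not $+\epsilon\mathcal A$; this is compensated by a second slip when you substitute $\mathcal A(\cdots;\bm p_h^n,\phi_h^n,\widehat\phi_h^n)=-\|\partial_t^+u_h^n\|^2_{\mathcal T_h}$, so you land on the correct identity. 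Your evaluation of the $\mathcal A$-term via $\mathcal A(\cdot;\bm p,\phi,\widehat\phi)=2\mathcal A(\cdot;\bm p,0,0)-\mathcal A(\cdot;\bm p,-\phi,-\widehat\phi)$ is a tidy reorganization of what the paper achieves by adding two separately tested equations; the ingredients are identical. Second, the only substantive difference is the closing step: your primary route runs a discrete Gronwall on $\|\phi_h^m\|^2_{\mathcal T_h}\le C+C\Delta t\sum_{n\le m}\|\phi_h^n\|^2_{\mathcal T_h}$, which requires an additional smallness condition on $\Delta t$ (with a constant not guaranteed by the standing hypothesis of \Cref{corEnergy}) in order to absorb the implicit $n=m$ term. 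The paper avoids this entirely by first establishing $\Delta t\sum_{n}\|\phi_h^n\|^2_{\mathcal T_h}\le C$ from the mean bound $|(\phi_h^n,1)_{\mathcal T_h}|=\epsilon^{-1}|(f^n(u_h^n),1)_{\mathcal T_h}|\le C$, the HDG Poincar\'e inequality \eqref{HDG-poincare} and the stability bounds \eqref{corEnergy-1}; the leftover $C\Delta t\sum_n\|\phi_h^n\|^2_{\mathcal T_h}$ is then simply a constant and no Gronwall is needed. This is exactly the ``alternative'' you sketch in your final sentence; it should be promoted to the main line of the proof so that the lemma holds under the stated hypothesis alone.
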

\begin{proof}

Taking $(\bm r_2, w_2, \mu_2)=(\bm 0, 1, 1)$ in Eq. \eqref{hdg02} yields
\begin{align*}
|(\phi_h^n, 1)_{\mathcal{T}_h}| =\frac{1}{\epsilon}|\big(f^n(u_h^n), 1\big)_{\mathcal{T}_h}| 
\leq \frac{1}{\epsilon} (||u_h^n||_{L^3}^3+||u_h^n||_{L^1})
\leq \frac{C}{\epsilon},
\end{align*}
where the last inequality follows from the stability bounds in \eqref{corEnergy-1}. In light of the stability bounds from \eqref{corEnergy-1} and the HDG Poincar\'{e} inequality \eqref{HDG-poincare}, one gets that 
\begin{align} \label{ine-end}
\delta t \sum_{n=1}^m ||\phi_h^n||_{\mathcal{T}_h}^2 \leq \frac{C}{\epsilon}.
\end{align}

	We apply $\partial_{t}^+$ to \eqref{hdg02} and keep \eqref{hdg01} unchanged to get
	\begin{subequations}
		\begin{align}
		(\partial_{t}^+ u_h^n,w_1)_{\mathcal{T}_h}+\mathcal A(\bm p_h^n,\phi_h^n,\widehat{\phi}_h^n;\bm r_1,w_1,\mu_1)&=0,\label{hdg001}\\
		\epsilon\mathcal{A}( \partial_{t}^+ \bm q_h^n,\partial_{t}^+ u_h^n,\partial_{t}^+ \widehat{u}_h^n;\bm r_2,w_2,\mu_2)&\nonumber\\
		+(\epsilon^{-1}\partial_{t}^+f^n(u_h^n),w_2)_{\mathcal{T}_h}-(\partial_{t}^+\phi_h^n,w_2)_{\mathcal{T}_h}&=0.\label{hdg002}
		\end{align}
	\end{subequations}
	Take $(\bm r_1, w_1, \mu_1)=-\epsilon(\partial_{t}^+ \bm q_h^n,-\partial_{t}^+ u_h^n,-\partial_{t}^+ \widehat u_h^n)$ in \eqref{hdg001} and $(\bm r_2,w_2,\mu_2)=(\bm p_h^n,\\ -\phi_h^n,-\widehat{\phi}_h^n)$ in \eqref{hdg002} to get
	\begin{align*}
	\epsilon (\partial_{t}^+ u_h^n,\partial_{t}^+ u_h^n)_{\mathcal{T}_h}-\epsilon\mathcal A(\bm p_h^n,\phi_h^n,\widehat{\phi}_h^n;\partial_{t}^+\bm q_h^n,-\partial_{t}^+u^n_h,-\partial_{t}^+{\widehat u}^n_h)&=0,\\
	\epsilon\mathcal{A}(\partial_{t}^+\bm q_h^n,\partial_{t}^+u_h^n,\partial_{t}^+\widehat{u}_h^n;\bm p_h^n, -\phi_h^n,-\widehat \phi_h^n)&\\
	+(\epsilon^{-1}\partial_{t}^+ f^n(u_h^n),-\phi_h^n)_{\mathcal{T}_h}-(\partial_{t}^+\phi_h^n,-\phi^n_h)_{\mathcal{T}_h}&=0.
	\end{align*}
	
	In light of the symmetry property \eqref{summertic_A1},  adding the above two equations together gives
	\begin{align}\label{add:0}
	(\partial_{t}^+\phi_h^n,\phi_h^n)_{\mathcal{T}_h}+\epsilon\|\partial_{t}^+u_h^n\|^2_{\mathcal{T}_h}=\epsilon^{-1}(\partial_{t}^+f^n(u_h^n),\phi_h^n)_{\mathcal{T}_h}.
	\end{align}
	By the identity \eqref{identity1} we have 
	\begin{align*}
	(\partial_{t}^+ \phi_h^n,\phi_h^n)_{\mathcal{T}_h}=\frac{1}{2\Delta t}\left(\|\phi_h^n\|^2_{\mathcal{T}_h}-\|\phi_h^{n-1}\|^2_{\mathcal{T}_h}+\|\phi_h^n-\phi_h^{n-1}\|^2_{\mathcal{T}_h}\right),
	\end{align*}
	and hence
	\begin{align}\label{proof_111}
	\begin{split}
	\hspace{1em}&\hspace{-1em}\|\phi_h^{m}\|_{\mathcal T_h}^2+\sum_{n=1}^{m}\|\phi_h^n-\phi_h^{n-1}\|^2_{\mathcal T_h}+2\Delta t \epsilon\sum_{n=1}^{m}\|\partial_{t}^+u_h^n\|_{\mathcal T_h}^2\\
	&=2\epsilon^{-1}\Delta t \sum_{n=1}^{m}(\partial_{t}^+ f^n(u_h^n),\phi_h^n)_{\mathcal{T}_h}+\|\phi_h^0\|_{\mathcal T_h}^2.
	\end{split}
	\end{align}

	By  H\"{o}lder's inequality, we have 
	\begin{align*}
	\hspace{1em}&\hspace{-1em}\Delta t \sum_{n=1}^{m} (\partial_{t}^+f^n(u_h^n),\phi_h^n)_{\mathcal{T}_h}\\
	&\le C\Delta t \sum_{n=1}^{m} \|\partial_{t}^+ u_h^n\|_{\mathcal T_h}\|\phi_h^n\|_{L^6(\Omega)}\left(\|(u_h^n)^2\|_{L^3(\Omega)}+\|(u_h^{n-1})^2\|_{L^3(\Omega)}+1\right)\\
	&= C\Delta t \sum_{n=1}^{m}\|\partial_{t}^+u_h^n\|_{\mathcal T_h}\|\phi_h^n\|_{L^6(\Omega)}\left(\|u_h^n\|_{L^6(\Omega)}^2+\|u_h^{n-1}\|_{L^6(\Omega)}^2+1\right).
	\end{align*}
	Next, by the HDG Sobolev inequality in \Cref{discrete-soblev-hdg}, we have 
    \begin{align*}
    \hspace{1em}&\hspace{-1em}\Delta t \sum_{n=1}^{m} (\partial_{t}^+f^n(u_h^n),\phi_h^n)_{\mathcal{T}_h}\\
	&\le  C\Delta t\sum_{n=1}^{m} \|\partial_{t}^+ u_h^n\|_{\mathcal T_h} \|\phi_h^n\|_{L^6(\Omega)} \left(\|u_h^n\|_{\mathcal T_h}^2  + \|u_h^{n-1}\|_{\mathcal T_h}^2 + \|\bm q_h^n\|_{\mathcal T_h}^2+ \|\bm q_h^{n-1}\|_{\mathcal T_h}^2 \right.\\
	&\qquad \left. +\|h_K^{-1/2}(\Pi_k^{\partial}u_h^n-\widehat u_h^n)\|_{\partial\mathcal{T}_h}^2+\|h_K^{-1/2}(\Pi_k^{\partial}u_h^{n-1}-\widehat u_h^{n-1})\|_{\partial\mathcal{T}_h}+1\right).
	\end{align*}
	We then use \Cref{corEnergy} and the Cauchy-Schwarz inequality to get 
	\begin{align*}
	\Delta t \sum_{n=1}^{m} (\partial_{t}^+f^n(u_h^n),\phi_h^n)_{\mathcal{T}_h}
	&\le C  \Delta t\sum_{n=1}^{m}\|\partial_{t}^+u_h^n\|_{\mathcal T_h}\|\phi_h^n\|_{L^6(\Omega)}\\
	&\le  \epsilon \Delta t\sum_{n=1}^{m}\|\partial_{t}^+u_h^n\|_{\mathcal T_h}^2 + C\Delta t \sum_{n=1}^m \|\phi_h^n\|_{L^6(\Omega)}^2.
	\end{align*}
	Together with \eqref{proof_111} and the above inequality, we have 
	\begin{align*}
	\hspace{1em}&\hspace{-1em} \|\phi_h^{m}\|_{\mathcal T_h}^2+\sum_{n=1}^{m}\|\phi_h^n-\phi_h^{n-1}\|^2_{\mathcal T_h}+\Delta t\sum_{n=1}^{m}\|\partial_{t}^+ u_h^n\|_{\mathcal T_h}^2\\%
	&\le C\Delta t \sum_{n=1}^m \|\phi_h^n\|_{L^6(\Omega)}^2 + C \|\phi_h^0\|_{\mathcal T_h}^2.
	\end{align*}
	 From the HDG Sobolev embedding inequality  \Cref{discrete-soblev-hdg} again, one gets

	 \begin{align*}
	 \hspace{1em}&\hspace{-1em} \|\phi_h^{m}\|_{\mathcal T_h}^2+\sum_{n=1}^{m}\|\phi_h^n-\phi_h^{n-1}\|^2_{\mathcal T_h}+\Delta t\sum_{n=1}^{m}\|\partial_{t}^+ u_h^n\|_{\mathcal T_h}^2\\%
	 &\le C\Delta t \sum_{n=1}^m (\|\bm p_h^n\|_{\mathcal T_h}^2 +\|h_K^{-1/2}(\Pi_k^{\partial}\phi_h^n-\widehat \phi_h^n)\|_{\partial\mathcal{T}_h})\\%
	&\quad  + C \|\phi_h^0\|_{\mathcal T_h}^2 + C\Delta t \sum_{n=1}^m  \|\phi_h^{n}\|_{\mathcal T_h}^2 \\
	&\leq C,
	 \end{align*}
where one uses the estimate \eqref{ine-end} and the bounds in \Cref{corEnergy} in the derivation of the last step. This finishes the proof.

\end{proof}

\subsection{The uniform estimate of $u_h^n$ }
We now bound  $u_h^n$ in the $L^\infty$ norm. For this, we introduce the discrete Laplacian operator.
For any $u_h\in  W_h$, we define \\
$(\bm q_h, \Delta_h u_h, \widehat{u}_h )\in  \bm V_h\times W_h \times M_h$ such that  
\begin{align}\label{Delta_h}
(\Delta_h u_h, w_h)_{\mathcal{T}_h}=-\mathcal{A}(\bm q_h,u_h,\widehat u_h;\bm r_h,w_h,\mu_h),
\end{align}
for all $ (\bm r_h, w_h, \mu_h)\in \bm V_h\times W_h \times M_h$.
One can verify that $\Delta_h u_h$ is well-defined by the linearity of the operator $\mathcal{A}$ and the fact that uniqueness implies existence for a linear square system of finite dimension, cf. \eqref{def_A}.

\begin{lemma}\label{lap_u_h1} 
	Let $u_h^n$ be the solution of \eqref{HDG-hill}. For all $n=1,2\ldots, N$,  we have 
	\begin{align}\label{lap_u_h1-1} 
	\|\Delta_h u_h^n\|_{\mathcal{T}_h}\le C,
	\end{align}
	where $C$ depends on $\epsilon, T$ and the initial condition.
\end{lemma}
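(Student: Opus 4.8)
The plan is to test the discrete Laplacian definition \eqref{Delta_h} against $w_h = \Delta_h u_h^n$ and exploit the HDG equations \eqref{hdg02} to rewrite the right-hand side in terms of quantities already controlled by \Cref{corEnergy} and \Cref{phi_infty}. Concretely, I would first observe that choosing $(\bm r_2, w_2, \mu_2) = (\bm q_h^n, u_h^n, \widehat u_h^n)$ in the definition of $\Delta_h$ and comparing with \eqref{hdg02} yields the identity $\epsilon\,\Delta_h u_h^n = \phi_h^n - \epsilon^{-1} f^n(u_h^n)$ tested against functions in $W_h$; more precisely, for all $w_h \in W_h$,
\begin{align*}
\epsilon (\Delta_h u_h^n, w_h)_{\mathcal{T}_h} = (\phi_h^n, w_h)_{\mathcal{T}_h} - \epsilon^{-1}(f^n(u_h^n), w_h)_{\mathcal{T}_h}.
\end{align*}
Taking $w_h = \Delta_h u_h^n$ and applying Cauchy--Schwarz gives
\begin{align*}
\epsilon \|\Delta_h u_h^n\|_{\mathcal{T}_h} \le \|\phi_h^n\|_{\mathcal{T}_h} + \epsilon^{-1}\|f^n(u_h^n)\|_{\mathcal{T}_h},
\end{align*}
so the whole task reduces to bounding $\|\phi_h^n\|_{\mathcal{T}_h}$ and $\|f^n(u_h^n)\|_{\mathcal{T}_h}$ uniformly in $n$ and $h$.

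The bound on $\|\phi_h^n\|_{\mathcal{T}_h}$ is exactly the uniform-in-$m$ estimate \eqref{phi_infty-1} from \Cref{phi_infty}, so that term contributes a constant $C$ depending on $\epsilon, T$ and the initial data. For $\|f^n(u_h^n)\|_{\mathcal{T}_h}$ with $f^n(u_h^n) = (u_h^n)^3 - u_h^n$ (or $(u_h^n)^3 - u_h^{n-1}$), the cubic term is the only nontrivial one: $\|(u_h^n)^3\|_{\mathcal{T}_h} = \|u_h^n\|_{L^6(\Omega)}^3$. Here is where I would invoke the HDG Sobolev inequality \eqref{sobolev-01} of \Cref{discrete-soblev-hdg} with $\mu = 6$ (valid in both $d=2$ and $d=3$), bounding $\|u_h^n\|_{L^6(\Omega)}$ by $\|u_h^n\|_{\mathcal{T}_h} + \|\nabla u_h^n\|_{\mathcal{T}_h} + \|h_K^{-1/2}(\Pi_k^\partial u_h^n - \widehat u_h^n)\|_{\partial\mathcal{T}_h}$, and each of these three quantities is bounded by the constant $C$ from \Cref{corEnergy} (note $\|u_h^n\|_{\mathcal{T}_h}$ is controlled via $\|u_h^n\|_{L^4}$ or the HDG Poincar\'e inequality together with the mass-conservation identity). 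Combining, $\|f^n(u_h^n)\|_{\mathcal{T}_h} \le C$ uniformly, and the claimed bound \eqref{lap_u_h1-1} follows.

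The main obstacle is not any single estimate but making sure the identity connecting $\Delta_h u_h^n$ to $\phi_h^n$ and $f^n(u_h^n)$ is set up correctly: \eqref{hdg02} only tests against $W_h \times M_h$ (or its subspaces), while the definition \eqref{Delta_h} of $\Delta_h$ tests against the full space $\bm V_h \times W_h \times M_h$, so one must check that the extra test directions (in $\bm V_h$ and in $M_h$) are consistent — they are, because the $\bm V_h$ and $M_h$ equations in \eqref{HDG-hill} and in \eqref{Delta_h} coincide after identifying the auxiliary flux and trace, so $\bm q_h^n$ and $\widehat u_h^n$ produced by $\Delta_h$ are precisely the HDG solution components. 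Once that bookkeeping is in place, everything else is a direct application of \Cref{corEnergy}, \Cref{phi_infty}, and the HDG Sobolev inequality.
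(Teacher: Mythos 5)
Your proposal is correct and follows essentially the same route as the paper: both reduce the bound to the identity $\epsilon(\Delta_h u_h^n, w_h)_{\mathcal{T}_h} = (\phi_h^n - \epsilon^{-1}f^n(u_h^n), w_h)_{\mathcal{T}_h}$ (the paper obtains it by taking $w_2=\Delta_h u_h^n$ in \eqref{hdg02} after identifying the auxiliary flux and trace with $\bm q_h^n$ and $\widehat u_h^n$, exactly the bookkeeping you flag), then apply Cauchy--Schwarz, control $\|u_h^n\|_{L^6}^3$ via the HDG Sobolev inequality and \Cref{corEnergy}, and control $\|\phi_h^n\|_{\mathcal{T}_h}$ via \Cref{phi_infty}. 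No gaps.
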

\begin{proof}
	For $u_h=u_h^n$ in Eq. \eqref{Delta_h}, by the uniqueness of the solution, in light of Eq.  \eqref{hdg02}, one identifies  that $\bm q_h=\bm q_h^n$ and $\widehat{u}_h=\widehat{u}_h^n$. Now taking $w_2=\Delta_h u_h^n$ in \eqref{hdg02},  one obtains
	\begin{align*}
	(\epsilon^{-1}f^n(u^n_h),\Delta_h u^n_h)_{\mathcal{T}_h}-\epsilon\|\Delta_h u_h^n\|^2_{\mathcal{T}_h}-(\phi_h^n,\Delta_h u^n_h)_{\mathcal{T}_h}=0.
	\end{align*}
	It follows from the Cauchy-Schwarz inequality
	\begin{align*}
	\epsilon\|\Delta_h u_h^n\|^2_{\mathcal{T}_h}&=	(\epsilon^{-1}f^n(u^n_h),\Delta_h u^n_h)_{\mathcal{T}_h}-(\phi_h^n,\Delta_h u^n_h)_{\mathcal{T}_h}\\
	&\le \epsilon^{-1}(\|u_h\|_{0,6}^3+\|u_h^n\|_{\mathcal T_h})\|\Delta_h u_h^n\|_{\mathcal{T}_h}
	+\|\phi_h^n\|_{\mathcal T_h}\|\Delta_h u_h^n\|_{\mathcal{T}_h}.
	\end{align*}
	Now, one estimates the term $\|u_h\|_{0,6}$ by the HDG Sobolev inequality \eqref{sobolev-01} and the stability bounds in \Cref{corEnergy} as follows
	\begin{align*}
	\|{u}_{h}^n\|_{0,6}&\le C \left(\|u_h^n\|_{\mathcal{T}_h}+\|\nabla u_h^n\|_{\mathcal{T}_h} +\|h_K^{-1/2}(\Pi_k^{\partial}u_h^n-\widehat u_h^n)\|_{\partial\mathcal{T}_h}\right)\\
	&\le C \left(\|u_h^n\|_{0,4}+\|\nabla u_h^n\|_{\mathcal{T}_h}+\|h_K^{-1/2}(\Pi_k^{\partial}u_h^n-\widehat u_h^n)\|_{\partial\mathcal{T}_h}\right)\\
	&\le C.
	\end{align*}
	One readily obtains the estimate \eqref{lap_u_h1-1} in light of the stability bound\eqref{phi_infty-1}. This completes the proof.
\end{proof}

We now estimate the $L^\infty$ norm of functions in $W_h$.
\begin{lemma}\label{lap_u_h2} 
	For all $w_h\in  W_h$, we have the inequality
	\begin{align*}
	&\|w_h\|_{\infty}\le C\left(h^{2-d/2}\|\Delta _h w_h\|_{\mathcal T_h}+\|\Delta_hw_h\|_{\mathcal T_h}+\|w_h\|_{\mathcal{T}_h}\right).
	\end{align*}
\end{lemma}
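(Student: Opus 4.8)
The plan is to identify $w_h$, after subtracting its average, as a Lehrenfeld--Sch\"oberl HDG approximation of the solution of an auxiliary Neumann problem with data $\Delta_h w_h$, and then to bound its $L^\infty$-norm by comparing with the continuous solution via the inverse and approximation inequalities.

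First I would remove the constant part. Set $\bar w_h := |\Omega|^{-1}(w_h,1)_{\mathcal T_h}$ and $w_h^0 := w_h-\bar w_h\in\mathring W_h$; then $|\bar w_h|\le C\|w_h\|_{\mathcal T_h}$, a direct check using the $\bm r_h$-equation and the transmission condition in \eqref{Delta_h} shows $\Delta_h w_h^0 = \Delta_h w_h =: g$, and testing \eqref{Delta_h} with $v_h=1$ (together with the transmission condition) gives $(g,1)_{\mathcal T_h}=0$, i.e. $g\in\mathring W_h$. Hence it suffices to estimate $\|w_h^0\|_{\infty}$. Next, comparing the defining relation \eqref{Delta_h} for $(\bm q_h^0,w_h^0,\widehat w_h^0)$, restricted to test functions in $\bm V_h\times\mathring W_h\times M_h$, with \Cref{-1h} applied to $-g$, and using the discrete LBB condition \Cref{LBB} (which makes this restricted system uniquely solvable), I would conclude $(\bm q_h^0,w_h^0,\widehat w_h^0)=(-\bm\Pi_{\bm V}g,-\Pi_W g,-\Pi_M g)$. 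In other words $w_h^0$ is exactly the HDG approximation of $\psi\in H^1(\Omega)\cap L^2_0(\Omega)$ solving $-\Delta\psi=-g$ in $\Omega$ with $\nabla\psi\cdot\bm n=0$ on $\partial\Omega$.

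By elliptic regularity, $\|\psi\|_{H^2(\Omega)}\le C\|g\|_{\mathcal T_h}$, and the standard $L^2$-error estimate for this HDG method (cf. \cite{Lehrenfeld_PhD_thesis_2010,Qiu_Shi_Convection_Diffusion_JSC_2016}), specialized to $H^2$ data, gives $\|\psi-w_h^0\|_{\mathcal T_h}\le C h^2\|\psi\|_{H^2}$. Then I would split $\|w_h^0\|_{\infty}\le\|w_h^0-\Pi_{k+1}^o\psi\|_{\infty}+\|\Pi_{k+1}^o\psi-\psi\|_{\infty}+\|\psi\|_{\infty}$ and estimate each term: the first by the inverse inequality $\|z_h\|_{L^\infty(K)}\le Ch_K^{-d/2}\|z_h\|_{L^2(K)}$ followed by the triangle inequality and \eqref{eq27a}, yielding $\|w_h^0-\Pi_{k+1}^o\psi\|_{\infty}\le Ch^{-d/2}\big(\|w_h^0-\psi\|_{\mathcal T_h}+\|\psi-\Pi_{k+1}^o\psi\|_{\mathcal T_h}\big)\le Ch^{2-d/2}\|\psi\|_{H^2}$; the second by the $H^2$-version of the $L^\infty$ interpolation estimate \eqref{est-infty}, giving $\|\Pi_{k+1}^o\psi-\psi\|_{\infty}\le Ch^{2-d/2}|\psi|_{H^2}$; and the third by the Sobolev embedding $H^2(\Omega)\hookrightarrow L^\infty(\Omega)$ (valid for $d=2,3$), giving $\|\psi\|_{\infty}\le C\|\psi\|_{H^2}$. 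Collecting these with $\|\psi\|_{H^2}\le C\|g\|_{\mathcal T_h}=C\|\Delta_h w_h\|_{\mathcal T_h}$ and adding back $|\bar w_h|\le C\|w_h\|_{\mathcal T_h}$ gives the claimed inequality.

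The main obstacle I anticipate is the identification $w_h^0=-\Pi_W(\Delta_h w_h)$ and the careful bookkeeping of the zero-average component: since $\Delta_h$ annihilates constants, the term $\|w_h\|_{\mathcal T_h}$ genuinely must appear, and the constant must be peeled off before any comparison. Once $w_h^0$ is recognized as a bona fide HDG solution of a Neumann problem, the remainder is the routine inverse-estimate-plus-approximation argument. A secondary point is that the clean $h^{2-d/2}$ rate relies on $H^2$ elliptic regularity of the Neumann Laplacian on $\Omega$ (used directly and, via the dual problem, in the $L^2$-error estimate).
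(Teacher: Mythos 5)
Your proposal is correct and follows essentially the same route as the paper: identify $w_h$ (up to its mean) as the HDG elliptic approximation of the continuous Neumann problem with datum $\Delta_h w_h$, invoke $H^2$ regularity and the $L^2$ projection error estimate \eqref{err-scal-1}, and then split $\|w_h\|_{\infty}$ into the three terms $\|w_h-\Pi_{k+1}^o\psi\|_{\infty}+\|\Pi_{k+1}^o\psi-\psi\|_{\infty}+\|\psi\|_{\infty}$ handled by the inverse inequality, \eqref{est-infty}, and Sobolev embedding. The only cosmetic difference is that you peel off the mean of $w_h$ at the outset (recovering the $\|w_h\|_{\mathcal T_h}$ term from $|\bar w_h|$), whereas the paper instead normalizes the continuous solution by $(w,1)_{\mathcal T_h}=(w_h,1)_{\mathcal T_h}$ and recovers that term from $\|w\|_{H^2}$.
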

\begin{proof} 
	Consider the following continuous problem: find $w\in  H^1(\Omega)$ such that
	\begin{align}\label{def-w}
	-\Delta w=-\Delta_h w_h   \text{ in }\Omega, \quad \nabla w \cdot \bm n=0   \text{ on }\partial \Omega,   \quad 	(w,1)_{\mathcal{T}_h}=(w_h,1)_{\mathcal{T}_h}.
	\end{align}
	Since $\Omega$ is convex, we have the regularity  estimate
	\begin{align}\label{es-w}
	|w|_{H^2(\Omega)}=|w-\overline w|_{H^2(\Omega)}\le \|w-\overline w\|_{H^2(\Omega)}\le C_{\text{reg}}\|\Delta_h w_h\|_{\mathcal{T}_h},
	\end{align}
	where $\overline w=\frac{1}{|\Omega|}(w_h,1)_{\mathcal{T}_h}$.
	The definitions \eqref{Delta_h} and \eqref{def-w} imply
	\begin{align*}
	\mathcal A(\bm q_h^w,w_h,\widehat{u}_h^w;\bm s_h,v_h,\mu_h)=-(\Delta w,v_h)_{\mathcal{T}_h},\qquad
	(w_h-w,1)_{\mathcal{T}_h}=0,
	\end{align*}
	for all $(\bm s_h,v_h,\mu_h)\in \bm V_h\times W_h\times M_h$.
	By the uniqueness of solutions to the elliptic projection \eqref{projection}, in view of Remark \ref{rem-gen}, one uses the HDG elliptic projection result   \eqref{err-scal-1} to get
	\begin{align}
	& \|w-w_h\|_{\mathcal{T}_h}\le Ch^{2}|w|_{H^2(\Omega)}.   \label{es-fem}
	\end{align}

	By the triangle inequality, we have 
	\begin{align*}
	\|w_h\|_{0,\infty} &\le \|w_h-\Pi_{k+1}^ow\|_{L^\infty}+ \|\Pi_{k+1}^o w-w\|_{L^\infty}
	+ \|w\|_{L^\infty}\\
	&:=R_1+R_2+R_3\color{blue}. \color{black}
	\end{align*}
	The $\{R_i\}_{i=1}^3$ are estimated as follows
	\begin{align*}
	R_1&\le Ch^{-d/2}\|w_h-\Pi_{k+1}^o w\|_{\mathcal{T}_h}\le Ch^{-d/2} \left( \|w_h-w\|_{\mathcal{T}_h}+\|w-\Pi_{k+1}^o w\|_{\mathcal{T}_h}\right)\\
	&\le Ch^{2-d/2}|w|_{H^2(\Omega)}\le Ch^{2-d/2}\|\Delta_h w_h\|_{\mathcal T_h},\\
	R_2&\le Ch^{2-d/2}|w|_{H^2(\Omega)}\le  Ch^{2-d/2}\|\Delta_h w_h\|_{\mathcal T_h}, \\
	R_3&\le C ||w||_{H^2(\Omega)} \le C (||\Delta_h w_h||_{\mathcal{T}_h}+||w_h||_{\mathcal{T}_h}),
	\end{align*}
	where the last inequality follows from the elliptic regularity result.
	Collecting the above estimates, one concludes the  proof.
	
\end{proof}

Using \Cref{lap_u_h2} and \Cref{lap_u_h1} immediately gives the following result.
\begin{lemma}\label{uh_infty_bounded}
	Let $u_h^n$ be the solution of \eqref{HDG-hill}. For all $n=1,2\ldots, N$,  we have 
	\begin{align*}
	\|u_h^n\|_{L^\infty}\le C,
	\end{align*}
	where $C$ depends on $\epsilon, T$ and the initial condition.
\end{lemma}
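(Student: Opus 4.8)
The plan is to apply the pointwise estimate of \Cref{lap_u_h2} to $w_h = u_h^n$ and then bound each of the three terms on its right-hand side with a priori estimates already in hand. Since $u_h^n\in W_h$, \Cref{lap_u_h2} gives
\[
\|u_h^n\|_{L^\infty}\le C\left(h^{2-d/2}\|\Delta_h u_h^n\|_{\mathcal{T}_h}+\|\Delta_h u_h^n\|_{\mathcal{T}_h}+\|u_h^n\|_{\mathcal{T}_h}\right),
\]
so it remains to see that the right-hand side is bounded uniformly in $n$ and $h$.

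First I would control the two terms containing $\Delta_h u_h^n$ by \Cref{lap_u_h1}, which yields $\|\Delta_h u_h^n\|_{\mathcal{T}_h}\le C$ with $C$ depending only on $\epsilon$, $T$ and the initial data (and, implicitly, the time-step constraint $\Delta t\le 2\epsilon^3/C$ inherited from \Cref{corEnergy}). For the last term I would invoke the energy bound \eqref{corEnergy-1}, which gives $\|u_h^n\|_{0,4}\le C$; since $\Omega$ is bounded, H\"older's inequality yields $\|u_h^n\|_{\mathcal{T}_h}\le |\Omega|^{1/4}\|u_h^n\|_{0,4}\le C$.

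The only point that needs a word is the prefactor $h^{2-d/2}$: because the triangulation is quasi-uniform, $h\le \operatorname{diam}(\Omega)$, and $2-d/2\ge \tfrac12>0$ for $d\in\{2,3\}$, so $h^{2-d/2}$ is bounded by a fixed constant and the term $h^{2-d/2}\|\Delta_h u_h^n\|_{\mathcal{T}_h}$ is harmless. Combining the three bounds gives $\|u_h^n\|_{L^\infty}\le C$. There is no genuine obstacle here: the substantive work was done in \Cref{lap_u_h1} (resting on the uniqueness of the discrete Laplacian, the stability bounds of \Cref{corEnergy}, and the control of $\|\phi_h^n\|_{\mathcal{T}_h}$ from \Cref{phi_infty}) and in \Cref{lap_u_h2} (resting on elliptic regularity and the HDG elliptic projection estimates); the present lemma merely assembles them.
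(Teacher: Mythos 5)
Your argument is exactly the paper's: the authors simply state that combining \Cref{lap_u_h2} with \Cref{lap_u_h1} gives the result, and you have filled in the obvious details (the $L^2$ bound on $u_h^n$ via the $L^4$ energy estimate and H\"older, and the harmlessness of the $h^{2-d/2}$ prefactor) correctly. Nothing further is needed.
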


\section{Error analysis}
\label{error_analysis}
 In this section, we provide a convergence analysis of the fully implicit HDG method for the Cahn-Hilliard equation. The convex-splitting scheme can be similarly treated. First, we give our main results. Then, we define an HDG elliptic projection as in \cite{Chen_Monk_Peter1}, which is a crucial step to prove the main result. In the end, we provide  rigorous error estimation for our fully implicit HDG method.
 
 Throughout, we assume the data and the solution of \eqref{ori} are smooth enough. As in \Cref{E_U_S}, we do not track the dependence on $\epsilon$ and treat as if $\epsilon = O(1)$. The generic constant $C$ may depend on the data of the problem but is independent of $h$ and may change from line to line.
 
 Given $\Theta\in L^2_0(\Omega)$, let $(\bm\Psi,\Phi)$ be the solution of the following system 
	\begin{align}\label{auxi-ellip}
	\bm{\Psi}+\nabla\Phi=0, \ \ 
	\nabla\cdot\bm{\Psi}=\Theta \ \text{in }\Omega, \ \ 
	\bm{\Psi}\cdot\bm n =0 \ \text{on } \partial \Omega, \ \ 
	\int_{\Omega}\Phi=0.
	\end{align}
 If $\Omega$ is convex, then we have the following regularity result:
		\begin{align}
		\|\bm\Psi\|_{H^1(\Omega)}+\|\Phi\|_{H^2(\Omega)}&\le C_{\text{reg}}\|\Theta\|_{L^2(\Omega)}.\label{reg1}
		\end{align}
\subsection{The main result}
We can now state our main result for the HDG method.
\begin{theorem}\label{main_res}
Let $(\bm p, \phi, \bm q, u)$  and $(\bm p_h^n,\phi_h^n, \bm q_h^n, u_h^n)$ be the solutions of \eqref{mixed} and \eqref{HDG-hill}, respectively. Assume the solution $(\bm p, \phi, \bm q, u)$ attains the maximum regularity for the best approximation  results in \eqref{classical_ine}. If $\Delta t \leq C \epsilon^3$ for the BE scheme and is arbitrary for the CS scheme, one has the following optimal error estimates
\begin{align}\label{main_res_estimates}
\begin{split}
\max_{1\le n\le N} \|u^n-u_h^n\|_{L^2(\Omega)}^2+\Delta t \sum_{n=1}^N \|\phi^n -\phi_h^n\|_{L^2(\Omega)}^2  \le C (h^{k+2}+\Delta t)^2, \\
\max_{1\le n\le N} \|\bm q^n -\bm q_h^n\|_{L^2(\Omega)}^2+\Delta t \sum_{n=1}^N \|\bm p^n -\bm p_h^n\|_{L^2(\Omega)}^2 \le C (h^{k+1}+\Delta t)^2.
\end{split} 
\end{align}
Furthermore, if the polynomial order $k\geq 1$, one also has the optimal error estimate in the negative norm
\begin{align}
\max_{1\le n\le N} \|u^n-u_h^n\|_{(H^1)\prime}^2 \leq C (h^{k+3}+\Delta t)^2.
\end{align}
\end{theorem}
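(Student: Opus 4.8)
The plan is to introduce an HDG elliptic projection adapted to the form $\mathcal{A}$, in the spirit of \cite{Chen_Monk_Peter1}, split every error into a projection part plus a discrete part, and close the discrete part by an energy argument modeled on the discrete energy identity of \Cref{DEL-2}, with a companion negative-norm argument modeled on \Cref{DEL} for the last estimate. I would first fix the elliptic projections $(\bm\Pi\bm q,\Pi u,\widehat{\Pi u})$ of $(\bm q,u)$ and $(\bm\Pi\bm p,\Pi\phi,\widehat{\Pi\phi})$ of $(\bm p,\phi)$, mean-preserving in the scalar component, satisfying the optimal bounds $\|\bm q-\bm\Pi\bm q\|_{\mathcal{T}_h}\le Ch^{k+1}$, $\|u-\Pi u\|_{\mathcal{T}_h}\le Ch^{k+2}$ (and likewise for $\bm p,\phi$), locally $L^2$-orthogonal to $\mathcal{P}^k(K)$, and constructed so that $\mathcal{A}(\bm\Pi\bm q^n,\Pi u^n,\widehat{\Pi u}^n;\bm r_h,w_h,\mu_h)=(\nabla\cdot\bm q^n,w_h)_{\mathcal{T}_h}$, which makes the flux projection error disappear from the error equations. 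Writing $\bm q^n-\bm q_h^n=\bm\eta_{\bm q}^n+\bm\theta_{\bm q}^n$ with $\bm\theta_{\bm q}^n:=\bm\Pi\bm q^n-\bm q_h^n$, and analogously $\eta_\phi^n,\theta_\phi^n,\eta_u^n,\theta_u^n$ and their numerical traces, subtracting \eqref{HDG-hill} from the HDG-consistent form of \eqref{mixed} at $t=t_n$ yields
\[
(\partial_t^+\theta_u^n,w_1)_{\mathcal{T}_h}+\mathcal{A}(\bm\theta_{\bm p}^n,\theta_\phi^n,\widehat\theta_\phi^n;\bm r_1,w_1,\mu_1)=(\tau^n-\partial_t^+\eta_u^n,w_1)_{\mathcal{T}_h},
\]
\[
\epsilon\mathcal{A}(\bm\theta_{\bm q}^n,\theta_u^n,\widehat\theta_u^n;\bm r_2,w_2,\mu_2)-(\theta_\phi^n,w_2)_{\mathcal{T}_h}+\epsilon^{-1}\bigl(f(u^n)-f(u_h^n),w_2\bigr)_{\mathcal{T}_h}=(\eta_\phi^n,w_2)_{\mathcal{T}_h},
\]
where $\tau^n$ is the backward-Euler truncation error with $\|\tau^n\|_{\mathcal{T}_h}\le C\Delta t$, so every residual is $O(\Delta t+h^{k+2})$ and no $O(h^{k+1})$ flux term appears.

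The energy step is to test the first equation with $(\bm\theta_{\bm p}^n,\theta_\phi^n,\widehat\theta_\phi^n)$, the second with $(\bm0,\partial_t^+\theta_u^n,\partial_t^+\widehat\theta_u^n)$, and the $\partial_t^+$ of the second with $(\bm\theta_{\bm q}^n,0,0)$, then add the three, using the symmetry of \Cref{summertic_A} so that the cross terms $(\partial_t^+\theta_u^n,\theta_\phi^n)$ and $-(\theta_\phi^n,\partial_t^+\theta_u^n)$ cancel exactly as in the proof of \Cref{DEL-2}. Invoking the identities of \Cref{identity}, multiplying by $\Delta t$ and summing over $n$ leaves on the left $\tfrac{\epsilon}{2}\bigl(\|\bm\theta_{\bm q}^m\|_{\mathcal{T}_h}^2+\|h_K^{-1/2}(\Pi_k^\partial\theta_u^m-\widehat\theta_u^m)\|_{\partial\mathcal{T}_h}^2\bigr)$, nonnegative discrete-derivative squares, and $\Delta t\sum_n\bigl(\|\bm\theta_{\bm p}^n\|_{\mathcal{T}_h}^2+\|h_K^{-1/2}(\Pi_k^\partial\theta_\phi^n-\widehat\theta_\phi^n)\|_{\partial\mathcal{T}_h}^2\bigr)$, and on the right the initial data, the accumulated residuals $\Delta t\sum_n\bigl[(\tau^n-\partial_t^+\eta_u^n,\theta_\phi^n)+(\eta_\phi^n,\partial_t^+\theta_u^n)\bigr]$, and the accumulated nonlinear term $\epsilon^{-1}\Delta t\sum_n\bigl(f(u^n)-f(u_h^n),\partial_t^+\theta_u^n\bigr)$. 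The residual $(\tau^n-\partial_t^+\eta_u^n,\theta_\phi^n)$ is dispatched with Cauchy--Schwarz and Young after using \Cref{es_u} (the first error equation with $w_1=\mu_1=0$ places $(\bm\theta_{\bm p}^n,\theta_\phi^n,\widehat\theta_\phi^n)$ in its hypothesis) and the HDG Poincar\'e inequality \eqref{HDG-poincare} to bound $\|\theta_\phi^n\|_{\mathcal{T}_h}$ by the left-hand quantities plus the mean of $\theta_\phi^n$, which is $O(\|u^n-u_h^n\|_{\mathcal{T}_h})$ by the scheme's mass conservation; the two terms paired with $\partial_t^+\theta_u^n$ are the delicate ones.

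That summation by parts is the main obstacle. Writing $g^n:=f(u^n)-f(u_h^n)$ and using Abel summation,
\[
\Delta t\sum_{n=1}^m\bigl(g^n,\partial_t^+\theta_u^n\bigr)_{\mathcal{T}_h}=\bigl(g^m,\theta_u^m\bigr)_{\mathcal{T}_h}-\bigl(g^1,\theta_u^0\bigr)_{\mathcal{T}_h}-\sum_{n=1}^{m-1}\bigl(g^{n+1}-g^n,\theta_u^n\bigr)_{\mathcal{T}_h},
\]
and similarly for $\sum(\eta_\phi^n,\partial_t^+\theta_u^n)$. I would use the uniform bound $\|u_h^n\|_{L^\infty}\le C$ of \Cref{uh_infty_bounded} and the assumed regularity of $u$ to get $|g^n|\le C|u^n-u_h^n|$, hence $\|g^n\|_{\mathcal{T}_h}\le C(\|\eta_u^n\|_{\mathcal{T}_h}+\|\theta_u^n\|_{\mathcal{T}_h})$; for the time differences $\|f(u^{n+1})-f(u^n)\|_{\mathcal{T}_h}\le C\Delta t$ by regularity and $\|f(u_h^{n+1})-f(u_h^n)\|_{\mathcal{T}_h}\le C\Delta t\|\partial_t^+u_h^{n+1}\|_{\mathcal{T}_h}$ again by the $L^\infty$ bound, and the a priori estimate $\Delta t\sum_n\|\partial_t^+u_h^n\|_{\mathcal{T}_h}^2\le C$ of \Cref{phi_infty} keeps $\sum_n\|g^{n+1}-g^n\|_{\mathcal{T}_h}\|\theta_u^n\|_{\mathcal{T}_h}$ summable. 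Young's inequality, together with \Cref{es_u} and \eqref{HDG-poincare} to control $\|\theta_u^n\|_{\mathcal{T}_h}$ by $\|\bm\theta_{\bm q}^n\|_{\mathcal{T}_h}+\|h_K^{-1/2}(\Pi_k^\partial\theta_u^n-\widehat\theta_u^n)\|_{\partial\mathcal{T}_h}$, and a discrete Gronwall argument then give $\max_m\|\bm\theta_{\bm q}^m\|_{\mathcal{T}_h}^2+\max_m\|\theta_u^m\|_{\mathcal{T}_h}^2+\Delta t\sum_n\bigl(\|\bm\theta_{\bm p}^n\|_{\mathcal{T}_h}^2+\|\theta_\phi^n\|_{\mathcal{T}_h}^2\bigr)\le C(h^{k+2}+\Delta t)^2$, and the two $L^2$ estimates in \eqref{main_res_estimates} follow from the triangle inequality and the projection bounds (order $h^{k+1}$ for the fluxes, $h^{k+2}$ for the scalars). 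It is precisely the need to pass the cubic nonlinearity through this estimate without losing powers of $h$ that forces the machinery of \Cref{E_U_S} — the $L^\infty$ bound on $u_h^n$, the bound on $\Delta t\sum\|\partial_t^+u_h^n\|_{\mathcal{T}_h}^2$, and, en route to them, the HDG Sobolev inequality of \Cref{discrete-soblev}.

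For the negative-norm estimate (assuming $k\ge1$), I would run the companion argument of \Cref{DEL}: since $\partial_t^+\theta_u^n\in\mathring{W}_h$, test the first error equation with the HDG inverse Laplacian of $\theta_u^n$ and the second with $(\bm\theta_{\bm q}^n,\theta_u^n,\widehat\theta_u^n)$, add, let the $(\theta_u^n,\theta_\phi^n)$ terms cancel (the constant mean of $\theta_\phi^n$ drops since $\theta_u^n$ has mean zero), and use the polarization identity for the $\|\cdot\|_{-1,h}$ inner product. The residuals now involve $(\eta_\phi^n,\theta_u^n)$ and $(\partial_t^+\eta_u^n,\Pi_W\theta_u^n)$, and here the local $\mathcal{P}^k$-orthogonality of the elliptic projection pays off one extra power of $h$ (via a first-order estimate against $\mathcal{P}^k(K)$ and \Cref{es_u}), the nonlinear term $\epsilon^{-1}(g^n,\theta_u^n)$ is bounded below using the pointwise sign of $(u^2+uu_h+u_h^2)(\theta_u^n)^2$, and a discrete Gronwall argument yields $\max_m\|\theta_u^m\|_{-1,h}^2\le C(h^{k+3}+\Delta t)^2$. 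Combining this with the $\mathcal{P}^k$-superapproximation of $\eta_u^n$ against test functions in $H^1$, and the equivalence of $\|\cdot\|_{-1,h}$ with the dual norm through the auxiliary problem \eqref{auxi-ellip} and its regularity \eqref{reg1}, gives $\max_{1\le n\le N}\|u^n-u_h^n\|_{(H^1)'}^2\le C(h^{k+3}+\Delta t)^2$. The hardest single point in the whole argument remains the cubic nonlinearity in the $L^2$ estimate; the rest is a careful but routine adaptation of the scalar energy identities already proved.
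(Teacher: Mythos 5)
Your overall architecture (elliptic projection, error equations, energy identities plus a negative-norm duality argument, with the nonlinearity tamed by the $L^\infty$ bound of \Cref{uh_infty_bounded} and the a priori bounds of \Cref{phi_infty}) matches the paper's, but there is a genuine gap in the way you try to extract the scalar estimate. Your single combined energy step is exactly the paper's \emph{flux} identity (test the first error equation with $(\bm\theta_{\bm p}^n,\theta_\phi^n,\widehat\theta_\phi^n)$, the second with $(\bm 0,\partial_t^+\theta_u^n,\partial_t^+\widehat\theta_u^n)$, and $\partial_t^+$ of the second with $(\bm\theta_{\bm q}^n,0,0)$), and in that identity the concave part $-u$ of $f(u)=u^3-u$ contributes $\epsilon^{-1}(\theta_u^n,\partial_t^+\theta_u^n)=\tfrac{1}{2\epsilon}\partial_t^+\|\theta_u^n\|^2_{\mathcal T_h}+\tfrac{\Delta t}{2\epsilon}\|\partial_t^+\theta_u^n\|^2_{\mathcal T_h}$ to the \emph{right-hand side}. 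After summation this is $+\tfrac{1}{2\epsilon}\|\theta_u^m\|^2_{\mathcal T_h}$ at the final time, and it cannot be absorbed: the left-hand side controls $\|\theta_u^m\|_{\mathcal T_h}$ only through $\|\bm\theta_{\bm q}^m\|_{\mathcal T_h}+\|h_K^{-1/2}(\Pi_k^\partial\theta_u^m-\widehat\theta_u^m)\|_{\partial\mathcal T_h}$ via \Cref{es_u} and \eqref{HDG-poincare}, whose constant is a fixed Poincar\'e constant, not small relative to the $\tfrac{\epsilon}{2}$ prefactor on the left. The same endpoint obstruction reappears after your Abel summation in $(g^m,\theta_u^m)_{\mathcal T_h}$. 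So the claim that this one argument delivers $\max_m\|\theta_u^m\|^2_{\mathcal T_h}\le C(h^{k+2}+\Delta t)^2$ does not close.

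The paper resolves this by running a \emph{separate, prior} scalar energy argument (\Cref{lem-err-sca}): test the first error equation with $(-e_h^{\bm q^n},e_h^{u^n},e_h^{\widehat u^n})$ and the second with $(e_h^{\bm p^n},-e_h^{\phi^n},-e_h^{\widehat\phi^n})$, so that by the symmetry of \Cref{summertic_A} \emph{all} the $\mathcal A$ terms cancel and one is left with $\epsilon(\partial_t^+e_h^{u^n},e_h^{u^n})_{\mathcal T_h}+\|e_h^{\phi^n}\|^2_{\mathcal T_h}$ on the left — i.e.\ the $L^2$ norm of $\theta_u$ is differentiated with the correct sign, and $\Delta t\sum_n\|\theta_\phi^n\|^2_{\mathcal T_h}$ comes out directly rather than through Poincar\'e. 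That lemma in turn uses the negative-norm estimate of \Cref{Err-neg} (proved \emph{first}, not last as in your plan) to control $\Delta t\sum_n\|e_h^{u^n}\|^2_{\mathcal T_h}$; only then is the flux identity run, with the already-established bound \eqref{inyter_mide} used to dispose of the $\|e_h^{u^m}\|^2_{\mathcal T_h}$ and $\Delta t^2\sum_n\|\partial_t^+e_h^{u^n}\|^2_{\mathcal T_h}$ terms on its right-hand side. Two smaller inaccuracies: the residuals in the flux step are only $O(h^{k+1})$ (the paper's $I_2,I_3$ involve $\|\nabla(P_W\phi^n-\phi_{Ih}^n)\|_{\mathcal T_h}$), so your claim that "no $O(h^{k+1})$ term appears" holds only for the scalar estimates; and your negative-norm sketch should make explicit that the extra power of $h$ comes from the $h^{k+3}$ superapproximation of the elliptic projection in $\|\cdot\|_{-1,h}$ (via the Scott--Zhang-type operator of the appendix), which is where the restriction $k\ge1$ enters.
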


\begin{remark}
To the best of our knowledge, \cite{Cockburn_Dong_Guzman} is the only work for  fourth order problems using an HDG method with polynomial degree $k$ for all variables.  They obtained an optimal convergence rate for the solution and suboptimal convergence rates for the other variables. In contrast, the HDG method proposed in this work  deals with a nonlinear fourth order problem and achieves  optimal convergence rates for all variables. Moreover, from the view point of degrees of freedom, we obtain the superconvergent  rate for the solution.	
\end{remark}

\subsection{The HDG elliptic projection}
For all $t\in [0,T]$, we define the HDG elliptic projection: find $(\bm p_{Ih},\phi_{Ih},\widehat{\phi}_{Ih}), \\ (\bm q_{Ih}, u_{Ih},\widehat u_{Ih})\in \bm V_h\times W_h\times M_h$ such that
\begin{subequations}\label{projection}
	\begin{align}	
	\mathcal A(\bm p_{Ih},\phi_{Ih},\widehat \phi_{Ih};\bm r_1, w_1,\mu_1)&=-(\Delta \phi,w_1)_{\mathcal{T}_h}
	&\text{and\;\;}(\phi_{I_h}-\phi,1)_{\mathcal{T}_h}=0,\label{projection01}\\
	\mathcal{A}(\bm q_{Ih},u_{Ih},\widehat u_{Ih};\bm r_2, w_2, \mu_2)&=-(\Delta u,w_2)_{\mathcal{T}_h}
	&\text{and\;\;}(u_{I_h}-u,1)_{\mathcal{T}_h}=0 \label{projection02}
	\end{align}
\end{subequations}
 for all $(\bm r_1,w_1,\mu_1), (\bm r_2, w_2,\mu_2) \in \bm V_h\times \mathring{W}_h\times M_h$. These projections are well-defined in the sense that there exist unique $(\bm p_{Ih},\phi_{Ih},\widehat{\phi}_{Ih}),  (\bm q_{Ih}, u_{Ih},\widehat u_{Ih})\in \bm V_h\times W_h\times M_h$ such that Eqs. \eqref{projection} hold.

We have the following approximation property for the HDG elliptic projection \eqref{projection}.

\begin{theorem}\label{projection_approximation} 
	Let $(\bm p, \phi, \bm q, u)$  and $(\bm p_{Ih},\phi_{Ih}, \bm q_{Ih}, u_{Ih})$ be the solution of \eqref{mixed} and \eqref{projection}, respectively. For all integer $s \in [0, k], k\geq 0 $ we have
	\begin{subequations}
	\begin{align}
	\|u-u_{Ih}\|_{\mathcal{T}_h}&\le Ch^{s+2}|u|_{H^{s+2}},  \label{projection_approximation_1}\\
	\|\bm q-\bm q_{Ih}\|_{\mathcal{T}_h}+\|h_K^{-1/2}(\Pi_k^{\partial}u_{Ih}-\widehat u_{Ih})\|_{\partial\mathcal{T}_h}&\le Ch^{s+1}|u|_{H^{s+2}},   \label{projection_approximation_2}\\
	\|\partial_tu-\partial_tu_{Ih}\|_{\mathcal{T}_h}&\le Ch^{s+2}|\partial_tu|_{H^{s+2}},  \label{projection_approximation_3}\\
	\|\phi-\phi_{Ih}\|_{\mathcal{T}_h}&\le Ch^{s+2}|\phi|_{H^{s+2}},   \label{projection_approximation_4}\\
	\|\bm p-\bm p_{Ih}\|_{\mathcal{T}_h}+\|h_K^{-1/2}(\Pi_k^{\partial}\phi_{Ih}-\widehat \phi_{Ih})\|_{\partial\mathcal{T}_h}&\le Ch^{s+1}|\phi|_{H^{s+2}}. \label{projection_approximation_5}
	\end{align}
	\end{subequations}
\end{theorem}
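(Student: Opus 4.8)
The plan is to follow the standard HDG elliptic-projection analysis (in the spirit of \cite{Chen_Monk_Peter1}), adapted to the Lehrenfeld--Sch\"oberl stabilization with mixed polynomial orders. The two blocks of \eqref{projection} have identical structure: \eqref{projection02} is the HDG discretization of the pure-Neumann Poisson problem with exact solution $u$, exact flux $\bm q$, and data $-\Delta u=\nabla\cdot\bm q$, and \eqref{projection01} is the same with $(u,\bm q,-\Delta u)$ replaced by $(\phi,\bm p,-\Delta\phi)$. So it suffices to establish \eqref{projection_approximation_1} and \eqref{projection_approximation_2}; then \eqref{projection_approximation_4} and \eqref{projection_approximation_5} follow verbatim, and \eqref{projection_approximation_3} follows at the end because the HDG elliptic projection is linear and $t$ enters \eqref{projection} only through the data, so $\partial_t u_{Ih}$ is the HDG elliptic projection of $\partial_t u$ and \eqref{projection_approximation_1} applies with $u$ replaced by $\partial_t u$.

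Step 1 (consistency and Galerkin orthogonality). Since $u\in H^1(\Omega)$ one has $\Pi_k^\partial u=P_M u$ and the interior-face jumps of $u$ vanish, while $\langle\bm q\cdot\bm n,\mu\rangle_{\partial\mathcal T_h}=0$ for single-valued $\mu\in M_h$ because $\bm q\cdot\bm n$ is continuous across interior faces and vanishes on $\partial\Omega$. Using $\bm q+\nabla u=0$ and $\nabla\cdot\bm q=-\Delta u$, a direct inspection of \eqref{def_A} gives the consistency identity $\mathcal A(\bm q,u,P_M u;\bm r,w,\mu)=-(\Delta u,w)_{\mathcal T_h}$ for all $(\bm r,w,\mu)\in\bm V_h\times\mathring W_h\times M_h$, so that subtracting \eqref{projection02} produces
\[
\mathcal A\big(\bm q-\bm q_{Ih},\ u-u_{Ih},\ P_M u-\widehat u_{Ih};\ \bm r,w,\mu\big)=0,\qquad\forall\,(\bm r,w,\mu)\in\bm V_h\times\mathring W_h\times M_h.
\]

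Step 2 (flux and stabilization estimates). Split with the $L^2$ projections of \eqref{L2_projection}: write $\bm q-\bm q_{Ih}=\delta^{\bm q}+\varepsilon^{\bm q}$ with $\delta^{\bm q}=\bm q-\bm\Pi_k^o\bm q$ and $\varepsilon^{\bm q}\in\bm V_h$; $u-u_{Ih}=\delta^u+\varepsilon^u$ with $\delta^u=u-\Pi_{k+1}^o u$ and $\varepsilon^u\in\mathring W_h$ (mean zero by the side condition of \eqref{projection02}); and $P_M u-\widehat u_{Ih}=\varepsilon^{\widehat u}\in M_h$. The orthogonality relation with $w=\mu=0$ shows that $(\varepsilon^{\bm q},\varepsilon^u,\varepsilon^{\widehat u})$ satisfies hypothesis \eqref{Ap}, so \Cref{es_u} bounds $\|\nabla\varepsilon^u\|_{\mathcal T_h}$ by $\|\varepsilon^{\bm q}\|_{\mathcal T_h}+\|h_K^{-1/2}(\Pi_k^\partial\varepsilon^u-\varepsilon^{\widehat u})\|_{\partial\mathcal T_h}$. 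Next, taking $(\bm r,w,\mu)=(\varepsilon^{\bm q},\varepsilon^u,\varepsilon^{\widehat u})$ in the orthogonality relation and using \Cref{energy_of_A},
\[
\|\varepsilon^{\bm q}\|_{\mathcal T_h}^2+\big\|h_K^{-1/2}(\Pi_k^\partial\varepsilon^u-\varepsilon^{\widehat u})\big\|_{\partial\mathcal T_h}^2=-\,\mathcal A\big(\delta^{\bm q},\delta^u,0;\ \varepsilon^{\bm q},\varepsilon^u,\varepsilon^{\widehat u}\big).
\]
Every volume term on the right vanishes by the defining $L^2$-orthogonality ($\nabla\cdot\varepsilon^{\bm q}$ and $\nabla\varepsilon^u$ lie in the projected spaces), leaving only boundary-face terms in $\delta^{\bm q}$ and $\delta^u$; the Cauchy--Schwarz and trace/inverse inequalities of \eqref{classical_ine}, together with the bound for $\|\nabla\varepsilon^u\|_{\mathcal T_h}$ just obtained, control these by $Ch^{s+1}|u|_{H^{s+2}}\,\big(\|\varepsilon^{\bm q}\|_{\mathcal T_h}+\|h_K^{-1/2}(\Pi_k^\partial\varepsilon^u-\varepsilon^{\widehat u})\|_{\partial\mathcal T_h}\big)$. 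Absorbing, and then using the triangle inequality with \eqref{classical_ine}, gives \eqref{projection_approximation_2} (and, analogously, \eqref{projection_approximation_5}).

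Step 3 (duality for the scalar), the main difficulty. For \eqref{projection_approximation_1} I would use an Aubin--Nitsche argument. Since $(u-u_{Ih},1)_{\mathcal T_h}=0$ by the side condition of \eqref{projection02}, set $\Theta=u-u_{Ih}\in L^2_0(\Omega)$, let $(\bm\Psi,\Phi)$ solve the auxiliary problem \eqref{auxi-ellip}, and use the regularity \eqref{reg1}: $\|\bm\Psi\|_{H^1(\Omega)}+\|\Phi\|_{H^2(\Omega)}\le C_{\text{reg}}\|u-u_{Ih}\|_{L^2(\Omega)}$. Writing $\|u-u_{Ih}\|_{L^2}^2=(\delta^u,\Theta)_{\mathcal T_h}+(\varepsilon^u,\Theta)_{\mathcal T_h}$, the first term equals $\|\delta^u\|_{\mathcal T_h}^2$ by $L^2$-orthogonality and is harmless; for the second I would introduce the HDG elliptic projection $(\bm\Psi_{Ih},\Phi_{Ih},\widehat\Phi_{Ih})\in\bm V_h\times\mathring W_h\times M_h$ of $(\bm\Psi,\Phi)$ and, using the consistency identity for the dual problem (Step 1 applied to $(\bm\Psi,\Phi)$), the primal orthogonality of Step 1 tested against $(\bm\Psi_{Ih},\Phi_{Ih},\widehat\Phi_{Ih})$, and the symmetry \Cref{summertic_A}, recast $(\varepsilon^u,\Theta)_{\mathcal T_h}$ as a sum of pairings of primal errors ($\bm q-\bm q_{Ih}$, the primal stabilization term, and the $L^2$-projection remainders $\delta^{\bm q},\delta^u$) against dual errors ($\bm\Psi-\bm\Psi_{Ih}$ and the dual stabilization term). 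Each pairing is then estimated by Cauchy--Schwarz as $O(h^{s+1}|u|_{H^{s+2}})\cdot O(h\,\|u-u_{Ih}\|_{L^2})$ — the primal factor from Step 2 (and \eqref{classical_ine}), the dual factor from Step 2 applied to \eqref{auxi-ellip} with $s=0$ — and dividing by one power of $\|u-u_{Ih}\|_{L^2}$ yields \eqref{projection_approximation_1}, while the same computation with $(u,\bm q)$ replaced by $(\phi,\bm p)$ yields \eqref{projection_approximation_4}. I expect the bookkeeping here to be the main obstacle: recasting $(\varepsilon^u,\Theta)_{\mathcal T_h}$ through $\mathcal A$, threading the two Galerkin orthogonalities and the twisted symmetry \Cref{summertic_A} to insert the discrete dual solution, and verifying that every surviving term gains an extra power of $h$ — in particular that the dual scalar error $\Phi-\Phi_{Ih}$ never enters in a way that would make the argument circular. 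The remaining subtlety, the mismatch between $\Pi_k^\partial$ and $P_M$, is immaterial here since it is applied only to $H^1$ functions, on which the two coincide.
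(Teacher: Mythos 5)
Your overall route is the same as the paper's: a consistency identity for the componentwise $L^2$/face projections of the exact solution leading to an error equation, an energy argument combined with \Cref{es_u} to control the flux and the stabilization term, an Aubin--Nitsche duality argument through the auxiliary problem \eqref{auxi-ellip} for the scalar, and linearity of the projection for \eqref{projection_approximation_3}. The only structural difference is in Step~3, where you insert the HDG elliptic projection of the dual pair $(\bm\Psi,\Phi)$, whereas the paper tests directly with $(\bm\Pi_k^o\bm\Psi,\Pi_{k+1}^o\Phi,\Pi_k^\partial\Phi)$ via the dual consistency identity \eqref{app:pi-dual}; both variants are standard and lead to the same collection of boundary pairings.

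There is, however, one concrete gap in Step~3: the blanket claim that ``each pairing is estimated by Cauchy--Schwarz as $O(h^{s+1}|u|_{H^{s+2}})\cdot O(h\,\|u-u_{Ih}\|_{L^2})$'' fails for $k=0$. After threading the two orthogonalities one is left with, among others, the pairing $\langle(\bm\Pi_k^o\bm q-\bm q)\cdot\bm n,\ \Pi_k^{\partial}\Phi-\Pi_{k+1}^{o}\Phi\rangle_{\partial\mathcal T_h}$, and $\|\Phi-\Pi_k^{\partial}\Phi\|_{\partial\mathcal T_h}$ is only $O(h^{k+1/2}|\Phi|_{H^{k+1}})$, which for $k=0$ contributes no extra power of $h$ and would leave you at $O(h^{s+1})$ instead of $O(h^{s+2})$. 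The paper closes this by an exact cancellation before estimating: $\langle\bm\Pi_k^o\bm q\cdot\bm n,\Phi-\Pi_k^{\partial}\Phi\rangle_{\partial\mathcal T_h}=0$ by the face orthogonality of $\Pi_k^{\partial}$, and $\langle\bm q\cdot\bm n,\Phi-\Pi_k^{\partial}\Phi\rangle_{\partial\mathcal T_h}=0$ because both $\Phi$ and $\Pi_k^{\partial}\Phi$ are single-valued while $\bm q\cdot\bm n$ is continuous across interior faces and vanishes on $\partial\Omega$; this replaces $\Pi_k^{\partial}\Phi$ by $\Phi$ in the pairing, after which $\|\Phi-\Pi_{k+1}^{o}\Phi\|_{\partial\mathcal T_h}\le Ch^{3/2}|\Phi|_{H^2}$ together with the regularity \eqref{reg1} supplies the missing power of $h$. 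You flagged the bookkeeping as the main obstacle but did not identify this cancellation, and without it the argument does not deliver \eqref{projection_approximation_1} for $k=0$ (for $k\ge 1$ raw Cauchy--Schwarz does suffice). Everything else in your proposal is sound.
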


\begin{remark}\label{rem-gen}
In the proof of Theorem \ref{projection_approximation} below, we only make use of the regularity  of $(\bm p, \phi, \bm q, u)$ and the fact that $\bm p=-\nabla \phi, \quad \bm q=-\nabla u$. 
Hence the approximation properties in Theorem \ref{projection_approximation} are valid for any regular functions with $\bm p=-\nabla \phi, \quad \bm q=-\nabla u$.
\end{remark}

We only give a proof of \eqref{projection_approximation_1} and \eqref{projection_approximation_2}, and we split the proof into three steps. To simplify the notation, we define
\begin{align}\label{error-func}
\varepsilon_h^{\bm q}:=\bm{\Pi}_k^o\bm q-\bm q_{Ih},\ \ \
\varepsilon_h^{u}:=\Pi_{k+1}^ou-u_{Ih},\ \ \
\varepsilon_h^{\widehat u}:=\Pi_k^{\partial}u-\widehat u_{Ih}.
\end{align}	
Note that $\varepsilon_h^{\widehat u} \in M_h$ since $\Pi_k^{\partial}u=P_M u$ for $u \in H^1(\Omega)$, where $P_M$ is the $L^2$ orthogonal projection onto $M_h$. The following error estimate of $\Pi_k^{\partial}$ ($P_M$) is classical
\begin{align}\label{err-pro-M}
\|\Phi-\Pi_k^{\partial}\Phi \|_{\partial \mathcal{T}_h} \leq C h^{s+1/2} |\Phi|_{H^{s+1}}, \quad s \in [0, k].
\end{align}
	
\subsubsection{Step 1: The error equation}
\begin{lemma}\label{projection_error} 
	For all $(\bm r_2, w_2,\mu_2)\in \bm V_h\times \mathring W_h\times M_h$, we have 
	\begin{align}
	&\mathcal{A}(\bm{\Pi}_k^o\bm q, \Pi_{k+1}^ou,\Pi_k^{\partial} u;\bm r_2, w_2, \mu_2)=-(\Delta u, w_2)_{\mathcal{T}_h} \nonumber \\
	&+\langle\bm q\cdot\bm n - \bm{\Pi}_k^o\bm q\cdot\bm n,\mu_2- w_2\rangle_{\partial\mathcal{T}_h} +\langle h_K^{-1}(\Pi_{k+1}^ou-u),\Pi_k^{\partial}w_2-\mu_2\rangle_{\partial\mathcal{T}_h}. \label{projection_error-1} 
	\end{align}
	
\end{lemma}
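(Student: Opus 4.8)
The plan is to derive the error equation \eqref{projection_error-1} by testing the definition \eqref{projection02} of the elliptic projection against the $L^2$ projections of the exact solution and carefully accounting for the consistency error introduced by the numerical traces. First I would start from the defining relation $\mathcal{A}(\bm q_{Ih}, u_{Ih}, \widehat u_{Ih}; \bm r_2, w_2, \mu_2) = -(\Delta u, w_2)_{\mathcal{T}_h}$ and then compute $\mathcal{A}(\bm{\Pi}_k^o\bm q, \Pi_{k+1}^o u, \Pi_k^\partial u; \bm r_2, w_2, \mu_2)$ directly from the definition \eqref{def_A}, expanding each of the six terms. The key observation is that the exact pair $(\bm q, u)$ with $\bm q = -\nabla u$ satisfies an exact counterpart: integrating by parts in $(\bm q, \bm r_2)_{\mathcal{T}_h} = -(\nabla u, \bm r_2)_{\mathcal{T}_h}$ and using single-valuedness of $u$ on interior faces together with $\bm q \cdot \bm n = 0$ on $\partial\Omega$, one finds that $\mathcal{A}$ applied to the (non-discrete) triple $(\bm q, u, u|_{\mathcal{E}_h})$ reproduces $-(\Delta u, w_2)_{\mathcal{T}_h}$ exactly, since the stabilization term vanishes (as $\Pi_k^\partial u - u|_{\mathcal{E}_h}$ evaluated against $\Pi_k^\partial w_2 - \mu_2$... needs care) — more precisely I would treat the exact solution's "numerical trace" as $\Pi_k^\partial u$ from the outset to match $M_h$.

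The core computation is then to substitute $\bm{\Pi}_k^o\bm q = \bm q + (\bm{\Pi}_k^o\bm q - \bm q)$, $\Pi_{k+1}^o u = u + (\Pi_{k+1}^o u - u)$, and $\Pi_k^\partial u = \widehat{u}_{\text{exact}}$ into the defining identity satisfied by $(\bm q, u)$ and push the projection errors through each term of $\mathcal{A}$. The volume terms: $(\bm{\Pi}_k^o\bm q - \bm q, \bm r_2)_{\mathcal{T}_h} = 0$ by definition of $\bm{\Pi}_k^o$ since $\bm r_2 \in [\mathcal{P}_k(K)]^d$; $(\Pi_{k+1}^o u - u, \nabla\cdot\bm r_2)_{\mathcal{T}_h} = 0$ since $\nabla\cdot\bm r_2 \in \mathcal{P}_k(K) \subset \mathcal{P}_{k+1}(K)$; and $(\nabla\cdot(\bm{\Pi}_k^o\bm q - \bm q), w_2)_{\mathcal{T}_h}$ must be integrated by parts back to $-(\bm{\Pi}_k^o\bm q - \bm q, \nabla w_2)_{\mathcal{T}_h} + \langle(\bm{\Pi}_k^o\bm q - \bm q)\cdot\bm n, w_2\rangle_{\partial\mathcal{T}_h}$, and the volume piece vanishes again by the projection property since $\nabla w_2 \in [\mathcal{P}_k(K)]^d$. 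The surface terms $\langle\Pi_k^\partial u - \widehat{u}, \bm r_2\cdot\bm n\rangle$ and $\langle(\bm{\Pi}_k^o\bm q - \bm q)\cdot\bm n, \mu_2\rangle$ must be combined; and the stabilization term $\langle h_K^{-1}(\Pi_k^\partial \Pi_{k+1}^o u - \Pi_k^\partial u), \Pi_k^\partial w_2 - \mu_2\rangle$ — here I would use $\Pi_k^\partial \Pi_{k+1}^o u - \Pi_k^\partial u = \Pi_k^\partial(\Pi_{k+1}^o u - u)$ — which after noting $\langle h_K^{-1}\Pi_k^\partial(\Pi_{k+1}^o u - u), \Pi_k^\partial w_2 - \mu_2\rangle = \langle h_K^{-1}(\Pi_{k+1}^o u - u), \Pi_k^\partial w_2 - \mu_2\rangle$ (since $\Pi_k^\partial w_2 - \mu_2 \in \mathcal{P}_k(E)$) yields exactly the last term in \eqref{projection_error-1}.

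Assembling these pieces, the surviving terms are the $L^2$-boundary consistency terms, which I would regroup as $\langle(\bm q - \bm{\Pi}_k^o\bm q)\cdot\bm n, \mu_2 - w_2\rangle_{\partial\mathcal{T}_h}$ by combining the $\langle(\bm{\Pi}_k^o\bm q - \bm q)\cdot\bm n, w_2\rangle$ from the integration by parts with the $\langle\bm q_h\cdot\bm n, \mu_h\rangle$-type term in $\mathcal{A}$ (using that on the exact solution, $\langle\bm q\cdot\bm n, \mu_2\rangle_{\partial\mathcal{T}_h}$ telescopes appropriately because $\mu_2$ is single-valued and $\bm q$ has continuous normal component), plus the stabilization term already identified. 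The main obstacle I anticipate is bookkeeping: correctly tracking which boundary integrals vanish due to single-valuedness of $\mu_2 \in M_h$ versus continuity of the exact flux $\bm q$, and ensuring the exact "trace" $\Pi_k^\partial u$ is the right object so that $\varepsilon_h^{\widehat u} \in M_h$ and no spurious stabilization contribution from the exact solution appears. Once \eqref{projection_error-1} is in hand, Steps 2 and 3 will use \Cref{LBB}, \Cref{es_u}, and the classical estimates \eqref{classical_ine}, \eqref{err-pro-M} to bound $\varepsilon_h^{\bm q}, \varepsilon_h^u, \varepsilon_h^{\widehat u}$, together with a duality argument against \eqref{auxi-ellip} for the optimal $L^2$ bound \eqref{projection_approximation_1}.
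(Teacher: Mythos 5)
Your proposal is correct and follows essentially the same route as the paper: expand $\mathcal{A}(\bm{\Pi}_k^o\bm q, \Pi_{k+1}^ou,\Pi_k^{\partial} u;\bm r_2, w_2, \mu_2)$ via \eqref{def_A}, use the $L^2$-orthogonality of $\bm\Pi_k^o$, $\Pi_{k+1}^o$, $\Pi_k^\partial$ to kill the volume projection errors (since $\bm r_2$, $\nabla\cdot\bm r_2$, $\nabla w_2$, $\bm r_2\cdot\bm n$ and $\Pi_k^\partial w_2-\mu_2$ all lie in the relevant polynomial spaces), integrate by parts, and invoke $\bm q=-\nabla u$ together with $\langle\bm q\cdot\bm n,\mu_2\rangle_{\partial\mathcal{T}_h}=0$ to collapse the boundary terms into the two stated consistency terms. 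The bookkeeping you flag as the main obstacle is exactly what the paper's proof carries out, and your identification of each vanishing and surviving term matches it.
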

\begin{proof} By the definition of $\mathcal{A}$ in \eqref{def_A}, we have 
	\begin{align*}
	\hspace{1em}&\hspace{-1em}\mathcal{A}(\bm{\Pi}_k^o\bm q, \Pi_{k+1}^ou,\Pi_k^{\partial} u;\bm r_2, w_2, \mu_2)\\
	&=(\bm{\Pi}_k^o\bm q,\bm r_2)_{\mathcal{T}_h}-(\Pi_{k+1}^ou,\nabla\cdot\bm r_2)_{\mathcal{T}_h}
	+\langle\Pi_k^{\partial}u,\bm r_2\cdot\bm n\rangle_{\partial\mathcal{T}_h}\\
	&\quad+(\nabla\cdot\bm{\Pi}_k^o\bm q,w_2)_{\mathcal{T}_h}-\langle\bm{\Pi}_k^o\bm q\cdot\bm n, \mu_2\rangle_{\partial\mathcal{T}_h}\\
	&\quad +\langle h_K^{-1}(\Pi_k^{\partial}\Pi_{k+1}^ou-\Pi_k^{\partial}u),\Pi_k^{\partial}w_2-\mu_2
	\rangle_{\partial\mathcal{T}_h}\\
	&=(\bm q+\nabla u,\bm r_2)_{\mathcal{T}_h} - (\bm q,\nabla w_2)_{\mathcal{T}_h}
	-\langle \bm{\Pi}_k^o\bm q\cdot\bm n,\mu_2-w_2\rangle_{\partial\mathcal{T}_h}\\
	&\quad+\langle h_K^{-1}(\Pi_{k+1}^ou-u),\Pi_k^{\partial}w_2-\mu_2\rangle_{\partial\mathcal{T}_h},
	\end{align*}
	where we used the orthogonality of $\bm{\Pi}_k^o$, $\Pi_{k+1}^o$, $\Pi_k^{\partial}$ in the last equality. Since $\bm q=-\nabla u$ and  $\langle \bm q\cdot\bm n,\mu_2\rangle_{\partial\mathcal{T}_h}=0$,
	one gets
	\begin{align*}
&	\mathcal{A}(\bm{\Pi}_k^o\bm q, \Pi_{k+1}^ou,\Pi_k^{\partial} u;\bm r_2, w_2, \mu_2)
	=-(\Delta u, w_2)_{\mathcal{T}_h}\\
&	+\langle\bm q\cdot\bm n - \bm{\Pi}_k^o\bm q\cdot\bm n,\mu_2-w_2\rangle_{\partial\mathcal{T}_h}
	+\langle h_K^{-1}(\Pi_{k+1}^ou-u),\Pi_k^{\partial}w_2-\mu_2\rangle_{\partial\mathcal{T}_h}
	\end{align*}
This completes the proof.	 
\end{proof}

Subtracting Eq. \eqref{projection_error-1} and Eq. \eqref{projection02}  gives the error equation.
\begin{lemma} 
	For all $(\bm r_2, w_2,\mu_2)\in \bm V_h\times \mathring W_h\times M_h$, we have 
	\begin{align}\label{app:error}
	\begin{split}
	\mathcal{A}(\varepsilon_h^{\bm q}, \varepsilon_h^{u},\varepsilon_h^{\widehat u};\bm r_2, w_2,\mu_2) &= \langle \bm q\cdot\bm n-\bm{\Pi}_k^o\bm q\cdot\bm n,\mu_2- w_2\rangle_{\partial\mathcal{T}_h}\\
	&\quad +\langle
	h_K^{-1}(\Pi_{k+1}^ou-u),\Pi_k^{\partial}w_2-\mu_2\rangle_{\partial\mathcal{T}_h}.
	\end{split}
	\end{align}
\end{lemma}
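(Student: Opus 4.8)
The plan is to obtain \eqref{app:error} directly from the representation formula \eqref{projection_error-1} of \Cref{projection_error} together with the defining relation \eqref{projection02}, exploiting the linearity of $\mathcal A$ in its first three arguments. First I would recall from \eqref{error-func} that
\[
\varepsilon_h^{\bm q}=\bm{\Pi}_k^o\bm q-\bm q_{Ih},\qquad
\varepsilon_h^{u}=\Pi_{k+1}^ou-u_{Ih},\qquad
\varepsilon_h^{\widehat u}=\Pi_k^{\partial}u-\widehat u_{Ih},
\]
and observe that, since the expression \eqref{def_A} defining $\mathcal A$ is linear in the triple $(\bm q_h,u_h,\widehat u_h)$, one has
\[
\mathcal{A}(\varepsilon_h^{\bm q}, \varepsilon_h^{u},\varepsilon_h^{\widehat u};\bm r_2, w_2,\mu_2)
=\mathcal{A}(\bm{\Pi}_k^o\bm q, \Pi_{k+1}^ou,\Pi_k^{\partial} u;\bm r_2, w_2,\mu_2)
-\mathcal{A}(\bm q_{Ih}, u_{Ih},\widehat u_{Ih};\bm r_2, w_2,\mu_2)
\]
for every $(\bm r_2,w_2,\mu_2)\in \bm V_h\times\mathring W_h\times M_h$.

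Next I would substitute \eqref{projection_error-1} for the first term on the right-hand side and the second equation of the elliptic projection \eqref{projection02} for the second term. Both identities are stated for test functions in $\bm V_h\times\mathring W_h\times M_h$, so they may be combined without any compatibility restriction. The two volume contributions $-(\Delta u,w_2)_{\mathcal{T}_h}$ then cancel, and what remains is precisely
\[
\langle\bm q\cdot\bm n - \bm{\Pi}_k^o\bm q\cdot\bm n,\mu_2- w_2\rangle_{\partial\mathcal{T}_h}
+\langle h_K^{-1}(\Pi_{k+1}^ou-u),\Pi_k^{\partial}w_2-\mu_2\rangle_{\partial\mathcal{T}_h},
\]
which is the asserted error equation \eqref{app:error}.

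Since the argument reduces to linearity of $\mathcal A$ and a one-line subtraction, there is no genuine obstacle here; the only point requiring minor care is the bookkeeping — ensuring that \eqref{projection_error-1} and \eqref{projection02} are tested against the same space $\bm V_h\times\mathring W_h\times M_h$, and that the sign conventions in \eqref{error-func} are respected so that the $(\Delta u,w_2)_{\mathcal{T}_h}$ terms cancel exactly rather than reinforce.
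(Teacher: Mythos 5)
Your proposal is correct and follows essentially the same route as the paper, which obtains \eqref{app:error} by subtracting \eqref{projection02} from the representation formula \eqref{projection_error-1}; your write-up merely makes explicit the bilinearity of $\mathcal A$ and the cancellation of the $-(\Delta u,w_2)_{\mathcal{T}_h}$ terms. No gaps.
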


\subsubsection{Step 2: An energy argument}

\begin{lemma}\label{Energy_Argum}
	Let $(\bm q, u)$ and $(\bm q_{Ih}, u_{Ih})$ be the solution of \eqref{ori} and \eqref{projection02}, respectively.  The following error estimate holds for $s\in[0, k]$ and $k\geq 0$.
	\begin{align}
&	\|\varepsilon_h^{\bm q}\|_{\mathcal{T}_h}+\|h_K^{-1/2}(\Pi_k^\partial \varepsilon_h^{u}-\varepsilon_h^{\widehat u}   )\|_{\partial\mathcal{T}_h} \leq C h^{s+1}|u|_{H^{s+2}}, \label{Energy_Argum-1pre} \\
	&\|h_K^{-1/2}( \varepsilon_h^{u}-\varepsilon_h^{\widehat u}   )\|_{\partial\mathcal{T}_h} \leq C h^{s+1}|u|_{H^{s+2}}. \label{Energy_Argum-inp1} 
	\end{align}
	In particular, one has
	\begin{align}
	&\|\bm q-\bm q_{Ih}\|_{\mathcal{T}_h}+\|h_K^{-1/2}(\Pi_k^{\partial}u_{Ih}-\widehat u_{Ih} )\|_{\partial\mathcal{T}_h}\le Ch^{s+1} |u|_{H^{s+2}} \label{Energy_Argum-1}.
	\end{align}
\end{lemma}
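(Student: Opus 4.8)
The plan is to run the standard HDG energy argument on the error equation~\eqref{app:error}, tested against the errors themselves. Two preliminary observations make this legitimate. First, since $1\in W_h$ and $(u_{Ih}-u,1)_{\mathcal T_h}=0$, we have $(\varepsilon_h^{u},1)_{\mathcal T_h}=(\Pi_{k+1}^{o}u-u,1)_{\mathcal T_h}+(u-u_{Ih},1)_{\mathcal T_h}=0$, so $\varepsilon_h^{u}\in\mathring W_h$ and $(\varepsilon_h^{\bm q},\varepsilon_h^{u},\varepsilon_h^{\widehat u})$ is an admissible test triple in~\eqref{app:error}. Second, choosing $w_2=0,\mu_2=0$ in~\eqref{app:error} gives $\mathcal A(\varepsilon_h^{\bm q},\varepsilon_h^{u},\varepsilon_h^{\widehat u};\bm r_2,0,0)=0$ for all $\bm r_2\in\bm V_h$, so \Cref{es_u} applies and yields
\[
\|\nabla\varepsilon_h^{u}\|_{\mathcal T_h}+\|h_K^{-1/2}(\varepsilon_h^{u}-\varepsilon_h^{\widehat u})\|_{\partial\mathcal T_h}\le C\big(\|\varepsilon_h^{\bm q}\|_{\mathcal T_h}+\|h_K^{-1/2}(\Pi_k^{\partial}\varepsilon_h^{u}-\varepsilon_h^{\widehat u})\|_{\partial\mathcal T_h}\big)=:C(A+B).
\]

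Next I would take $(\bm r_2,w_2,\mu_2)=(\varepsilon_h^{\bm q},\varepsilon_h^{u},\varepsilon_h^{\widehat u})$ in~\eqref{app:error}; by \Cref{energy_of_A} the left-hand side is exactly $A^{2}+B^{2}$. For the right-hand side, I would write $\mu_2-w_2=\varepsilon_h^{\widehat u}-\varepsilon_h^{u}=-(\Pi_k^{\partial}\varepsilon_h^{u}-\varepsilon_h^{\widehat u})-(\varepsilon_h^{u}-\Pi_k^{\partial}\varepsilon_h^{u})$, so that Cauchy--Schwarz bounds the first boundary term by $\|h_K^{1/2}(\bm q-\bm\Pi_k^{o}\bm q)\cdot\bm n\|_{\partial\mathcal T_h}\big(B+\|h_K^{-1/2}(\varepsilon_h^{u}-\Pi_k^{\partial}\varepsilon_h^{u})\|_{\partial\mathcal T_h}\big)$, and the second boundary term by $\|h_K^{-1/2}(\Pi_{k+1}^{o}u-u)\|_{\partial\mathcal T_h}\,B$. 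The classical (elementwise) trace error estimates for the $L^2$ projections, together with $\bm q=-\nabla u$, give $\|h_K^{1/2}(\bm q-\bm\Pi_k^{o}\bm q)\cdot\bm n\|_{\partial\mathcal T_h}\le Ch^{s+1}|u|_{H^{s+2}}$ and $\|h_K^{-1/2}(\Pi_{k+1}^{o}u-u)\|_{\partial\mathcal T_h}\le Ch^{s+1}|u|_{H^{s+2}}$; moreover the elementwise estimate used in the proof of \Cref{es_u} gives $\|h_K^{-1/2}(\varepsilon_h^{u}-\Pi_k^{\partial}\varepsilon_h^{u})\|_{\partial\mathcal T_h}\le C\|\nabla\varepsilon_h^{u}\|_{\mathcal T_h}\le C(A+B)$. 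Combining, the right-hand side is $\le Ch^{s+1}|u|_{H^{s+2}}(A+B)$, and Young's inequality applied to $A^{2}+B^{2}\le Ch^{s+1}|u|_{H^{s+2}}(A+B)$ yields $A+B\le Ch^{s+1}|u|_{H^{s+2}}$, i.e.~\eqref{Energy_Argum-1pre}.

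Inequality~\eqref{Energy_Argum-inp1} then follows immediately from the \Cref{es_u} bound recorded in the first paragraph together with~\eqref{Energy_Argum-1pre}. For~\eqref{Energy_Argum-1}, I would combine the triangle inequality $\|\bm q-\bm q_{Ih}\|_{\mathcal T_h}\le\|\bm q-\bm\Pi_k^{o}\bm q\|_{\mathcal T_h}+A$ with the classical estimate~\eqref{eq27a}, and use $\Pi_k^{\partial}u_{Ih}-\widehat u_{Ih}=\Pi_k^{\partial}(\Pi_{k+1}^{o}u-u)-(\Pi_k^{\partial}\varepsilon_h^{u}-\varepsilon_h^{\widehat u})$, bounding the first term by $L^2$-stability of $\Pi_k^{\partial}$ and the trace estimate and the second by $B$. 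The step I expect to require the most care is the apparent circularity in the first boundary term: the factor $\varepsilon_h^{u}-\Pi_k^{\partial}\varepsilon_h^{u}$ is controlled only through $\|\nabla\varepsilon_h^{u}\|_{\mathcal T_h}$, which \Cref{es_u} bounds by the very quantities $A$ and $B$ appearing on the left-hand side; this is harmless because those quantities enter linearly and are absorbed into the quadratic left-hand side by Young's inequality, but the bookkeeping must be kept explicit to ensure the absorption goes through.
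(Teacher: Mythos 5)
Your proposal is correct and follows essentially the same route as the paper: the same verification that $\varepsilon_h^u\in\mathring W_h$, the same use of \Cref{es_u} via the flux equation, the same test triple $(\varepsilon_h^{\bm q},\varepsilon_h^{u},\varepsilon_h^{\widehat u})$ with the decomposition $\varepsilon_h^{\widehat u}-\varepsilon_h^{u}=(\varepsilon_h^{\widehat u}-\Pi_k^{\partial}\varepsilon_h^{u})+(\Pi_k^{\partial}\varepsilon_h^{u}-\varepsilon_h^{u})$, and the same absorption of the linear right-hand side into the quadratic left-hand side. The only difference is presentational: you make the bookkeeping of the $\|\nabla\varepsilon_h^u\|_{\mathcal T_h}$ term more explicit than the paper does.
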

\begin{proof}

First, the error equation \eqref{app:error} implies that 
\begin{align*}
\mathcal{A}(\varepsilon_h^{\bm q}, \varepsilon_h^{u},\varepsilon_h^{\widehat u};\bm r_2, 0,0)=0, \quad \forall \bm r_h \in \bm V_h.
\end{align*}
Hence Lemma \ref{es_u} gives 
\begin{align}\label{ineq-inter1}
||h_K^{-1/2}(\varepsilon_h^{u}-\varepsilon_h^{\widehat u})||_{\partial \mathcal{T}_h} \leq C(||\varepsilon_h^{\bm q}||_{\mathcal{T}_h} + ||h_K^{-1/2}(\Pi_k^{\partial}\varepsilon_h^{u}-\varepsilon_h^{\widehat u})||_{\partial \mathcal{T}_h}).
\end{align}
Noting that $\varepsilon_h^{u} \in \mathring{W}_h$ by the definitions of $\Pi_{k+1}^o$ and $u_{Ih}$, we now take $(\bm r_2, w_2, \mu_2) =(\varepsilon_h^{\bm q}, \varepsilon_h^{u},\varepsilon_h^{\widehat u})
	$ in \eqref{app:error}. Then by the Cauchy-Schwartz inequality, the triangle inequality, the inequality \eqref{ineq-inter1} and inequalities in \eqref{classical_ine}, one  get
	\begin{align*}
	&\|\varepsilon_h^{\bm q}\|^2_{\mathcal{T}_h}+\|h_K^{-1/2}(\Pi_k^{\partial}\varepsilon_h^{u}-\varepsilon_h^{\widehat u}   )\|^2_{\partial\mathcal{T}_h}=\langle \bm q\cdot\bm n - \bm{\Pi}_k^o\bm q\cdot\bm n,\varepsilon_h^{\widehat u}-\Pi_k^\partial \varepsilon_h^u
	\rangle_{\partial\mathcal{T}_h}\\
	&\langle \bm q\cdot\bm n - \bm{\Pi}_k^o\bm q\cdot\bm n, \Pi_k^\partial \varepsilon_h^u-\varepsilon_h^u \rangle_{\partial\mathcal{T}_h}+\langle h_K^{-1}(\Pi_{k+1}^ou-u),\Pi_k^{\partial}\varepsilon_h^u-\varepsilon_h^{\widehat u}
	\rangle_{\partial\mathcal{T}_h}\\
	&\le Ch^{s+1}(|u|_{H^{s+2}}+|\bm q|_{H^{s+1}}) \left( \|\varepsilon_h^{\bm q}\|^2_{\mathcal{T}_h}+\|h_K^{-1/2}(\Pi_k^{\partial}\varepsilon_h^{u}-\varepsilon_h^{\widehat u}   )\|^2_{\partial\mathcal{T}_h}\right)^{1/2}.
	\end{align*}
The error estimate \eqref{Energy_Argum-1pre}	 readily follows. Then the estimate \eqref{Energy_Argum-inp1} is a consequence of the inequality \eqref{ineq-inter1}.
	
Now in light of the definitions of the error functions in \eqref{error-func}, one obtains  the desired error estimate \eqref{Energy_Argum-1} by the triangle inequality, the $L^2$ stability of the projection $\Pi_k^\partial$,  the inequalities in \eqref{classical_ine} and the fact that $\bm q=-\nabla u$. This completes the proof.
\end{proof}

\subsubsection{Step 3: The  error estimate of the scalar variable by the duality argument}
Similar to \Cref{projection_error} we have the following result.

\begin{lemma}
	Let $\Theta$ be in $\mathring{W}_h$, and let $(\bm{\Psi}, \phi)$ be the solution to the system \eqref{auxi-ellip}. Then for all $(\bm r_2, w_2, \mu_2)\in \bm V_h\times \mathring{W}_h\times M_h$, we have the equation
	\begin{align}\label{app:pi-dual}
	\begin{split}
&	\mathcal{A}(\bm{\Pi}_k^o\bm{\Psi},\Pi_{k+1}^o\Phi,\Pi_k^{\partial}\Phi;\bm r_2, w_2, \mu_2)
	=(\Theta,w_2)_{\mathcal T_h}  \\
&	+\langle \bm \Psi\cdot\bm n - \bm{\Pi}_k^o\bm \Psi\cdot\bm n,\mu_2- w_2 \rangle_{\partial\mathcal{T}_h}
 + \langle	h_K^{-1}(\Pi_{k+1}^o\Phi-\Phi),\Pi_k^{\partial}w_2-\mu_2
	\rangle_{\partial\mathcal{T}_h}.
	\end{split}
	\end{align}
\end{lemma}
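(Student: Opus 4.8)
The plan is to prove \eqref{app:pi-dual} by repeating, almost verbatim, the computation carried out for the HDG elliptic projection in \Cref{projection_error}, with the pair $(\bm q,u)$ replaced by $(\bm\Psi,\Phi)$ and the identities $\bm q=-\nabla u$, $\langle\bm q\cdot\bm n,\mu_2\rangle_{\partial\mathcal{T}_h}=0$ replaced by the corresponding properties of the dual system \eqref{auxi-ellip}. The only genuinely new bookkeeping is tracking how the source term $\Theta=\nabla\cdot\bm\Psi$ enters.

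First I would expand $\mathcal{A}(\bm{\Pi}_k^o\bm{\Psi},\Pi_{k+1}^o\Phi,\Pi_k^{\partial}\Phi;\bm r_2, w_2, \mu_2)$ using the definition \eqref{def_A}, obtaining the volume terms $(\bm{\Pi}_k^o\bm{\Psi},\bm r_2)_{\mathcal{T}_h}$, $-(\Pi_{k+1}^o\Phi,\nabla\cdot\bm r_2)_{\mathcal{T}_h}$, $(\nabla\cdot\bm{\Pi}_k^o\bm{\Psi},w_2)_{\mathcal{T}_h}$, the trace terms $\langle\Pi_k^{\partial}\Phi,\bm r_2\cdot\bm n\rangle_{\partial\mathcal{T}_h}$ and $-\langle\bm{\Pi}_k^o\bm{\Psi}\cdot\bm n,\mu_2\rangle_{\partial\mathcal{T}_h}$, together with the stabilization term $\langle h_K^{-1}(\Pi_k^{\partial}\Pi_{k+1}^o\Phi-\Pi_k^{\partial}\Phi),\Pi_k^{\partial}w_2-\mu_2\rangle_{\partial\mathcal{T}_h}$. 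Because $\bm r_2\in\bm V_h$, $\nabla\cdot\bm r_2\in\mathcal{P}^k(K)$, $\nabla w_2\in[\mathcal{P}^k(K)]^d$, and $\bm r_2\cdot\bm n,\mu_2\in\mathcal{P}^k(E)$, I can remove each $L^2$ projection $\bm\Pi_k^o,\Pi_{k+1}^o,\Pi_k^{\partial}$ whenever it is paired against a polynomial of admissible degree; combined with element-wise integration by parts this reduces the first three volume terms plus the first trace term to $(\bm\Psi+\nabla\Phi,\bm r_2)_{\mathcal{T}_h}-(\bm\Psi,\nabla w_2)_{\mathcal{T}_h}-\langle\bm{\Pi}_k^o\bm\Psi\cdot\bm n,\mu_2-w_2\rangle_{\partial\mathcal{T}_h}$, while the stabilization term becomes $\langle h_K^{-1}(\Pi_{k+1}^o\Phi-\Phi),\Pi_k^{\partial}w_2-\mu_2\rangle_{\partial\mathcal{T}_h}$ after writing $\Pi_k^{\partial}\Pi_{k+1}^o\Phi-\Pi_k^{\partial}\Phi=\Pi_k^{\partial}(\Pi_{k+1}^o\Phi-\Phi)$ and using the self-adjointness and idempotency of $\Pi_k^{\partial}$ to move it onto $\Pi_k^{\partial}w_2-\mu_2$.

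Next I would use \eqref{auxi-ellip}: the identity $\bm\Psi=-\nabla\Phi$ annihilates $(\bm\Psi+\nabla\Phi,\bm r_2)_{\mathcal{T}_h}$, and element-wise integration by parts gives $-(\bm\Psi,\nabla w_2)_{\mathcal{T}_h}=(\nabla\cdot\bm\Psi,w_2)_{\mathcal{T}_h}-\langle\bm\Psi\cdot\bm n,w_2\rangle_{\partial\mathcal{T}_h}=(\Theta,w_2)_{\mathcal{T}_h}-\langle\bm\Psi\cdot\bm n,w_2\rangle_{\partial\mathcal{T}_h}$ since $\nabla\cdot\bm\Psi=\Theta$. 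Finally, since $\bm\Psi\in H(\mathrm{div};\Omega)$ has a continuous normal component across interior faces and $\bm\Psi\cdot\bm n=0$ on $\partial\Omega$, while $\mu_2$ is single valued, we have $\langle\bm\Psi\cdot\bm n,\mu_2\rangle_{\partial\mathcal{T}_h}=0$, whence $-\langle\bm\Psi\cdot\bm n,w_2\rangle_{\partial\mathcal{T}_h}-\langle\bm{\Pi}_k^o\bm\Psi\cdot\bm n,\mu_2-w_2\rangle_{\partial\mathcal{T}_h}=\langle\bm\Psi\cdot\bm n-\bm{\Pi}_k^o\bm\Psi\cdot\bm n,\mu_2-w_2\rangle_{\partial\mathcal{T}_h}$. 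Collecting the three surviving contributions yields exactly \eqref{app:pi-dual}.

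The computation is entirely routine: it is \Cref{projection_error} with $(\bm q,u)$ traded for $(\bm\Psi,\Phi)$ and $-(\Delta u,w_2)_{\mathcal{T}_h}$ replaced by $(\Theta,w_2)_{\mathcal{T}_h}$. The only place that asks for a little care is the bookkeeping of the normal-trace boundary terms — one must recognize the cancellation $\langle\bm\Psi\cdot\bm n,\mu_2\rangle_{\partial\mathcal{T}_h}=0$ and dispose of the stabilization term via the algebraic properties of $\Pi_k^{\partial}$ — so that the residual emerges in precisely the projection-error form needed for the subsequent duality argument. I do not anticipate any genuine obstacle.
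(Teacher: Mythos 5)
Your proof is correct and follows exactly the route the paper intends: the paper omits the proof of this lemma, stating only that it is ``similar to'' the proof of the elliptic-projection identity in \Cref{projection_error}, and your computation is precisely that argument transposed to the dual system \eqref{auxi-ellip}, with $-(\Delta\Phi,w_2)_{\mathcal T_h}=(\Theta,w_2)_{\mathcal T_h}$ supplying the source term and the cancellation $\langle\bm\Psi\cdot\bm n,\mu_2\rangle_{\partial\mathcal T_h}=0$ handled as in the original. No gaps.
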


\begin{lemma} Let $u$ and $u_{Ih}$ be the solutions of \eqref{ori} and \eqref{projection02}, respectively.  Then  for $s\in[0, k]$, $k\geq 0$ we have the error estimates
	\begin{align}
&	\|u-u_{Ih}\|_{\mathcal{T}_h}\le Ch^{s+2}|u|_{H^{s+2}} \label{err-scal-1}. 
	\end{align}
\end{lemma}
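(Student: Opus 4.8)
The plan is an Aubin--Nitsche duality argument. I would first establish the improved estimate $\|\varepsilon_h^u\|_{\mathcal{T}_h}\le Ch^{s+2}|u|_{H^{s+2}}$ for $\varepsilon_h^u=\Pi_{k+1}^ou-u_{Ih}$, and then deduce \eqref{err-scal-1} from the triangle inequality together with the classical bound $\|u-\Pi_{k+1}^ou\|_{\mathcal{T}_h}\le Ch^{s+2}|u|_{H^{s+2}}$ (a consequence of \eqref{classical_ine}, since the scalar space has degree $k+1\ge s+1$). Note first that $\varepsilon_h^u\in\mathring W_h$, because $(\Pi_{k+1}^ou,1)_{\mathcal{T}_h}=(u,1)_{\mathcal{T}_h}=(u_{Ih},1)_{\mathcal{T}_h}$ by \eqref{projection02}. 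I would set $\Theta:=\varepsilon_h^u$ and let $(\bm\Psi,\Phi)$ solve the dual problem \eqref{auxi-ellip}; convexity of $\Omega$ and \eqref{reg1} give $\|\bm\Psi\|_{H^1(\Omega)}+\|\Phi\|_{H^2(\Omega)}\le C_{\text{reg}}\|\varepsilon_h^u\|_{\mathcal{T}_h}$, which is what will ultimately be absorbed on the right.

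The core step is to isolate $\|\varepsilon_h^u\|_{\mathcal{T}_h}^2$. I would take the test triple $(\bm r_2,w_2,\mu_2)=(\varepsilon_h^{\bm q},-\varepsilon_h^u,-\varepsilon_h^{\widehat u})\in\bm V_h\times\mathring W_h\times M_h$ in the dual identity \eqref{app:pi-dual}; its $(\Theta,w_2)_{\mathcal{T}_h}$ term produces $-\|\varepsilon_h^u\|_{\mathcal{T}_h}^2$. By the symmetry property \eqref{summertic_A1}, the left-hand side of \eqref{app:pi-dual} equals $\mathcal A(\varepsilon_h^{\bm q},\varepsilon_h^u,\varepsilon_h^{\widehat u};\bm\Pi_k^o\bm\Psi,-\Pi_{k+1}^o\Phi,-\Pi_k^\partial\Phi)$, which I would evaluate by the projection error equation \eqref{app:error} with the admissible test triple $(\bm\Pi_k^o\bm\Psi,-\Pi_{k+1}^o\Phi,-\Pi_k^\partial\Phi)$ (note $\Pi_{k+1}^o\Phi\in\mathring W_h$ because $\int_\Omega\Phi=0$). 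Equating the two evaluations expresses $\|\varepsilon_h^u\|_{\mathcal{T}_h}^2$ as a sum of four interface terms: (i) $\langle\bm\Psi\cdot\bm n-\bm\Pi_k^o\bm\Psi\cdot\bm n,\varepsilon_h^u-\varepsilon_h^{\widehat u}\rangle_{\partial\mathcal{T}_h}$, (ii) $\langle h_K^{-1}(\Pi_{k+1}^o\Phi-\Phi),\varepsilon_h^{\widehat u}-\Pi_k^\partial\varepsilon_h^u\rangle_{\partial\mathcal{T}_h}$, (iii) $-\langle\bm q\cdot\bm n-\bm\Pi_k^o\bm q\cdot\bm n,\Pi_{k+1}^o\Phi-\Pi_k^\partial\Phi\rangle_{\partial\mathcal{T}_h}$, and (iv) $-\langle h_K^{-1}(\Pi_{k+1}^ou-u),\Pi_k^\partial(\Phi-\Pi_{k+1}^o\Phi)\rangle_{\partial\mathcal{T}_h}$.

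It remains to bound each of (i)--(iv) by $Ch^{s+2}|u|_{H^{s+2}}\|\varepsilon_h^u\|_{\mathcal{T}_h}$. For (i) I would use Cauchy--Schwarz with weights $h_K^{\pm1/2}$, the scaled trace/approximation bound $\|h_K^{1/2}(\bm\Psi-\bm\Pi_k^o\bm\Psi)\cdot\bm n\|_{\partial\mathcal{T}_h}\le Ch\|\bm\Psi\|_{H^1}$, and $\|h_K^{-1/2}(\varepsilon_h^u-\varepsilon_h^{\widehat u})\|_{\partial\mathcal{T}_h}\le Ch^{s+1}|u|_{H^{s+2}}$ from \eqref{Energy_Argum-inp1}. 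For (ii) and (iv) I would use $\|h_K^{-1/2}(\Pi_{k+1}^o\Phi-\Phi)\|_{\partial\mathcal{T}_h}\le Ch\|\Phi\|_{H^2}$ (standard approximation, since the scalar space has degree $k+1\ge1$), the $L^2$-stability of $\Pi_k^\partial$, and the bounds $\|h_K^{-1/2}(\Pi_k^\partial\varepsilon_h^u-\varepsilon_h^{\widehat u})\|_{\partial\mathcal{T}_h}\le Ch^{s+1}|u|_{H^{s+2}}$ from \eqref{Energy_Argum-1pre} together with $\|h_K^{-1/2}(u-\Pi_{k+1}^ou)\|_{\partial\mathcal{T}_h}\le Ch^{s+1}|u|_{H^{s+2}}$. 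Term (iii) is the delicate one: a direct Cauchy--Schwarz estimate of $\|h_K^{-1/2}(\Pi_{k+1}^o\Phi-\Pi_k^\partial\Phi)\|_{\partial\mathcal{T}_h}$ only yields $O(1)$ when $k=0$, since $\Phi\in H^2$ does not control $\Phi-\Pi_k^\partial\Phi$ in that scaled norm. Instead I would split $\Pi_{k+1}^o\Phi-\Pi_k^\partial\Phi=(\Pi_{k+1}^o\Phi-\Phi)+(\Phi-\Pi_k^\partial\Phi)$ and observe that the contribution of $\Phi-\Pi_k^\partial\Phi$ vanishes identically: on each face $E$, $\langle\bm\Pi_k^o\bm q\cdot\bm n,\Phi-\Pi_k^\partial\Phi\rangle_E=0$ because $\bm\Pi_k^o\bm q\cdot\bm n|_E\in\mathcal P^k(E)$ and $\Phi-\Pi_k^\partial\Phi\perp\mathcal P^k(E)$, while $\langle\bm q\cdot\bm n,\Phi-\Pi_k^\partial\Phi\rangle_{\partial\mathcal{T}_h}=0$ since $\Phi-\Pi_k^\partial\Phi$ is single-valued across interior faces and $\bm q\cdot\bm n=0$ on $\partial\Omega$. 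Hence (iii) reduces to $-\langle\bm q\cdot\bm n-\bm\Pi_k^o\bm q\cdot\bm n,\Pi_{k+1}^o\Phi-\Phi\rangle_{\partial\mathcal{T}_h}$, which, using $\bm q=-\nabla u\in[H^{s+1}]^d$ and $\|h_K^{1/2}(\bm q-\bm\Pi_k^o\bm q)\|_{\partial\mathcal{T}_h}\le Ch^{s+1}|u|_{H^{s+2}}$, is $O(h^{s+2}|u|_{H^{s+2}}\|\Phi\|_{H^2})$. Summing (i)--(iv) and absorbing $\|\bm\Psi\|_{H^1}+\|\Phi\|_{H^2}\le C\|\varepsilon_h^u\|_{\mathcal{T}_h}$ gives $\|\varepsilon_h^u\|_{\mathcal{T}_h}\le Ch^{s+2}|u|_{H^{s+2}}$, and the triangle inequality finishes the proof.

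The main obstacle is term (iii): the naive bound loses half a power of $h$ (failing outright for $k=0$), and optimality hinges on exploiting both the $\mathcal P^k(E)$-orthogonality and the single-valuedness of $\Phi-\Pi_k^\partial\Phi$ to annihilate the non-converging part; a secondary point of care is the sign bookkeeping when chaining \eqref{app:pi-dual}, the symmetry \eqref{summertic_A1}, and \eqref{app:error} so that one lands exactly on $+\|\varepsilon_h^u\|_{\mathcal{T}_h}^2$.
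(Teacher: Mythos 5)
Your proposal is correct and follows essentially the same route as the paper: the same duality argument combining \eqref{app:pi-dual}, the symmetry property \eqref{summertic_A1}, and the error equation \eqref{app:error} with the test triple $(\varepsilon_h^{\bm q},-\varepsilon_h^u,-\varepsilon_h^{\widehat u})$, and the same key observation for the delicate term (iii), namely that $\langle \bm q\cdot\bm n-\bm\Pi_k^o\bm q\cdot\bm n,\Phi-\Pi_k^{\partial}\Phi\rangle_{\partial\mathcal{T}_h}=0$ by single-valuedness and $\mathcal P^k(E)$-orthogonality, which reduces it to $\langle \bm q\cdot\bm n-\bm\Pi_k^o\bm q\cdot\bm n,\Phi-\Pi_{k+1}^{o}\Phi\rangle_{\partial\mathcal{T}_h}$. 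The remaining term-by-term bounds via \eqref{Energy_Argum-1pre}, \eqref{Energy_Argum-inp1}, the $L^2$-stability of $\Pi_k^\partial$, the elliptic regularity \eqref{reg1}, and the final triangle inequality coincide with the paper's argument.
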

\begin{proof} 
	We take $(\bm r_2, w_2,\mu_2)=(\varepsilon_h^{\bm q},-\varepsilon_h^u,-\varepsilon_h^{\widehat u})$ and $\Theta=-\varepsilon_h^u$ in \eqref{app:pi-dual} to get
	\begin{align*}
	\|\varepsilon_h^u\|^2_{\mathcal{T}_h}
	&=	\mathcal{A}(\bm{\Pi}_k^o\bm{\Psi},\Pi_{k+1}^o\Phi,\Pi_k^{\partial}\Phi;\varepsilon_h^{\bm q},-\varepsilon_h^u,-\varepsilon_h^{\widehat u} )
	 -\langle\bm \Psi\cdot\bm n- \bm{\Pi}_k^o\bm \Psi\cdot\bm n, \varepsilon_h^{\widehat u}-\varepsilon_h^u
	\rangle_{\partial\mathcal{T}_h}\\
	&\quad 
	 - \langle h_K^{-1} (\Pi_{k+1}^o\Phi-\Phi),\Pi_k^{\partial}\varepsilon_h^u-\varepsilon_h^{\widehat u}\rangle_{\partial\mathcal{T}_h}.
	\end{align*}
	By  \Cref{summertic_A},  the error equation \eqref{app:error},  we have 
	\begin{align*}
	&\|\varepsilon_h^u\|^2_{\mathcal{T}_h} \\
	&=	\mathcal{A}(\varepsilon_h^{\bm q},\varepsilon_h^u,\varepsilon_h^{\widehat u};\bm{\Pi}_k^o\bm{\Psi},-\Pi_{k+1}^o\Phi,-\Pi_k^{\partial}\Phi; )
	-\langle\bm \Psi\cdot\bm n- \bm{\Pi}_k^o\bm \Psi\cdot\bm n, \varepsilon_h^{\widehat u}-\varepsilon_h^u
	\rangle_{\partial\mathcal{T}_h}\\
	&\quad
	 - \langle h_K^{-1} (\Pi_{k+1}^o\Phi-\Phi),\Pi_k^{\partial}\varepsilon_h^u-\varepsilon_h^{\widehat u}\rangle_{\partial\mathcal{T}_h} \\
	&=	-\langle \bm{\Pi}_k^o\bm q\cdot\bm n-\bm q\cdot\bm n,\Pi_k^{\partial}\Phi-\Pi_{k+1}^{o}\Phi)
	\rangle_{\partial\mathcal{T}_h}- \langle h_K^{-1}(\Pi_{k+1}^ou-u),\Pi_k^{\partial}\Pi_{k+1}^{o}\Phi -\Pi_k^{\partial}\Phi
	\rangle_{\partial\mathcal{T}_h}\\	
	&\quad 
	 -\langle \bm \Psi\cdot\bm n-\bm{\Pi}_k^o\bm \Psi\cdot\bm n,\varepsilon_h^{\widehat u}- \varepsilon_h^u
	\rangle_{\partial\mathcal{T}_h}
- \langle h_K^{-1}(\Pi_{k+1}^o\Phi-\Phi),\Pi_k^{\partial}\varepsilon_h^u-\varepsilon_h^{\widehat u}
	\rangle_{\partial\mathcal{T}_h}.
	\end{align*}

Since $\Pi_k^\partial \Phi$ is single valued, one has $\langle\bm q\cdot\bm n,\Pi_k^{\partial}\Phi\rangle_{\partial\mathcal{T}_h}=\langle\bm q\cdot\bm n,\Phi\rangle_{\partial\mathcal{T}_h}=0$. Hence
\begin{align*}
\langle \bm{\Pi}_k^o\bm q\cdot\bm n-\bm q\cdot\bm n,\Pi_k^{\partial}\Phi-\Pi_{k+1}^{o}\Phi)
	\rangle_{\partial\mathcal{T}_h}=\langle \bm{\Pi}_k^o\bm q\cdot\bm n-\bm q\cdot\bm n,\Phi-\Pi_{k+1}^{o}\Phi)
	\rangle_{\partial\mathcal{T}_h}
\end{align*}
It follows that
\begin{align*}
|\langle \bm{\Pi}_k^o\bm q\cdot\bm n-\bm q\cdot\bm n,\Phi-\Pi_{k+1}^{o}\Phi)
	\rangle_{\partial\mathcal{T}_h}| &\leq  \|\bm{\Pi}_k^o\bm q\cdot\bm n-\bm q\cdot\bm n\|_{\partial \mathcal{T}_h} \|\Phi-\Pi_{k+1}^{o}\Phi \|_{\partial \mathcal{T}_h} \\
	&\leq C h^{-1/2}  \|\bm{\Pi}_k^o\bm q-\bm q\|_{ \mathcal{T}_h} h^{3/2} |\Phi|_2 \\
	&\leq Ch \|\bm{\Pi}_k^o\bm q-\bm q\|_{ \mathcal{T}_h} \| \varepsilon_h^u\|_{ \mathcal{T}_h},
\end{align*}
where the regularity result \eqref{reg1} with $\Theta=-\varepsilon_h^u$ is used 	in the derivation of the last inequality. 
The rest of the terms can be dealt with similarly by using  the $L^2$ stability of the projection $\Pi_k^\partial$, and the inequalities in \eqref{classical_ine}.

The desired error estimate \eqref{err-scal-1} now follows from the error estimates \eqref{Energy_Argum-1pre} and \eqref{Energy_Argum-inp1}, and the triangle inequality. This completes the proof.

\end{proof}

\subsection{Error estimate in the negative norm}

To establish the approximation properties of the elliptic projection in the negative norm, we  introduce a Scott-Zhang type (cf. \cite{ScZh1990}) interpolation operator $\mathcal I_h^{\color{black}k+2+d}$ in \Cref{app}. For all $(u_h,\widehat u_h)\in L^2(\mathcal T_h)\times L^2(\partial\mathcal T_h)$,  $\mathcal I_h^{\color{black}k+2+d}(u_h,\widehat u_h)|_K\in \mathcal {P}^{\color{black}k+2+d}(K)$ and satisfies
\begin{subequations}
	\begin{align}
	(\mathcal{I}_h^{\color{black}k+2+d}(u_h,\widehat u_h),w_h)_{K}&= (u_h,w_h)_{K} &\text{for all }w_h\in \mathcal{P}^{k+1}(K),\label{pro_K}\\
	\color{black}\langle\mathcal{I}_h^{\color{black}k+2+d}(u_h,\widehat u_h),\mu_h \rangle_{F}&=\langle 
	\widehat u_h,\mu_h\rangle_F&\text{for all }\mu_h\in \mathcal{P}^{k+2}(F), F \color{black}\subset\color{black} \partial K.\label{pro_F}
	\end{align}
\end{subequations}

\begin{theorem} Let $(\phi, u)$ and $(\phi_{Ih},u_{Ih})$ be the solution of \eqref{ori} and \eqref{projection01}-\eqref{projection02}, respectively. Then \color{black} if $k\ge 1$, \color{black} we have the following error estimates
	\begin{subequations}\label{nega_error}
		\begin{align}
		\|\Pi_{k+1}^ou-u_{Ih}\|_{-1,h}\le Ch^{k+3}|u|_{H^{k+2}},\label{nega_error_1}\\
		\|\Pi_{k+1}^o\phi-\phi_{Ih}\|_{-1,h}\le Ch^{k+3}|\phi|_{H^{k+2}}.\label{nega_error_2}
		\end{align}
	\end{subequations}
\end{theorem}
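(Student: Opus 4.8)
We prove \eqref{nega_error_1}; the estimate \eqref{nega_error_2} follows from the identical argument applied to the projection \eqref{projection01}, since $\bm p=-\nabla\phi$ and \Cref{rem-gen} applies. Write $\varepsilon_h^{\bm q},\varepsilon_h^{u},\varepsilon_h^{\widehat u}$ for the projection errors in \eqref{error-func}, so that $\varepsilon_h^u\in\mathring W_h$. The starting point is a dual characterization of the discrete negative norm: since $\|v_h\|_{-1,h}^2=(v_h,\Pi_Wv_h)_{\mathcal T_h}$ and, by \Cref{lem-nega}, $\|\nabla\Pi_Wv_h\|_{\mathcal T_h}+\|h_K^{-1/2}(\Pi_k^\partial\Pi_Wv_h-\Pi_Mv_h)\|_{\partial\mathcal T_h}\le C\|v_h\|_{-1,h}$, one obtains
\[
\|\varepsilon_h^u\|_{-1,h}\le C\sup_{\bm 0\ne(w_h,\mu_h)\in W_h\times M_h}\frac{(\varepsilon_h^u,w_h)_{\mathcal T_h}}{\|\nabla w_h\|_{\mathcal T_h}+\|h_K^{-1/2}(\Pi_k^\partial w_h-\mu_h)\|_{\partial\mathcal T_h}},
\]
so it suffices to bound $(\varepsilon_h^u,w_h)_{\mathcal T_h}$ for a fixed pair $(w_h,\mu_h)$.

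The crucial step is to trade the discrete test function for a genuinely $H^1$ function, so as to raise the regularity of the dual problem. Set $\widetilde w:=\mathcal I_h^{k+2+d}(w_h,\mu_h)$, which by the construction in \Cref{app} is continuous, hence in $H^1(\Omega)$. Property \eqref{pro_K} together with $\varepsilon_h^u|_K\in\mathcal P^{k+1}(K)$ gives $(\varepsilon_h^u,\widetilde w)_{\mathcal T_h}=(\varepsilon_h^u,w_h)_{\mathcal T_h}$; subtracting the mean (legitimate since $\varepsilon_h^u\in\mathring W_h$) produces $\widetilde w_0\in H^1(\Omega)\cap L^2_0(\Omega)$ with $(\varepsilon_h^u,\widetilde w_0)_{\mathcal T_h}=(\varepsilon_h^u,w_h)_{\mathcal T_h}$, and, using the stability of $\mathcal I_h^{k+2+d}$ (\Cref{app}), the trace bound $\|h_K^{-1/2}(w_h-\Pi_k^\partial w_h)\|_{\partial\mathcal T_h}\le C\|\nabla w_h\|_{\mathcal T_h}$ and Poincar\'e, one gets $\|\widetilde w_0\|_{H^1(\Omega)}\le C(\|\nabla w_h\|_{\mathcal T_h}+\|h_K^{-1/2}(\Pi_k^\partial w_h-\mu_h)\|_{\partial\mathcal T_h})$. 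Let $(\bm\Psi,\Phi)$ solve \eqref{auxi-ellip} with $\Theta=\widetilde w_0$; because $\widetilde w_0\in H^1(\Omega)$, elliptic regularity improves \eqref{reg1} to $\|\bm\Psi\|_{H^2(\Omega)}+\|\Phi\|_{H^3(\Omega)}\le C\|\widetilde w_0\|_{H^1(\Omega)}$.

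Let $(\bm\Psi_{Ih},\Phi_{Ih},\widehat\Phi_{Ih})$ be the HDG elliptic projection of $(\bm\Psi,\Phi)$ (well defined by \Cref{rem-gen}, with $\Phi_{Ih}\in\mathring W_h$), and put $\varepsilon_h^\Phi:=\Pi_{k+1}^o\Phi-\Phi_{Ih}$, $\varepsilon_h^{\widehat\Phi}:=\Pi_k^\partial\Phi-\widehat\Phi_{Ih}$. Testing the defining relation of this projection with $(\varepsilon_h^{\bm q},-\varepsilon_h^u,-\varepsilon_h^{\widehat u})$, applying the symmetry \eqref{summertic_A1}, and then the error equation \eqref{app:error} with $(\bm r_2,w_2,\mu_2)=(\bm\Psi_{Ih},-\Phi_{Ih},-\widehat\Phi_{Ih})$ yields
\[
(\varepsilon_h^u,w_h)_{\mathcal T_h}=-\langle(\bm q-\bm{\Pi}_k^o\bm q)\cdot\bm n,\Phi_{Ih}-\widehat\Phi_{Ih}\rangle_{\partial\mathcal T_h}+\langle h_K^{-1}(\Pi_{k+1}^ou-u),\Pi_k^\partial\Phi_{Ih}-\widehat\Phi_{Ih}\rangle_{\partial\mathcal T_h}.
\]
I then split $\Phi_{Ih}-\widehat\Phi_{Ih}=(\Pi_{k+1}^o\Phi-\Phi)+(\Phi-\Pi_k^\partial\Phi)+(\varepsilon_h^{\widehat\Phi}-\varepsilon_h^\Phi)$ and $\Pi_k^\partial\Phi_{Ih}-\widehat\Phi_{Ih}=\Pi_k^\partial(\Pi_{k+1}^o\Phi-\Phi)+(\varepsilon_h^{\widehat\Phi}-\Pi_k^\partial\varepsilon_h^\Phi)$. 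The term $\langle(\bm q-\bm{\Pi}_k^o\bm q)\cdot\bm n,\Phi-\Pi_k^\partial\Phi\rangle_{\partial\mathcal T_h}$ vanishes: the $\bm{\Pi}_k^o\bm q$ part is zero elementwise because $\Phi-\Pi_k^\partial\Phi$ is orthogonal to $\mathcal P^k$ on each face, and the $\bm q\cdot\bm n$ part is zero because $\bm q$ is continuous with $\bm q\cdot\bm n=0$ on $\partial\Omega$ (by \eqref{o3}) while $\Phi-\Pi_k^\partial\Phi$ is single valued. All remaining contributions are controlled by Cauchy--Schwarz using $\|h_K^{1/2}(\bm q-\bm{\Pi}_k^o\bm q)\cdot\bm n\|_{\partial\mathcal T_h}+\|h_K^{-1/2}(\Pi_{k+1}^ou-u)\|_{\partial\mathcal T_h}\le Ch^{k+1}|u|_{H^{k+2}}$ (from \eqref{classical_ine}), the bound $\|h_K^{-1/2}(\Pi_{k+1}^o\Phi-\Phi)\|_{\partial\mathcal T_h}\le Ch^{2}|\Phi|_{H^3}$ (which uses $k\ge1$), and the dual projection estimates \eqref{projection_approximation_2}, \eqref{Energy_Argum-1pre}, \eqref{Energy_Argum-inp1} with $s=1$ (again requiring $k\ge1$), giving $\|h_K^{-1/2}(\varepsilon_h^\Phi-\varepsilon_h^{\widehat\Phi})\|_{\partial\mathcal T_h}\le Ch^2|\Phi|_{H^3}$. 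Hence $(\varepsilon_h^u,w_h)_{\mathcal T_h}\le Ch^{k+3}|u|_{H^{k+2}}|\Phi|_{H^3}\le Ch^{k+3}|u|_{H^{k+2}}\big(\|\nabla w_h\|_{\mathcal T_h}+\|h_K^{-1/2}(\Pi_k^\partial w_h-\mu_h)\|_{\partial\mathcal T_h}\big)$, and the dual characterization gives \eqref{nega_error_1}.

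The main obstacle is the factor $h_K^{-1}$ in the second boundary term of the error identity: estimated directly it only reproduces the $O(h^{k+2})$ rate of the $L^2$ bound \eqref{err-scal-1}. The extra power of $h$ is recovered precisely because $\mathcal I_h^{k+2+d}$ replaces the discrete data $w_h$ by the genuinely $H^1$ function $\widetilde w_0$ without changing $(\varepsilon_h^u,w_h)_{\mathcal T_h}$, which raises the dual solution from $H^2$ to $H^3$; cashing in this smoothness is exactly what forces the HDG elliptic projection rates at $s=1$, i.e.\ the hypothesis $k\ge1$. The technical points that need care are the stability and approximation properties of the Scott--Zhang type operator $\mathcal I_h^{k+2+d}$ (deferred to \Cref{app}) and the $H^3$ elliptic regularity of \eqref{auxi-ellip}.
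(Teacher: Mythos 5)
Your proof is correct, and its core mechanism is the same as the paper's: use the Scott--Zhang-type operator $\mathcal I_h^{k+2+d}$ to replace the discrete dual datum by an $H^1(\Omega)$ function without changing the relevant inner product (via \eqref{pro_K}), so that the dual problem \eqref{auxi-ellip} gains one order of regularity, and the hypothesis $k\ge 1$ enters exactly where the dual approximation rates at $s=1$ are invoked. Where you differ is in the bookkeeping around this idea. First, you recast $\|\cdot\|_{-1,h}$ as a supremum over generic pairs $(w_h,\mu_h)\in W_h\times M_h$ and lift an arbitrary such pair, whereas the paper lifts the specific representer $(\Pi_W\xi_h^u,\Pi_M\xi_h^u)$, bounds $\|\mathcal I_h^{k+2+d}(\Pi_W\xi_h^u,\Pi_M\xi_h^u)\|_{H^1}$ by $C\|\xi_h^u\|_{-1,h}$ using \eqref{IC} and \Cref{lem-nega}, and divides; the two are equivalent, with your version making the duality structure slightly more transparent at the cost of an (easily justified) sup characterization. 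Second, you test the primal error equation \eqref{app:error} against the HDG \emph{elliptic projection} of the dual solution $(\bm\Psi,\Phi)$, which yields only two boundary terms but requires the dual elliptic-projection estimates \eqref{Energy_Argum-1pre} and \eqref{Energy_Argum-inp1} at $s=1$ (legitimate by \Cref{rem-gen}); the paper instead uses the consistency identity \eqref{app:pi-dual} for the $L^2$/trace projections of the dual data, producing four boundary terms that pair the primal projection errors $\xi_h^u,\xi_h^{\widehat u}$ with $L^2$-projection errors of $(\bm\Psi,\Phi)$. Both routes deliver the same $O(h^{k+3})$ bound. One caveat you share with the paper rather than introduce yourself: the step $\|\bm\Psi\|_{H^2}+\|\Phi\|_{H^3}\le C\|\Theta\|_{H^1}$ is an $H^3$ elliptic regularity assumption on the convex polygonal domain that goes beyond the stated \eqref{reg1}; it is used implicitly in the paper's proof as well and should be flagged as a hypothesis.
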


\begin{proof}
	We only give a proof for \eqref{nega_error_1}, the proof for \eqref{nega_error_2} is similar.
	
	Let $\xi_h^u = \Pi_{k+1}^ou-u_{Ih}$, by \Cref{-1h} and \eqref{pro_K} one gets
	\begin{align}\label{proof:-1}
	\|\xi_h^u\|^2_{-1,h}=(\Pi_W\xi_h^u,\xi_h^u)_{\mathcal{T}_h}=(\mathcal{I}_h^{\color{black}k+2+d}(\Pi_W\xi_h^u,\Pi_M \xi_h^u),\xi_h^u)_{\mathcal{T}_h}.
	\end{align}
	We take $(\bm r_2,w_2,\mu_2)=(\xi_h^{\bm q},-\xi_h^u,-\xi_h^{\widehat u})$ and $\Theta=-\mathcal{I}_h^{\color{black}k+2+d}(\Pi_W\xi_h^u,\Pi_M \xi_h^u)$ in \eqref{app:pi-dual},
	and use \eqref{proof:-1} 
	to get
	\begin{align*}
&	\|\xi_h^u\|^2_{-1,h}=\mathcal{A}(\bm{\Pi}_k^o\bm{\Psi},\Pi_{k+1}^o\Phi,\Pi_k^{\partial}\Phi;\xi_h^{\bm q},-\xi_h^u,-\xi_h^{\widehat u} ) -\langle\bm{\Pi}_k^o\bm \Psi\cdot\bm n-\bm \Psi\cdot\bm n,\xi_h^{\widehat u}-\xi_h^u
	\rangle_{\partial\mathcal{T}_h}\\
	&\quad - \langle h_K^{-1}(\Pi_{k+1}^o\Phi-\Phi),\Pi_k^{\partial}\xi_h^u-\xi_h^{\widehat u}
	\rangle_{\partial\mathcal{T}_h}\\
	&=-	\langle \bm{\Pi}_k^o\bm q\cdot\bm n-\bm q\cdot\bm n,\Pi_k^{\partial}\Phi-\Pi_{k+1}^{o}\Phi \rangle_{\partial\mathcal{T}_h} - \langle h_K^{-1}(\Pi_{k+1}^ou-u),\Pi_k^{\partial}\Pi_{k+1}^{o}\Phi-\Pi_k^{\partial}\Phi
	\rangle_{\partial\mathcal{T}_h} \\
	 	&\quad-\langle \bm{\Pi}_k^o\bm \Psi\cdot\bm n-\bm \Psi\cdot\bm n,\xi_h^{\widehat u}-\xi_h^u
	\rangle_{\partial\mathcal{T}_h}
 - \langle h_K^{-1}(\Pi_{k+1}^o\Phi-\Phi),\Pi_k^{\partial}\xi_h^u-\xi_h^{\widehat u}
	\rangle_{\partial\mathcal{T}_h}\\
	&\le Ch^{k+3}(\|\Phi\|_{H^3(\Omega)} +\|\bm \Psi\|_{H^2(\Omega)}).
	\end{align*}
	Hence
	\begin{align}\label{-1leIh}
	\|\xi_h^u\|^2_{-1,h}\le Ch^{k+3}|u|_{k+2}\|\mathcal{I}_h^{\color{black}k+2+d}(\Pi_W\xi_h^u,\Pi_M \xi_h^u)\|_{H^1(\Omega)}.
	\end{align}
	Since $\Pi_W \xi_h^u \in \mathring{W}_h$, and  by the $H^1$ stability of the interpolation operator    \eqref{IC} one has
	\begin{align*}
	\hspace{1em}&\hspace{-1em}\|\mathcal{I}_h^{\color{black}k+2+d}(\Pi_W\xi_h^u,\Pi_M \xi_h^u)\|_{H^1(\Omega)}\\
	&\le C\left(
	\color{black}
	\|\Pi_W\xi_h^u\|_{\mathcal T_h}+
	\color{black}
	\|\nabla \Pi_W\xi_h^u\|_{\mathcal{T}_h}+\|h_K^{-1/2}( \Pi_W\xi_h^u-\Pi_M \xi_h^u    )\|_{\partial\mathcal{T}_h}\right)\\
	&\le C\left(\|\nabla \Pi_W\xi_h^u\|_{\mathcal{T}_h}+\|h_K^{-1/2}(\Pi_k^{\partial} \Pi_W\xi_h^u-\Pi_M\xi_h^u)\|_{\partial\mathcal{T}_h}\right) \\
	&\le C(\|\bm\Pi_{\bm V} \xi_h^u\|_{\mathcal T_h} + \|h_K^{-1/2}(\Pi_k^\partial \Pi_W \xi_h^u - \Pi_M \xi_h^u)\|_{\partial \mathcal T_h})\\
	&\le C\|\xi_h^u\|_{-1,h},
	\end{align*}
	\color{black} 
	which combining \eqref{-1leIh}, then implies
	\color{black}
	\begin{align*}
	\|\xi_h^u\|_{-1,h}\le Ch^{k+3}|u|_{H^{k+2}}.
	\end{align*}
	This completes the proof.
\end{proof}

In a similar fashion as \Cref{uh_infty_bounded} one can establish the stability bound of $u_{Ih}$ in the uniform norm.
\begin{lemma}\label{uih_infty_bounded}
Let $u_{Ih}$ be the solution to the elliptic projection \eqref{projection02}.  Assume $u \in L^\infty(0,T; H^2(\Omega))$. Then one has
	\begin{align*}
	\|u_{Ih}^n\|_{0, \infty}\le C,
	\end{align*}
	where $C$ depends on $\|u\|_{L^\infty(H^2)}$.
\end{lemma}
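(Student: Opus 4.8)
The plan is to mirror the argument behind \Cref{uh_infty_bounded}, replacing the role played there by the HDG scheme \eqref{hdg02} with the elliptic projection equation \eqref{projection02}. Since $u_{Ih}^n\in W_h$, \Cref{lap_u_h2} already gives
\[
\|u_{Ih}^n\|_{0,\infty}\le C\left(h^{2-d/2}\|\Delta_h u_{Ih}^n\|_{\mathcal T_h}+\|\Delta_h u_{Ih}^n\|_{\mathcal T_h}+\|u_{Ih}^n\|_{\mathcal T_h}\right),
\]
so, using $h\le 1$ (whence $h^{2-d/2}\le 1$ for $d=2,3$), it suffices to bound $\|u_{Ih}^n\|_{\mathcal T_h}$ and $\|\Delta_h u_{Ih}^n\|_{\mathcal T_h}$ by a constant depending only on $\|u\|_{L^\infty(H^2)}$ (and the fixed data of the problem).

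The bound on $\|u_{Ih}^n\|_{\mathcal T_h}$ is immediate from the triangle inequality and the elliptic projection estimate \eqref{err-scal-1} with $s=0$: $\|u_{Ih}^n\|_{\mathcal T_h}\le\|u^n-u_{Ih}^n\|_{\mathcal T_h}+\|u^n\|_{L^2(\Omega)}\le Ch^2|u^n|_{H^2}+\|u^n\|_{L^2(\Omega)}\le C\|u\|_{L^\infty(H^2)}$.

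For $\|\Delta_h u_{Ih}^n\|_{\mathcal T_h}$ I would proceed exactly as in \Cref{lap_u_h1}. Setting $u_h=u_{Ih}^n$ in the defining relation \eqref{Delta_h} and invoking uniqueness of the local solver, one identifies the auxiliary variables with those of the elliptic projection, $\bm q_h=\bm q_{Ih}^n$ and $\widehat u_h=\widehat u_{Ih}^n$, in view of \eqref{projection02}. The transmission condition built into \eqref{Delta_h} (testing with $(\bm 0,0,1)$ together with element-wise integration by parts) forces $(\Delta_h u_{Ih}^n,1)_{\mathcal T_h}=0$, so that $\Delta_h u_{Ih}^n\in\mathring W_h$ and is therefore an admissible test function in \eqref{projection02}. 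Testing \eqref{Delta_h} with $(\bm 0,\Delta_h u_{Ih}^n,\bm 0)$ then gives
\[
\|\Delta_h u_{Ih}^n\|_{\mathcal T_h}^2=-\mathcal A(\bm q_{Ih}^n,u_{Ih}^n,\widehat u_{Ih}^n;\bm 0,\Delta_h u_{Ih}^n,\bm 0)=(\Delta u^n,\Delta_h u_{Ih}^n)_{\mathcal T_h}\le\|\Delta u^n\|_{L^2(\Omega)}\|\Delta_h u_{Ih}^n\|_{\mathcal T_h},
\]
so $\|\Delta_h u_{Ih}^n\|_{\mathcal T_h}\le\|\Delta u^n\|_{L^2(\Omega)}\le C\|u\|_{L^\infty(H^2)}$. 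Substituting the two bounds into the inequality from \Cref{lap_u_h2} and using $h\le 1$ finishes the proof.

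The only genuinely delicate point is the bookkeeping of the constant component: \eqref{Delta_h} is tested against all of $W_h$ whereas \eqref{projection02} holds only on $\mathring W_h$, so one must verify $\Delta_h u_{Ih}^n\in\mathring W_h$ before using it as a test function. Equivalently, one may establish the identity $\Delta_h u_{Ih}^n=\Pi_{k+1}^o\Delta u^n$ directly — decomposing a generic $w_h\in W_h$ into its mean-zero part and a constant, noting $(\Delta u^n,1)_{\mathcal T_h}=\int_{\partial\Omega}\nabla u^n\cdot\bm n=0$ — and then bound $\|\Pi_{k+1}^o\Delta u^n\|_{\mathcal T_h}\le\|\Delta u^n\|_{L^2(\Omega)}$. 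Everything else is a routine reuse of \Cref{lap_u_h1,lap_u_h2} and the already-proven projection estimate \eqref{err-scal-1}.
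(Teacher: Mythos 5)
Your proposal is correct and follows exactly the route the paper intends when it says the bound is established ``in a similar fashion as \Cref{uh_infty_bounded}'': reduce via \Cref{lap_u_h2} to bounding $\|u_{Ih}^n\|_{\mathcal T_h}$ (by \eqref{err-scal-1} and the triangle inequality) and $\|\Delta_h u_{Ih}^n\|_{\mathcal T_h}$ (by testing \eqref{Delta_h} against \eqref{projection02}). Your observation that $\Delta_h u_{Ih}^n=\Pi_{k+1}^o\Delta u^n$, together with the check that the constant mode is consistent on both sides, is a clean and valid way to handle the only delicate point, and the argument is even simpler here than in \Cref{lap_u_h1} since no nonlinear term appears.
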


\subsection{Proof of \Cref{main_res}}
To simplify  notation, we define
\begin{subequations}
	\begin{align}
	e_h^{\bm p^n}:=\bm p_{Ih}^n-\bm p_h^n,\ \ \ e_h^{\phi^n}:=\phi_{Ih}^n-\phi_h^n,\ \ \ e_h^{\widehat \phi^n}:=\widehat \phi_{Ih}^n-\widehat \phi_{h}^n, \label{def_e1}\\
	e_h^{\bm q^n}:=\bm q_{Ih}^n-\bm q_h^n,\ \ \ e_h^{u^n}:=u_{Ih}^n-u_h^n,\ \ \ e_h^{\widehat u^n}:=\widehat u_{Ih}^n-\widehat u_{h}^n. \label{def_e2}
	\end{align}
\end{subequations}

\begin{lemma} 
	For all $(\bm r_1, w_1, \mu_1),(\bm r_2, w_2, \mu_2)\in \bm V_h\times W_h\times M_h$, we have 
	the following error equations
	\begin{subequations}
		\begin{align}
		\hspace{1em}&\hspace{-1em} (\partial_{t}^+ e_h^{u^n},w_1)_{\mathcal{T}_h}+
		\mathcal A(e_h^{\bm p^n},e_h^{\phi^n},e_h^{\widehat \phi^n};\bm r_1, w_1, \mu_1)\nonumber\\
		&=(\partial_{t}^+ u_{Ih}^n-\partial_tu^n,w_1)_{\mathcal{T}_h},\label{error_01}\\
		\hspace{1em}&\hspace{-1em} \epsilon\mathcal{A}(e_h^{\bm q^n},e_h^{u^n},e_h^{\widehat u^n};\bm r_2, w_2, \mu_2)_{\mathcal{T}_h}-(e_h^{\phi^n}, w_2)_{\mathcal T_h}\nonumber\\
		&=(\phi^n-\phi^n_{Ih},w_2)_{\mathcal{T}_h} +\epsilon^{-1}(f^n(u_h^n)-f(u^n),w_2)_{\mathcal{T}_h}.\label{error_02}
		\end{align}
	\end{subequations}
\end{lemma}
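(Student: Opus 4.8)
The plan is to derive both identities by the standard device of subtracting the discrete scheme \eqref{hdg01}--\eqref{hdg02} from the HDG elliptic projection \eqref{projection01}--\eqref{projection02}, using the strong form of the continuous equations \eqref{o1}--\eqref{o2} to rewrite the Laplacian data, and then upgrading the test space from $\mathring{W}_h$ to all of $W_h$ by a separate mean-value argument.

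First, for \eqref{error_01}: evaluating \eqref{projection01} at $t=t_n$ gives $\mathcal A(\bm p_{Ih}^n,\phi_{Ih}^n,\widehat\phi_{Ih}^n;\bm r_1,w_1,\mu_1)=-(\Delta\phi^n,w_1)_{\mathcal{T}_h}$ for all $(\bm r_1,w_1,\mu_1)\in\bm V_h\times\mathring{W}_h\times M_h$. By \eqref{o1} one has $\Delta\phi^n=\partial_t u^n$, so the right-hand side equals $-(\partial_t u^n,w_1)_{\mathcal{T}_h}$. Subtracting \eqref{hdg01}, using the bilinearity of $\mathcal A$, and adding and subtracting $(\partial_t^+ u_{Ih}^n,w_1)_{\mathcal{T}_h}$ then yields \eqref{error_01} for all $w_1\in\mathring{W}_h$. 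The argument for \eqref{error_02} is identical in spirit: I would multiply \eqref{projection02} at $t_n$ by $\epsilon$, use \eqref{o2} in the form $-\epsilon\Delta u^n=\phi^n-\epsilon^{-1}f(u^n)$, subtract \eqref{hdg02}, and add $-(e_h^{\phi^n},w_2)_{\mathcal{T}_h}$ to both sides, rewriting $(\phi^n-\phi_h^n,w_2)_{\mathcal{T}_h}-(e_h^{\phi^n},w_2)_{\mathcal{T}_h}=(\phi^n-\phi_{Ih}^n,w_2)_{\mathcal{T}_h}$, to obtain \eqref{error_02} for all $w_2\in\mathring{W}_h$.

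The one point that needs care is extending each identity from $\mathring{W}_h$ to all of $W_h$, since the elliptic projection is posed only against mean-zero test functions; note that $\bm V_h$ and $M_h$ are the same in the scheme and the projection, so only the $w$-slot is at issue. Given $(\bm r_i,w_i,\mu_i)\in\bm V_h\times W_h\times M_h$, I would set $c_i:=|\Omega|^{-1}(w_i,1)_{\mathcal{T}_h}$, $\mathring w_i:=w_i-c_i\in\mathring{W}_h$, and split the test triple as $(\bm r_i,\mathring w_i,\mu_i-c_i)+(\bm 0,c_i,c_i)$. The first triple lies in $\bm V_h\times\mathring{W}_h\times M_h$, where the identity is already established; for the second, the definition \eqref{def_A}, the elementwise divergence theorem, and $\Pi_k^\partial c_i=c_i$ force $\mathcal A(\,\cdot\,;\bm 0,c_i,c_i)=0$. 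It then remains to check that the constant components of the two sides of each error equation coincide. For \eqref{error_01} this reduces to $(\partial_t^+ e_h^{u^n},1)_{\mathcal{T}_h}=(\partial_t^+ u_{Ih}^n-\partial_t u^n,1)_{\mathcal{T}_h}$, which follows from $(\partial_t^+ u_h^n,1)_{\mathcal{T}_h}=0$ and $(u_{Ih}^n,1)_{\mathcal{T}_h}=(u^n,1)_{\mathcal{T}_h}$ --- the mass conservation identity \eqref{mass_conservative} and the mean constraint in \eqref{projection01} --- together with $\frac{d}{dt}(u(t),1)_\Omega=(\Delta\phi,1)_\Omega=0$, obtained by integrating \eqref{o1} over $\Omega$ and using \eqref{o3}. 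For \eqref{error_02} the analogous check uses $(\phi_h^n,1)_{\mathcal{T}_h}=\epsilon^{-1}(f^n(u_h^n),1)_{\mathcal{T}_h}$, $(\phi_{Ih}^n,1)_{\mathcal{T}_h}=(\phi^n,1)_{\mathcal{T}_h}$, and $(\phi^n,1)_\Omega=\epsilon^{-1}(f(u^n),1)_\Omega$, the last being \eqref{o2} integrated over $\Omega$ together with \eqref{o3}.

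I do not anticipate any real obstacle: once the strong-form substitutions \eqref{o1}--\eqref{o2} are made, the lemma is essentially a bookkeeping exercise in subtracting two linear systems. The only thing to watch is keeping the mean-value and compatibility relations straight in the last step so that the constant parts of each identity cancel exactly.
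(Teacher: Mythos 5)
Your proposal is correct and follows essentially the same route as the paper: subtract the scheme \eqref{hdg01}--\eqref{hdg02} from the elliptic projection \eqref{projection01}--\eqref{projection02} and use the strong forms \eqref{o1}--\eqref{o2} to replace $-\Delta\phi^n$ and $-\epsilon\Delta u^n$. Your extra paragraph extending the test space from $\mathring{W}_h$ to $W_h$ via the splitting $(\bm r_i,w_i,\mu_i)=(\bm r_i,\mathring w_i,\mu_i-c_i)+(\bm 0,c_i,c_i)$ and the compatibility of means is a point the paper's proof passes over silently, and your verification of it is correct.
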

\begin{proof}
	We use  the definition of $\mathcal A$ in \eqref{def_A} to get 
	\begin{align*}
	\hspace{1em}&\hspace{-1em}(\partial_{t}^+ e_h^{u^n},w_1)_{\mathcal{T}_h}+
	\mathcal A(e_h^{\bm p^n},e_h^{\phi^n},e_h^{\widehat \phi^n};\bm r_1, w_1, \mu_1)\\
	&=(\partial_{t}^+ u_{Ih}^n,w_1)_{\mathcal{T}_h}
	+\mathcal A(\bm p_{Ih}^n,\phi_{Ih}^n,\widehat \phi_{Ih}^n;\bm r_1, w_1, \mu_1)\\
	&\quad-(\partial_{t}^+ u_{h}^n,w_1)_{\mathcal{T}_h}
	-\mathcal A(\bm p_{h}^n,\phi_{h}^n,\widehat \phi_{h}^n;\bm r_1, w_1, \mu_1) & \textup{by }\eqref{def_e1}\\
	&=(\partial_{t}^+u_{Ih}^n,w_1)_{\mathcal{T}_h}-(\Delta \phi^n,w_1)_{\mathcal{T}_h}
	&\textup{by } \eqref{projection01}, \eqref{hdg01}\\
	&=(\partial_{t}^+u_{Ih}^n-\partial_tu^n,w_1)_{\mathcal{T}_h}& \textup{by } \eqref{o1}.
	\end{align*}
	Next, we have
	\begin{align*}
	\hspace{1em}&\hspace{-1em} \epsilon\mathcal{A}(e_h^{\bm q^n},e_h^{u^n},e_h^{\widehat u^n};\bm r_2, w_2,\mu_2)_{\mathcal{T}_h}-(e_h^{\phi^n}, w_2)_{\mathcal T_h}\\
	&=\epsilon\mathcal{A}(\bm q^n_{Ih},u^n_{Ih},\widehat u^n_{Ih};\bm r_2, w_2, \mu_2)_{\mathcal{T}_h}-(\phi_{Ih}^n, w_2)_{\mathcal T_h}\\
	&\quad-\epsilon\mathcal{A}(\bm q^n_{h},u^n_{h},\widehat u^n_{h};\bm r_2, w_2, \mu_2)_{\mathcal{T}_h}+(\phi_h^n, w_2)_{\mathcal T_h}
	&\textup{by }\eqref{def_e2}\\
	&=-\epsilon(\Delta u^n,w_2)_{\mathcal{T}_h} - (\phi^n_{Ih},w_2)_{\mathcal{T}_h}+\epsilon^{-1}(f^n(u_h^n),w_2)_{\mathcal{T}_h}
	&\textup{by }\eqref{projection01}, \eqref{hdg02}\\
	&=   (\phi^n-\phi^n_{Ih},w_2)_{\mathcal{T}_h} +\epsilon^{-1}(f^n(u_h^n)-f(u^n),w_2)_{\mathcal{T}_h} &\textup{by }\eqref{o2}.
	\end{align*}
	This  completes the proof.
\end{proof}

We start the error analysis in the negative norm. We have
\begin{lemma}[Error estimates in the $-1$ norm]\label{Err-neg}
The following error estimates hold
	\begin{align}
	 &\max_{1\le n\le N}\|e_h^{u^{n}}\|^2_{-1,h}+2\epsilon\Delta t \sum_{n=1}^{\color{black}N}
	\left(\|e_h^{\bm q^n}\|^2_{\mathcal T_h}+2\|h_K^{-1/2}( \Pi_k^{\partial}e_h^{u^n}-e_h^{\widehat u^n})\|^2_{\partial \mathcal{T}_h}\right) \nonumber \\
	& +(\Delta t)^2\sum_{n=1}^{\color{black}N}\|\partial_{t}^+e_h^{u^n}\|^2_{-1,h} \le  \left\{
	\begin{array}{ll}
	 C\big((\Delta t)^2+h^{2(k+3)}\big) &k\geq1, \label{Err-neg1} \\
	C\big((\Delta t)^2+h^{4}\big) & k=0.
	\end{array}
	\right.
	\end{align}
\end{lemma}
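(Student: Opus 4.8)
The plan is to run a discrete energy argument on the error equations \eqref{error_01}--\eqref{error_02}, testing the first against the HDG inverse Laplacian of $e_h^{u^n}$ so that the time-difference term generates the squared negative norm. First observe that $e_h^{u^n}\in\mathring W_h$ for every $n$: the scheme conserves $(u_h^n,1)_{\mathcal T_h}$, the elliptic projection satisfies $(u_{Ih}^n-u^n,1)_{\mathcal T_h}=0$, and $(u^n,1)_{\mathcal T_h}=(u^0,1)_{\mathcal T_h}$, so $\Pi_W e_h^{u^n}$ is well defined by \Cref{-1h}. In \eqref{error_01} take $(\bm r_1,w_1,\mu_1)=(-\bm\Pi_{\bm V}e_h^{u^n},\Pi_W e_h^{u^n},\Pi_M e_h^{u^n})$; using the symmetry \eqref{summertic_A1} and the definition \eqref{def_-1_h} this collapses to $(\partial_t^+ e_h^{u^n},\Pi_W e_h^{u^n})_{\mathcal T_h}+(e_h^{u^n},e_h^{\phi^n})_{\mathcal T_h}=(\rho^n,\Pi_W e_h^{u^n})_{\mathcal T_h}$ with $\rho^n:=\partial_t^+ u_{Ih}^n-\partial_t u^n$. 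In \eqref{error_02} take $(\bm r_2,w_2,\mu_2)=(e_h^{\bm q^n},e_h^{u^n},e_h^{\widehat u^n})$ and use \eqref{energy_of_A1}; adding the two identities cancels $(e_h^{u^n},e_h^{\phi^n})_{\mathcal T_h}$ and leaves
\[
(\partial_t^+ e_h^{u^n},\Pi_W e_h^{u^n})_{\mathcal T_h}+\epsilon\bigl(\|e_h^{\bm q^n}\|_{\mathcal T_h}^2+\|h_K^{-1/2}(\Pi_k^{\partial}e_h^{u^n}-e_h^{\widehat u^n})\|_{\partial\mathcal T_h}^2\bigr)=(\rho^n,\Pi_W e_h^{u^n})_{\mathcal T_h}+(\phi^n-\phi_{Ih}^n,e_h^{u^n})_{\mathcal T_h}+\epsilon^{-1}(f^n(u_h^n)-f(u^n),e_h^{u^n})_{\mathcal T_h}.
\]
The form $(v_h,w_h)\mapsto(v_h,\Pi_W w_h)_{\mathcal T_h}$ is symmetric on $\mathring W_h$ (again by \eqref{summertic_A1} and \eqref{def_-1_h}) with $(v_h,\Pi_W v_h)_{\mathcal T_h}=\|v_h\|_{-1,h}^2$; hence multiplying by $\Delta t$, summing over $n=1,\dots,m$ and applying the elementary identity \eqref{identity1} to this inner product turns the first term into $\tfrac12\|e_h^{u^m}\|_{-1,h}^2-\tfrac12\|e_h^{u^0}\|_{-1,h}^2+\tfrac12\sum_{n=1}^m\|e_h^{u^n}-e_h^{u^{n-1}}\|_{-1,h}^2$, which reproduces the left-hand side of \eqref{Err-neg1}. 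Since $u_h^0=\Pi_{k+1}^o u^0$ by \eqref{hdg03} we have $e_h^{u^0}=u_{Ih}^0-\Pi_{k+1}^o u^0$, so $\|e_h^{u^0}\|_{-1,h}\le Ch^{k+3}|u^0|_{H^{k+2}}$ for $k\ge1$ by \eqref{nega_error_1}, and $\le C\|e_h^{u^0}\|_{\mathcal T_h}\le Ch^{2}|u^0|_{H^2}$ for $k=0$.

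It then remains to bound the three terms on the right. For the $\rho^n$ term, decompose $\rho^n=\partial_t^+(\Pi_{k+1}^o u^n-u_{Ih}^n)+(\Pi_{k+1}^o-I)\partial_t^+u^n+(\partial_t^+u^n-\partial_t u^n)$: the middle piece is $L^2$-orthogonal to $W_h$ and annihilates $\Pi_W e_h^{u^n}\in W_h$; the last is $O(\Delta t)$ in $L^2$ and pairs with $\|\Pi_W e_h^{u^n}\|_{\mathcal T_h}\le C\|e_h^{u^n}\|_{-1,h}$ (from \Cref{lem-nega} and \eqref{HDG-poincare}); the first piece is $\partial_t^+$ of the elliptic-projection error of $u$, so by differentiating \eqref{projection} in time and using \eqref{nega_error_1} its $\|\cdot\|_{-1,h}$ norm is $\le Ch^{k+3}\|\partial_t u\|_{L^\infty(H^{k+2})}$ ($k\ge1$), and it is paired by Cauchy--Schwarz in $\|\cdot\|_{-1,h}$. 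For the $(\phi^n-\phi_{Ih}^n,e_h^{u^n})_{\mathcal T_h}$ term, since $\phi^n-\Pi_{k+1}^o\phi^n\perp W_h$ replace $\phi^n$ by $\Pi_{k+1}^o\phi^n$, then apply \eqref{Cauthy} with $w_h=e_h^{u^n},\mu_h=e_h^{\widehat u^n}$ together with \Cref{es_u} (legitimate because \eqref{error_02} with $w_2=\mu_2=0$ gives $\mathcal A(e_h^{\bm q^n},e_h^{u^n},e_h^{\widehat u^n};\bm r_2,0,0)=0$), bounding by $C\|\Pi_{k+1}^o\phi^n-\phi_{Ih}^n\|_{-1,h}\bigl(\|e_h^{\bm q^n}\|_{\mathcal T_h}+\|h_K^{-1/2}(\Pi_k^{\partial}e_h^{u^n}-e_h^{\widehat u^n})\|_{\partial\mathcal T_h}\bigr)$ and then invoking \eqref{nega_error_2}; a Young inequality moves the energy factor into the left-hand side.

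The nonlinear term is the crux. Expand $f^n(u_h^n)-f(u^n)=-f'(u^n)(u^n-u_h^n)+r^n$, where $r^n$ collects the terms quadratic and cubic in $u^n-u_h^n=\eta^n+e_h^{u^n}$ with $\eta^n:=u^n-u_{Ih}^n$. Every summand of $(r^n,e_h^{u^n})_{\mathcal T_h}$ carries at least two ``small'' factors, and using the uniform $L^\infty$ bounds on $u^n,u_h^n,u_{Ih}^n$ (\Cref{uh_infty_bounded}, \Cref{uih_infty_bounded}), the $L^2$/$L^4$ elliptic-projection estimates \eqref{projection_approximation_1}, \eqref{classical_ine}, and the interpolation bound $\|e_h^{u^n}\|_{\mathcal T_h}^2\le C\|e_h^{u^n}\|_{-1,h}\bigl(\|e_h^{\bm q^n}\|_{\mathcal T_h}+\|h_K^{-1/2}(\Pi_k^{\partial}e_h^{u^n}-e_h^{\widehat u^n})\|_{\partial\mathcal T_h}\bigr)$ (from \eqref{Cauthy} and \Cref{es_u}), a Young inequality shows these contribute only $\delta\bigl(\|e_h^{\bm q^n}\|_{\mathcal T_h}^2+\|h_K^{-1/2}(\Pi_k^{\partial}e_h^{u^n}-e_h^{\widehat u^n})\|_{\partial\mathcal T_h}^2\bigr)+C_\delta\|e_h^{u^n}\|_{-1,h}^2+C_\delta\,h^{2(k+3)}$ (respectively $C_\delta h^4$ for $k=0$). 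The linear piece $-\epsilon^{-1}(f'(u^n)(\eta^n+e_h^{u^n}),e_h^{u^n})_{\mathcal T_h}$ splits: its $e_h^{u^n}$ part is $\le C\|e_h^{u^n}\|_{\mathcal T_h}^2$ and is absorbed as above; for its $\eta^n$ part write $\eta^n=(u^n-\Pi_{k+1}^o u^n)+(\Pi_{k+1}^o u^n-u_{Ih}^n)$ — against $\Pi_{k+1}^o u^n-u_{Ih}^n\in\mathring W_h$ use \eqref{Cauthy}, \Cref{es_u} and \eqref{nega_error_1}; against $u^n-\Pi_{k+1}^o u^n$, which is $L^2$-orthogonal to $W_h$, subtract $\Pi_{k+1}^o(f'(u^n)e_h^{u^n})\in W_h$. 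The decisive point here is that $f'(u^n)=3(u^n)^2-1$ is \emph{smooth}: therefore $f'(u^n)e_h^{u^n}$ has piecewise $H^1$-seminorm $\le C(\|e_h^{u^n}\|_{\mathcal T_h}+\|\nabla e_h^{u^n}\|_{\mathcal T_h})\le C\bigl(\|e_h^{\bm q^n}\|_{\mathcal T_h}+\|h_K^{-1/2}(\Pi_k^{\partial}e_h^{u^n}-e_h^{\widehat u^n})\|_{\partial\mathcal T_h}\bigr)$ with \emph{no} inverse-inequality loss, so $\|f'(u^n)e_h^{u^n}-\Pi_{k+1}^o(f'(u^n)e_h^{u^n})\|_{\mathcal T_h}\le Ch\bigl(\|e_h^{\bm q^n}\|_{\mathcal T_h}+\|h_K^{-1/2}(\Pi_k^{\partial}e_h^{u^n}-e_h^{\widehat u^n})\|_{\partial\mathcal T_h}\bigr)$, and this extra power of $h$, combined with $\|u^n-\Pi_{k+1}^o u^n\|_{\mathcal T_h}\le Ch^{k+2}|u^n|_{H^{k+2}}$, produces the desired $O(h^{k+3})$ contribution. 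The main obstacle is exactly this estimate: one must avoid the crude bound $\|\eta^n\|_{\mathcal T_h}\|e_h^{u^n}\|_{\mathcal T_h}=O(h^{k+2})(\cdots)$, which would cap the rate at $h^{2(k+2)}$, and instead exploit the $L^2$-orthogonality of $u^n-\Pi_{k+1}^o u^n$, the smoothness of the coefficient $f'(u^n)$, and the superconvergent negative-norm bound \eqref{nega_error_1} for the elliptic projection.

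Finally, collect all estimates, choose $\delta$ small enough that the energy contributions are absorbed into the $\epsilon\Delta t\sum_{n=1}^m(\cdots)$ term on the left, and apply the discrete Gronwall inequality (admissible once $\Delta t$ is small, as assumed) to the resulting relation $\tfrac12\|e_h^{u^m}\|_{-1,h}^2+\cdots\le\tfrac12\|e_h^{u^0}\|_{-1,h}^2+C\Delta t\sum_{n=1}^m\|e_h^{u^n}\|_{-1,h}^2+C\bigl((\Delta t)^2+h^{2(k+3)}\bigr)$ to bound $\max_{1\le m\le N}\|e_h^{u^m}\|_{-1,h}^2$; reinserting this into the summed identity delivers the bounds on $\Delta t\sum_n(\|e_h^{\bm q^n}\|_{\mathcal T_h}^2+\|h_K^{-1/2}(\Pi_k^{\partial}e_h^{u^n}-e_h^{\widehat u^n})\|_{\partial\mathcal T_h}^2)$ and $(\Delta t)^2\sum_n\|\partial_t^+ e_h^{u^n}\|_{-1,h}^2$, which is \eqref{Err-neg1}. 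For $k=0$ the same argument applies verbatim with \eqref{nega_error_1}--\eqref{nega_error_2} replaced by the crude estimate $\|\cdot\|_{-1,h}\le C\|\cdot\|_{\mathcal T_h}$ and the $L^2$ projection bounds, which is what degrades the rate to $h^2$.
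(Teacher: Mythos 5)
Your proof is correct and its skeleton coincides with the paper's: the same test functions $(-\bm\Pi_{\bm V}e_h^{u^n},\Pi_W e_h^{u^n},\Pi_M e_h^{u^n})$ in \eqref{error_01} and $(e_h^{\bm q^n},e_h^{u^n},e_h^{\widehat u^n})$ in \eqref{error_02}, the same conversion of the time-difference term into $\tfrac12\partial_t^+\|e_h^{u^n}\|_{-1,h}^2+\tfrac{\Delta t}{2}\|\partial_t^+e_h^{u^n}\|_{-1,h}^2$ via \eqref{summertic_A1} and \eqref{def_-1_h}, the same treatment of the consistency and $\phi$-projection terms through \eqref{Cauthy}, \Cref{es_u} and \eqref{nega_error_1}--\eqref{nega_error_2}, and Gronwall at the end. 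Where you genuinely diverge is the nonlinear term, which is indeed the crux. The paper routes the cubic difference through the elliptic projection: the piece $\big((u_h^n)^3-(u_{Ih}^n)^3,e_h^{u^n}\big)=-\big((e_h^{u^n})^2\xi_h^n,1\big)\le 0$ is discarded by sign, the piece $(u_{Ih}^n)^3-(u^n)^3$ is measured in $(H^1)'$ against the broken gradient of $e_h^{u^n}$, and the linear part $u^n-u_h^n$ is rewritten as $-\Delta t\,\partial_t^+u_h^n-(u_h^{n-1}-u^{n-1})+\Delta t\,\partial_t^+u^n$ so that it couples to $\|e_h^{u^{n-1}}\|_{-1,h}$. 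You instead Taylor-expand $f$ about $u^n$, control the quadratic and cubic remainders with the uniform bounds of \Cref{uh_infty_bounded} and \Cref{uih_infty_bounded} together with the interpolation inequality $\|e_h^{u^n}\|_{\mathcal T_h}^2\le C\|e_h^{u^n}\|_{-1,h}\big(\|e_h^{\bm q^n}\|_{\mathcal T_h}+\|h_K^{-1/2}(\Pi_k^{\partial}e_h^{u^n}-e_h^{\widehat u^n})\|_{\partial\mathcal T_h}\big)$, and recover the decisive extra power of $h$ in the linear piece $\big(f'(u^n)(u^n-\Pi_{k+1}^o u^n),e_h^{u^n}\big)$ by the superapproximation argument with $f'(u^n)e_h^{u^n}-\Pi_{k+1}^o\big(f'(u^n)e_h^{u^n}\big)$; your observation that the naive bound $\|\eta^n\|_{\mathcal T_h}\|e_h^{u^n}\|_{\mathcal T_h}$ stalls the rate at $h^{2(k+2)}$ is accurate. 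Your route makes the mechanism behind the superconvergent negative-norm rate more transparent and avoids the $(H^1)'$ bound on $(u_{Ih}^n)^3-(u^n)^3$ that the paper invokes without proof; the paper's route avoids the superapproximation computation by exploiting monotonicity of the cubic. One detail to tighten: when you pair $\Pi_{k+1}^o u^n-u_{Ih}^n\in\mathring W_h$ against $f'(u^n)e_h^{u^n}$ via \eqref{Cauthy}, that inequality requires the second argument to lie in $W_h\times M_h$, so you must first insert $\Pi_{k+1}^o$ (and a suitable trace function $\mu_h$) and estimate the resulting commutator exactly as you already do for the orthogonal piece; this costs nothing but should be stated.
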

\begin{proof}
	Taking $(\bm r_1, w_1, \mu_1) =( -\bm{\Pi_V}e_h^{u^n},\Pi_We_h^{u^n},\Pi_M e_h^{u^n})$ in \eqref{error_01} gives
\begin{align}\label{proof_1_sub}
\begin{split}
& (\partial_{t}^+ e_h^{u^n},\Pi_We_h^{u^n})_{\mathcal{T}_h}+
\mathcal A(e_h^{\bm p^n},e_h^{\phi^n},e_h^{\widehat \phi^n};-\bm{\Pi_V}e_h^{u^n},\Pi_We_h^{u^n},\Pi_M e_h^{u^n})\\
&=(\partial_{t}^+ u_{Ih}^n-\partial_tu^n,\Pi_We_h^{u^n})_{\mathcal{T}_h}.
\end{split}
\end{align}
 \Cref{summertic_A} and \Cref{-1h} imply 
\begin{align}\label{proof_2_sub}
\begin{split}
&\mathcal A(e_h^{\bm p^n},e_h^{\phi^n},e_h^{\widehat \phi^n};-\bm{\Pi_V}e_h^{u^n},\Pi_We_h^{u^n},\Pi_M e_h^{u^n}) \\
&= \mathcal A(\bm{\Pi_V}e_h^{u^n},\Pi_We_h^{u^n},\Pi_M e_h^{u^n}; -e_h^{\bm p^n},e_h^{\phi^n},e_h^{\widehat \phi^n})\\
& = (e_h^{u^n},e_h^{\phi^n})_{\mathcal T_h}.
\end{split}
\end{align}	
Hence
\begin{align}\label{proof_3_sub}
(\partial_{t}^+ e_h^{u^n},\Pi_We_h^{u^n})_{\mathcal{T}_h}+
(e_h^{u^n},e_h^{\phi^n})_{\mathcal{T}_h}=(\partial_{t}^+ u_{Ih}^n-\partial_tu^n,\Pi_We_h^{u^n})_{\mathcal{T}_h}.
\end{align}
Next, one takes $(\bm r_2, w_2, \mu_2)=(e_h^{\bm q^n},e_h^{u^n},e_h^{\widehat u^n})$ in \eqref{error_02} to get 
\begin{align}\label{proof_4_sub}
\begin{split}
\hspace{1em}&\hspace{-1em}\epsilon\|e_h^{\bm q^n}\|^2_{\mathcal T_h}+\epsilon\|h_K^{-1/2}( \Pi_k^{\partial}e_h^{u^n}-e_h^{\widehat u^n})\|^2_{\partial\mathcal{T}_h}-(e_h^{\phi^n},e_h^{u^n})_{\mathcal{T}_h}\\
&+\epsilon^{-1}(f(u^n)-f^n(u_h^n),e_h^{u^n})_{\mathcal{T}_h}
=(\phi^n-\phi^n_{Ih},e_h^{u^n})_{\mathcal{T}_h}.
\end{split}
\end{align}
Combining Eqs.  \eqref{proof_3_sub} and \eqref{proof_4_sub} one obtains
\begin{align}
&(\partial_{t}^+ e_h^{u^n},\Pi_We_h^{u^n})_{\mathcal{T}_h}+\epsilon\|e_h^{\bm q^n}\|^2_{ \mathcal T_h} +\epsilon\|h_K^{-1/2}( \Pi_k^{\partial}e_h^{u^n}-e_h^{\widehat u^n}    )\|^2_{\partial\mathcal{T}_h} =  (\phi^n-\phi^n_{Ih},e_h^{u^n})_{\mathcal{T}_h}\nonumber \\
& +(\partial_{t}^+ u_{Ih}^n-\partial_tu^n,\Pi_We_h^{u^n})_{\mathcal{T}_h}+\epsilon^{-1}\big(f^n(u_h^n)-f(u^n),e_h^{u^n}\big)_{\mathcal{T}_h}. \label{err-neg-eq1}
\end{align}	

We first calculate the first term on the left hand side of Eq. \eqref{err-neg-eq1}.
 Utilizing \Cref{-1h}  and \eqref{def_A}, one has  
\begin{align} \label{c-1}
\hspace{1em}&\hspace{-1em}(\partial_{t}^+ e_h^{u^n},\Pi_We_h^{u^n})_{\mathcal{T}_h}\nonumber\\
&=\mathcal A(\bm{\Pi_V}\partial_{t}^+ e_h^{u^n},\Pi_W\partial_{t}^+e_h^{u^n},\Pi_M \partial_{t}^+ e_h^{u^n};\bm 0,\Pi_We_h^{u^n},\Pi_Me_h^{u^n})\nonumber\\
&=(\nabla\cdot\bm{\Pi_V}\partial_{t}^+ e_h^{u^n},\Pi_We_h^{u^n} )_{\mathcal{T}_h}
-\langle\bm n\cdot \bm{\Pi_V}\partial_{t}^+ e_h^{u^n},\Pi_M e_h^{u^n}
\rangle_{\partial\mathcal{T}_h}\\
&\quad+\langle h_K^{-1}(\Pi_k^{\partial}\Pi_We_h^{u^n}-\Pi_M e_h^{u^n}),\partial_{t}^+ (\Pi_k^{\partial}\Pi_We_h^{u^n}-\Pi_Me_h^{u^n}) \rangle_{\partial\mathcal{T}_h}.\nonumber
\end{align}
On the other hand, 
\begin{align}\label{proof_5_sub}
\mathcal A(\bm{\Pi_V} e_h^{u^n},\Pi_We_h^{u^n},\Pi_M e_h^{u^n};\bm r_h,w_h, \mu_h) = ( e_h^{u^n}, w_h)_{\mathcal T_h}.
\end{align}
Hence  taking $\bm r_h = \bm{\Pi_V}\partial_{t}^+e_h^{u^n}$, $w_h=\mu_h=0$ in \eqref{proof_5_sub} yields 
\begin{align*}
& ({\color{black}\Pi_W} e_h^{u^n}, \nabla\cdot\bm{\Pi_V} \partial_{t}^+ e_h^{u^n})_{\mathcal T_h} - \langle \Pi_M  e_h^{u^n}, \bm n\cdot  \bm{\Pi_V} \partial_{t}^+ e_h^{u^n}\rangle_{\partial \mathcal T_h} =(\bm{\Pi_V} e_h^{u^n},  \partial_{t}^+ \bm{\Pi_V} e_h^{u^n})_{\mathcal T_h}.  
\end{align*}
Eq. \eqref{c-1} now becomes
\begin{align}
&(\partial_{t}^+ e_h^{u^n},\Pi_We_h^{u^n})_{\mathcal{T}_h} \nonumber\\
&=(\partial_{t}^+ \bm{\Pi_V}e_h^{u^n},\bm{\Pi_V}e_h^{u^n})_{\mathcal{T}_h}+\langle h_K^{-1}(\Pi_k^{\partial}\Pi_We_h^{u^n}-\Pi_Me_h^{u^n}),\partial_{t}^+(\Pi_k^{\partial}\Pi_We_h^{u^n}-\Pi_Me_h^{u^n}) \rangle_{\partial\mathcal{T}_h} \nonumber \\
&=\frac{1}{2}\partial_t^+\|e_h^{u^n}\|_{-1,h}^2+\frac{\Delta t}{2}\|\partial_{t}^+e_h^{u^n}\|_{-1,h}^2, \label{temp-neg1}
\end{align}
where one has utilized  Eq. \eqref{-1_equal}.
Substituting \eqref{temp-neg1} into the error equation \eqref{err-neg-eq1}, one has 
\begin{align}\label{err-neg-eq2}
&\frac{1}{2}\partial_t^+\|e_h^{u^n}\|_{-1,h}^2+\frac{\Delta t}{2}\|\partial_{t}^+e_h^{u^n}\|_{-1,h}^2+\epsilon\Big(\|e_h^{\bm q^n}\|^2_{ \mathcal T_h} +\|h_K^{-1/2}( \Pi_k^{\partial}e_h^{u^n}-e_h^{\widehat u^n}    )\|^2_{\partial\mathcal{T}_h}\Big) \nonumber \\
&=(\phi^n-\phi^n_{Ih},e_h^{u^n})_{\mathcal{T}_h}  +(\partial_{t}^+ u_{Ih}^n-\partial_tu^n,\Pi_We_h^{u^n})_{\mathcal{T}_h}+\epsilon^{-1}\big(f^n(u_h^n)-f(u^n),e_h^{u^n}\big)_{\mathcal{T}_h}.  
\end{align}	

The three terms on the right-hand side of Eq. \eqref{err-neg-eq2} are estimated as follows. By \Cref{Cauthy} and \Cref{es_u} one has
\begin{align}\label{err-es-r1}
&|(\phi^n-\phi^n_{Ih},e_h^{u^n})_{\mathcal{T}_h}|\nonumber \\ &=|(\Pi_{k+1}^o\phi^n-\phi^n_{Ih},e_h^{u^n})_{\mathcal{T}_h}| \nonumber \\
&\leq C \|\Pi_{k+1}^o\phi^n-\phi^n_{Ih}\|_{-1,h} (\|e_h^{\bm q^n}\|_{ \mathcal T_h} +\|h_K^{-1/2}( \Pi_k^{\partial}e_h^{u^n}-e_h^{\widehat u^n}    )\|_{\partial\mathcal{T}_h}) \nonumber \\
&\leq C h^{2(k+3)} +\theta (\|e_h^{\bm q^n}\|_{ \mathcal T_h}^2 +\|h_K^{-1/2}( \Pi_k^{\partial}e_h^{u^n}-e_h^{\widehat u^n}    )\|_{\partial\mathcal{T}_h}^2),
\end{align}
with $\theta$ an arbitrary positive constant.
Likewise, \Cref{Cauthy} and \Cref{lem-nega} implies
\begin{align}\label{err-es-r2}
|(\partial_{t}^+ u_{Ih}^n-\partial_tu^n,\Pi_We_h^{u^n})_{\mathcal{T}_h}| & \leq  \|\partial_{t}^+ u_{Ih}^n-\Pi_{k+1}^o \partial_tu^n\|_{-1,h} \|e_h^{u^n}\|_{-1,h} \nonumber \\
& \leq C (h^{2(k+3)} +\Delta t^2)+ \kappa \|e_h^{u^n}\|_{-1,h}^2,
\end{align}
where $\kappa$ is another free parameter.
For the nonlinear term one has
\begin{align*}
f^n(u_h^n)-f(u^n)&=(u_h^n)^3-(u_{Ih}^n)^3+ (u_{Ih}^n)^3-(u^n)^3-\Delta t \partial_t^+u_h^n-(u_h^{n-1}-u^{n-1})\\
&\quad +\Delta t \partial_t^+u^n.
\end{align*}
It follows
\begin{align}\label{err-es-r3}
&\epsilon^{-1}\big(f^n(u_h^n)-f(u^n),e_h^{u^n}\big)_{\mathcal{T}_h}\nonumber \\
&= \epsilon^{-1} \left\{-\big((e_h^{u^n})^2, \xi^n_h\big)_{\mathcal{T}_h}+ \big((u_{Ih}^n)^3-(u^n)^3, e_h^{u^n}\big)_{\mathcal{T}_h}-\Delta t (\partial_t^+u_h^n, e_h^{u^n})_{\mathcal{T}_h}
\right.\nonumber \\
&\left. \quad  -(u_h^{n-1}-u^{n-1}, e_h^{u^n})_{\mathcal{T}_h}+ \Delta t (\partial_t^+u^n, e_h^{u^n})_{\mathcal{T}_h}\right\} \nonumber \\
&\leq -\epsilon^{-1}\big((e_h^{u^n})^2, \xi^n_h\big)_{\mathcal{T}_h}+C\|e_h^{u^{n-1}}\|_{-1,h}(\|e_h^{\bm q^n}\|_{ \mathcal T_h} +\|h_K^{-1/2}( \Pi_k^{\partial}e_h^{u^n}-e_h^{\widehat u^n}    )\|_{\partial\mathcal{T}_h}),\nonumber \\
&+C \Big(\|(u_{Ih}^n)^3-(u^n)^3\|_{(H^1)^\prime}+\|u_{Ih}^{n-1}-\Pi_{k+1}^ou^{n-1}\|_{-1,h}+ \Delta t (\|\partial_t^+u^n\|_{\mathcal{T}_h}+\|\partial_t^+u^n_h\|_{\mathcal{T}_h})\Big)\|\nabla e_h^{{u}^n}\|_{\mathcal{T}_h}, \nonumber \\
&\leq -\epsilon^{-1}\big((e_h^{u^n})^2, \xi^n_h\big)_{\mathcal{T}_h}+C (h^{2(k+3)}+(\Delta t)^2)+ \theta (\|e_h^{\bm q^n}\|_{ \mathcal T_h}^2 +\|h_K^{-1/2}( \Pi_k^{\partial}e_h^{u^n}-e_h^{\widehat u^n}    )\|_{\partial\mathcal{T}_h}^2)  \nonumber \\
&+C\|e_h^{u^{n-1}}\|_{-1,h}^2,
\end{align}
where $\xi_h^n:=(u_h^n)^2+u_h^nu_{Ih}^n+(u_{Ih}^n)^2 \geq 0$, and one has utilized the element-wise duality of $H^1$, the uniform bound in \Cref{uih_infty_bounded}, and the stability bounds in \Cref{phi_infty}.

Taking $\theta=\frac{\epsilon}{4}$ and substituting the inequalities \eqref{err-es-r1}--\eqref{err-es-r3} back into \eqref{err-neg-eq2}, then multiplying  the resulting equation by $\Delta t$ and taking summation from $n=1$ to $n=m$ gives
\begin{align}\label{err-neg-final}
&\frac{1}{2}\|e_h^{u^m}\|_{-1,h}^2+\frac{\Delta t}{2} \sum_{n=1}^m\|\partial_{t}^+e_h^{u^n}\|_{-1,h}^2+\frac{\epsilon}{2}\sum_{n=1}^m\Big(\|e_h^{\bm q^n}\|^2_{ \mathcal T_h} +\|h_K^{-1/2}( \Pi_k^{\partial}e_h^{u^n}-e_h^{\widehat u^n}    )\|^2_{\partial\mathcal{T}_h}\Big) \nonumber \\
&\leq C (h^{2(k+3)}+(\Delta t)^2)+ C\Delta t\sum_{n=1}^m \|e_h^{u^{n-1}}\|_{-1,h}^2+ \kappa \Delta t \|e_h^{u^{m}}\|_{-1,h}^2.
\end{align}	
Since $\kappa$ is an arbitrary positive number, one can choose the maximum of $\kappa$ such that $\kappa \Delta t \leq \frac{1}{4} $. An application of Gronwall's inequality then gives the error estimate \eqref{Err-neg1}. This completes the proof.
\end{proof}

Next we derive the error estimates of the scalar variables in the $L^2$ norm.
\begin{lemma}\label{lem-err-sca} The scalar variables satisfy the following error bounds
	\begin{align}\label{lem-err-sca1}
	\max_{1\le n\le N}\|e_h^{u^n}\|^2_{\mathcal{T}_h}+\Delta t\sum_{n=1}^{N}\|e_h^{\phi^n}\|^2_{\mathcal{T}_h}+
	 \le C((\Delta t)^2+ h^{2(k+2)}).
	\end{align}
	
\end{lemma}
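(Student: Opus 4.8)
The plan is to run an energy argument on the error equations \eqref{error_01}--\eqref{error_02}, now choosing the test functions that produce an $L^2$ estimate on $e_h^{u^n}$ rather than a $-1$ estimate. First I would take $(\bm r_1,w_1,\mu_1)=(\bm p_h^n - \bm p_{Ih}^n, \phi_h^n-\phi_{Ih}^n, \widehat\phi_h^n-\widehat\phi_{Ih}^n) = (-e_h^{\bm p^n},-e_h^{\phi^n},-e_h^{\widehat\phi^n})$ in \eqref{error_01}, which by \Cref{summertic_A} couples to the choice $(\bm r_2,w_2,\mu_2)=(e_h^{\bm q^n}, e_h^{u^n},e_h^{\widehat u^n})$ in \eqref{error_02}. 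Adding the two resulting identities, the mixed terms $\pm(e_h^{\phi^n},e_h^{u^n})_{\mathcal T_h}$ and the coupling through $\mathcal A$ cancel via \Cref{summertic_A}, leaving on the left
\begin{align*}
-(\partial_t^+ e_h^{u^n}, e_h^{\phi^n})_{\mathcal T_h} + \epsilon\|e_h^{\bm q^n}\|_{\mathcal T_h}^2 + \epsilon\|h_K^{-1/2}(\Pi_k^\partial e_h^{u^n}-e_h^{\widehat u^n})\|_{\partial\mathcal T_h}^2,
\end{align*}
which is not yet quite what I want; so instead I would use the second energy-type pairing: take $(\bm r_1,w_1,\mu_1)=(e_h^{\bm p^n},e_h^{\phi^n},e_h^{\widehat\phi^n})$ in \eqref{error_01} to get $(\partial_t^+e_h^{u^n},e_h^{\phi^n})_{\mathcal T_h}+\|e_h^{\bm p^n}\|^2_{\mathcal T_h}+\|h_K^{-1/2}(\Pi_k^\partial e_h^{\phi^n}-e_h^{\widehat\phi^n})\|^2_{\partial\mathcal T_h} = (\partial_t^+u_{Ih}^n-\partial_t u^n, e_h^{\phi^n})_{\mathcal T_h}$, and take $(\bm r_2,w_2,\mu_2)=(e_h^{\bm q^n}, e_h^{u^n},e_h^{\widehat u^n})$ in \eqref{error_02}. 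To control $(\partial_t^+ e_h^{u^n}, e_h^{\phi^n})$ I would differentiate \eqref{error_02} in $\partial_t^+$ and pair with $(e_h^{\bm p^n},-e_h^{\phi^n},-e_h^{\widehat\phi^n})$, following the pattern of the proof of \Cref{phi_infty}, so that the time derivative is absorbed into $\partial_t^+\|e_h^{\phi^n}\|^2$ plus a term $\epsilon\|\partial_t^+ e_h^{u^n}\|_{\mathcal T_h}^2$ using identity \eqref{identity1}.

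The key steps, in order, are: (i) assemble the combined error identity producing $\tfrac{1}{2}\partial_t^+\|e_h^{\phi^n}\|_{\mathcal T_h}^2 + \epsilon\|\partial_t^+ e_h^{u^n}\|_{\mathcal T_h}^2 + (\text{positive }\mathcal A\text{-terms}) = (\text{RHS})$; (ii) bound the projection-error right-hand sides $(\partial_t^+u_{Ih}^n-\partial_t u^n,\cdot)$ and $(\phi^n-\phi_{Ih}^n,\cdot)$ using \Cref{projection_approximation} (which gives $h^{s+2}$ factors) together with \Cref{es_u} to pass from $\|\nabla e_h^{u^n}\|_{\mathcal T_h}$ to $\|e_h^{\bm q^n}\|_{\mathcal T_h}+\|h_K^{-1/2}(\Pi_k^\partial e_h^{u^n}-e_h^{\widehat u^n})\|_{\partial\mathcal T_h}$, absorbing these into the positive left-hand terms with a Young's inequality; (iii) handle the nonlinear term $\epsilon^{-1}(f^n(u_h^n)-f(u^n),\partial_t^+ e_h^{u^n})_{\mathcal T_h}$ by writing $f^n(u_h^n)-f(u^n) = (u_h^n)^3-(u^n)^3 - (u_h^n-u^n) + O(\Delta t)$, splitting $u_h^n-u^n = (u_h^n-u_{Ih}^n) + (u_{Ih}^n - u^n) = -e_h^{u^n} + (u_{Ih}^n-u^n)$, using $\|u_h^n\|_\infty \le C$ and $\|u_{Ih}^n\|_\infty \le C$ from \Cref{uh_infty_bounded} and \Cref{uih_infty_bounded} to bound the cubic difference by $C\|e_h^{u^n}\|_{\mathcal T_h} + Ch^{k+2}$, and then applying Cauchy--Schwarz against $\|\partial_t^+ e_h^{u^n}\|_{\mathcal T_h}$ with the factor $\epsilon^{-1}$ carefully balanced against the $\epsilon\|\partial_t^+e_h^{u^n}\|_{\mathcal T_h}^2$ term (this is where a time-step restriction $\Delta t \le C\epsilon^3$ or a constant from the CS positivity enters, consistent with the hypotheses of \Cref{main_res}); (iv) multiply by $\Delta t$, sum from $n=1$ to $m$, use $e_h^{u^0}=0$ and $e_h^{\phi^0}$ controlled by the projection estimate, and close with the discrete Gr\"onwall inequality to obtain a bound on $\max_n\|e_h^{\phi^n}\|^2_{\mathcal T_h}$ and $\Delta t\sum \|e_h^{\bm q^n}\|^2_{\mathcal T_h} + \ldots$. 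Finally, to recover $\max_n\|e_h^{u^n}\|^2_{\mathcal T_h}$ I would revisit the first combined identity: since $\partial_t^+\|e_h^{u^n}\|_{-1,h}^2$ plus $\epsilon$ times the positive $\mathcal A$-terms is already controlled by \Cref{Err-neg}, I instead extract $\|e_h^{u^n}\|_{\mathcal T_h}$ directly by testing \eqref{error_02} with $(\bm r_2,w_2,\mu_2)=(\text{something involving }\Delta_h)$ or, more simply, use \Cref{es_u} and the fact that $\Delta t\sum\|e_h^{\bm q^n}\|^2$ is bounded together with a Poincar\'e-type estimate; and for the $L^2$-in-time bound on $e_h^{\phi^n}$, I would test \eqref{error_02} with $(\bm 0, e_h^{\phi^n}, \widehat{e}_h^{\phi^n})$-type functions combined with the discrete LBB condition \Cref{LBB} to convert the already-controlled $\mathcal A$-norm of $e_h^{\phi^n}$ into its $\|\nabla e_h^{\phi^n}\|_{\mathcal T_h}$, then apply the HDG Poincar\'e inequality \eqref{HDG-poincare} after noting $(e_h^{\phi^n},1)_{\mathcal T_h}$ is controlled by the nonlinear mass relation \eqref{mass_conservative}.

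The main obstacle I expect is the treatment of the nonlinear term together with the appearance of $\|e_h^{\phi^n}\|_{\mathcal T_h}$ in the $L^2$-in-time norm: unlike $u_h^n$ and $e_h^{u^n}$, which live in $\mathring W_h$ and enjoy the HDG Poincar\'e inequality, $e_h^{\phi^n}$ has nonzero mean (its mean is tied to $(f^n(u_h^n)-f(u^n),1)_{\mathcal T_h}$), so controlling it requires first controlling that mean via the $L^\infty$ bounds and the $L^2$ error on $u$, and only then applying Poincar\'e. A second delicate point is ensuring the $\epsilon^{-1}$-weighted nonlinear contribution is genuinely absorbed rather than merely hidden: because the cubic term carries the $\epsilon^{-1}$ factor, the Young's-inequality splitting against $\epsilon\|\partial_t^+ e_h^{u^n}\|_{\mathcal T_h}^2$ forces the $\Delta t \le C\epsilon^3$ restriction exactly as in \Cref{unique} and \Cref{corEnergy}, and one must track this constant so that the Gr\"onwall constant does not blow up; for the convex-splitting scheme the sign structure from \eqref{identity3} removes this restriction. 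Everything else is a routine chain of Cauchy--Schwarz, triangle, inverse \eqref{eq27b}, and projection estimates \eqref{classical_ine}, \eqref{projection_approximation}.
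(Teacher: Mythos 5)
There is a genuine gap, and it stems from abandoning the one test-function pairing that makes this lemma easy. The paper's proof takes $(\bm r_1,w_1,\mu_1)=(-e_h^{\bm q^n},e_h^{u^n},e_h^{\widehat u^n})$ in \eqref{error_01} and $(\bm r_2,w_2,\mu_2)=(e_h^{\bm p^n},-e_h^{\phi^n},-e_h^{\widehat \phi^n})$ in \eqref{error_02}; multiplying the first identity by $\epsilon$ and adding, the symmetry \Cref{summertic_A} makes the two $\mathcal A$-terms cancel exactly, leaving $\epsilon(\partial_t^+ e_h^{u^n},e_h^{u^n})_{\mathcal T_h}+\|e_h^{\phi^n}\|_{\mathcal T_h}^2$ equal to projection-error terms plus the nonlinearity tested against $e_h^{\phi^n}$. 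With that pairing the nonlinear term is harmless: \Cref{uh_infty_bounded} gives $|f^n(u_h^n)-f(u^n)|\le C|u_h^n-u^n|$, Cauchy--Schwarz absorbs it into $\|e_h^{\phi^n}\|^2$, and after telescoping via \eqref{identity1} the only troublesome remainder is $C\Delta t\sum_n\|e_h^{u^n}\|^2_{\mathcal T_h}$. You consider a near-miss of this pairing in your first paragraph (with $-e_h^{\bm p^n}$ in place of $-e_h^{\bm q^n}$, for which the $\mathcal A$-terms do \emph{not} cancel), discard it, and pivot to the pairing that mimics \Cref{phi_infty} and \Cref{lem-err-flux}. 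That route produces $\partial_t^+\|e_h^{\phi^n}\|^2$ and $\epsilon\|\partial_t^+e_h^{u^n}\|^2$ on the left --- not the quantities the lemma asserts --- and places the cubic nonlinearity against $\partial_t^+e_h^{u^n}$. That is the genuinely hard term: in the paper it is handled only in the flux lemma, via the summation-by-parts identity \eqref{identity4} and the sign $\xi^n\ge 0$, and even there the result is $h^{2(k+1)}$, one order short of \eqref{lem-err-sca1}. Your proposed Young's-inequality absorption against $\epsilon\|\partial_t^+e_h^{u^n}\|^2$ leaves a residue $C\epsilon^{-3}\|e_h^{u^n}\|^2$ that your left-hand side does not dominate at the rate $h^{2(k+2)}$.

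The second, decisive gap is the recovery of $\max_n\|e_h^{u^n}\|^2_{\mathcal T_h}$ at the superconvergent rate. Your fallback --- \Cref{es_u} plus a Poincar\'e-type bound through $\|e_h^{\bm q^n}\|_{\mathcal T_h}$ --- can only deliver $O(h^{k+1})$, and the alternative suggestions (testing with ``something involving $\Delta_h$'', converting $\mathcal A$-norms via \Cref{LBB}) are not worked out. The missing ingredient is that \Cref{Err-neg} must be used as input: the interpolation inequality \eqref{Cauthy} together with \Cref{es_u} converts $\max_n\|e_h^{u^n}\|_{-1,h}$ and $\Delta t\sum_n\|e_h^{\bm q^n}\|^2_{\mathcal T_h}$ from \eqref{Err-neg1} into $\Delta t\sum_n\|e_h^{u^n}\|^2_{\mathcal T_h}\le C((\Delta t)^2+h^{2(k+2)})$, and it is this bound, substituted into the clean identity above, that closes the estimate for both $\max_n\|e_h^{u^n}\|^2_{\mathcal T_h}$ and $\Delta t\sum_n\|e_h^{\phi^n}\|^2_{\mathcal T_h}$ without any delicate Gr\"onwall loop through the nonlinearity. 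Your plan cites \Cref{Err-neg} only in passing and never deploys it where it is actually needed.
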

\begin{proof} 
	First, we take $(\bm r_1, w_1, \mu_1)=(-e_h^{\bm q^n},e_h^{u^n},e_h^{\widehat{u}^n})$ in \eqref{error_01} to get
	\begin{align}\label{RR2}
	\begin{split}
	\hspace{1em}&\hspace{-1em}(\partial_{t}^+ e_h^{u^n},e_h^{u^n})_{\mathcal T_h}
	+\mathcal{A}(e_h^{\bm p^n},e_h^{\phi^n},e_h^{\widehat{\phi}^n};-e_h^{\bm q^n},e_h^{u^n},e_h^{\widehat u^n})\\
	&=(\partial_{t}^+ u_{Ih}^n-\partial_tu^n,e_h^{u^n})_{\mathcal{T}_h}.
	\end{split}
	\end{align}

	Next, we take $(\bm r_2,w_2,\mu_2)=(e_h^{\bm p^n},-e_h^{\phi^n},-e_h^{\widehat{\phi}^n})$ in \eqref{error_02} to get
	\begin{align}\label{RR1}
	\begin{split}
	\hspace{1em}&\hspace{-1em}\|e_h^{\phi^n}\|^2_{\mathcal{T}_h}+\epsilon
	\mathcal{A}(e_h^{\bm q^n},e_h^{u^n},e_h^{\widehat u^n};e_h^{\bm p^n},-e_h^{\phi^n},-e_h^{\widehat{\phi}^n})\\
	&=-(\phi^n - \phi_{Ih}^n,e_h^{\phi^n})_{\mathcal{T}_h}
	-\epsilon^{-1}(f^n(u_h^n)-f(u^n),e_h^{\phi^n})_{\mathcal{T}_h}.
	\end{split}
	\end{align}
	
	We multiply \eqref{RR2} by $\epsilon$ and then add \eqref{RR1}  to get
	\begin{align} \label{RR0}
	\begin{split}
	 &\epsilon (\partial_{t}^+e_h^{u^n},e_h^{u^n})
	+\|e_h^{\phi^n}\|^2_{\mathcal{T}_h}
	=	-(\phi^n - \phi_{Ih}^n,e_h^{\phi^n})_{\mathcal{T}_h} \\
	&-\epsilon^{-1}(f^n(u_h^n)-f(u^n),e_h^{\phi^n})_{\mathcal{T}_h}
 +\epsilon (\partial_{t}^+u_{Ih}^n-\partial_tu^n,e_h^{u^n})_{\mathcal{T}_h}.
	\end{split}
	\end{align}
	By the $\L^\infty$ stability bound of $u_h^n$ in \Cref{uh_infty_bounded},  we have 
	\begin{align}\label{uhinftybounded}
	|f^n(u_h^n)-f(u^n)|\le |(u_h^n)^3-(u^n)^3|+|u_h^n-u^n|\le C|u_h^n-u^n|.
	\end{align}
	By \Cref{identity} it follows that 
	\begin{align*}
	(\partial_{t}^+e_h^{u^n},e_h^{u^n})_{\mathcal T_h} =\frac{\|e_h^{u^n}\|^2_{\mathcal{T}_h}
		-\|e_h^{u^{n-1}}\|^2_{\mathcal{T}_h}
		+(\Delta t)^2\|\partial_{t}^+ e_h^{u^n}\|^2_{\mathcal{T}_h}}{2\Delta t}.
	\end{align*}
	Apply Cauchy-Schwartz inequality to \eqref{RR0}, and then
	add the resulting equation from $n=1$ to $n=m$ to get
	\begin{align*}
	& \epsilon \|e_h^{u^{m}}\|^2_{\mathcal{T}_h}+2 \Delta t\sum_{n=1}^{m}\|e_h^{\phi^{n}}\|^2_{\mathcal{T}_h}
	+\epsilon (\Delta t)^2\sum_{n=1}^{m}\|\partial_{t}^+ e_h^{u^{n}}\|^2_{\mathcal{T}_h}\\
	&\le C\Delta t\sum_{n=1}^{m} (\|\phi^n - \phi_{Ih}^n \|_{\mathcal{T}_h}^2+\|\partial_{t}^+ u_{Ih}^n-\partial_t u^n \|_{\mathcal{T}_h}^2+\|e_h^{u^{n}}\|^2_{\mathcal{T}_h}+\epsilon^{-2}\|u^n-u_h^n\|_{\mathcal{T}_h}^2)\\
	&\le C \big(h^{2(k+2)}+(\Delta t)^2\big)+C\Delta t\sum_{n=1}^{m} \epsilon^{-2} \|e_h^{u^{n}}\|^2_{\mathcal{T}_h},
	\end{align*}
	where one has applied the approximation properties of the elliptic projection in \Cref{projection_approximation}. The error estimate in the negative norm \eqref{Err-neg1}
implies that 
\begin{align*}
\Delta t \sum_{n=1}^N \|e_h^{u^n}\|_{\mathcal{T}_h}^2 \leq C\big((\Delta t)^2+h^{2(k+2)}\big).
\end{align*}	
Hence one has
	\begin{align}\label{inyter_mide}
 \|e_h^{u^{m}}\|^2_{\mathcal{T}_h}+2 \Delta t\sum_{n=1}^{m}\|e_h^{\phi^{n}}\|^2_{\mathcal{T}_h}
	+ (\Delta t)^2\sum_{n=1}^{m}\|\partial_{t}^+ e_h^{u^{n}}\|^2_{\mathcal{T}_h}
	&\le C(\epsilon)\big((\Delta t)^2+ h^{2(k+2)}\big).
	\end{align}
	This establishes the optimal error estimates of $u$ and $\phi$ in the $L^2$ norm.

\end{proof}

Finally one performs the error analysis of the flux variables. One has
\begin{lemma}\label{lem-err-flux} The scalar variables satisfy the following error bounds
	\begin{align}\label{lem-err-flux1}
	\max_{1\le n\le N}\|e_h^{\bm q^n}\|^2_{\mathcal{T}_h}+\Delta t\sum_{n=1}^{N}\|e_h^{\bm p^n}\|^2_{\mathcal{T}_h}+
	\le C((\Delta t)^2+ h^{2(k+1)}).
	\end{align}
	
\end{lemma}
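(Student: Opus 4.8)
The plan is to obtain the two flux estimates from the $L^2$ scalar error bounds of \Cref{lem-err-sca}, after first upgrading the control of the pressure-type error $e_h^{\phi^n}$ and of the difference quotient $\partial_t^+e_h^{u^n}$ from the summed estimates already available to \emph{pointwise-in-time} ones, by a time-differencing argument patterned on the proof of \Cref{phi_infty}. Throughout I write $\mathcal{T}_h$ for the triangulation and freely use the projection bounds of \Cref{projection_approximation} and the uniform-in-time a priori bounds already established.

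\emph{Step 1: a pointwise bound for $e_h^{\phi^n}$.} I would first establish
\[
\max_{1\le m\le N}\|e_h^{\phi^m}\|_{\mathcal{T}_h}^2+\Delta t\sum_{n=1}^{N}\|\partial_t^+e_h^{u^n}\|_{\mathcal{T}_h}^2\le C\big((\Delta t)^2+h^{2(k+2)}\big).
\]
To this end, apply $\partial_t^+$ to \eqref{error_02} and test the result with $(\bm r_2,w_2,\mu_2)=(e_h^{\bm p^n},-e_h^{\phi^n},-e_h^{\widehat\phi^n})$; test \eqref{error_01} with $(\bm r_1,w_1,\mu_1)=(-\epsilon\partial_t^+e_h^{\bm q^n},\epsilon\partial_t^+e_h^{u^n},\epsilon\partial_t^+e_h^{\widehat u^n})$; then add the two identities. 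The symmetry \eqref{summertic_A1} makes the two $\mathcal A$-contributions cancel and leaves, exactly as in \eqref{add:0},
\[
\epsilon\|\partial_t^+e_h^{u^n}\|_{\mathcal{T}_h}^2+(\partial_t^+e_h^{\phi^n},e_h^{\phi^n})_{\mathcal{T}_h}=\epsilon(\partial_t^+u_{Ih}^n-\partial_tu^n,\partial_t^+e_h^{u^n})_{\mathcal{T}_h}-(\partial_t^+(\phi^n-\phi_{Ih}^n),e_h^{\phi^n})_{\mathcal{T}_h}-\epsilon^{-1}\big(\partial_t^+(f^n(u_h^n)-f(u^n)),e_h^{\phi^n}\big)_{\mathcal{T}_h}.
\]
One telescopes the second term on the left via \eqref{identity1}; bounds the first two terms on the right by the projection estimates \eqref{projection_approximation_3}--\eqref{projection_approximation_5} (together with $\partial_t^+u^n-\partial_tu^n=O(\Delta t)$); and for the nonlinear term writes $(u_h^n)^3-(u^n)^3=(u_h^n-u^n)\big((u_h^n)^2+u_h^nu^n+(u^n)^2\big)$, applies $\partial_t^+$ by the discrete product rule, and estimates the resulting pieces using the uniform bounds of \Cref{uh_infty_bounded} and \Cref{uih_infty_bounded} together with the a priori estimate $\epsilon\Delta t\sum_n\|\partial_t^+u_h^n\|_{\mathcal{T}_h}^2\le C$ from \eqref{phi_infty-1}. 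Multiplying by $\Delta t$, summing over $n$, absorbing the small multiples of $\|\partial_t^+e_h^{u^n}\|_{\mathcal{T}_h}^2$, and invoking the already-proven $\Delta t\sum_n\|e_h^{\phi^n}\|_{\mathcal{T}_h}^2\le C((\Delta t)^2+h^{2(k+2)})$ of \Cref{lem-err-sca} then yields the claim; the first time step is handled directly from \eqref{error_01}--\eqref{error_02}.

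\emph{Step 2: the bounds for $\bm p$ and $\bm q$.} Testing \eqref{error_01} with $(\bm r_1,w_1,\mu_1)=(e_h^{\bm p^n},e_h^{\phi^n},e_h^{\widehat\phi^n})$ and using \Cref{energy_of_A} gives
\[
\|e_h^{\bm p^n}\|_{\mathcal{T}_h}^2+\|h_K^{-1/2}(\Pi_k^{\partial}e_h^{\phi^n}-e_h^{\widehat\phi^n})\|_{\partial\mathcal{T}_h}^2=-(\partial_t^+e_h^{u^n},e_h^{\phi^n})_{\mathcal{T}_h}+(\partial_t^+u_{Ih}^n-\partial_tu^n,e_h^{\phi^n})_{\mathcal{T}_h};
\]
bounding the right-hand side by Cauchy--Schwarz, multiplying by $\Delta t$, summing over $n$, and using Step 1, $\Delta t\sum_n\|e_h^{\phi^n}\|_{\mathcal{T}_h}^2\le C((\Delta t)^2+h^{2(k+2)})$ (\Cref{lem-err-sca}) and \eqref{projection_approximation_3} gives $\Delta t\sum_n\|e_h^{\bm p^n}\|_{\mathcal{T}_h}^2\le C((\Delta t)^2+h^{2(k+2)})$. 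Similarly, testing \eqref{error_02} with $(\bm r_2,w_2,\mu_2)=(e_h^{\bm q^n},e_h^{u^n},e_h^{\widehat u^n})$ and using \Cref{energy_of_A} gives
\[
\epsilon\|e_h^{\bm q^n}\|_{\mathcal{T}_h}^2+\epsilon\|h_K^{-1/2}(\Pi_k^{\partial}e_h^{u^n}-e_h^{\widehat u^n})\|_{\partial\mathcal{T}_h}^2=(e_h^{\phi^n},e_h^{u^n})_{\mathcal{T}_h}+(\phi^n-\phi_{Ih}^n,e_h^{u^n})_{\mathcal{T}_h}+\epsilon^{-1}(f^n(u_h^n)-f(u^n),e_h^{u^n})_{\mathcal{T}_h},
\]
and each term on the right is controlled by Cauchy--Schwarz using $\|e_h^{\phi^n}\|_{\mathcal{T}_h}\le C(\Delta t+h^{k+2})$ (Step 1), $\|e_h^{u^n}\|_{\mathcal{T}_h}\le C(\Delta t+h^{k+2})$ (\Cref{lem-err-sca}), $\|\phi^n-\phi_{Ih}^n\|_{\mathcal{T}_h}\le Ch^{k+2}$ (\eqref{projection_approximation_4}) and $|f^n(u_h^n)-f(u^n)|\le C|u_h^n-u^n|$ (\eqref{uhinftybounded}); taking the maximum over $n$ gives $\max_n\|e_h^{\bm q^n}\|_{\mathcal{T}_h}^2\le C((\Delta t)^2+h^{2(k+2)})$. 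These two estimates together imply \eqref{lem-err-flux1}, since the stated rate $h^{2(k+1)}$ is weaker than the rate $h^{2(k+2)}$ just obtained.

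\emph{Main obstacle.} The only genuinely delicate point is Step 1: at a fixed time level neither $\partial_t^+e_h^{u^n}$ in $L^2$ nor $e_h^{\phi^n}$ in $L^2$ is controlled by \Cref{lem-err-sca} or \Cref{Err-neg} (only their $\Delta t$-weighted sums are), so a naive Cauchy--Schwarz bound in Step 2 would pick up an uncontrolled $h^{2(k+2)}/\Delta t$ factor; this forces the time-differenced identity, whose successful closure relies both on the exact cancellation of the $\mathcal A$-terms produced by \eqref{summertic_A1} and on splitting $\partial_t^+\big(f^n(u_h^n)-f(u^n)\big)$ so that every resulting piece is either absorbed into $\epsilon\Delta t\sum_n\|\partial_t^+e_h^{u^n}\|_{\mathcal{T}_h}^2$ or bounded through the uniform-in-time a priori estimates of \Cref{phi_infty}, \Cref{uh_infty_bounded} and \Cref{uih_infty_bounded}. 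Once Step 1 is in hand, Step 2 is routine.
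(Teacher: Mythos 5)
Your Step 2 is fine \emph{conditional on} Step 1, and your Step 2 identities are exactly the paper's starting identities \eqref{Inte-1} and \eqref{proof_4_sub}. But Step 1 is where the proof lives, and as sketched it does not close. The combined time-differenced identity is correct, and the pieces $\xi^n\partial_t^+(u_h^n-u^n)$ and $\partial_t^+(\phi^n-\phi_{Ih}^n)$ are handled as you say; the obstruction is the cross term $\big((u_h^{n-1}-u^{n-1})\,\partial_t^+\xi^n,\,e_h^{\phi^n}\big)_{\mathcal{T}_h}$ coming from the discrete product rule, where $\xi^n=(u_h^n)^2+u_h^nu^n+(u^n)^2$. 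Here $\partial_t^+\xi^n$ is controlled only through $\|\partial_t^+u_h^n\|_{\mathcal{T}_h}$, which is available only in $\ell^2$ of time (\Cref{phi_infty}), and $e_h^{\phi^n}$ is likewise available only in $\ell^2(L^2)$ (\Cref{lem-err-sca}); pairing these two exhausts both Cauchy--Schwarz slots, so the remaining factor $u_h^{n-1}-u^{n-1}$ must be measured in $L^\infty$, where \Cref{uh_infty_bounded} gives only $O(1)$ and no smallness. The resulting bound on $\Delta t\sum_n$ of this term is $O(\Delta t+h^{k+2})$, one full power short of the $O\big((\Delta t+h^{k+2})^2\big)$ you need. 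Any attempt to recover smallness by measuring $u_h^{n-1}-u^{n-1}$ in $L^p$, $p>2$, via the discrete Sobolev inequality requires pointwise-in-time control of $\|e_h^{\bm q^{n-1}}\|_{\mathcal{T}_h}$ --- precisely the quantity the lemma is trying to establish --- so your two steps cannot be run sequentially. A further, smaller gap: the telescoping of $(\partial_t^+e_h^{\phi^n},e_h^{\phi^n})_{\mathcal{T}_h}$ requires a bound on $\|e_h^{\phi^0}\|_{\mathcal{T}_h}$, but $\phi_h^0$ is not defined by the scheme \eqref{HDG-hill}, so ``handled directly from \eqref{error_01}--\eqref{error_02}'' does not dispose of the initial step. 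It is also telling that the pointwise-in-time estimate $\max_n\|e_h^{\phi^n}\|_{\mathcal{T}_h}\le C(\Delta t+h^{k+2})$ you posit is strictly stronger than anything in \Cref{main_res}, which controls $\phi$ only in $\ell^2$ of time.

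The paper avoids all of this by never seeking a pointwise bound on $e_h^{\phi^n}$ or on $\partial_t^+e_h^{u^n}$. It applies $\partial_t^+$ to the \emph{second} error equation and tests with $(e_h^{\bm q^n},0,0)$, producing the telescoping term $\epsilon(\partial_t^+e_h^{\bm q^n},e_h^{\bm q^n})_{\mathcal{T}_h}$ that directly yields $\max_n\|e_h^{\bm q^n}\|^2_{\mathcal{T}_h}$; the term $(\phi^n-\phi_{Ih}^n,\partial_t^+e_h^{u^n})_{\mathcal{T}_h}$ is rewritten through the first error equation so that $\partial_t^+e_h^{u^n}$ never appears in an $L^2$ norm; and the cubic term is treated with the summation-by-parts identity \eqref{identity4}, which makes two of the three resulting contributions nonpositive (since $\xi^n\ge0$) and leaves the third paired with $\|e_h^{\bm q^{n-1}}\|^2_{\mathcal{T}_h}$, closable by Gronwall using $\Delta t\sum_n\big(1+\|\partial_t^+u_{Ih}^n\|^2_{\mathcal{T}_h}+\|\partial_t^+u_h^n\|^2_{\mathcal{T}_h}\big)\le C$. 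You would need to restructure your argument along these lines (a single coupled Gronwall on the flux quantities) rather than trying to first upgrade $e_h^{\phi^n}$ to a pointwise-in-time estimate.
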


\begin{proof}
First we take $(\bm r_1, w_1, \mu_1)=(e_h^{\bm p^n}, e_h^{\phi^n},e_h^{\widehat{\phi}^n})$ in Eq. \eqref{error_01} to get
\begin{align}\label{Inte-1}
&(\partial^+_t e_h^{u^n}, e_h^{\phi^n})_{\mathcal{T}_h}+\|e_h^{\bm p^n} \|^2_{\mathcal T_h}+
\|h_K^{-1/2}(\Pi_k^{\partial}e_h^{\phi^n}-e_h^{\widehat{\phi}^n})\|^2_{\partial\mathcal T_h}=(\partial_t^+ u_{Ih}^n-\partial_t u^n, e_h^{\phi^n})_{\mathcal{T}_h}.
\end{align}
Applying $\partial_t^+$ to Eq. \eqref{error_02} and then setting $(\bm r_2, w_2, \mu_2)=(e_h^{\bm q^n}, 0,0)$ in the resulting equation gives
\begin{align}\label{Inte-2}
\epsilon (\partial_t^+ e_h^{\bm q^n}, e_h^{\bm q^n})_{\mathcal{T}_h}-\epsilon (\partial_t^+ e_h^{u^n}, \nabla \cdot e_h^{\bm q^n})_{\mathcal{T}_h}+ \epsilon \langle\partial_t^+ e_h^{\widehat{u}^n},  e_h^{\bm q^n} \cdot n\rangle_{\partial {\mathcal{T}_h}}=0.
\end{align}
One now takes $(\bm r_2, w_2, \mu_2)=(0, \partial_t^+ e_h^{u^n}, \partial_t^+ e_h^{\widehat{u}^n})$ in Eq. \eqref{error_02}, and take summation of the result with Eqs. \eqref{Inte-1}--\eqref{Inte-2} to obtain
\begin{align}\label{Final-1}
&\epsilon (\partial_t^+ e_h^{\bm q^n}, e_h^{\bm q^n})_{\mathcal{T}_h}+\|e_h^{\bm p^n}\|_{\mathcal{T}_h}^2+\|h_K^{-1/2}(\Pi_k^{\partial}e_h^{\phi^n}-e_h^{\widehat{\phi}^n})\|^2_{\partial\mathcal T_h} \nonumber\\
&+\epsilon\langle h_K^{-1}(\Pi_k^\partial e_h^{u^n}-e_h^{\widehat{u}^n}), \partial_t^+(\Pi_k^\partial e_h^{u^n}-e_h^{\widehat{u}^n})\rangle_{\partial\mathcal T_h}\nonumber \\
&=\big(\partial_t^+ u_{Ih}^n-\partial_t u^n, e_h^{\phi^n}\big)_{\mathcal{T}_h} 
+\big(\phi^n-\phi_{Ih}^n, \partial_t^+ e_h^{u^n}\big)_{\mathcal{T}_h}+\epsilon^{-1}\big(f^n(u_h^n)-f(u^n), \partial_t^+ e_h^{u^n}\big)_{\mathcal{T}_h}\nonumber\\
&=\big(\partial_t^+ u_{Ih}^n-\partial_t u^n, e_h^{\phi^n}\big)_{\mathcal{T}_h} 
+\big(\phi^n-\phi_{Ih}^n, \partial_t^+ e_h^{u^n}\big)_{\mathcal{T}_h}+\epsilon^{-1}\big(f^n(u_{Ih}^n)-f(u^n), \partial_t^+ e_h^{u^n}\big)_{\mathcal{T}_h}\nonumber\\
&+\epsilon^{-1}\big(e_h^{u^n}, \partial_t^+ e_h^{u^n}\big)_{\mathcal{T}_h}+\epsilon^{-1}\big((u_h^n)^3-(u_{Ih}^n)^3, \partial_t^+ e_h^{u^n}\big)_{\mathcal{T}_h} \nonumber\\
&:=I_1+I_2+I_3+I_4+I_5.
\end{align}

For $I_1$, by the Cauchy-Schwartz inequality, the HDG Sobolev inequality \Cref{discrete-soblev-hdg}, \Cref{es_u} and the approximation properties in \Cref{projection_approximation}, one obtains
\begin{align}\label{Inte-I1}
\begin{split}
|I_1| &\leq \|\partial_t^+ u_{Ih}^n-\partial_t u^n\|_{\mathcal{T}_h}\| e_h^{\phi^n}\|_{\mathcal{T}_h} \\
&\leq C \|\partial_t^+ u_{Ih}^n-\partial_t u^n\|_{\mathcal{T}_h} (\|\nabla e_h^{\phi^n}\|_{\mathcal{T}_h}+\|h_K^{-1/2}(\Pi_k^{\partial}e_h^{\phi^n}-e_h^{\widehat{\phi}^n})\|_{\partial\mathcal T_h}) \\
&\leq C \|\partial_t^+ u_{Ih}^n-\partial_t u^n\|_{\mathcal{T}_h} (\| e_h^{\bm p^n}\|_{\mathcal{T}_h}+\|h_K^{-1/2}(\Pi_k^{\partial}e_h^{\phi^n}-e_h^{\widehat{\phi}^n})\|_{\partial\mathcal T_h}) \\
&\leq C (h^{2(k+2)}+\Delta t^2)+ \theta \big(\| e_h^{\bm p^n}\|_{\mathcal{T}_h}^2+\|h_K^{-1/2}(\Pi_k^{\partial}e_h^{\phi^n}-e_h^{\widehat{\phi}^n})\|_{\partial\mathcal T_h}\big),
\end{split}
\end{align}
with $\theta$ a positive number to be chosen later.

Denote by $P_W$ the $L^2$ projection operator onto $W_h$. By Eq. \eqref{error_01} with $(\bm r_1, \mu_1)=(0,0)$, one has
\begin{align}\label{Inte-I2}
|I_2|&=|(P_W \phi^n-\phi_{Ih}^n, \partial_t^+ e_h^{u^n})_{\mathcal{T}_h}| \nonumber \\
&\leq |\mathcal{A}\big(e_h^{\bm p^n}, e_h^{\phi^n}, e_h^{\widehat{\phi}^n}; 0, (P_W \phi^n-\phi_{Ih}^n), 0\big)|+|\big(\partial_t^+ u_{Ih}^n-\partial_t u^n, (P_W \phi^n-\phi_{Ih}^n)\big)_{\mathcal{T}_h}| \nonumber\\
&\leq \|\partial_t^+ u_{Ih}^n-\partial_t u^n\|_{\mathcal{T}_h}\|P_W \phi^n-\phi_{Ih}^n\|_{\mathcal{T}_h}+C(\| e_h^{\bm p^n}\|_{\mathcal{T}_h}+\|h_K^{-1/2}(\Pi_k^{\partial}e_h^{\phi^n}-e_h^{\widehat{\phi}^n})\|_{\partial\mathcal T_h})\nonumber\\
& \quad \times(\|\nabla (P_W \phi^n-\phi_{Ih}^n)\|_{\mathcal{T}_h}+\|h_K^{-1/2} \Pi_k^\partial( P_W \phi^n-\phi_{Ih}^n)\|_{\partial{\mathcal{T}}_h})\nonumber\\
&\leq  C (h^{2(k+1)}+\Delta t^2)+\theta \big(\| e_h^{\bm p^n}\|_{\mathcal{T}_h}^2+\|h_K^{-1/2}(\Pi_k^{\partial}e_h^{\phi^n}-e_h^{\widehat{\phi}^n})\|_{\partial\mathcal T_h}\big),
\end{align}
where one has utilizes the continuity of the operator $\mathcal{A}$ in \Cref{A_bound}, the inverse inequality, the $L^2$ stability of the projections $\Pi_k^\partial$ and $P_W$.

The term $I_3$ is estimated similarly as the term $I_2$ as follows.
\begin{align}\label{Inte-I3}
|I_3|&\leq C(h^{2(k+2)}+\Delta t^2)+ \theta \big(\| e_h^{\bm p^n}\|_{\mathcal{T}_h}^2+\|h_K^{-1/2}(\Pi_k^{\partial}e_h^{\phi^n}-e_h^{\widehat{\phi}^n})\|_{\partial\mathcal T_h}\big)\nonumber\\
&\quad+C(\|\nabla P_W \big(f^n(u_{Ih}^n)-f(u^n)\big) \|_{\mathcal{T}_h}+\|h_K^{-1/2} \Pi_k^\partial P_W \big(f^n(u_{Ih}^n)-f(u^n)\big)\|_{\partial{\mathcal{T}}_h})\nonumber \\
&\leq C(h^{2(k+1)}+\Delta t^2)+ \theta \big(\| e_h^{\bm p^n}\|_{\mathcal{T}_h}^2+\|h_K^{-1/2}(\Pi_k^{\partial}e_h^{\phi^n}-e_h^{\widehat{\phi}^n})\|_{\partial\mathcal T_h}\big),
\end{align}
where the uniform bound of $u_{Ih}^n$ in \Cref{uih_infty_bounded} has been applied here.

For $I_4$ one has
\begin{align}\label{Inte_I4}
I_4=\frac{1}{2}\partial_t^+ \|e_h^{u^n}\|_{\mathcal{T}_h}^2+\frac{\Delta t}{2}\|\partial_t^+e_h^{u^n}\|_{\mathcal{T}_h}^2.
\end{align}

We estimate $I_5$ following the approach in \cite{LCWW2017}.  Choosing $\theta=\frac{1}{6}$, substituting the inequalities \eqref{Inte-I1}--\eqref{Inte-I3} back to the Eq. \eqref{Final-1}, multiplying the result by $\Delta t$ and then taking summation from $n=1$ to $n=m$ implies
\begin{align}\label{Final-2}
&\|e_h^{\bm q^n}\|_{\mathcal{T}_h}^2+\|h_K^{-1/2}(\Pi_k^\partial e_h^{u^n}-e_h^{\widehat{u}^n})\|+\Delta t\sum_{n=1}^{m}\|e_h^{\bm p^n}\|_{\mathcal{T}_h}^2+\|h_K^{-1/2}(\Pi_k^{\partial}e_h^{\phi^n}-e_h^{\widehat{\phi}^n})\|^2_{\partial\mathcal T_h} \nonumber \\
&\leq C(h^{2(k+1)}+\Delta t^2)+ C \|e_h^{u^n}\|_{\mathcal{T}_h}^2+C \Delta t^2 \sum_{n=1}^m\|\partial_t^+e_h^{u^n}\|_{\mathcal{T}_h}^2
-C\Delta t\sum_{n=1}^m \big((u_h^n)^3-(u_{Ih}^n)^3, \partial_t^+ e_h^{u^n}\big)_{\mathcal{T}_h}\nonumber \\
&\leq C(h^{2(k+1)}+\Delta t^2)-C\Delta t\sum_{n=1}^m \big((u_h^n)^3-(u_{Ih}^n)^3, \partial_t^+ e_h^{u^n}\big)_{\mathcal{T}_h},
\end{align}
where the last step follows from the $L^2$ error estimate in \eqref{inyter_mide}.

By the identity \eqref{identity4} the last term in \eqref{Final-2} can be written as
\begin{align}\label{Inte-f1}
&-C\Delta t\sum_{n=1}^m \big((u_{Ih}^n)^3-(u_h^n)^3, \partial_t^+ e_h^{u^n}\big)_{\mathcal{T}_h}=-C\Delta t\sum_{n=1}^m \big(\xi^n e_h^{u^n}, \partial_t^+ e_h^{u^n}\big)_{\mathcal{T}_h} \nonumber\\
&=\frac{C \Delta t}{2} \sum_{n=1}^m  \big(\partial_t^+\xi^n,  (e_h^{u^{n-1}})^2\big)_{\mathcal{T}_h}-\frac{C \Delta t^2}{2} \sum_{n=1}^m\big(\xi^n \partial_t^+ e_h^{u^n},  \partial_t^+ e_h^{u^n}\big)_{\mathcal{T}_h}\nonumber \\
&\quad -\frac{C \Delta t}{2} \big(\xi^m \partial_t^+ e_h^{u^m},  \partial_t^+ e_h^{u^m}\big)_{\mathcal{T}_h}, 
\end{align}
with $\xi^n=(u_{Ih}^n)^2+u_{Ih}^n u_h^n+(u_h^n)^2$. Noting that $\xi^n \geq 0$, the last two terms in \eqref{Inte-f1} are non-positive. One has
\begin{align*}
\partial_t^+\xi^n&=\partial_t^+u_{Ih}^n(u_{Ih}^n+u_{Ih}^{n-1})+\partial_tu_{Ih}^nu_h^{n-1}+\Delta t (\partial_t^+u_h^n)^2\nonumber\\
&\quad+\partial_t^+u_h^n(u_{Ih}^n+2u_{Ih}^{n-1}-2e_h^{u^{n-1}}).
\end{align*}
Hence by the uniform stability of $u_h^n$ and $u_{Ih}^n$ in \Cref{uh_infty_bounded} and \Cref{uih_infty_bounded}, one obtains
\begin{align}\label{Inte-f2}
&\big(\partial_t^+ \xi^n, (e_h^{u^{n-1}})^2\big)_{\mathcal{T}_h} \nonumber\\
&\leq C \|\partial_t^+u_{Ih}^n\|_{\mathcal{T}_h}\|e_h^{u^{n-1}}\|_{L^4(\Omega)}^2+C \|\partial_t^+u_{h}^n\|_{\mathcal{T}_h}\|e_h^{u^{n-1}}\|_{L^4(\Omega)}^2 \\
&\leq C (1+\|\partial_t^+u_{Ih}^n\|_{\mathcal{T}_h}^2+\|\partial_t^+u_{h}^n\|_{\mathcal{T}_h}^2)(\|\nabla e_h^{u^{n-1}}\|_{\mathcal{T}_h}^2+\|h_K^{-1/2}(\Pi_k^\partial e_h^{u^{n-1}}-e_h^{\widehat{u}^{n-1}})\|_{\partial\mathcal{T}_h}^2) \nonumber\\
&\leq C (1+\|\partial_t^+u_{Ih}^n\|_{\mathcal{T}_h}^2+\|\partial_t^+u_{h}^n\|_{\mathcal{T}_h}^2)(\| e_h^{\bm q^{n-1}}\|_{\mathcal{T}_h}^2+\|h_K^{-1/2}(\Pi_k^\partial e_h^{u^{n-1}}-e_h^{\widehat{u}^{n-1}})\|_{\partial\mathcal{T}_h}^2), \nonumber
\end{align}
where one has applied the HDG Sobolev inequality \eqref{sobolev-02} and \Cref{es_u} in deriving the last inequality.

With  \eqref{Inte-f1} and \eqref{Inte-f2}, the inequality \eqref{Final-2} becomes
\begin{align}\label{Final-3}
&\|e_h^{\bm q^n}\|_{\mathcal{T}_h}^2+\|h_K^{-1/2}(\Pi_k^\partial e_h^{u^n}-e_h^{\widehat{u}^n})\|+\Delta t\sum_{n=1}^{m}\|e_h^{\bm p^n}\|_{\mathcal{T}_h}^2+\|h_K^{-1/2}(\Pi_k^{\partial}e_h^{\phi^n}-e_h^{\widehat{\phi}^n})\|^2_{\partial\mathcal T_h} \nonumber \\
&\leq C(h^{2(k+1)}+\Delta t^2) 
+C\Delta t \sum_{n=1}^m\big(1+\|\partial_t^+u_{Ih}^n\|_{\mathcal{T}_h}^2+\|\partial_t^+u_{h}^n\|_{\mathcal{T}_h}^2\big)\nonumber\\
&\quad \times \big(\| e_h^{\bm q^{n-1}}\|_{\mathcal{T}_h}^2+\|h_K^{-1/2}(\Pi_k^\partial e_h^{u^{n-1}}-e_h^{\widehat{u}^{n-1}})\|_{\partial\mathcal{T}_h}^2\big).
\end{align}
By \Cref{phi_infty} and \Cref{projection_approximation}, one has
\begin{align*}
\Delta t \sum_{n=1}^m\big(1+\|\partial_t^+u_{Ih}^n\|_{\mathcal{T}_h}^2+\|\partial_t^+u_{h}^n\|_{\mathcal{T}_h}^2\big) \leq C.
\end{align*}
An application of the Gronwall's inequality gives
\begin{align*}
&\|e_h^{\bm q^n}\|_{\mathcal{T}_h}^2+\|h_K^{-1/2}(\Pi_k^\partial e_h^{u^n}-e_h^{\widehat{u}^n})\|+\Delta t\sum_{n=1}^{m}\|e_h^{\bm p^n}\|_{\mathcal{T}_h}^2+\|h_K^{-1/2}(\Pi_k^{\partial}e_h^{\phi^n}-e_h^{\widehat{\phi}^n})\|^2_{\partial\mathcal T_h} \nonumber \\
& \leq C(h^{2(k+1)}+\Delta t^2).
\end{align*}
This completes the proof.
\end{proof}

The combination of \eqref{projection_approximation_1}, \eqref{projection_approximation_4}, \eqref{projection_approximation_5} and the triangle inequality  finishes the proof of \Cref{main_res}.

\section{Numerical results}
\label{numerics}

We consider two examples on unit square domains in $ \mathbb{R}^2 $.  In the first example we have an explicit solution of the system \eqref{ori}; in the second example an explicit form for the exact solution is not known.

\begin{example}\label{example1}
	The problem data $ u^0$ and the artificial $ f $ are chosen so that the exact solution of the  system \eqref{ori} is given by
	\begin{gather*}
		\varepsilon = 1, \quad u = \phi =  e^{-t}x^2y^2(1-x)^2(1-y)^2.
	\end{gather*}
	We report the errors at the final time $T = 1$ for polynomial degrees $k = 0$ and $k = 1$ in \Cref{table_1,table_2} for  the fully implicit scheme and \Cref{table_3,table_4}
	for the energy-splitting scheme. The observed convergence rates match the theory, where $\Delta t = h^{k+1}$.

	\begin{table}
		\begin{center}
			\begin{tabular}{cccccc}
				\hline
				${h}/{\sqrt 2}$ &1/4& 1/8&1/16 &1/32 & 1/64 \\
				\hline
				$\|\bm q  - \bm q_h\|_{\mathcal T_h}$&8.6745E-04   &4.8767E-04   &2.5058E-04   &1.2614E-04   &6.3177E-05\\
				order&-& 0.83088   &0.96061   &0.99025   &0.99757 \\
				\hline
				$\|\bm p-\bm p_h\|_{\mathcal T_h}$&8.9032E-04   &4.9104E-04   &2.5102E-04   &1.2620E-04   &6.3184E-05   
				\\
				order&-&  0.85847   &0.96803   &0.99214   &0.99804 \\
				\hline
				$\| u -  u_h\|_{\mathcal T_h}$ &2.5400E-04   &6.6287E-05   &1.6746E-05   &4.1975E-06   &1.0501E-06   \\
				order&-& 1.9380   &1.9849  & 1.9962   &1.9990 \\
				\hline
				$\|\phi -  \phi_h\|_{\mathcal T_h}$&2.6147E-04   &6.7626E-05   &1.7040E-05   &4.2683E-06   &1.0676E-06 \\
				order&-& 1.9510   &1.9887   &1.9972   &1.9993\\
				\hline
			\end{tabular}
		\end{center}
		\caption{\Cref{example1}, $k=0$ with fully implicit scheme: Errors, observed convergence orders for  $u$, $\phi$ and their fluxes $\bm q$ and $\bm p$.}\label{table_1}
	\end{table}

	\begin{table}
	\begin{center}
		\begin{tabular}{cccccc}
			\hline
			${h}/{\sqrt 2}$ &1/4& 1/8&1/16 &1/32 & 1/64 \\
			\hline
			$\|\bm q  - \bm q_h\|_{\mathcal T_h}$&1.6623E-04  &4.5233E-05   &1.1599E-05   &2.9202E-06   &7.3141E-07\\
			order&-&  1.8778  &1.9634   &1.9899   &1.9973 \\
			\hline
			$\|\bm p-\bm p_h\|_{\mathcal T_h}$&1.6700E-04   &4.5276E-05   &1.1602E-05   &2.9204E-06   &7.3142E-07   
			\\
			order&-&  1.8830   &1.9644   &1.9901  &1.9974 \\
			\hline
			$\| u -  u_h\|_{\mathcal T_h}$ & 4.8698E-05   &6.1714E-06   &7.7349E-07   &9.6742E-08   &1.2094E-08  \\
			order&-& 2.9802   &2.9962   &2.9992   &2.9998\\
			\hline
			$\|\phi -  \phi_h\|_{\mathcal T_h}$& 4.9152E-05   &6.1862E-06   &7.7391E-07   &9.6753E-08   &1.2095E-08 \\
			order&-& 2.9901  &2.9988   &2.9998   &3.0000\\
			\hline
		\end{tabular}
	\end{center}
	\caption{\Cref{example1}, $k=1$ with fully implicit scheme: Errors, observed convergence orders for  $u$, $\phi$ and their fluxes $\bm q$ and $\bm p$.}\label{table_2}
\end{table}
	\begin{table}
	\begin{center}
		\begin{tabular}{cccccc}
			\hline
			${h}/{\sqrt 2}$ &1/4& 1/8&1/16 &1/32 & 1/64 \\
			\hline
			$\|\bm q  - \bm q_h\|_{\mathcal T_h}$& 8.6761E-04   &4.8768E-04  & 2.5059E-04   &1.2614E-04   &6.3177E-05\\
			order&-& 0.83111   &0.96063   &0.99025  &0.99757\\
			\hline
			$\|\bm p-\bm p_h\|_{\mathcal T_h}$&8.9460E-04   &4.9143E-04   &2.5107E-04   &1.2620E-04   &6.3185E-05   
			\\
			order&-&   0.86427   &0.96891   &0.99232   &0.99809 \\
			\hline
			$\| u -  u_h\|_{\mathcal T_h}$ & 2.5759E-04   &6.7122E-05   &1.6952E-05   &4.2490E-06   &1.0629E-06  \\
			order&-& 1.9402   &1.9853   &1.9963   &1.9991 \\
			\hline
			$\|\phi -  \phi_h\|_{\mathcal T_h}$&  2.6295E-04   &6.7806E-05   &1.7076E-05   &4.2768E-06   &1.0697E-06 \\
			order&-&1.9553   &1.9894   &1.9974   &1.9993\\
			\hline
		\end{tabular}
	\end{center}
		\caption{\Cref{example1}, $k=0$ with energy-splitting scheme: Errors, observed convergence orders for  $u$, $\phi$ and their fluxes $\bm q$ and $\bm p$.}\label{table_3}
\end{table}

	\begin{table}
	\begin{center}
		\begin{tabular}{cccccc}
			\hline
			${h}/{\sqrt 2}$ &1/4& 1/8&1/16 &1/32 & 1/64 \\
			\hline
			$\|\bm q  - \bm q_h\|_{\mathcal T_h}$& 1.5809E-04   &4.3945E-05   &1.1415E-05   &2.8955E-06   &7.2935E-07\\
			order&-&1.8470   &1.9448   &1.9790   &1.9891\\
			\hline
			$\|\bm p-\bm p_h\|_{\mathcal T_h}$& 1.5896E-04   &4.3991E-05   &1.1418E-05   &2.8957E-06   &7.2940E-07 
			\\
			order&-&   1.8534  &1.9459   &1.9793   &1.9891 \\
			\hline
			$\| u -  u_h\|_{\mathcal T_h}$ &  4.9741E-05   &6.3026E-06   &7.9008E-07  & 9.8850E-08   &1.2358E-08 \\
			order&-& 2.9804   &2.9959   &2.9987   &2.9998 \\
			\hline
			$\|\phi -  \phi_h\|_{\mathcal T_h}$&  4.9111E-05   &6.1809E-06   &7.7336E-07   &9.6709E-08   &1.2090E-08 \\
			order&-&2.9902   &2.9986   &2.9994   &2.9998\\
			\hline
		\end{tabular}
	\end{center}
\caption{\Cref{example1}, $k=1$ with energy-splitting scheme: Errors, observed convergence orders for  $u$, $\phi$ and their fluxes $\bm q$ and $\bm p$.}\label{table_4}
\end{table}

\end{example}

\section{Appendix}\label{app}

\begin{definition}
	
	\begin{subequations}\label{def-Ih}
		\color{black}
		For every $K\in\mathcal T_h$, we define $\mathcal I_K^{\color{black}k+1+d}(u_h,\widehat u_h)\in \mathcal {P}^{k+1+d}(K)$ as follows. \color{black}
		
		\textbf{(1)}	For every vertex $A_i$ on mesh $\mathcal{T}_h$, 
		let $N_{A_i}$ be the number of elements adjoint at $A_i$, and $\mathcal{K}_{A_i}$ denote all these elements, then $\mathcal I_K^{\color{black}k+1+d}$ at $A_i$ is defined as
		\begin{align}
		\mathcal{I}_K^{\color{black}k+1+d}(u_h,\widehat u_h)(A_i)=\frac{1}{N_{A_i}}\sum_{K\in\mathcal K_{A_i}}\widehat u_h(A_i)|_{\color{black}\partial K},\label{Id}
		\end{align}
		we note that $N_{A_i}$ is a \color{black} fixed finite number \color{black} since $\mathcal T_h$ is shape-regular.
		
		\textbf{(2)} If, in addition, for  $d=3$,  for every edge $E$ of element $K$, there are $k+d$ interior Lagrange points on edge $E$, for any of these points $B_i$,
		let $N_{B_i}$ be the number of elements adjoint at $B_i$, and $\mathcal{K}_{B_i}$ denote all these elements, then 
		$\mathcal I_K^{\color{black}k+1+d}$ at $B_i$ is defined as
		\begin{align}
		\mathcal{I}_K^{\color{black}k+1+d}(u_h,\widehat u_h)(B_i)=\frac{1}{N_{B_i}}\sum_{K\in\mathcal K_{B_i}}\widehat u_h(B_i)|_{\color{black}\partial K}.\label{Ic}
		\end{align}
		Again, $N_{B_i}$ is finite since $\mathcal T_h$ is shape-regular.
		
		\textbf{(3)} Since $\mathcal I_K^{\color{black}k+1+d}(u_h,\widehat u_h)\in \mathcal {P}^{k+1+d}(K)$, there are $\binom{k+d}{d-1}$ Lagrange points on every face $F$ of $K$, the value of $\mathcal I_K^{\color{black}k+1+d}(u_h,\widehat u_h)$ on these points are determined by	
		\begin{align}
		\langle 
		\mathcal{I}_K^{\color{black}k+1+d}(u_h,\widehat u_h),\widehat v_h
		\rangle_{F}&=\langle 
		\widehat u_h,\widehat v_h
		\rangle_F&\text{for all }\widehat v_h\in \mathcal{P}^{k+1}(F),\label{Ib}
		\end{align}
		holds for all face $F$ of $K$. 
		
		\textbf{(4)} Since $\mathcal I_K^{\color{black}k+1+d}(u_h,\widehat u_h)\in \mathcal {P}^{k+1+d}(K)$, there are $\binom{k+d}{d}$ Lagrange points in every element $K$, the value of $\mathcal I_K^{\color{black}k+1+d}(u_h,\widehat u_h)$ on these points are determined by	
		\begin{align}
		(\mathcal{I}_K^{\color{black}k+1+d}(u_h,\widehat u_h),v_h)_{K}
		&=
		(u_h,v_h)_{K} &\text{for all }v_h\in \mathcal{P}^k(K),\label{Ia}
		\end{align}
	\end{subequations}
\end{definition}

It is easy to check that the degrees of freedom of $\mathcal{P}^{k+1+d}(K)$ is $\binom{k+1+2d}{d}$, the constrains for \eqref{Id}, \eqref{Ic}, \eqref{Ib} and \eqref{Ia} are $d+1$, $(d-1)d\binom{k+d}{1}$,   $(d+1)\binom{k+d}{d-1}$, and $\binom{k+d}{d}$. For $d=3$, there holds
\begin{align*}
\binom{k+1+2d}{d}&=d+1+d(d-1)\binom{k+d}{1}+(d+1)\binom{k+d}{d-1}+\binom{k+d}{d};
\end{align*}
and 
for $d=2$, it holds
\begin{align*}
\binom{k+1+2d}{d}&=d+1+(d+1)\binom{k+d}{d-1}+\binom{k+d}{d}.
\end{align*}
Then the definition of $\mathcal I_K^{\color{black}k+1+d}(u_h,\widehat u_h)$ is a square system, therefore, the uniqueness and the existence of $\mathcal I_K^{\color{black}k+1+d}(u_h,\widehat u_h)$ are equivalence.
In addition, it is obviously that when $u_h=\widehat u_h=0$ we have $\mathcal I_K^{\color{black}k+1+d}(u_h,\widehat u_h)=0$, 
then the operator $\mathcal{I}_K^{\color{black}k+1+d}$ is well-defined. 
We define $\mathcal{I}_h^{\color{black}k+1+d}|_K=\mathcal{I}_K^{\color{black}k+1+d}$,  if for all $(u_h,\widehat u_h)\in W_h\times M_h$, we have $\mathcal{I}_h^{\color{black}k+1+d}(u_h,\widehat u_h)$
is unique defined at every face of $\mathcal{T}_h$ due to \eqref{Id}, \eqref{Ic}, and \eqref{Ib}. Then $\mathcal{I}_h^{\color{black}k+1+d}(u_h,\widehat u_h)\in H^1(\Omega)$.

\begin{lemma}\label{stablity} For all $(u_h,\widehat u_h)\in L^2(\mathcal T_h)\times L^2(\partial\mathcal T_h)$, we have the stability:
	\begin{align*}
	\|\mathcal{I}_h^{\color{black}k+1+d}(u_h,\widehat u_h)\|_{K}\le 
	C
	\left(
	\|u_h\|_{S(K)}
	+\|h_K^{1/2}\widehat u_h\|_{\partial S(K)}
	\right),
	\end{align*}
	where $S(K)$ is the set of all the simplex $K^{\star}\in \mathcal{T}_h$ such that $K^{\star}$ and $K$ has at least one common node, and  $\partial S(K)$ is the set of all the faces of those simplex.
\end{lemma}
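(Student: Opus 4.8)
The plan is to prove the stability estimate for the interpolation operator $\mathcal{I}_h^{k+1+d}$ by a standard scaling argument. First I would reduce to a single element $K$ and its patch. The key observation is that $\mathcal{I}_K^{k+1+d}(u_h,\widehat u_h)$ depends linearly on the data $(u_h,\widehat u_h)$, and on a fixed reference element $\widehat K$ the degrees of freedom defining the interpolant (the vertex/edge averages of $\widehat u_h$, the face moments against $\mathcal{P}^{k+1}$, and the interior moments against $\mathcal{P}^k$) are each bounded linear functionals of $(\widehat u_h, u_h)$ with respect to the $L^2$ norms on the faces and interior. Since $\mathcal{P}^{k+1+d}(\widehat K)$ is finite-dimensional, all norms on it are equivalent, so on the reference element one gets
\begin{align*}
\|\mathcal{I}_{\widehat K}^{k+1+d}(\widehat u_h,\widehat{\widehat u}_h)\|_{\widehat K} \le C\left(\|\widehat u_h\|_{\widehat S(\widehat K)} + \|\widehat{\widehat u}_h\|_{\partial \widehat S(\widehat K)}\right),
\end{align*}
where the constant depends only on the reference configuration (and hence, by shape-regularity, is uniform over the family $\mathcal{T}_h$).

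Next I would transfer this estimate back to the physical element $K$ via the affine map $F_K:\widehat K\to K$. Under this pullback, $L^2$ norms over a $d$-dimensional simplex scale like $h_K^{d/2}$ and $L^2$ norms over a $(d-1)$-dimensional face scale like $h_K^{(d-1)/2}$; the quasi-uniformity and shape-regularity assumptions ensure that all elements in the patch $S(K)$ have diameters comparable to $h_K$ and that $N_{A_i}, N_{B_i}$ are uniformly bounded, so the averaging steps \eqref{Id} and \eqref{Ic} contribute only bounded constants. Combining the scalings: the left side picks up $h_K^{d/2}$, the volume term on the right picks up $h_K^{d/2}$, and the boundary term picks up $h_K^{(d-1)/2}$, which after dividing through matches the claimed weight $h_K^{1/2}$ on $\widehat u_h$ in the $\partial S(K)$ norm. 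This produces exactly the stated inequality
\begin{align*}
\|\mathcal{I}_h^{k+1+d}(u_h,\widehat u_h)\|_{K}\le C\left(\|u_h\|_{S(K)} + \|h_K^{1/2}\widehat u_h\|_{\partial S(K)}\right).
\end{align*}

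The main obstacle I anticipate is making precise that the defining conditions \eqref{Id}--\eqref{Ia} genuinely determine a bounded linear map on the reference element — i.e., verifying that the local square system is invertible uniformly (which the excerpt already asserts via the dimension-counting identities, so one may invoke it) and that each degree of freedom is a continuous functional of $(u_h,\widehat u_h)$ in the relevant $L^2$ norms. The point-evaluation-type functionals in \eqref{Id} and \eqref{Ic} need care: they evaluate $\widehat u_h|_{\partial K}$ at vertices/edge points, but since $\widehat u_h \in M_h$ is piecewise polynomial of fixed degree on each face, point values are controlled by the face $L^2$ norm via inverse estimates on the reference configuration, so this is harmless. Once continuity of all the functionals and invertibility of the system are in hand, the norm-equivalence argument on $\mathcal{P}^{k+1+d}(\widehat K)$ and the scaling back to $K$ are routine, and one should also note that because $S(K)$ involves finitely many neighboring elements (uniformly bounded in number), summing the contributions over the patch does not degrade the constant. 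This completes the proof.
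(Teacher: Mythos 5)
Your proposal is correct and follows essentially the same route as the paper: a scaling argument that bounds each degree of freedom of the local square system \eqref{Id}--\eqref{Ia} by scaled $L^2$ norms of the data and then recombines, the only difference being that you invoke norm equivalence on the reference element abstractly while the paper writes the interpolant explicitly in a Lagrange basis and estimates the coefficients $a_{1,i},\dots,a_{4,j}$ one by one via dual basis functions. Your remark that the point-evaluation functionals in \eqref{Id}--\eqref{Ic} are only controlled by face $L^2$ norms because $\widehat u_h$ is a piecewise polynomial (so the lemma really holds on $W_h\times M_h$ rather than all of $L^2(\mathcal T_h)\times L^2(\partial\mathcal T_h)$) is a point the paper uses implicitly but does not state, and you handle it correctly.
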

\begin{proof}
	To simplify the proof, we only give a proof for $d=3$, the proof of $d=2$ is similar.
	According to \eqref{def-Ih}, we divide the Lagrange points on $K$ of degree $k+1+d$ into 4 parts,
	and the corresponding Lagrange basis denoted as $\{\phi_{1,j}\}_{j=1}^{N_1}$, 
	$\{\phi_{2,j}\}_{j=1}^{N_2}$,  $\{\phi_{3,j}\}_{j=1}^{N_3}$, and  $\{\phi_{4,j}\}_{j=1}^{N_4}$, which are determined by  (1)(2)(3)(4) in \eqref{def-Ih}, respectively.
	It is known that $\phi_{4,j}|_{\partial K}=0$, since the corresponding Lagrange points are inside $K$.
	We also denote the dual basis of $\{\phi_{3,j}\}_{j=1}^{N_3}$ and $\{\phi_{4,j}\}_{j=1}^{N_4}$ as $\{\psi_{3,j}\}_{j=1}^{N_3}$ and $\{\psi_{4,j}\}_{j=1}^{N_4}$, respectively, such that
	\begin{align*}
	\langle \phi_{3,j},\psi_{3,\ell} \rangle_F=\delta_{\color{black}j,\ell},\quad 
	(\phi_{4,j},\psi_{4,\ell} )_K=\delta_{\color{black}j,\ell},
	\end{align*}
	\color{black}
	where $\delta_{j,\ell}$ is the Kronecker delta.
	\color{black}
	A result in \cite[Lemma 3.1]{} show that
	\begin{align}\label{psi-infy}
	\|\psi_{3,j}\|_{0,\infty,F}\le Ch_F^{-(d-1)},\qquad 
	\|\phi_{4,j}\|_{0,\infty,K}\le Ch_K^{-d}.
	\end{align}
	We can write $\mathcal{I}_K^{\color{black}k+1+d}(u_h,\widehat u_h)$ as
	\begin{align}\label{IIHH}
	\mathcal{I}_K^{\color{black}k+1+d}(u_h,\widehat u_h)=
	\sum_{i=1}^{N_1}a_{1,i}\phi_{1,i}
	+\sum_{i=1}^{N_2}a_{2,i}\phi_{2,i}
	+\sum_{i=1}^{N_3}a_{3,i}\phi_{3,i}
	+\sum_{i=1}^{N_4}a_{4,i}\phi_{4,i}, 
	\end{align}
	where 
	\begin{subequations}
		\begin{align}
		a_{1,i}&=\frac{1}{N_{A_i}}\sum_{K\in\mathcal K_{A_i}}\widehat u_h(A_i)|_{\color{black}\partial K},\label{solve-a1}\\
		a_{2,i}&=\frac{1}{N_{B_i}}\sum_{K\in\mathcal K_{B_i}}\widehat u_h(B_i)|_{\color{black}\partial K},\label{solve-a2}\\
		a_{3,j}&=\langle \widehat u_h-\sum_{i=1}^{N_1} a_{1,i}\phi_{1,i}
		-\sum_{i=1}^{N_2} a_{2,i}\phi_{2,i},\psi_{3,j} \rangle_F,\label{solve-a3}\\
		a_{4,j}&=( u_h-\sum_{i=1}^{N_1} a_{1,i}\phi_{1,i}
		-\sum_{i=1}^{N_2} a_{2,i}\phi_{2,i}
		-\sum_{i=1}^{N_3} a_{3,i}\phi_{3,i}
		,\psi_{4,j} )_K,\label{solve-a4}
		\end{align}
		according \color{black} to \color{black} \eqref{def-Ih}.
	\end{subequations}
	By \color{black} a \color{black} scaling argument, one can get
	\begin{subequations}
		\begin{align}\label{scal-phi}
		\|\phi_{i,j}\|_{0,p,K}&\le Ch_K^{d/p}\|\hat{\phi}_{i,p,j}\|_{0,p,\hat{K}}\le Ch_K^{d/p},\\
		\|\phi_{i,j}\|_{0,p,F}&\le Ch_F^{(d-1)/p}\|\hat{\phi}_{i,p,j}\|_{0,p,\hat{F}}\le Ch_F^{(d-1)/p},
		\end{align} 
		for any integer $p\ge 1$.
	\end{subequations}
	Again, by a scaling argument, for the Lagrange point $A_i$ on a face $F\subset\partial K$, and $A_i$ is also the vertex of $\mathcal T_h$, one can get
	\begin{align}\label{a1}
	|a_{1,i}|\le\frac{1}{N_{A_i}}\sum_{A_i\in F}\|\widehat u_h\|_{0,\infty, F}\le C\sum_{ \hat{A_i}\in \hat F}\|\widehat{\widehat u_h}\|_{0,\infty, \hat F}
	\le C\sum_{ A_i\in F}h_F^{-(d-1)/2}\|\widehat u_h\|_{0, F},
	\end{align}
	and the similar for $a_{2,i}$, for the Lagrange point $B_i$ on an edge $E\subset\partial F$, $F\subset \partial K$:
	\begin{align}\label{a2}
	|a_{2,i}|
	\le C\sum_{B_i\in F}h_F^{-(d-1)/2}\|\widehat u_h\|_{0,F}.
	\end{align}
	We use \eqref{solve-a3}, \eqref{a1}, \eqref{a2}, \eqref{psi-infy}, \eqref{scal-phi}, \eqref{a1}, \eqref{a2} and a scaling argument, to get 
	\begin{align}\label{a3}
	|a_{3,j}|&\le \left(\|\widehat u_h\|_{0,1,F}+\sum_{i=1}^{N_1} |a_{1,i}|\cdot\|\phi_{1,i}\|_{0,1,F}
	+\sum_{i=1}^{N_2} |a_{2,i}|\cdot\|\phi_{1,i}\|_{0,1,F}\right)\|\psi_{3,j} \|_{0,\infty,F}\nonumber\\
	&\le C 
	\left( h_F^{-(d-1)/2}\|\widehat u_h\|_{0,F}+\sum_{A_i\in F}h_F^{-(d-1)/2}\|\widehat{u}_h\|_{0,F}
	+\sum_{B_i\in F}h_F^{-(d-1)/2}\|\widehat{u}_h\|_{0,F}.
	\right)
	\end{align}
	We use \eqref{solve-a4}, \eqref{a1}, \eqref{a2}, \eqref{a3}, \eqref{psi-infy}, \eqref{scal-phi}, \eqref{a1}, \eqref{a2} and a scaling argument, to get 
	\begin{align}\label{a4}
	|a_{4,j}|&\le \left(\| u_h\|_{0,1,K}+\sum_{i=1}^{N_1} |a_{1,i}|\cdot\|\phi_{1,i}\|_{0,1,K}
	+\sum_{i=1}^{N_2} |a_{2,i}|\cdot\|\phi_{1,i}\|_{0,1,K}
	+\sum_{i=1}^{N_3} |a_{3,i}|\cdot\|\phi_{1,i}\|_{0,1,K}
	\right)\|\psi_{4,j} \|_{L^\infty,K}\nonumber\\
	&\le C 
	\left( h_F^{-d/2}\|u_h\|_{0,K}+\sum_{A_i\in F}h_F^{-(d-1)/2}\|\widehat{u}_h\|_{0,F}
	+\sum_{B_i\in F}h_F^{-(d-1)/2}\|\widehat{u}_h\|_{0,F}
	\right)
	\end{align}
	
	Then desired result is followed by \eqref{IIHH}, \eqref{a1}, \eqref{a2}, \eqref{a3}, \eqref{a4} and \eqref{scal-phi} with $p=2$.
\end{proof}

Finally, we have the following estimation
\begin{lemma}\label{error_bounds_Ih} 
	For all $(u_h,\widehat u_h)\in W_h\times M_h$, we have 
	\begin{subequations}\label{IC}
		\begin{align}\label{ICK}
		\|\mathcal{I}_h^{\color{black}k+2+d}(u_h,\widehat u_h)-u_h\|_{\mathcal{T}_h}\le C
		\|h_K^{1/2}(u_h-\widehat u_h    )\|_{\partial\mathcal{T}_h},\\
		\|\nabla( \mathcal{I}_h^{\color{black}k+2+d}(u_h,\widehat u_h)-u_h)\|_{\mathcal{T}_h}\le C
		\|h_K^{-1/2}(u_h-\widehat u_h    )\|_{\partial\mathcal{T}_h}.\label{ICK22}
		\end{align}
	\end{subequations}
\end{lemma}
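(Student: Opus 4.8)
The plan is to compare $\mathcal{I}_h^{k+2+d}(u_h,\widehat u_h)$ with $u_h$ element by element and to exploit the defining properties \eqref{pro_K} and \eqref{pro_F} together with the stability estimate in \Cref{stablity} (with $k$ replaced by $k+1$ throughout, since here we interpolate into $\mathcal{P}^{k+2+d}(K)$ and reproduce polynomials of degree $k+1$ in the interior and degree $k+2$ on the faces). Fix $K\in\mathcal{T}_h$ and write $\eta_h:=\mathcal{I}_h^{k+2+d}(u_h,\widehat u_h)-u_h$ on $K$. The key observation is that $\eta_h$ depends linearly on the data and that, by \eqref{pro_K}, $(\eta_h,w_h)_K = (\mathcal{I}_h^{k+2+d}(u_h,\widehat u_h),w_h)_K-(u_h,w_h)_K = 0$ for all $w_h\in\mathcal{P}^{k+1}(K)$, i.e. $\eta_h$ is $L^2(K)$-orthogonal to $\mathcal{P}^{k+1}(K)\supseteq\mathcal{P}^k(K)$; and on each face $F\subset\partial K$, by \eqref{pro_F}, $\langle\eta_h,\mu_h\rangle_F = \langle\widehat u_h-u_h,\mu_h\rangle_F$ for all $\mu_h\in\mathcal{P}^{k+2}(F)\supseteq\mathcal{P}^k(F)$.

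For \eqref{ICK}, I would observe that $\eta_h = \mathcal{I}_h^{k+2+d}(u_h,\widehat u_h) - \mathcal{I}_h^{k+2+d}(u_h,u_h)$, because $u_h\in W_h$ satisfies $(u_h,w_h)_K=(u_h,w_h)_K$ for $w_h\in\mathcal{P}^{k+1}(K)$ and $\langle u_h,\mu_h\rangle_F = \langle u_h,\mu_h\rangle_F$ for $\mu_h\in\mathcal{P}^{k+2}(F)$, so by uniqueness of the defining square system $\mathcal{I}_h^{k+2+d}(u_h,u_h)=u_h$ on $K$. Hence $\eta_h = \mathcal{I}_h^{k+2+d}(0,\widehat u_h - u_h)$, and applying \Cref{stablity} (adapted to degree $k+2+d$) to the data pair $(0,\widehat u_h-u_h)$ gives
\begin{align*}
\|\eta_h\|_K \le C\|h_K^{1/2}(\widehat u_h - u_h)\|_{\partial S(K)}.
\end{align*}
Summing over $K\in\mathcal{T}_h$ and using the finite-overlap property of the patches $S(K)$ (a consequence of shape-regularity) yields \eqref{ICK}.

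For \eqref{ICK22}, the argument proceeds by a scaling/inverse-inequality argument on the reference element. Since $\eta_h|_K\in\mathcal{P}^{k+2+d}(K)$, the inverse inequality gives $\|\nabla\eta_h\|_K\le C h_K^{-1}\|\eta_h\|_K$, but this would lose a power of $h$ relative to the claimed bound; instead I would refine by using that $\eta_h$ is $L^2(K)$-orthogonal to constants, so $\|\eta_h\|_K\le C h_K\|\nabla\eta_h\|_K$ is in the wrong direction — rather, the sharp route is to work directly on $\widehat K$. Map to the reference element $\widehat K$, where $\|\nabla\eta_h\|_K \simeq h_K^{d/2-1}\|\widehat\nabla\widehat\eta_h\|_{\widehat K}$ and, because $\widehat\eta_h$ is determined linearly by the boundary data $\widehat u_h-u_h$ restricted to $\partial\widehat K$ (the interior moments against $\mathcal{P}^{k+1}$ being zero) via a fixed linear map on a finite-dimensional space, one has $\|\widehat\nabla\widehat\eta_h\|_{\widehat K}\le C\|\widehat u_h - u_h\|_{L^2(\partial\widehat K)}$; scaling back gives $\|\nabla\eta_h\|_K\le C h_K^{-1/2}\|\widehat u_h - u_h\|_{\partial K}$. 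Summing over $K$ and using finite overlap yields \eqref{ICK22}.

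The main obstacle I anticipate is the bookkeeping in the scaling argument for \eqref{ICK22}: one must be careful that the reference-element estimate $\|\widehat\nabla\widehat\eta_h\|_{\widehat K}\le C\|\widehat u_h-u_h\|_{L^2(\partial\widehat K)}$ genuinely holds, which relies on the fact that once the interior $\mathcal{P}^{k+1}$-moments of $\widehat\eta_h$ vanish and the remaining degrees of freedom are pinned down by the vertex/edge averaging \eqref{Id}--\eqref{Ic} and the face moments \eqref{pro_F} against boundary data, the resulting map from boundary data to $\widehat\eta_h$ is a well-defined bounded linear operator on finite-dimensional spaces; this is essentially the content of \Cref{stablity} with the interior term $\|u_h\|_{S(K)}$ absent. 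Care is also needed because the averaged vertex/edge values in \eqref{Id}--\eqref{Ic} involve neighbouring elements, so the patch $S(K)$ (rather than $K$ alone) must appear, and one invokes shape-regularity both for the finite cardinality of each patch and to pass between $h_K$ and $h_{K^\star}$ for $K^\star\in S(K)$.
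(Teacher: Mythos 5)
Your treatment of \eqref{ICK} is exactly the paper's argument: reproduction $\mathcal I_h^{k+2+d}(u_h,u_h)=u_h$, linearity to reduce to the data pair $(0,\widehat u_h-u_h)$, the stability \Cref{stablity}, and finite overlap of the patches $S(K)$. For \eqref{ICK22}, however, you talked yourself out of the correct (and much shorter) route on the basis of a scaling miscount. You claim the element-wise inverse inequality $\|\nabla\eta_h\|_K\le C h_K^{-1}\|\eta_h\|_K$ "would lose a power of $h$", but it does not: combining it with the local form of \eqref{ICK} gives
\begin{align*}
\|\nabla\eta_h\|_K\le C h_K^{-1}\,\|h_{K}^{1/2}(u_h-\widehat u_h)\|_{\partial S(K)}\le C\,\|h_{K}^{-1/2}(u_h-\widehat u_h)\|_{\partial S(K)},
\end{align*}
since $h_K^{-1}\cdot h_K^{1/2}=h_K^{-1/2}$ is precisely the weight appearing in \eqref{ICK22} (shape regularity lets you move $h_K$ across neighbouring elements of the patch). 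Summing with finite overlap finishes the proof; this is exactly what the paper does ("\eqref{ICK22} follows by an application of the inverse inequality"). Your substitute argument --- mapping to the reference element and bounding $\widehat\nabla\widehat\eta_h$ by the boundary data through a fixed finite-dimensional linear map --- does also reach the correct bound after the scaling $h_K^{d/2-1}\cdot h_K^{-(d-1)/2}=h_K^{-1/2}$, and you correctly flag that the vertex and edge averaging in \eqref{Id}--\eqref{Ic} forces the patch $S(K)$ rather than $K$ alone to appear; but it amounts to re-proving \Cref{stablity} for the gradient, which is unnecessary work. The one genuine inaccuracy to fix is the claim that the inverse-inequality route fails.
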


\begin{proof}[Proof of \Cref{error_bounds_Ih}]
	Since $\mathcal I_h^{\color{black}k+2+d}(u_h,u_h)=u_h$ for every $u_h\in V_h$,
	then \eqref{ICK} follows from \Cref{stablity} and the fact that $\mathcal I_h^{\color{black}k+2+d}$ is linear. Then \eqref{ICK22} follows by an application of the inverse inequality. This completes the proof.
\end{proof}

\bibliographystyle{plain}
\bibliography{Cahn_Hilliard,Model_Order_Reduction,Ensemble,HDG,Interpolatory,Mypapers,Added,multiphase-2018}

\def\cprime{$'$}
\begin{thebibliography}{10}

\bibitem{AMW1998}
D.~M. Anderson, G.~B. McFadden, and A.~A. Wheeler.
\newblock Diffuse-interface methods in fluid mechanics.
\newblock In {\em Annual review of fluid mechanics, {V}ol. 30}, volume~30 of
  {\em Annu. Rev. Fluid Mech.}, pages 139--165. Annual Reviews, Palo Alto, CA,
  1998.

\bibitem{AKW2015}
Andreas~C. Aristotelous, Ohannes~A. Karakashian, and Steven~M. Wise.
\newblock Adaptive, second-order in time, primitive-variable discontinuous
  {G}alerkin schemes for a {C}ahn-{H}illiard equation with a mass source.
\newblock {\em IMA J. Numer. Anal.}, 35(3):1167--1198, 2015.

\bibitem{BaZl1973}
Ivo Babu\v{s}ka and Milo\v{s} Zl\'{a}mal.
\newblock Nonconforming elements in the finite element method with penalty.
\newblock {\em SIAM J. Numer. Anal.}, 10:863--875, 1973.

\bibitem{Baker1977}
Garth~A. Baker.
\newblock Finite element methods for elliptic equations using nonconforming
  elements.
\newblock {\em Math. Comp.}, 31(137):45--59, 1977.

\bibitem{BBG1999}
John~W. Barrett, James~F. Blowey, and Harald Garcke.
\newblock Finite element approximation of the {C}ahn-{H}illiard equation with
  degenerate mobility.
\newblock {\em SIAM J. Numer. Anal.}, 37(1):286--318, 1999.

\bibitem{BePu1996}
A.~L. Bertozzi and M.~Pugh.
\newblock The lubrication approximation for thin viscous films: regularity and
  long-time behavior of weak solutions.
\newblock {\em Comm. Pure Appl. Math.}, 49(2):85--123, 1996.

\bibitem{Brenner_Poincare_SINUM_2003}
Susanne~C. Brenner.
\newblock Poincar\'e-{F}riedrichs inequalities for piecewise {$H^1$} functions.
\newblock {\em SIAM J. Numer. Anal.}, 41(1):306--324, 2003.

\bibitem{Cahn1959}
J.~W. Cahn.
\newblock Free {E}nergy of a {N}onuniform {S}ystem. {II. T}hermodynamic
  {B}asis.
\newblock {\em Journal of Chemical Physics}, 30(5):{1121--1124}, 1959.

\bibitem{CaHi1958}
J.~W. Cahn and J.~E. Hilliard.
\newblock Free {E}nergy of a {N}onuniform {S}ystem. {I. I}nterfacial {F}ree
  {E}nergy.
\newblock {\em Journal of Chemical Physics}, {28}({2}):{258--267}, {1958}.

\bibitem{CaHi1959}
J.~W. Cahn and J.~E. Hilliard.
\newblock Free {E}nergy of a {N}onuniform {S}ystem. {III. N}ucleation in a
  {T}wo‐{C}omponent {I}ncompressible {F}luid.
\newblock {\em Journal of Chemical Physics}, 31(3):688--699, 1959.

\bibitem{Cesmelioglu_Cockburn_Nguyen_Peraire_Oseen_JSC_2013}
Aycil Cesmelioglu, Bernardo Cockburn, Ngoc~Cuong Nguyen, and Jaume Peraire.
\newblock Analysis of {HDG} methods for {O}seen equations.
\newblock {\em J. Sci. Comput.}, 55(2):392--431, 2013.

\bibitem{Cesmelioglu_NS_MathComp_2017}
Aycil Cesmelioglu, Bernardo Cockburn, and Weifeng Qiu.
\newblock Analysis of a hybridizable discontinuous {G}alerkin method for the
  steady-state incompressible {N}avier-{S}tokes equations.
\newblock {\em Math. Comp.}, 86(306):1643--1670, 2017.

\bibitem{ChenHuShenSinglerZhangZheng_HDG_Convection_Dirtributed_Control_JCAM_2018}
Gang Chen, Weiwei Hu, Jiguang Shen, John~R. Singler, Yangwen Zhang, and Xiaobo
  Zheng.
\newblock An {HDG} method for distributed control of convection diffusion
  {PDE}s.
\newblock {\em J. Comput. Appl. Math.}, 343:643--661, 2018.

\bibitem{Chen_Monk_Peter1}
Gang Chen, Peter Monk, and Yangwen Zheng.
\newblock {A}n {HDG} {M}ethod for {T}ime-dependent {D}rift-{D}iffusion {M}odel
  of {S}emiconductor {D}evices.

\bibitem{ChenSinglerZhang1}
Gang Chen, John Singler, and Yangwen Zhang.
\newblock An {HDG} {M}ethod {F}or {D}irichlet {B}oundary {C}ontrol of
  {C}onvection {D}ominated {D}iffusion {PDE}s.
\newblock Submitted to SIAM Journal on Numerical Analysis.

\bibitem{Chen_Cockburn_Convection_Diffusion_IMAJNA_2012}
Yanlai Chen and Bernardo Cockburn.
\newblock Analysis of variable-degree {HDG} methods for convection-diffusion
  equations. {P}art {I}: general nonconforming meshes.
\newblock {\em IMA J. Numer. Anal.}, 32(4):1267--1293, 2012.

\bibitem{Chen_Cockburn_Convection_Diffusion_MathComp_2014}
Yanlai Chen and Bernardo Cockburn.
\newblock Analysis of variable-degree {HDG} methods for convection-diffusion
  equations. {P}art {II}: {S}emimatching nonconforming meshes.
\newblock {\em Math. Comp.}, 83(285):87--111, 2014.

\bibitem{Cockburn_Nguyen_Peraire_Hyperbolic_Book_2016}
B.~Cockburn, N.~C. Nguyen, and J.~Peraire.
\newblock H{DG} methods for hyperbolic problems.
\newblock In {\em Handbook of numerical methods for hyperbolic problems},
  volume~17 of {\em Handb. Numer. Anal.}, pages 173--197.
  Elsevier/North-Holland, Amsterdam, 2016.

\bibitem{Cockburn_Dong_Guzman}
Bernardo Cockburn, Bo~Dong, and Johnny Guzm\'{a}n.
\newblock A hybridizable and superconvergent discontinuous {G}alerkin method
  for biharmonic problems.
\newblock {\em J. Sci. Comput.}, 40(1-3):141--187, 2009.

\bibitem{Cockburn_Gopalakrishnan_Lazarov_Unify_SINUM_2009}
Bernardo Cockburn, Jayadeep Gopalakrishnan, and Raytcho Lazarov.
\newblock Unified hybridization of discontinuous {G}alerkin, mixed, and
  continuous {G}alerkin methods for second order elliptic problems.
\newblock {\em SIAM J. Numer. Anal.}, 47(2):1319--1365, 2009.

\bibitem{Cockburn_Gopalakrishnan_Nguyen_Peraire_Sayas_Stokes_MathComp_2011}
Bernardo Cockburn, Jayadeep Gopalakrishnan, Ngoc~Cuong Nguyen, Jaume Peraire,
  and Francisco-Javier Sayas.
\newblock Analysis of {HDG} methods for {S}tokes flow.
\newblock {\em Math. Comp.}, 80(274):723--760, 2011.

\bibitem{Cockburn_Gopalakrishnan_Sayas_Porjection_MathComp_2010}
Bernardo Cockburn, Jayadeep Gopalakrishnan, and Francisco-Javier Sayas.
\newblock A projection-based error analysis of {HDG} methods.
\newblock {\em Math. Comp.}, 79(271):1351--1367, 2010.

\bibitem{Cockburn_Says_Divergence_Free_MathComp_2014}
Bernardo Cockburn and Francisco-Javier Sayas.
\newblock Divergence-conforming {HDG} methods for {S}tokes flows.
\newblock {\em Math. Comp.}, 83(288):1571--1598, 2014.

\bibitem{Cockburn_Shi_Stokes_MathComp_2013}
Bernardo Cockburn and Ke~Shi.
\newblock Conditions for superconvergence of {HDG} methods for {S}tokes flow.
\newblock {\em Math. Comp.}, 82(282):651--671, 2013.

\bibitem{Cockburn_Shi_Stokes_CF_2014}
Bernardo Cockburn and Ke~Shi.
\newblock Devising {$\bf{HDG}$} methods for {S}tokes flow: an overview.
\newblock {\em Comput. \& Fluids}, 98:221--229, 2014.

\bibitem{DWW2016}
Amanda~E. Diegel, Cheng Wang, and Steven~M. Wise.
\newblock Stability and convergence of a second-order mixed finite element
  method for the {C}ahn-{H}illiard equation.
\newblock {\em IMA J. Numer. Anal.}, 36(4):1867--1897, 2016.

\bibitem{DoSh2009}
Bo~Dong and Chi-Wang Shu.
\newblock Analysis of a local discontinuous {G}alerkin method for linear
  time-dependent fourth-order problems.
\newblock {\em SIAM J. Numer. Anal.}, 47(5):3240--3268, 2009.

\bibitem{DuNi1991}
Qiang Du and R.~A. Nicolaides.
\newblock Numerical analysis of a continuum model of phase transition.
\newblock {\em SIAM J. Numer. Anal.}, 28(5):1310--1322, 1991.

\bibitem{ElFr1989}
Charles~M. Elliott and Donald~A. French.
\newblock A nonconforming finite-element method for the two-dimensional
  {C}ahn-{H}illiard equation.
\newblock {\em SIAM J. Numer. Anal.}, 26(4):884--903, 1989.

\bibitem{ElLa1992}
Charles~M. Elliott and Stig Larsson.
\newblock Error estimates with smooth and nonsmooth data for a finite element
  method for the {C}ahn-{H}illiard equation.
\newblock {\em Math. Comp.}, 58(198):603--630, S33--S36, 1992.

\bibitem{FeKa2007}
Xiaobing Feng and Ohannes~A. Karakashian.
\newblock Fully discrete dynamic mesh discontinuous {G}alerkin methods for the
  {C}ahn-{H}illiard equation of phase transition.
\newblock {\em Math. Comp.}, 76(259):1093--1117 (electronic), 2007.

\bibitem{FLX2016}
Xiaobing Feng, Yukun Li, and Yulong Xing.
\newblock Analysis of mixed interior penalty discontinuous {G}alerkin methods
  for the {C}ahn-{H}illiard equation and the {H}ele-{S}haw flow.
\newblock {\em SIAM J. Numer. Anal.}, 54(2):825--847, 2016.

\bibitem{FLAR2018}
Xiaobing Feng, Yukun Li, and Yulong Xing.
\newblock Analysis of mixed interior penalty discontinuous {G}alerkin methods
  for the {C}ahn-{H}illiard equation and the {H}ele-{S}haw flow.
\newblock {\em SIAM J. Numer. Anal.}, 54(2):825--847, 2016.

\bibitem{FePr2004}
Xiaobing Feng and Andreas Prohl.
\newblock Error analysis of a mixed finite element method for the
  {C}ahn-{H}illiard equation.
\newblock {\em Numer. Math.}, 99(1):47--84, 2004.

\bibitem{Furihata2001}
Daisuke Furihata.
\newblock A stable and conservative finite difference scheme for the
  {C}ahn-{H}illiard equation.
\newblock {\em Numer. Math.}, 87(4):675--699, 2001.

\bibitem{GaBr2000}
Venkat Ganesan and Howard Brenner.
\newblock A diffuse interface model of two-phase flow in porous media.
\newblock {\em R. Soc. Lond. Proc. Ser. A Math. Phys. Eng. Sci.},
  456(1996):731--803, 2000.

\bibitem{HuMateosSinglerZhangZhang2}
Wei Gong, Weiwei Hu, Mariano Mateos, John Singler, Xiao Zhang, and Yangwen
  Zhang.
\newblock A {N}ew {HDG} {M}ethod for {D}irichlet {B}oundary {C}ontrol of
  {C}onvection {D}iffusion {PDE}s {II}: {L}ow {R}egularity.
\newblock {\em SIAM J. Numer. Anal.}, 56(4):2262--2287, 2018.

\bibitem{GXX2014}
Ruihan Guo, Yinhua Xia, and Yan Xu.
\newblock An efficient fully-discrete local discontinuous {G}alerkin method for
  the {C}ahn-{H}illiard-{H}ele-{S}haw system.
\newblock {\em J. Comput. Phys.}, 264:23--40, 2014.

\bibitem{GuXu2014}
Ruihan Guo and Yan Xu.
\newblock Efficient solvers of discontinuous {G}alerkin discretization for the
  {C}ahn-{H}illiard equations.
\newblock {\em J. Sci. Comput.}, 58(2):380--408, 2014.

\bibitem{HuShenSinglerZhangZheng_HDG_Dirichlet_control3}
Weiwei Hu, Jiguang Shen, John~R. Singler, Yangwen Zhang, and Xiaobo Zheng.
\newblock A {S}uperconvergent {HDG} {M}ethod for {D}istributed {C}ontrol of
  {C}onvection {D}iffusion {PDE}s.
\newblock {\em J. Sci. Comput.}, 76(3):1436--1457, 2018.

\bibitem{MR2300291}
Ohannes~A. Karakashian and Frederic Pascal.
\newblock Convergence of adaptive discontinuous {G}alerkin approximations of
  second-order elliptic problems.
\newblock {\em SIAM J. Numer. Anal.}, 45(2):641--665, 2007.

\bibitem{KSS2009}
David Kay, Vanessa Styles, and Endre S{\"u}li.
\newblock Discontinuous {G}alerkin finite element approximation of the
  {C}ahn-{H}illiard equation with convection.
\newblock {\em SIAM J. Numer. Anal.}, 47(4):2660--2685, 2009.

\bibitem{Lehrenfeld_PhD_thesis_2010}
C.~Lehrenfeld.
\newblock {H}ybrid {D}iscontinuous {G}alerkin methods for solving
  incompressible flow problems.
\newblock 2010.
\newblock PhD Thesis.

\bibitem{LiQi2017}
Dong Li and Zhonghua Qiao.
\newblock On second order semi-implicit {F}ourier spectral methods for 2{D}
  {C}ahn-{H}illiard equations.
\newblock {\em J. Sci. Comput.}, 70(1):301--341, 2017.

\bibitem{LCWW2017}
Yuan Liu, Wenbin Chen, Cheng Wang, and Steven~M. Wise.
\newblock Error analysis of a mixed finite element method for a
  {C}ahn-{H}illiard-{H}ele-{S}haw system.
\newblock {\em Numer. Math.}, 135(3):679--709, 2017.

\bibitem{LoTr1998}
J.~Lowengrub and L.~Truskinovsky.
\newblock Quasi-incompressible {C}ahn-{H}illiard fluids and topological
  transitions.
\newblock {\em R. Soc. Lond. Proc. Ser. A Math. Phys. Eng. Sci.},
  454(1978):2617--2654, 1998.

\bibitem{MoBo2007}
I.~Mozolevski and P.~R. B\"{o}sing.
\newblock Sharp expressions for the stabilization parameters in symmetric
  interior-penalty discontinuous {G}alerkin finite element approximations of
  fourth-order elliptic problems.
\newblock {\em Comput. Methods Appl. Math.}, 7(4):365--375, 2007.

\bibitem{MoSu2003}
Igor Mozolevski and Endre S\"{u}li.
\newblock A priori error analysis for the {$hp$}-version of the discontinuous
  {G}alerkin finite element method for the biharmonic equation.
\newblock {\em Comput. Methods Appl. Math.}, 3(4):596--607, 2003.

\bibitem{MoSuBo2007b}
Igor Mozolevski, Endre S\"{u}li, and Paulo~R. B\"{o}sing.
\newblock {$hp$}-version a priori error analysis of interior penalty
  discontinuous {G}alerkin finite element approximations to the biharmonic
  equation.
\newblock {\em J. Sci. Comput.}, 30(3):465--491, 2007.

\bibitem{MoSuBo2007a}
Igor Mozolevski, Endre S\"{u}li, and Paulo~Rafael B\"{o}sing.
\newblock Discontinuous {G}alerkin finite element approximation of the
  two-dimensional {N}avier-{S}tokes equations in stream-function formulation.
\newblock {\em Comm. Numer. Methods Engrg.}, 23(6):447--459, 2007.

\bibitem{Qiu_Shen_Shi_Elasticity_MathComp_2018}
Weifeng Qiu, Jiguang Shen, and Ke~Shi.
\newblock An {HDG} method for linear elasticity with strong symmetric stresses.
\newblock {\em Math. Comp.}, 87(309):69--93, 2018.

\bibitem{Qiu_Shi_Convection_Diffusion_JSC_2016}
Weifeng Qiu and Ke~Shi.
\newblock An {HDG} method for convection diffusion equation.
\newblock {\em J. Sci. Comput.}, 66(1):346--357, 2016.

\bibitem{Qiu_Shi_NS_IMAJNA_2016}
Weifeng Qiu and Ke~Shi.
\newblock A superconvergent {HDG} method for the incompressible
  {N}avier-{S}tokes equations on general polyhedral meshes.
\newblock {\em IMA J. Numer. Anal.}, 36(4):1943--1967, 2016.

\bibitem{Rhebergen_Cockburn_Deforming_JCP_2012}
Sander Rhebergen and Bernardo Cockburn.
\newblock A space-time hybridizable discontinuous {G}alerkin method for
  incompressible flows on deforming domains.
\newblock {\em J. Comput. Phys.}, 231(11):4185--4204, 2012.

\bibitem{Rhebergen_Cockburn_NS_JCP_2013}
Sander Rhebergen, Bernardo Cockburn, and Jaap J.~W. van~der Vegt.
\newblock A space-time discontinuous {G}alerkin method for the incompressible
  {N}avier-{S}tokes equations.
\newblock {\em J. Comput. Phys.}, 233:339--358, 2013.

\bibitem{Sanchez_Ciuca_Nguyen_Peraire_Cockburn_Hamiltonian_JCP_2017}
M.~A. S\'anchez, C.~Ciuca, N.~C. Nguyen, J.~Peraire, and B.~Cockburn.
\newblock Symplectic {H}amiltonian {HDG} methods for wave propagation
  phenomena.
\newblock {\em J. Comput. Phys.}, 350:951--973, 2017.

\bibitem{ScZh1990}
L.~Ridgway Scott and Shangyou Zhang.
\newblock Finite element interpolation of nonsmooth functions satisfying
  boundary conditions.
\newblock {\em Math. Comp.}, 54(190):483--493, 1990.

\bibitem{SXY2018}
Jie Shen, Jie Xu, and Jiang Yang.
\newblock The scalar auxiliary variable ({SAV}) approach for gradient flows.
\newblock {\em J. Comput. Phys.}, 353:407--416, 2018.

\bibitem{ShYa2009}
Jie Shen and Xiaofeng Yang.
\newblock An efficient moving mesh spectral method for the phase-field model of
  two-phase flows.
\newblock {\em J. Comput. Phys.}, 228(8):2978--2992, 2009.

\bibitem{SoSh2017}
Huailing Song and Chi-Wang Shu.
\newblock Unconditional energy stability analysis of a second order
  implicit-explicit local discontinuous {G}alerkin method for the
  {C}ahn-{H}illiard equation.
\newblock {\em J. Sci. Comput.}, 73(2-3):1178--1203, 2017.

\bibitem{Stanglmeier_Nguyen_Peraire_Wave_CMAME_2016}
M.~Stanglmeier, N.~C. Nguyen, J.~Peraire, and B.~Cockburn.
\newblock An explicit hybridizable discontinuous {G}alerkin method for the
  acoustic wave equation.
\newblock {\em Comput. Methods Appl. Mech. Engrg.}, 300:748--769, 2016.

\bibitem{SuMo2007}
Endre S\"{u}li and Igor Mozolevski.
\newblock {$hp$}-version interior penalty {DGFEM}s for the biharmonic equation.
\newblock {\em Comput. Methods Appl. Mech. Engrg.}, 196(13-16):1851--1863,
  2007.

\bibitem{Roger_Book_1977}
Roger Temam.
\newblock {\em Navier-{S}tokes equations. {T}heory and numerical analysis}.
\newblock North-Holland Publishing Co., Amsterdam-New York-Oxford, 1977.
\newblock Studies in Mathematics and its Applications, Vol. 2.

\bibitem{WLC2011}
S.~M. Wise, J.~S. Lowengrub, and V.~Cristini.
\newblock An adaptive multigrid algorithm for simulating solid tumor growth
  using mixture models.
\newblock {\em Math. Comput. Modelling}, 53(1-2):1--20, 2011.

\bibitem{ZPN2005}
Michael~A. Zaks, Alla Podolny, Alexander~A. Nepomnyashchy, and Alexander~A.
  Golovin.
\newblock Periodic stationary patterns governed by a convective
  {C}ahn-{H}illiard equation.
\newblock {\em SIAM J. Appl. Math.}, 66(2):700--720 (electronic), 2005.

\end{thebibliography}
\end{document}